\documentclass[12pt]{article}

\usepackage{amsmath,amssymb,amsfonts,amsthm}
\usepackage{graphicx}
\usepackage{color}

\setlength{\textwidth}{6.15 in}
 \setlength{\textheight}{7.75in}
 \setlength{\oddsidemargin}{0in}
 \setlength{\topmargin}{0in}
 \addtolength{\textheight}{.8in}
 \addtolength{\voffset}{-.5in}

\newtheorem{corollary}{Corollary}
\newtheorem{theorem}[corollary]{Theorem}
\newtheorem{lemma}[corollary]{Lemma}

\newtheorem{proposition}[corollary]{Proposition}

\newcommand{\R}{\mathbb{R}}
\newcommand{\C}{\mathbb{C}}

\newcommand{\Z}{\mathbb{Z}}
\newcommand{\N}{\mathbb{N}}
\newcommand{\HH}{\mathbb{H}}
\newcommand{\E}{\mathbb{E}}
\newcommand{\U}{\mathbb{U}}
\newcommand{\A}{\mathcal{A}}
\newcommand{\Gamm}{\mathcal C}

\def\diam{\mathop{\mathrm{diam}}}

\def\Im{{\rm Im}\,}
\def\Re{{\rm Re}\,}
\def\SLEkk#1/{$\mathrm{SLE}_{#1}$}
\def\SLEk/{\SLEkk{\kappa}/}
\def\SLEtwo/{\SLEkk2/}
\def\SLE/{$\mathrm{SLE}$}
\def\CLEkk#1/{$\mathrm{CLE}_{#1}$}
\def\CLEk/{\CLEkk{\kappa}/}
\def\CLEtwo/{\CLEkk2/}
\def\CLE/{$\mathrm{CLE}$}
\def\GLEkk#1/{$\mathrm{GLE}_{#1}$}
\def\GLEk/{\GLEkk{\kappa}/}
\def\GLEtwo/{\GLEkk2/}
\def\GLE/{$\mathrm{GLE}$}
\def\SLEkr/{$\mathrm{SLE}_{\kappa, \rho}$}

\def\Ito/{It\^o}
\def \eps {\varepsilon}
\def \P {{\bf P}}

\def \E {{\bf E}}

\def\hcap{\mathrm{hcap}}

\def\diam{\mathrm{diam}}
\def\H{\mathbb{H}}
\def\U{\mathbb{U}}

\def\BESd/{$\mathrm{BES}^\delta$}
\def\BESdx/{$\mathrm{BES}^\delta_x$}
\def\SBESd/{$\mathrm{SBES}^\delta$}
\def\BESQd/{$\mathrm{BESQ}^\delta$}

\def \interior {{\hbox {int}}}
\def \eps {\varepsilon}

\def \gammabis {{\ell}}
\def \Gammabis {{\mathcal L}}
\numberwithin{corollary}{section}

\begin{document}

\title{Conformal loop ensembles:\\ the Markovian characterization and the loop-soup construction}
\author{{\sc Scott Sheffield\thanks{Massachusetts Institute of Technology. Partially supported by NSF grants DMS 0403182 and DMS 0645585 and
OISE 0730136.}} \,  and
{\sc Wendelin Werner\thanks{Universit\'e Paris-Sud 11  and Ecole Normale Sup\'erieure. Research supported in part by ANR-06-BLAN-00058}
}
}
\date {}

\maketitle
\begin {abstract}
For random collections of self-avoiding loops in two-dimensional domains, we define a simple and natural conformal restriction property that is conjecturally satisfied by the
scaling limits of interfaces in models from statistical physics. This property is basically the combination of conformal invariance and the locality of the interaction in the model. Unlike the Markov property that Schramm used to characterize SLE curves (which involves conditioning on partially generated interfaces up to arbitrary stopping times), this property only involves conditioning on entire loops and thus appears at first glance to be weaker.

Our first main result is that there exists exactly a one-dimensional family of random loop collections with this property---one for each $\kappa \in (8/3,4]$---and that the loops are forms of SLE$_\kappa$. The proof proceeds in two steps. First, uniqueness is established by showing that every such loop ensemble can be generated by an ``exploration'' process based on SLE.

Second, existence is obtained using the two-dimensional Brownian loop-soup, which is a Poissonian random collection of loops in a planar domain.  When the intensity parameter $c$ of the loop-soup is less than $1$, we show that the outer boundaries of the loop clusters are disjoint simple loops (when $c>1$ there is a.s.\ only one cluster) that satisfy the conformal restriction axioms.  We prove various results about loop-soups, cluster sizes, and the $c=1$ phase transition.

Taken together, our results imply that the following families are equivalent:
\begin{enumerate}
\item The random loop ensembles traced by branching Schramm-Loewner Evolution (SLE$_\kappa$) curves for $\kappa$ in $(8/3, 4]$.
\item The outer-cluster-boundary ensembles of Brownian loop-soups for $c \in (0, 1]$.
\item The (only) random loop ensembles satisfying the {\em conformal restriction axioms}.
\end{enumerate}
\end{abstract}

\newpage

\tableofcontents

\newpage

\section {Introduction}

\subsection {General introduction}

{\bf SLE and its conformal Markov property.}
Oded Schramm's SLE processes introduced in \cite {Sch} have deeply changed the way mathematicians and physicists understand critical phenomena in two dimensions.
Recall that a chordal SLE is a random non-self-traversing curve in a simply connected domain, joining two prescribed boundary points of the domain.
Modulo conformal invariance hypotheses that have been proved to hold in several cases, the scaling limit of an interface that appears in various two-dimensional models from statistical physics, when boundary conditions are chosen in a particular way, is one of these SLE curves.  For instance, in the Ising model on a triangular lattice, if one connected arc $d_+$ of the boundary of a simply connected region $D$ is forced to contain only $+$ spins whereas the complementary arc $d_-$ contains only $-$ spins, then there is a random interface that separates the cluster of $+$ spins attached to $d_+$ from the cluster of $-$ spins attached to $d_-$; this random curve has recently been proved by Chelkak and Smirnov to converge in distribution to an SLE curve (SLE$_3$) when one lets the mesh of the lattice go to zero (and chooses the critical temperature of the Ising model) \cite {Sm2,ChSm}.

Note that SLE describes the law of one particular interface, not the joint law of all interfaces (we will come back to this issue later). On the other hand, for a given model, one expects all macroscopic interfaces to have similar geometric properties, i.e., to locally look like an SLE.

\medbreak

\begin {figure}[htbp]
\begin {center}
\includegraphics [width=2.2in]{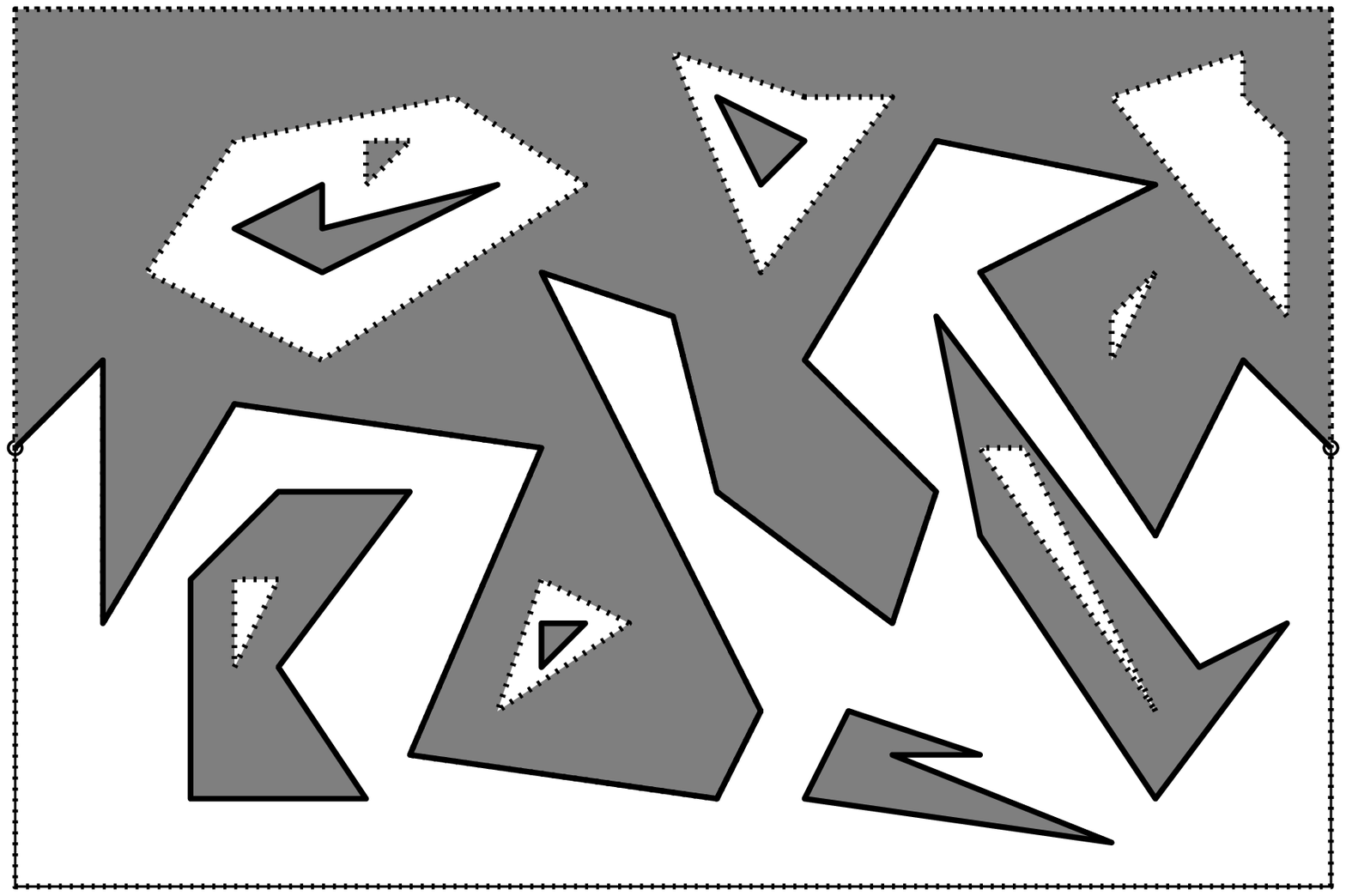}
\includegraphics [width=2.2in]{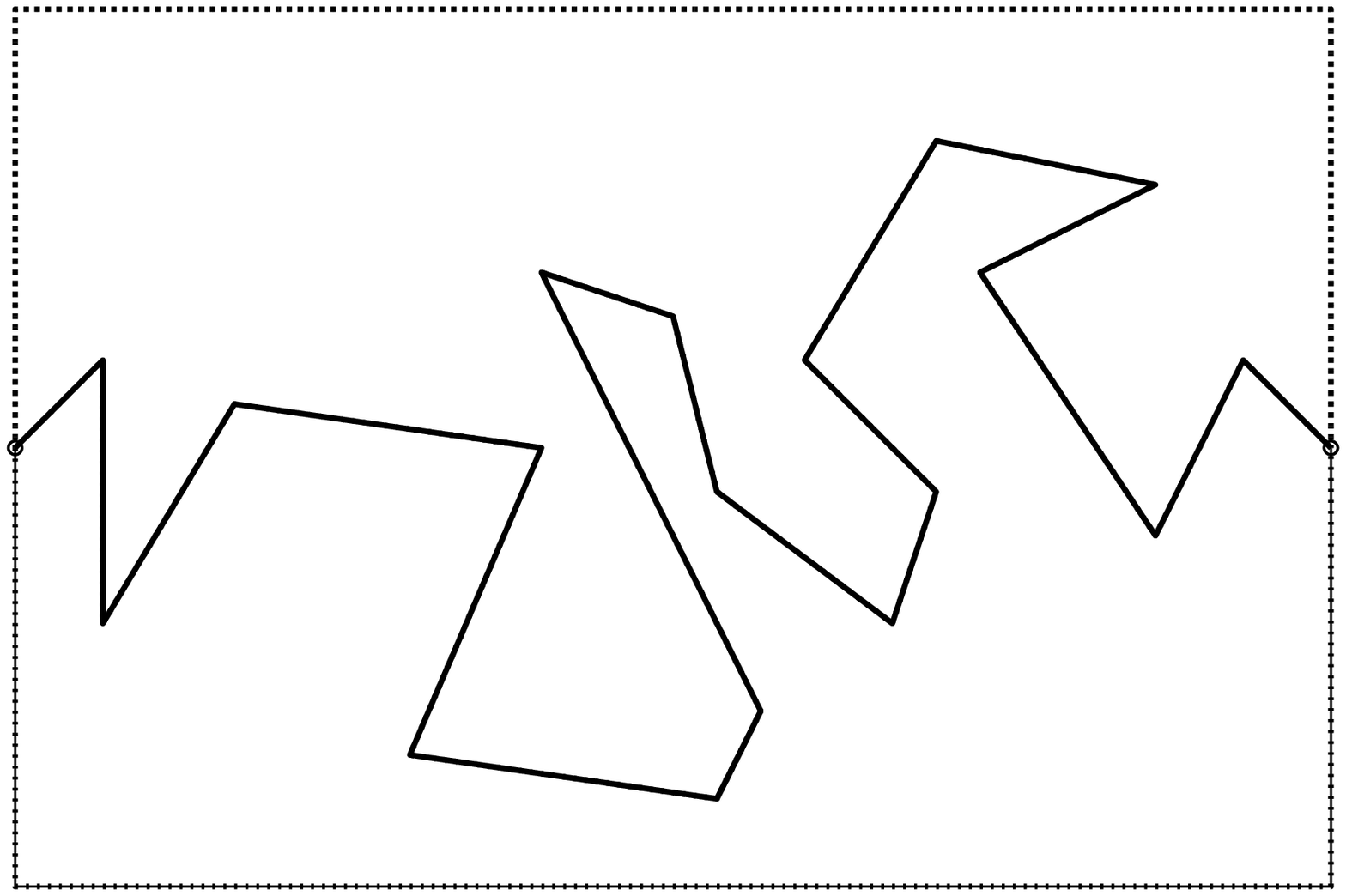}
\caption {A coloring with good boundary conditions {(black on one boundary arc, white on the complementary boundary arc)} and the chordal interface (sketch)}
\end {center}
\end {figure}

The construction of SLE curves can be summarized as follows:
The first observation, contained in Schramm's original paper \cite {Sch}, is the ``analysis'' of the problem: Assuming that the two-dimensional models of statistical physics have a conformally invariant scaling limit, what can be said about the scaling limit of the interfaces? If one chooses the boundary conditions in a suitable way, one can identify a special interface that joins two boundary points (as in the Ising model mentioned above).
Schramm argues that if this curve has a scaling limit, and if its law is conformally invariant, then it should satisfy an ``exploration'' property in the scaling limit.  This property, combined with conformal invariance, implies that it can be defined via iterations of independent random conformal maps. With the help of Loewner's theory for slit mappings, this leads naturally to the definition of the (one parameter) family of SLE processes, which are
 random increasing families of compact sets (called Loewner chains), see \cite {Sch} for more details.  Recall that Loewner chains are constructed via continuous iterations of infinitesimal conformal perturbations of the identity, and they do not a priori necessarily correspond to actual planar curves.

A second step, essentially completed in \cite{RS}, is to start from the definition of these SLE processes as random Loewner chains, and to prove that
they indeed correspond to random two-dimensional curves.
This constructs a one-parameter family of SLE random curves joining two boundary points of a domain, and the previous steps shows that if a random curve is conformally invariant (in distribution) and satisfies
 the exploration property, then it is necessarily one of these SLE curves.

One can study various properties of these random Loewner chains.  For instance, one can compute critical exponents such as in \cite {LSW1,LSW2}, determine their fractal dimension as in \cite {RS,Be}, derive special properties of certain SLE's -- locality, restriction -- as in \cite {LSW1,LSWr}, relate them to discrete lattice models such as uniform spanning trees, percolation, the discrete Gaussian Free Field or the Ising model as in \cite {LSWlesl,Sm1,Sm2,CN, SchSh}, or to the Gaussian Free Field and its variants as in \cite {SchSh, Dub, M1, M} etc.  Indeed, at this point the literature is far too large for us to properly survey here. For conditions that ensure that discrete interfaces converge to SLE paths, see the recent contributions \cite{K, Sh}.
\medbreak

{\bf Conformal Markov property for collections of loops.}
A natural question is how to describe the ``entire'' scaling limit of the lattice-based model, and not only that of one particular interface.
In the present paper, we will answer the following question: Supposing that a discrete random system gives rise in its scaling limit to a conformally invariant collection of loops (i.e., interfaces) that remain disjoint (note that this is not always the case; we will comment on this later), what can these random conformally invariant collections of loops be?

\medbreak

\begin {figure}[htbp]
\begin {center}
\includegraphics [width=4.5in,angle=180]{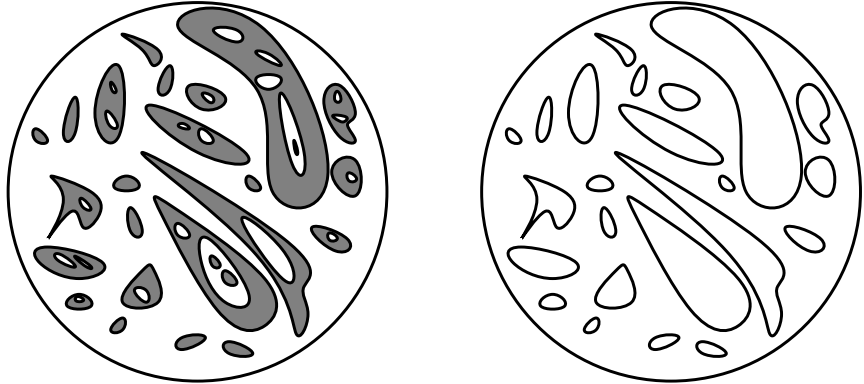}
\caption {A coloring and the corresponding outermost loops (sketch)}
\end {center}
\end {figure}
\medbreak

More precisely, we will define and study random collections of loops that combine conformal invariance and a natural restriction property (motivated by the fact that
the discrete analog of this  property trivially holds for the discrete models we have in mind). We call such collections of loops Conformal Loop Ensembles (CLE).
The two main results of the present paper can be summarized as follows:
\begin {theorem}\label {mainresults}
\begin {itemize}
\item
For each CLE, there exists a value $\kappa \in (8/3, 4]$ such that with probability one, all loops of the CLE are SLE$_\kappa$-type loops.
\item
Conversely, for each $\kappa \in (8/3, 4]$, there exists exactly one CLE with SLE$_\kappa$ type loops.
\end {itemize}
\end {theorem}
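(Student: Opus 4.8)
The plan is to treat the two bullets in sequence: first \emph{uniqueness} --- that any CLE is an \SLEk/-type loop ensemble for a single $\kappa\in(8/3,4]$, and that two CLEs sharing the same $\kappa$ must coincide --- and then \emph{existence} --- that the Brownian loop-soup at intensity $c\in(0,1]$ produces such an ensemble, and that every admissible $\kappa$ arises. The bridge between the two halves is that the loop-soup construction will be shown to satisfy the conformal restriction axioms, so the uniqueness half automatically forces it to be \emph{some} \CLEk/; only the dependence of $\kappa$ on $c$ then needs a separate argument.

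For uniqueness I would fix a boundary point $x$ of the domain $D$ and construct an \emph{exploration process} started from $x$ (the ``branching \SLE/'' of the abstract's item~1). Informally this is a Loewner-type curve growing from $x$ that, whenever it first touches a loop $\gamma$ of the ensemble, traces along the outside of $\gamma$ and then continues into the remaining domain; iterating, it discovers the loops surrounding a given target point one at a time. The conformal restriction axiom is exactly what is needed to see that, conditionally on the part of the ensemble already discovered --- a countable collection of \emph{entire} loops --- the undiscovered loops form a fresh CLE in the complementary domain; combining this with conformal invariance and Loewner's equation yields Schramm's conformal Markov property for the exploration path, so it is an $\mathrm{SLE}_\kappa(\kappa-6)$ process for some $\kappa$, and the loops it traces are \SLEk/-type loops. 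Simplicity of the loops forces $\kappa\le 4$, and non-degeneracy of the ensemble (loops actually appear) forces $\kappa>8/3$. Finally, since the whole ensemble can be reconstructed from a countable family of such exploration processes aimed at a dense set of target points, and the law of each exploration is determined by $\kappa$ alone, any two CLEs with the same $\kappa$ have the same law.

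For existence I would take the Brownian loop soup $\mathcal L$ in $D$ of intensity $c$, form the clusters of loops under the chain-connectedness relation, and let the candidate ensemble be the collection of outer boundaries of the \emph{outermost} clusters. Conformal invariance is inherited from that of the Brownian loop measure. The restriction axiom comes from the fact that the loops of $\mathcal L$ that stay inside a subdomain $U\subset D$ form a Brownian loop soup in $U$: applied to $U=D\setminus(\text{closure of a discovered outer loop})$, this shows the conditional law of the remaining outer loops is again the loop-soup ensemble in $U$. The genuinely new input is \emph{geometric}: for $c\le 1$ one must show the outer cluster boundaries are \emph{disjoint simple} loops, whereas for $c>1$ there is a single cluster filling the domain. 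This requires controlling the local structure of loop clusters --- that no point is surrounded by arbitrarily small loops of one cluster, and that distinct outermost clusters do not touch --- and it is here that the $c=1$ threshold enters sharply. Once this is in place, the loop-soup ensemble is a CLE, hence a \CLEk/ for some $\kappa=\kappa(c)$.

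It then remains to identify the correspondence $c\leftrightarrow\kappa$ and to check it is onto $(8/3,4]$. Here I would compute, on both sides, some quantity sensitive to $\kappa$ --- for instance an exponent for the probability that a small ball is surrounded by an outermost loop, or a direct comparison with the Gaussian free field at $c=1$, $\kappa=4$ --- and combine it with a monotonicity/continuity argument in $c$ to pin down $\kappa(c)$ as the expected function, with $\kappa(c)$ sweeping out all of $(8/3,4]$ as $c$ ranges over $(0,1]$. I expect the main obstacle to be the two hard analytic points flanking these softer arguments: upgrading the exploration process to a bona fide continuous curve, so that ``conditioning on loops'' genuinely reproduces Schramm's setup, on the uniqueness side; and proving simplicity and disjointness of loop-soup cluster boundaries for $c\le 1$ on the existence side.
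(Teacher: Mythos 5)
Your two-part plan matches the paper's architecture exactly, and your existence half (loop-soup clusters, restriction inherited from the loop-soup, the $c>1$ collapse to a single cluster, the separate identification of $c_0=1$ and of $\kappa(c)$) is an accurate outline of Sections \ref{S.2bis}--\ref{S.4}. The issue is in the uniqueness half, where you write that conditioning on ``a countable collection of entire loops'' gives a fresh CLE and that ``combining this with conformal invariance and Loewner's equation yields Schramm's conformal Markov property for the exploration path.'' That last step is precisely the gap the paper is built to fill, and it does not follow.

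Schramm's characterization of \SLE/ requires knowing the conditional law of the future path given the \emph{past of the path up to an arbitrary stopping time $\tau$}. But at a generic $\tau$ your exploration is in the middle of tracing a loop: what has been revealed is the boundary hull plus a \emph{partial} loop. The CLE restriction axiom says nothing about conditioning on a partially revealed loop --- it only governs conditioning on the set of entire loops that exit a fixed open $U$. This is exactly the sense, announced in the introduction, in which the CLE axiom ``appears at first glance to be weaker'' than Schramm's exploration property. The paper's entire first half is the machinery that bridges this gap: one first constructs the pinned loop measure $\mu$ as a vague limit of conditioned CLE loops (Section~\ref{S.5}), proves a restriction property for $\mu$ itself (Proposition~\ref{muA}), then builds a \emph{two-point} pinned measure and establishes a key independence statement (Proposition~\ref{Pkey}) between the ``lower'' and ``upper'' halves of a two-point pinned loop. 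Only after reparametrizing a one-point pinned loop via a carefully chosen circle-hitting time does one obtain a genuine simple curve $\xi$ satisfying Schramm's hypotheses (Corollary~\ref{recognize}). Along the way one also needs the a priori estimate $\beta<2$ on the scaling exponent to control the accumulation of small loops, which is what actually pins $\kappa>8/3$ --- ``non-degeneracy'' alone is not enough. Your reconstruction step (determining the CLE law from $\kappa$) and your $c\leftrightarrow\kappa$ identification are also softer than what the paper does (the latter uses the conformal restriction exponent of \cite{LSWr} directly rather than a monotonicity-plus-endpoint argument), but those are matters of route, not correctness; the unjustified jump from loop-level conditioning to Schramm's Markov property is a genuine gap.
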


In fact, these statements will be derived via two almost independent steps, that involve different techniques:

\begin {enumerate}
\item We first derive the first of these two statements together with the uniqueness part of the second one. This will involve a detailed analysis of the CLE property, and consequences about possible ways to ``explore'' a CLE. Here, SLE techniques will be important.
\item We derive the existence part of the second statement using clusters of Poisson point processes of Brownian loops (the Brownian loop-soups).
\end {enumerate}

In the end, we will have two remarkably different explicit constructions of these conformal loop ensembles CLE$_\kappa$ for each $\kappa$ in $(8/3,4]$ (one based on SLE, one based on loop-soups).
This is useful, since many properties that seem very mysterious from one perspective are easy from the other.  For example, the (expectation) fractal dimensions of the individual loops and of the set of points not surrounded by any loop can be explicitly computed with SLE tools \cite{SSW}, while many monotonicity results and FKG-type correlation inequalities are immediate from the loop-soup construction \cite{Wls}. One illustration of the interplay between these two approaches is already present in this paper: One can use SLE tools to determine exactly the value of the critical intensity that separates the two percolative phases of the Brownian loop-soup (and to our knowledge, this is the only self-similar percolation model where this critical value has been determined).

In order to try to explain the logical construction of the proof, let us outline these two parts separately in the following two subsections.

\subsection {Main statements and outline: The Markovian construction}

Let us now describe in more detail the results of the first part of the paper, corresponding to Sections \ref {S.2} through \ref {S.8bis}.
We are going to study random families $\Gamma = ( \gamma_j, j \in J)$ of non-nested simple disjoint loops in simply connected domains.
For each simply connected $D$, we let $P_D$ denote the law of this loop-ensemble in $D$. We say that this family is {\bf conformally invariant} if for any two simply connected domains $D$ and $D'$ (that are not equal to the entire plane) and conformal transformation $\psi: D \to D'$,  the image of $P_D$ under $\psi$ is $P_{D'}$.

We also make the following ``local finiteness'' assumption: if $D$ is equal to the unit disc $\U$, then for any $\eps >0$, there are $P_{\U}$ almost surely only finitely many loops of radius larger than
$\epsilon$ in $\Gamma$.

Consider two simply connected domains $D_1 \subset D_2$, and sample a family
$(\gamma_j, j \in J)$ according to the law $P_{D_2}$ in the larger domain $D_2$.
Then, we can subdivide the family $\Gamma= (\gamma_j, j \in J)$ into two parts: Those that do not stay in $D_1$, and those that stay in $D_1$ (we call the latter $(\gamma_j, j \in J_1)$). Let us now define $D_1^*$ to be the random set obtained when removing from the set $D_1$ all the loops of $\Gamma$ that do not fully stay in $D_1$, together with their interiors.
We say that the family $P_D$ {\bf satisfies restriction} if, for any such $D_1$ and $D_2$, the conditional law of $(\gamma_j, j \in J_1)$ given $D_1^*$ is
$P_{D_1^*}$ (or more precisely, it is the product of $P_D$ for each connected component $D$ of $D_1^*$).
When a family is conformally invariant and satisfies restriction, we say that it is a Conformal Loop Ensemble (CLE). The goal of the paper is
to characterize and construct all possible CLEs.

\begin {figure}[htbp]
\begin {center}
\includegraphics [width=1.8in]{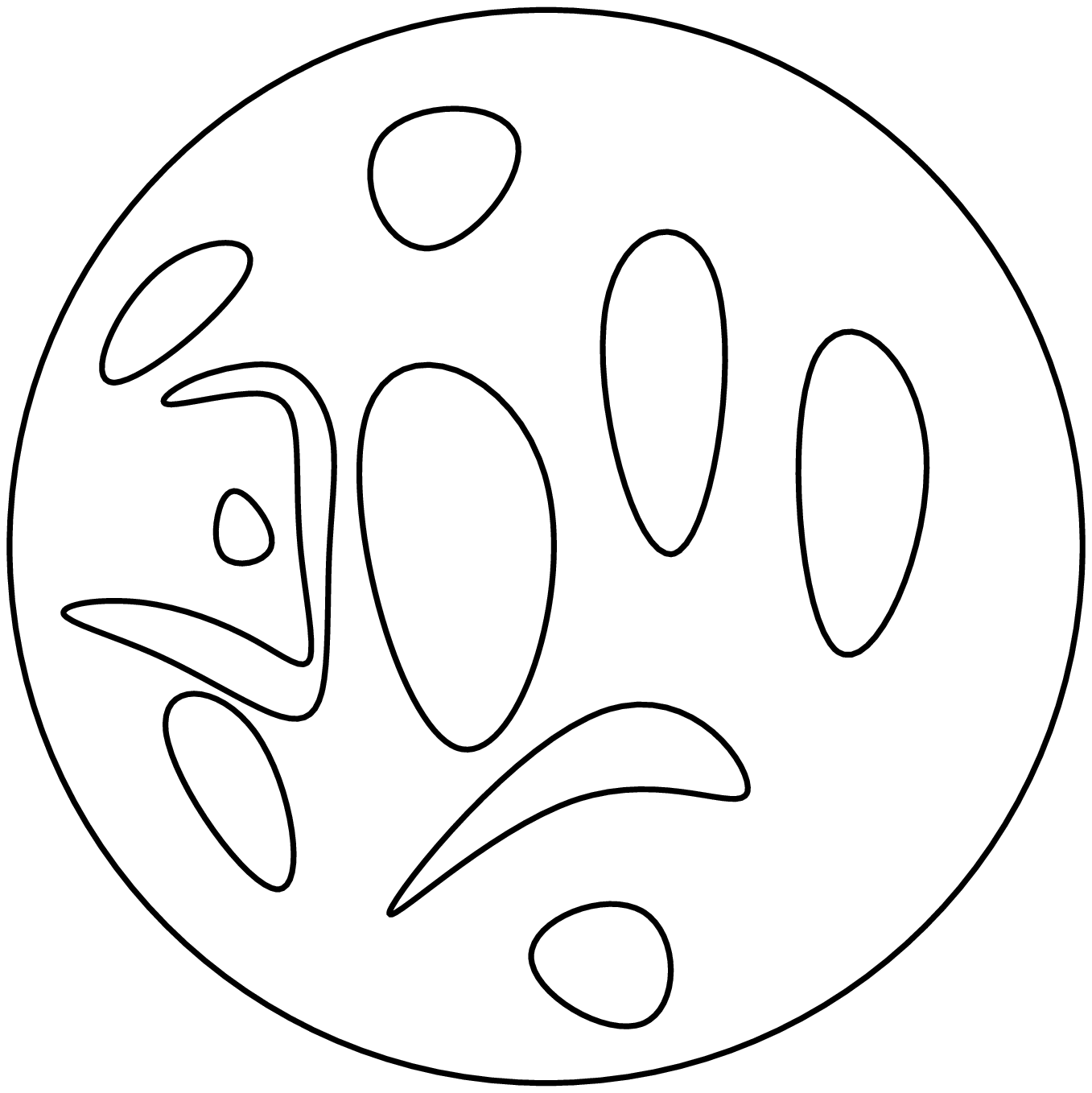}
\includegraphics [width=1.8in]{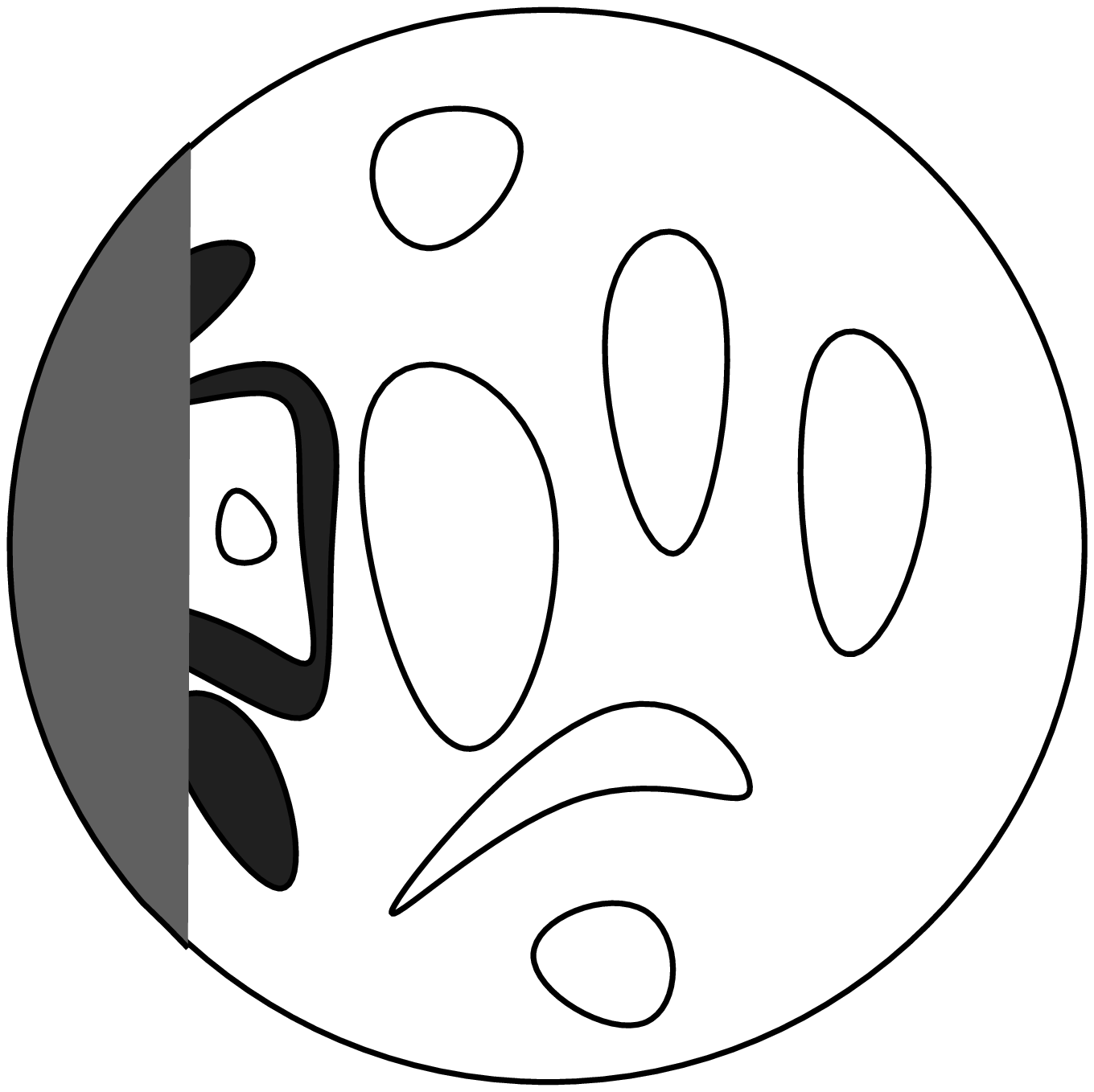}
\caption {Restriction property (sketch): given the set of loops intersecting $D_2 \setminus D_1$ (the grey wedge on the left of the right figure)
the conditional law of the remaining loops is an independent CLE in each component of the (interior of the) complement of this set.}
\end {center}
\end {figure}

By conformal invariance, it is sufficient to describe $P_D$ for one given simply connected domain. Let us for instance consider $D$ to be the upper half-plane $\HH$.
A first step in our analysis will be to prove that for all $z \in \HH$, if $\Gamma$ is a CLE, then the conditional law of the unique loop $\gamma(z) \in \Gamma$ that surrounds
$z$, conditionally on the fact that $\gamma (z)$ intersects the $\eps$-neighborhood of the origin, converges as $\eps \to 0$ to a probability measure $P^z$ on ``pinned loops'', i.e., loops in $\HH$ that touch the real line only at the origin. We will derive various properties of $P^z$, which will eventually enable us to relate it to SLE. One simple way to describe this relation is as follows:

\begin {theorem}
\label {t1}
If $\Gamma$ is a CLE, then the measure $P^z$ exists for all $z \in \HH$, and it is equal to the limit when $\eps \to 0$ of the law of an SLE$_\kappa$ from $\eps$ to $0$ in $\HH$ conditioned to disconnect $z$ from infinity in $\HH$, for some $\kappa \in (8/3, 4]$ (we call this limit the SLE$_\kappa$ {\bf bubble measure}).
\end {theorem}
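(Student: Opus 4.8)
\medbreak
\noindent{\bf Proof strategy.}
The plan is to reduce the statement to Schramm's conformal Markov characterization of SLE by converting the CLE restriction axiom---which only ever conditions on \emph{entire} loops---into a spatial Markov property for a continuum \emph{exploration} of the single loop $\gamma(z)$. I would organize this in three stages: first, show that the conditioned laws $\mu^z_\eps$ are tight and describe their limit points; second, attach to a limiting sample a Loewner chain tracing one boundary arc of the pinned loop out of the origin, and prove that this chain is carried by a continuous curve whose driving function has stationary independent increments; third, identify the driving function as $\sqrt\kappa$ times a Brownian motion, pin down $\kappa\in(8/3,4]$, and recognize the limit as the \SLEk/ bubble measure.

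For the first stage, fix $z\in\HH$ and let $\mu^z_\eps$ be the law of $\gamma(z)$ conditioned on $\{\gamma(z)\cap\overline{B(0,\eps)}\neq\emptyset\}$, an event of positive probability by local finiteness. Apply the restriction property with $D_2=\HH$ and $D_1=\HH\setminus(\overline{B(0,\eps)}\cap\overline\HH)$: the loops discarded in forming $D_1^*$ are exactly those meeting $\overline{B(0,\eps)}$, so the conditioning event is measurable with respect to the discarded loops, and on its complement $z\in D_1^*$ is surrounded there by a loop of a fresh \CLE/. Local finiteness then yields tightness of $(\mu^z_\eps)_{\eps>0}$ and prevents degeneration (the loop collapsing to $\{0\}$ or escaping to $\infty$); every limit point is supported on loops meeting $\R$ only at $0$. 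That all limit points coincide---so that $\mu^z_\eps$ converges to a single measure $P^z$---will follow a posteriori from the next two stages, which apply to any such limit point and force it to be the \SLEk/ bubble for the (subsequence-independent) index $\kappa$ attached to $\Gamma$.

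For the second and third stages, take a limiting sample $\gamma$ and explore one of its two boundary arcs starting from the origin, encoding the growing hull $K_t\subset\HH$ by half-plane capacity with centered uniformizing maps $g_t$ and driving function $W_t$. Applying restriction to $D_1=\HH\setminus K_t$ and passing to the $\eps\to0$ limit shows that, conditionally on $K_t$ and on the event that the arc has not yet closed up, the image under $g_t$ of the unexplored part of $\gamma$ is again a pinned loop of the same type, now based at $W_t$; by conformal invariance this is the \emph{same} object, the dependence on $z$ entering only through a locally absolutely continuous reweighting, namely the ``surrounds $z$'' conditioning, which is eventually decided by the exploration. Hence $(W_t)$ has stationary independent increments; a regularity argument---again using local finiteness and the simplicity of CLE loops to control the exploration near the tip---shows that $K_t$ is generated by a continuous curve and that $W_t$ is continuous, so $W_t=\sqrt\kappa\,B_t+at$ for a Brownian motion $B$, and the scale invariance of the construction (half-plane capacity scales quadratically, the driving term linearly) forces $a=0$. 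Thus the arc is \SLEk/, and running it from $\eps$ to $0$ conditioned to separate $z$ from $\infty$ reproduces $\mu^z_\eps$ exactly, so $P^z$ is the \SLEk/ bubble measure. Finally $\kappa\le4$ because the CLE loops, hence the bubble, are simple, while $\kappa\le8/3$ is excluded because for those values chordal \SLEk/ already obeys the restriction property exactly and carries no nontrivial bubble measure, which is incompatible with the pinned-loop limit that the CLE axiom forces on the loop around $z$.

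The main obstacle is the second stage: the CLE axiom only licenses conditioning on \emph{complete} loops, whereas the exploration conditions on a partially traced arc at a continuum of stopping times, so one must show that the loop-level restriction property genuinely propagates to the exploration path, and moreover that the resulting hull family is carried by an honest continuous curve with a continuous driving function. This is the substance of the argument and where essentially all the work lies (Sections~\ref{S.2}--\ref{S.8bis}); by comparison, the tightness and nondegeneracy in the first stage, and the absence of drift and the precise range of $\kappa$ in the third, are comparatively routine once the second stage is in hand.
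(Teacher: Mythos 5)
Your plan correctly identifies the target---reduce to Schramm's conformal Markov characterization of SLE---and correctly names the central difficulty: the CLE axiom only permits conditioning on entire loops, whereas the exploration conditions on a partially-traced arc at a continuum of times. But naming the obstacle is not overcoming it, and the device you propose for overcoming it does not work as stated, while the device the paper actually uses is quite different from what you anticipate.

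Your stage~2 suggests ``applying restriction to $D_1=\HH\setminus K_t$'' where $K_t$ is a partial arc of $\gamma(z)$. This does not give what you want. In the restriction axiom, the discarded loops are exactly those \emph{not contained} in $D_1$; since $\gamma(z)$ meets $K_t$, the loop $\gamma(z)$ is itself discarded. Restriction then describes the conditional law of the \emph{remaining} (fully contained) loops given $D_1^*$---it tells you nothing directly about the conditional law of $\gamma(z)$'s unexplored part. There is no clean $\eps\to 0$ limit of this form that yields ``the unexplored part of $\gamma$, uniformized by $g_t$, is again a pinned loop based at $W_t$.'' What the paper does instead is considerably more indirect: it first builds the one-point pinned measure $\mu$ (an infinite measure on loops touching $\R$ at $0$), upgrades it to a measure on pinned \emph{configurations} and proves a restriction property \emph{for} $\mu$ (Propositions~\ref{muA},~\ref{P72}), then constructs a \emph{two-point} pinned probability measure by conditioning the pinned loop to reach a second boundary point along a grid path (Proposition~\ref{P81}), and only then proves the key independence (Proposition~\ref{Pkey}): if the two-point pinned loop $\tilde\gamma$ is split into a lower arc $\gamma_*$ and an upper arc $\gamma^*$, then $\psi_*(\gamma^*)$ is independent of $\gamma_*$. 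It is this independence, applied along the exploration via the grid-approximation lemma preceding Corollary~\ref{recognize}, that delivers the stationary independent-increments property of the driving function---not a direct application of loop-level restriction to $\HH\setminus K_t$. Your sketch does not anticipate the two-point pinned construction, and without it the argument has no engine.

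Two further gaps. First, tightness and nondegeneracy of $\mu^z_\eps$ are not as automatic as you indicate: the paper establishes Proposition~\ref{p44} (existence of the limit $\hat y^{-1}\hat\Phi(\gamma_0)$) through a nontrivial chain of deterministic exploration lemmas in Section~\ref{explo}, not just from local finiteness and a single restriction step. Second, your argument for $\kappa>8/3$ (``for those values chordal \SLEk/ already obeys restriction and carries no nontrivial bubble measure'') is a heuristic, not a proof. The paper's route is to prove $\beta<2$ (equivalently $\kappa>8/3$ once $\kappa=8/(1+\beta)$) by a Loewner-chain stability argument comparing the accumulated half-plane capacity of the small discovered loops with a Poisson point process whose intensity scales like $\mu$; this is a genuine quantitative estimate, not a soft restriction-property observation.
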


This shows that all the loops of a CLE are indeed in some sense ``SLE$_\kappa$ loops'' for some $\kappa$. In fact, the way in which  $P^z$ will be described (and in which this theorem will actually be proved) can be understood as follows (this will be the content of Proposition \ref {p44}) in the case where $z=i$: Consider $A$ the lowest point on $[0, i ] \cap \gamma (i)$, and $H$ the unbounded connected component of the domain obtained by removing from $\HH \setminus [0,A]$ all the loops of the CLE that intersect $[0,A)$. Consider the conformal map $\Phi$ from $H$ onto $\HH$ with $\Phi(i)=i$ and $\Phi(A)=0$. Then, the law of $\Phi (\gamma(i))$ is exactly $P^i$.
\begin {figure}[htbp]
\begin {center}
\includegraphics [width=5in]{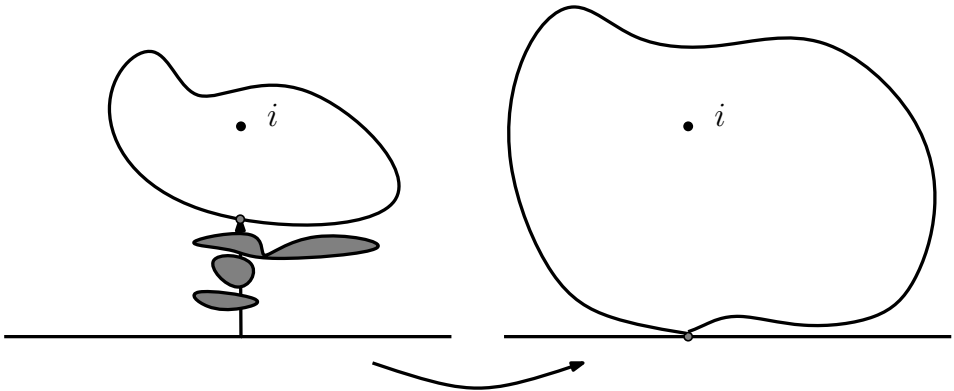}
\caption {Description of $P^i$ (sketch).}
\end {center}
\end {figure}

Theorem \ref {t1} raises the question of whether two different CLE distributions can correspond to the same measure $P^z$. We will prove that it is not possible, i.e., we will describe a way to reconstruct the law of the CLE out of the knowledge of $P^z$ only, using a construction based on a Poisson point process of pinned loops:

\begin {theorem}
\label {t2}
 For each $\kappa \in (8/3, 4]$, there exists at most one CLE such that $P^z$ is the SLE$_\kappa$ bubble measure.
\end {theorem}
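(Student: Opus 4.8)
The plan is to prove that the law of a CLE $\Gamma$ in $\HH$ is a deterministic functional of the bubble measure $P^z$; the theorem then follows, since fixing $\kappa\in(8/3,4]$ fixes $P^z$ (by Theorem~\ref{t1} it is the $\mathrm{SLE}_\kappa$ bubble measure, and conformal invariance makes the knowledge of $P^z$ at one interior point equivalent to the knowledge of $P^z$ at every interior point). By conformal invariance it suffices to reconstruct $P_{\HH}$. Fix the marked point $z=i$ and run the exploration of $\Gamma$ along the segment $[0,i]$ used in Proposition~\ref{p44}: for $t\in[0,1]$ let $\Gamma_t$ be the set of loops of $\Gamma$ meeting $[0,it)$, let $H_t$ be the unbounded connected component of $\HH$ minus $[0,it]$ and minus the loops of $\Gamma_t$ together with their interiors (so $i\in H_t$ up to the stopping height, as in Proposition~\ref{p44}), and let $\Phi_t\colon H_t\to\HH$ be the conformal map with $\Phi_t(i)=i$ and $\Phi_t(it)=0$. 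Let $it_0$ be the lowest point of $[0,i]\cap\gamma(i)$; this is a.s.\ a well-defined point with $t_0\in(0,1]$, because the loop $\gamma(i)$ surrounding $i$ exists a.s., the segment $[0,i]$ must cross it, and a fixed boundary point a.s.\ does not lie on $\gamma(i)$. I claim that the joint law of $\gamma(i)$ together with the family $\Gamma_{t_0}$ of loops met by $[0,it_0)$ is a function of $P^i$.

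The key point is that $t\mapsto(\Phi_t,\Gamma_t)$ is a \emph{conformal Markov process}. Indeed, $H_t$ and the loops of $\Gamma_t$ are measurable functions of $\Gamma_t$, and the restriction axiom---applied with outer domain $\HH$ and inner domain $H_t$, extended to these random domains by a standard approximation by deterministic ones---says that, conditionally on $\Gamma_t$, the loops of $\Gamma$ lying in $H_t$ form an independent copy of $P_{H_t}$. Pushing forward by $\Phi_t$, these become the loops of a fresh copy of $P_{\HH}$, the marked point $i$ is unchanged, and the part of the segment still to be scanned is the $\Phi_t$-image of $[it,i)$, a curve from $0$ to $i$ that is itself determined by $\Phi_t$. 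Hence the increments of the exploration---which loops it absorbs, and the instant and shape of the loop at which it stops---are governed entirely by the behaviour of a CLE in $\HH$ near the boundary point $0$, and this is precisely what $P^i$ records. Concretely: between absorptions the domain grows by a deterministic slit, so randomness enters only through the jumps; the absorbed loops form a Poisson point process whose intensity is the infinite pinned-loop (``bubble'') measure $\mu$ attached to $P^i$ by the $\eps\to0$ renormalization that turns the conditioning ``$\gamma(i)$ comes within $\eps$ of $0$'' into a $\sigma$-finite measure (together with conformal invariance, which recovers $\mu$ on the loops surrounding each interior point, that is, on all loops, up to an irrelevant overall scaling); the exploration stops at the first atom of this process whose loop surrounds $i$; and that loop, mapped back, has law exactly $P^i$, in agreement with Proposition~\ref{p44}. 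Since every ingredient is a function of $P^i$, so is the law of $\gamma(i)$ together with $\Gamma_{t_0}$.

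To pass from this to the law of the whole ensemble, iterate. Conditionally on $\gamma(i)$ and $\Gamma_{t_0}$, the restriction axiom says that in each connected component $D$ of the complement of the explored set---the slit $[0,it_0]$, the absorbed loops with their interiors, and $\gamma(i)$ with its interior---the remaining loops of $\Gamma$ form an independent copy of $P_D$, which by conformal invariance is again the universal CLE in disguise; choosing a marked point in each such $D$ and running the same exploration there is, by the previous paragraph, again governed by $P^i$. By local finiteness, every loop of radius at least $\eps$ is revealed after finitely many iterations---an exploration around any point interior to such a loop reveals it, as loops are non-nested, and only finitely many macroscopic loops lie in a bounded region---so, letting $\eps\to0$ and exploring around a fixed dense sequence of points in each component that is ever produced, the recursion a.s.\ reveals every loop. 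Therefore the law of $\Gamma$ equals the deterministic law of the output of this recursion fed with $P^i$, and two CLEs with the same bubble measure have the same law.

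The conceptual scheme is short; the real work, and the main obstacle, is the identification of the exploration's infinitesimal dynamics with the Poisson point process of $P^i$-bubbles. This requires, first, showing that the renormalized conditional laws converge to a genuine $\sigma$-finite measure $\mu$ and controlling its mass near ``large'' loops, so that only finitely many such loops are absorbed before stopping---here local finiteness is essential, since a priori the exploration could absorb infinitely many tiny loops within a short span of $[0,i]$---and checking that $\mu$ is indeed recoverable from $P^z$ (via the loops it assigns to each interior point, as indicated above). Second, it requires a good time parametrization---for instance $-\log$ of the conformal radius of the remaining domain seen from $i$, rather than the raw height along $[0,i]$---in which the process is genuinely Markov with stationary additive dynamics, together with a proof that the Loewner-type evolution of $\Phi_t$ between jumps is well defined and continuous, despite the fact that in the running coordinates the curve still to be scanned is no longer straight and small loops may accumulate. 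Third, one must check that no two macroscopic loops are absorbed simultaneously and that the exploration genuinely exhausts $[0,i]$ and reaches $\gamma(i)$, so that the stopping description, combined with Proposition~\ref{p44}, pins down the final loop. Once these continuity and measurability points are settled, uniqueness follows as above.
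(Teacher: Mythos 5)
Your overall strategy matches the paper's: you reconstruct the law of the CLE from the pinned (bubble) measure by running an exploration that discovers $\gamma(i)$ together with the loops in between, identifying the discovered loops with a Poisson point process of intensity $\mu$, and then iterating in each remaining component. The paper does the same thing in Section~\ref{S.8}, where Lemma~\ref{l.mugivesgammailaw} recovers the law of $\gamma(i)$ via a Poisson point process of $\mu$-bubbles and Section~\ref{s.severalpoints} extends this to finite-dimensional marginals by a ``split-when-points-get-close'' recursion, which is the same spirit as your component-by-component iteration.

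The one place where you genuinely deviate --- and where your sketch has a real gap --- is the choice of exploration. You use the deterministic exploration along the segment $[0,i]$, with the two-point normalization $\Phi_t(i)=i$, $\Phi_t(it)=0$. The paper, by contrast, uses the \emph{discrete radial exploration} of Section~\ref{S43}: cut the semi-circle $C_\eps$ around the origin in the running coordinates, remove the loops it meets, map the component of $i$ back to $\HH$ with the normalization $\varphi_n^\eps(i)=i$, $(\varphi_n^\eps)'(i)>0$, and repeat. That choice is crucial: it makes the steps genuinely i.i.d.\ (the cut is always $C_\eps$ about the current origin, and the renormalized CLE is fresh), so that as $\eps\to0$ the discovered pinned loops converge to a PPP with intensity $\mu$, and the accumulated conformal maps converge thanks to the bound $\beta<2$. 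Your segment exploration does \emph{not} have stationary increments: at time $t$ the next cut is determined by the shape of $\Phi_t([it,i])$, which depends on the past; equivalently, in the discrete $\eps$-admissible chain of Section~\ref{explo} the points $y_n$ are history-dependent, so the steps are not i.i.d.\ in the frame in which you compose them. The asserted ``stationary additive dynamics'' in the $-\log$-conformal-radius clock does not fix this; the obstruction is precisely the cumulative rotational drift $\prod_n y_n$, and controlling it requires a separate argument (which the paper does carry out, but in Section~\ref{S.8bis} and for a different purpose, using the symmetry of the coin-tossed signs and Doob's inequality). So as written, ``the absorbed loops form a Poisson point process of intensity $\mu$'' is unjustified for your exploration. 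The fix is either to switch to the radial exploration as the paper does, or to carry the rotation process along explicitly and bound its cumulative effect using $\beta<2$ --- a nontrivial extra piece that you have correctly flagged as ``real work'' but not resolved.

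Two smaller remarks. First, your reconstruction uses not just $P^z$ but the full $\sigma$-finite measure $\mu$, including its overall normalization; since $\mu^z$ is $P^z$ only after normalizing, the scaling exponent $\beta$ has to be recovered as well. Here Theorem~\ref{t1} already supplies $\kappa$ and hence $\beta=8/\kappa-1$, so this is not a gap, but you should make the dependence on the normalization explicit. Second, the step where you condition on $\gamma(i)$ \emph{and} $\Gamma_{t_0}$ and invoke the restriction axiom in the remaining components is correct, but it deserves a sentence of justification: $\gamma(i)$ is one of the loops not contained in $U=\HH\setminus[0,it_0]$ (it touches $it_0$), so it is part of the conditioning data that the restriction property allows, together with the other loops of $\Gamma_{t_0}$.
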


In a way, this reconstruction procedure can be interpreted as an ``excursion theory'' for CLEs.
It will be very closely related to the decomposition of a Bessel process via its Poisson point process of excursions. In fact, this
 will enable us to relate our CLEs to the random loop ensembles defined in \cite {Sh}
using branching SLE processes, which we now briefly describe. Recall that when $\kappa \le 4$, $SLE_\kappa$ is a random simple curve from one marked boundary point $a$ of a simply connected domain $D$ to another boundary point $b$. If we now compare the law of an SLE from $a$ to $b$ in $D$ with that of an SLE from $a$ to $b'$ in $D$ when $b \not= b'$, then they clearly differ, and it is also immediate to check
that the laws of their initial parts (i.e., the laws of the paths up to the first time they exit some fixed small neighborhood of $a$) are also not identical. We say that SLE$_\kappa$ is not target-independent.
However, a variant of SLE($\kappa$) called SLE($\kappa$, $\kappa- 6$) has been shown by Schramm and Wilson \cite{SchW} (see also \cite {Sh}) to be target-independent.
This makes it possible to couple such processes starting at $a$ and aiming at two different points $b$ and $b'$ in such a way that they coincide until the first disconnection point. This in turn makes it possible to canonically define a conformally invariant ``exploration tree''
 of SLE ($\kappa$, $\kappa-6$) processes rooted at $a$, and a collection of loops
 called Conformal Loop Ensembles in \cite {Sh}.  It is conjectured in \cite {Sh} that this one-parameter  collection of loops indeed corresponds to the scaling limit of a wide class of discrete lattice-based models, and that for each $\kappa$, the law of the constructed family of loops is independent of the starting point $a$.

The  branching SLE ($\kappa, \kappa -6 $) procedure works for any $\kappa \in (8/3,8]$, but the obtained loops are
 simple and disjoint loops only when $\kappa \le 4$. In this paper, we use the term CLE to refer to any collection of loops satisfying conformal invariance and restriction, while using the term CLE$_\kappa$ to refer to the random collections of loops constructed in \cite{Sh}.  We shall prove the following:
\begin {theorem}
\label {t3}
Every CLE is in fact a CLE$_\kappa$ for some $\kappa \in (8/3,4]$.
\end {theorem}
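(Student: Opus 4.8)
The plan is to reduce Theorem \ref{t3} to Theorems \ref{t1} and \ref{t2} together with a direct examination of the branching-SLE construction of \cite{Sh}. Let $\Gamma$ be an arbitrary CLE. By Theorem \ref{t1}, there is some $\kappa\in(8/3,4]$ for which the pinned-loop measure $P^z$ attached to $\Gamma$ is the SLE$_\kappa$ bubble measure, and by Theorem \ref{t2} there is at most one CLE with this property. Hence it is enough to prove that CLE$_\kappa$ (the loop ensemble built in \cite{Sh} from the branching SLE$(\kappa,\kappa-6)$ exploration tree) is itself a CLE and that its pinned-loop measure is the SLE$_\kappa$ bubble measure; the uniqueness statement of Theorem \ref{t2} then forces $\Gamma=\mathrm{CLE}_\kappa$. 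In fact, the reconstruction used in the proof of Theorem \ref{t2}---a Poisson point process of pinned loops, modelled on the excursion decomposition of a Bessel process---is precisely the mechanism through which CLE$_\kappa$ will be recognized, so the bulk of what remains is bookkeeping rather than new ideas.

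The first point to check is that CLE$_\kappa$ satisfies the conformal restriction axioms. Conformal invariance is part of the construction, and for $\kappa\le 4$ the outermost branching-SLE loops are simple, disjoint, and locally finite, so the substantive issue is the restriction property. Here the crucial input is the target-independence of SLE$(\kappa,\kappa-6)$ proved in \cite{SchW}: the branch of the exploration tree aimed at a target point depends on that target only through its location, which lets one couple the exploration of $D_2$ with the exploration of the smaller domain. Concretely, one runs the branching exploration in $D_2$ until every loop meeting $D_2\setminus D_1$ has been traced and removed (this is where local finiteness is used to make the procedure terminate in a controlled way), and target-independence guarantees that, conditionally on the resulting random domain $D_1^*$, the continuation of the exploration inside each connected component of $D_1^*$ is distributed as a fresh, independent branching exploration of that component. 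This is exactly the restriction property.

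The second point is to identify $P^z$ for CLE$_\kappa$ with the SLE$_\kappa$ bubble measure, and this is where the description of $P^i$ in Proposition \ref{p44} and the Bessel-excursion picture behind Theorem \ref{t2} are invoked. Following the branch of the exploration tree targeting a fixed point $z$, the successive ``bubbles'' that the tip of the SLE$(\kappa,\kappa-6)$ trajectory detaches from the component containing $z$ are encoded by the excursions away from $0$ of the associated Bessel-type driving process; the last such bubble before $z$ is disconnected, once the relevant component is mapped back to $\HH$ as in Proposition \ref{p44}, is an SLE$_\kappa$ bubble. Thus the loop of CLE$_\kappa$ surrounding $z$ has the conditional law prescribed by $P^z$, and the identification is complete. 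Combining the two points with Theorem \ref{t2} yields $\Gamma=\mathrm{CLE}_\kappa$.

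I expect the main obstacle to be establishing the restriction property for CLE$_\kappa$: one must handle the (random, a priori possibly infinite) family of times at which the branch targeting a point strays into $D_2\setminus D_1$, show that the collection of loops discovered-and-removed in this process is measurable with respect to the appropriate $\sigma$-field, and verify that what remains is genuinely a \emph{product} of independent branching explorations over the components of $D_1^*$. The delicacy is that the restriction axiom conditions on entire loops rather than on the path up to a stopping time, so one cannot directly quote the strong Markov property of SLE$(\kappa,\kappa-6)$; the local finiteness input and a careful limiting argument along explorations that stop earlier and earlier are needed to bridge this gap. The identification of the bubble measure, by contrast, should follow fairly routinely once the Bessel-excursion decomposition developed for Theorem \ref{t2} is in hand.
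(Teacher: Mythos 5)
Your plan inverts the paper's logic, and the inversion introduces a gap that the authors explicitly flag as non-trivial. You propose to show (i) that the branching-SLE($\kappa,\kappa-6$) ensemble CLE$_\kappa$ of \cite{Sh} satisfies the conformal restriction axioms, and (ii) that its pinned measure is the SLE$_\kappa$ bubble measure, and then invoke the uniqueness of Theorem~\ref{t2} to conclude $\Gamma = \mathrm{CLE}_\kappa$. The paper deliberately does \emph{not} do (i) here. Just after the statement of Theorem~\ref{t3} the authors write: ``we have not yet proved at this point that the CLE$_\kappa$ are themselves CLEs (and this was also not established in \cite{Sh}) --- nor that the law of CLE$_\kappa$ is root-independent.'' Both facts are only obtained in Part Two via the loop-soup construction. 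Your proposal treats them as ``bookkeeping,'' and your sketch of the restriction argument from target-independence does not close this hole: after you condition on the loops of CLE$_\kappa$ meeting $D_2\setminus D_1$, the remaining loops in each component of $D_1^*$ would be a branching exploration rooted at a particular (random) boundary point, and concluding they form ``a fresh CLE$_\kappa$ in that component'' requires precisely the root-independence that is not yet available. There is also a measurability/ordering issue: the exploration tree does not discover only the loops leaving $D_1$ before discovering anything else, so ``run until every loop meeting $D_2\setminus D_1$ has been traced'' is not a stopping operation on the tree, and you cannot simply quote the Markov property. Likewise, without first knowing that CLE$_\kappa$ is a CLE, the ``pinned measure of CLE$_\kappa$'' is not a defined object in the sense of this paper, so (ii) as stated is not something Theorem~\ref{t2} can be fed.

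What the paper actually does is the reverse reduction, which sidesteps both problems. One starts from the given CLE $\Gamma$ (which is a CLE by hypothesis), identifies its pinned measure with the SLE$_\kappa$ excursion measure (Theorem~\ref{t1}, Proposition~\ref{muisSLEexcursion}), and then shows that the \emph{finite-dimensional marginals} $(\gamma(z_1),\dots,\gamma(z_m))$ of $\Gamma$ coincide with those of the \cite{Sh} ensemble. This is done by proving that the discrete radial $\epsilon$-exploration of $\Gamma$ (modified with the fair-coin ``rotations'' so as to match the normalization of the Bessel flow) converges to the radial SLE($\kappa,\kappa-6$) exploration, and then by the induction of Lemma~\ref{l.SLECLEmix}: run the SLE($\kappa,\kappa-6$) targeting $z_1$ until the first separation time $\tau$ of the $m$ targets, and use the CLE restriction property (of $\Gamma$, which is available!) in each remaining component, which contains strictly fewer than $m$ targets. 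The point is that the restriction axiom is only ever applied to $\Gamma$, never to CLE$_\kappa$; the branching SLE enters only as the limit law of an exploration of $\Gamma$. You would need to either adopt this route or supply a self-contained proof that the branching-SLE ensemble is a CLE, which is substantially harder than your sketch suggests and is in fact only obtained as a byproduct of Part Two.
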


Let us stress that we have not yet proved at this point that the CLE$_\kappa$ are themselves CLEs (and this was also not established in \cite {Sh}) -- nor that
the law of CLE$_\kappa$ is root-independent. In fact, it is not proved at this point that CLEs exist at all. All of this will follow from the second part.

\subsection {Main statements and outline: The loop-soup construction}

We now describe the content of Sections \ref {S.2bis} to \ref {S.4}.
The {Brownian loop-soup}, as defined in \cite{LW}, is a Poissonian random countable collection
of Brownian loops contained within a fixed simply-connected domain $D$.  We will actually only need to
consider the outer boundaries of the Brownian loops, so we will take the perspective that a loop-soup is
a random countable collection of simple loops (outer boudaries of Brownian loops
 can be defined as SLE$_{8/3}$ loops, see \cite {W}).
Let us stress that our {conformal loop ensembles}
are also random collections of simple loops, but that, unlike the loops of the Brownian loop-soup, the
loops in a CLE are almost surely all disjoint from one another.

The loops of the {\em Brownian loop-soup}  $\Gammabis = (\gammabis_j, j \in J)$ in the unit disk $\U$
 are the points of a Poisson point process with intensity
$c \mu$, where $c$ is an {\em intensity} constant, and $\mu$ is the {\em Brownian loop measure in $\U$}.
The Brownian loop-soup measure $\mathbb P = \mathbb P_c$
is the law of this random collection $\Gammabis$.

When $A$ and $A'$ are two closed bounded subsets of a bounded domain $D$, we denote by
$L(A,A'; D)$ the $\mu$-mass of the set of loops that intersect both sets $A$ and $A'$, and stay in $D$.
When the distance between $A$ and $A'$ is positive, this mass is finite \cite{LW}.
Similarly, for each fixed positive $\epsilon$, the set of loops that stay in the bounded domain $D$ and have diameter
larger than $\epsilon$, has finite mass for $\mu$.

The conformal restriction property of the Brownian loop measure $\mu$ (which in fact characterizes the measure up to a multiplicative constant; see \cite{W}) implies the following two facts (which are essentially the only features of the Brownian loop-soup that we shall use in the present paper):
\begin{enumerate}
\item Conformal invariance: The measure $\mathbb P_c$ is invariant under any Moebius transformation of the unit disc onto itself. This invariance makes it in fact possible to define the law
$\mathbb P_D$ of the loop-soup in any simply connected domain $D \not= \C$ as the law of the image of $\Gammabis$ under any given conformal map $\Phi$ from $\U$ onto $D$ (because the law of this image does not depend on the actual choice of $\Phi$).
\item Restriction: If one restricts a loop-soup in $\U$ to those loops that stay in
a simply connected domain $U \subset \U$, one gets a sample of $\mathbb P_U$.
\end{enumerate}
We will work with the usual definition (i.e., the usual normalization) of the measure $\mu$ (as in \cite {LW} --- note that there can be some confusion about a factor $2$ in the definition, related to whether one keeps track of the orientation of the Brownian loops or not). Since we will be talking about some explicit values of $c$ later, it is important to specify this normalization. For a direct definition of the measure $\mu$ in terms of Brownian loops, see \cite {LW}.

\vskip 5cm

\begin {figure}[htbp]
\hskip 4cm
\includegraphics [width=1.6in]{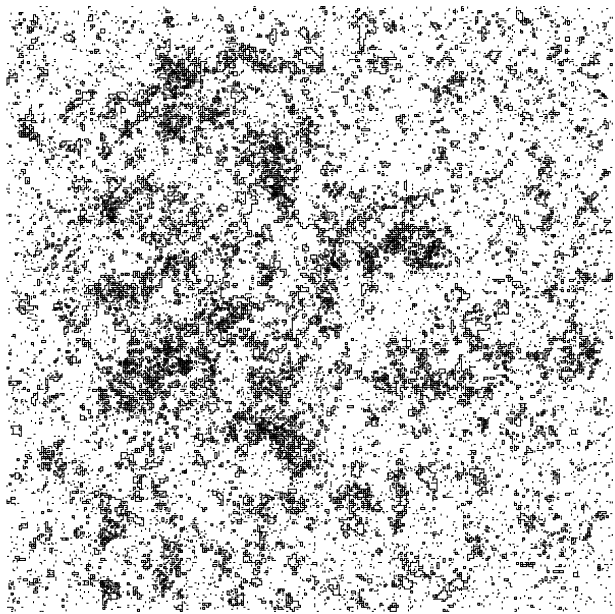}
\caption {Sample of a random-walk loop-soup approximation \cite {LTF} of a Brownian loop-soup in a square, by Serban Nacu}
\end {figure}

As mentioned above, \cite{Wls} pointed out a  way to relate Brownian loop-soups clusters to SLE-type loops:
Two loops in $\Gammabis$ are said to be adjacent if they intersect.  Denote
by $\mathcal C(\Gammabis)$ the set of clusters of loops under this relation.  For each element
$C \in \mathcal C(\Gammabis)$ write $\overline C$ for the closure of the union of all the loops in $C$ and denote
by $\overline \Gamm$ the family of all $\overline C$'s.

\begin {figure}[htbp]
\begin {center}
\includegraphics [width=2in]{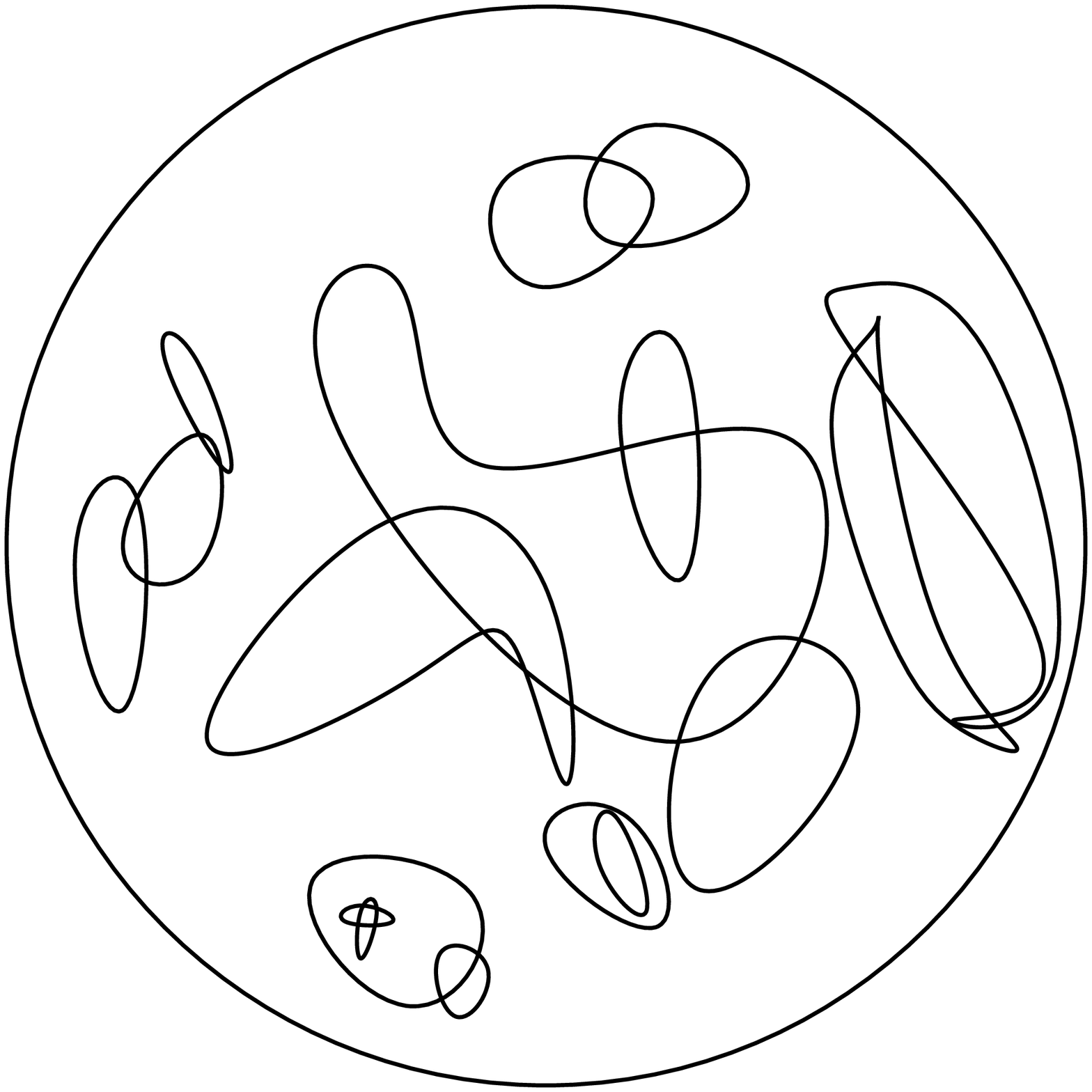}
\includegraphics [width=2in]{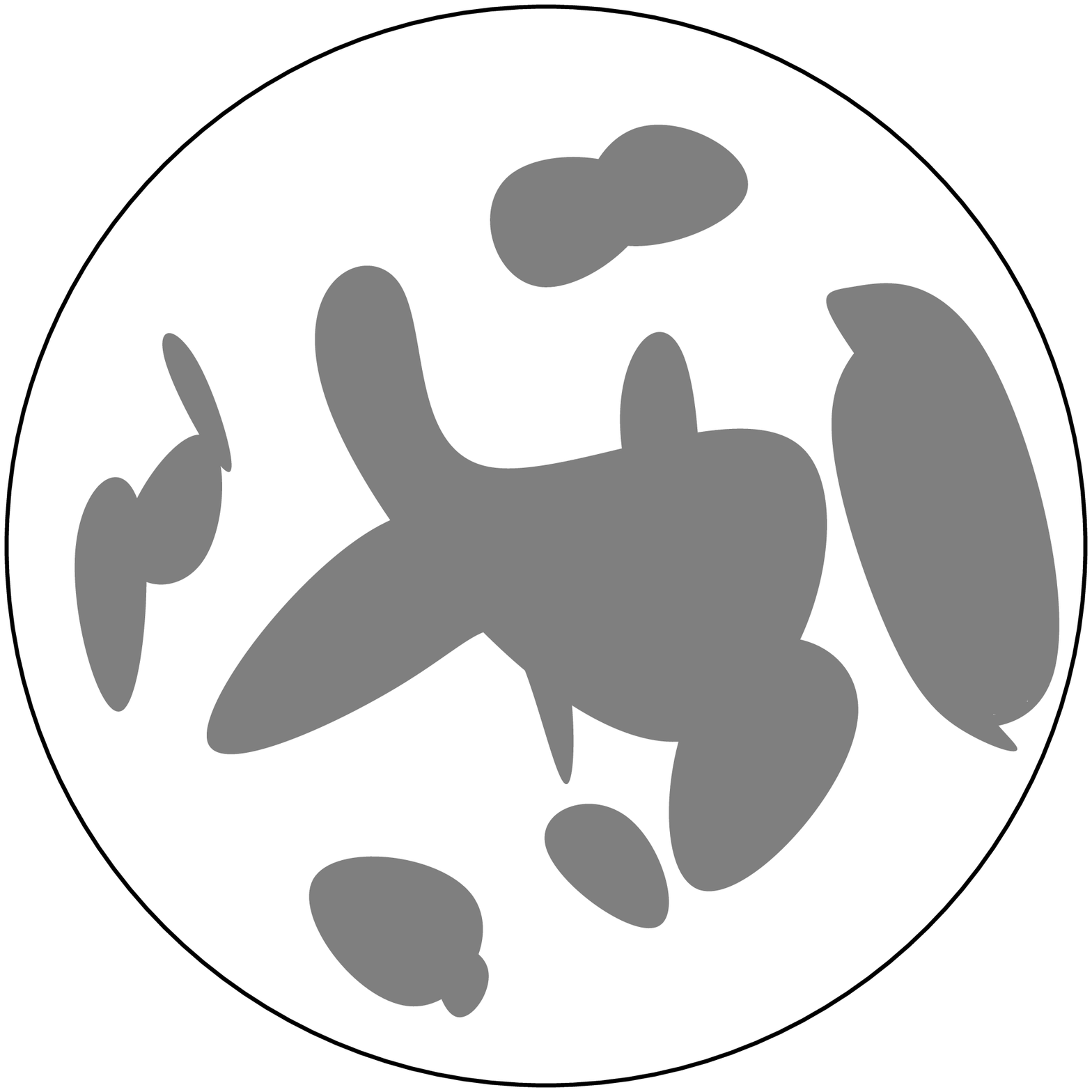}
\caption {A loup-soup and the fillings of its outermost clusters (sketch)}
\end {center}
\end {figure}

We write  $F(C)$ for the {\em filling} of $C$, i.e., for
the complement of the unbounded connected component of $\C \setminus \overline C$.
A cluster $C$ is called {\em outermost} if there exists no $C'$ such that $C \subset F(C')$.
The {\em outer boundary} of such an outermost cluster $C$ is the boundary of $F(C)$.
Denote by $\Gamma$ the set of outer boundaries of outermost clusters of $\Gammabis$.

\medbreak

Let us now state the main results of this second step:

\begin{theorem} \label{loopsoupsatisfiesaxioms}
Suppose that  $\Gammabis$ is the Brownian loop-soup with intensity $c$ in $\U$.
\begin {itemize}
 \item
If $c \in (0,1]$,
then $\Gamma$ is a random countable collection of disjoint simple loops that satisfies the conformal restriction axioms.
\item
If $c>1$, then there is almost surely only one cluster in $\mathcal C(\Gammabis)$.
\end {itemize}
\end{theorem}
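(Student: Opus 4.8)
The statement separates into a subcritical regime $c\in(0,1]$ and a supercritical regime $c>1$, handled by different arguments. In the subcritical regime one must verify, for the family $\Gamma$ of outer boundaries of outermost clusters, each ingredient of the conformal restriction axioms. Conformal invariance is essentially formal: the passage $\Gammabis\mapsto\Gamma$ uses only the intersection graph of the loops, the notion of filling, and the relation ``is outermost'', all of which commute with conformal maps, so it follows from the conformal invariance of the Brownian loop-soup. The step I expect to be the main obstacle of this regime is that the boundaries $\partial F(C)$ of the outermost clusters are a.s.\ \emph{simple} loops that are pairwise \emph{disjoint} (distinct outermost clusters do not ``kiss''): a filling is a priori only an arbitrary simply connected domain, and this is exactly where the subcriticality $c\le1$ must enter. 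The plan is to explore the outermost cluster surrounding a fixed point, map out the explored region, and identify the conditional law of its boundary with an \SLEk/-type process — the boundaries of the individual Brownian loops already being \SLEkk{8/3}/-loops — where $\kappa$ is tied to $c$ by the central-charge relation $c=\frac{(3\kappa-8)(6-\kappa)}{2\kappa}$, which restricts to a bijection from $(8/3,4]$ onto $(0,1]$. Since $c\le1$ forces $\kappa\le4$ and \SLEk/ with $\kappa\le4$ is a simple curve that a.s.\ neither touches itself nor an independent nearby curve, simplicity and disjointness follow. Local finiteness then follows from a first-moment bound on the number of outermost clusters of diameter at least $\eps$, using the scaling of the Brownian loop measure and the disjointness of the fillings.

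For the restriction axiom one exploits the Poissonian structure of the loop-soup. Given $\Gammabis$ under $\mathbb P_{D_2}$ and $D_1\subset D_2$, the loops that stay in $D_1$ and the loops that do not are restrictions of a Poisson point process to disjoint measurable sets of loops, hence \emph{independent}, and by the restriction property of $\mu$ the former is a $\mathbb P_{D_1}$-loop-soup. A cluster all of whose loops stay in $D_1$ itself stays in $D_1$, so every outermost cluster not staying in $D_1$ contains a loop that does not; deleting these clusters and their (disjoint) fillings from $D_1$ produces $D_1^{*}$. The loops of the soup lying in $D_1^{*}$ are exactly those not swallowed by a deleted filling, and their outermost clusters are precisely the loops of $\Gamma$ that stay in $D_1$; the delicate point here — intertwined with the simplicity and disjointness above — is to ensure that no loop of a surviving outermost cluster touches the boundary of a deleted filling, so that the clusters match exactly on the two sides. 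It then remains to invoke a spatial (strong) restriction property of the loop-soup, a consequence of the restriction property of $\mu$ at the random domain $D_1^{*}$ and analogous to a strong Markov property, to conclude that conditionally on $D_1^{*}$ the loops of the soup in $D_1^{*}$ form a $\mathbb P_{D_1^{*}}$-soup, hence produce an independent copy of the loop-soup CLE in each connected component of $D_1^{*}$.

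In the supercritical regime one must show that $\mathcal C(\Gammabis)$ is a.s.\ a single cluster. The plan is a percolation-type argument. Fixing a base point, say the origin of $\U$, and a large integer $K$, consider for each $n$ the event $E_n$ that the loops staying in the annulus $\{2^{-n-K}<|z|<2^{-n+K}\}$ contain a single cluster possessing a circuit around $0$ near each of the two boundary circles together with a chain of loops joining those two circuits. By scale invariance $\mathbb P(E_n)$ is independent of $n$, and the events are independent along sufficiently spread-out arithmetic progressions. A Jordan-curve separation argument shows that when $E_n$ and $E_m$ both hold with $m-n$ bounded in terms of $K$, the radial chain at level $n$ must cross the outer circuit at level $m$, so the corresponding clusters merge; consequently, if $E_n$ holds along a sequence of $n$ with bounded gaps, all clusters surrounding $0$ at small scales coalesce into one, and a chaining argument over a countable dense set of base points then promotes this to the existence of a single cluster globally.

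The crux — and what ultimately forces the phase transition to occur at exactly $c=1$ rather than at some other value — is to show that for $c>1$ the relevant crossing probabilities do not degenerate and long runs of consecutive failures of $E_n$ are excluded, i.e.\ that $c>1$ is genuinely supercritical. I would first establish, via a monotone coupling in $c$ and a $0$--$1$ law under scaling, the existence of a critical intensity $c_{\mathrm{cr}}$ above which there is a.s.\ a single cluster; the subcritical analysis above already gives $c_{\mathrm{cr}}\ge1$. To pin down $c_{\mathrm{cr}}=1$ one again invokes SLE: if $c$ were strictly above $1$ yet subcritical, the outermost cluster boundaries would still be disjoint simple loops and, by the exploration described above, \SLEk/-loops with $c=\frac{(3\kappa-8)(6-\kappa)}{2\kappa}>1$ and hence $\kappa>4$ — but \SLEk/ with $\kappa>4$ is not a simple curve, a contradiction. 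This SLE input, rather than the softer percolation bookkeeping, is where I expect the genuine difficulty of the supercritical statement to lie.
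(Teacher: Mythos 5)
Your high‐level decomposition is right, and the two "formal" ingredients you identify — conformal invariance of the passage $\Gammabis\mapsto\Gamma$, and the Poissonian restriction argument (loops staying in $D_1$ are independent of those that don't, and form a loop-soup in $D_1$) — are essentially what the paper does in Lemma \ref{l.confrest}. Your argument that $c_0\le1$ is also the paper's: if some $c>1$ were subcritical, the Markovian characterization would force $\kappa(c)>4$, contradicting simplicity. But there are two serious gaps in the remaining parts.

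First, your plan for proving that the outer boundaries are a.s.\ simple, disjoint, and locally finite is circular. You propose to identify the conditional law of the boundary with an \SLEk/ curve ($\kappa\le4$) and then deduce simplicity from properties of SLE. But the paper's route to the SLE identification is the Markovian characterization (Theorems \ref{t1}--\ref{t3}), whose \emph{hypotheses} are exactly that $\Gamma$ is a locally finite collection of disjoint simple loops satisfying the conformal restriction axioms. You cannot invoke the conclusion to establish the hypotheses. The paper establishes simplicity, disjointness and local finiteness by arguments that make no reference to SLE: a BK-type inequality for Poisson processes shows that the number of disjoint cluster-chains crossing a fixed annulus has exponentially decaying tail (Lemma \ref{exponentialcrossingdecay}), hence local finiteness; a purely topological construction realizes the closure of a cluster as the range of a single continuous loop, whose outer boundary is therefore a continuous loop; and the observation that a cluster a.s.\ has no cut point (two loops never touch at a single point) upgrades this to a \emph{simple} loop (Lemma \ref{l.simpleloops}). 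Only \emph{after} this is the Markovian characterization applied to get a $\kappa$, and only then is $\kappa$ pinned to $c$ via the SLE restriction martingale of \cite{LSWr}.

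Second, and more seriously, your treatment of the supercritical side and of the exact location of the threshold does not close. You correctly recognize that one must prove both $c_0\le1$ and $c_0\ge1$, but you assert that "the subcritical analysis above already gives $c_{\mathrm{cr}}\ge1$." It does not: that analysis only describes what happens \emph{assuming} $c$ is subcritical; it does not show that any particular $c\le1$ actually is. The paper's proof that $c_0\ge1$ (Proposition \ref{criticaltheorem}, together with Corollary \ref{boundedness} and the $M_\alpha$ machinery of Section \ref{S.4}) is the hardest step of Part two: one introduces a fractional-moment half-plane capacity $M_\alpha$, shows via the tail estimate $\E(\mathcal R^{(1+\beta)/2})<\infty$ (valid when $\kappa<4$) that a single loop-soup exploration step contracts this capacity by a factor $C_1(c)\to0$ as $c\to0$, and then argues by alternating explorations with intensities $c_0$ and a small $c'$ and a Borel--Cantelli/half-plane-capacity additivity argument that the resulting cluster stays bounded, contradicting criticality of $c_0+c'$ if $c_0<1$. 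Nothing in your proposed annulus-crossing scheme, nor in the monotone-coupling/$0$--$1$ observation, supplies a quantitative estimate of this kind, so the claim that the phase transition occurs exactly at $c=1$ (rather than at some $c_0<1$) is not actually established. Finally, one also needs to know that $c_0$ itself is subcritical (Proposition \ref{p.cosubcritical}) in order for the theorem's statement "$c\in(0,1]$" to include the endpoint; your argument does not address this boundary case.
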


It therefore follows from our Markovian characterization that $\Gamma$ is a CLE$_\kappa$ (according to the branching SLE($\kappa,
\kappa-6$) based definition in \cite{Sh}) for {\it some}
$\kappa \in (8/3,4]$.  We will in fact also derive the following correspondence:

\begin{theorem} \label{kappacorrespondence}
Fix $c \in (0,1]$ and let $\Gammabis$ be a Brownian loop-soup of intensity $c$ on $\U$.
Then $\Gamma$ is a CLE$_\kappa$ where $\kappa \in (8/3,4]$ is determined by the relation
$c=(3\kappa-8)(6-\kappa)/2\kappa$.
\end{theorem}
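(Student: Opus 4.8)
The plan is to identify the value of $\kappa$ associated to the loop-soup CLE by analyzing the behavior of a single outermost cluster boundary near a boundary point of the domain, and matching it against the corresponding quantity for the branching-SLE CLE$_\kappa$. Concretely, I would work in the upper half-plane $\HH$ and study the loop $\gamma(i) \in \Gamma$ surrounding $i$, conditioned to come within distance $\eps$ of the origin; by Theorems \ref{t1} and \ref{t2} the law of this loop (suitably conformally normalized) converges to the SLE$_\kappa$ bubble measure, and $\kappa$ is the unique free parameter. So it suffices to compute, \emph{from the loop-soup side}, some scale-invariant functional of the conditioned loop that depends monotonically on $\kappa$, and to evaluate the same functional for the SLE$_\kappa$ bubble measure. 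The natural candidate is an ``intensity''/``exponent'' quantity governing how likely the outer boundary of a cluster is to approach a point: the rate at which a cluster's filling $F(C)$ comes near a marked point should be expressible via the loop measure $\mu$, and hence in closed form in terms of $c$, while the same rate for the SLE$_\kappa$ bubble measure is known (or computable) in terms of $\kappa$.

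First I would make precise the ``pinned'' or ``bubble'' object on the loop-soup side. Using the restriction property of $\mathbb P_c$ together with the restriction property of the Brownian loop measure $\mu$ itself, the event that the cluster through a given region approaches the origin at scale $\eps$ has a $\mu$-mass with a prescribed $\eps$-power-law. The key point is that the exponent — or equivalently the constant in the ``disconnection'' rate — can be extracted by a Poissonian/excursion decomposition: conditioning a loop-soup cluster boundary to be pinned at $0$ amounts to inserting into the soup an atypical configuration of small loops chaining out to $0$, and the cost of this chain is controlled by the two-point function $L(A,A';D)$ of the loop measure. Lawler--Werner type computations for $\mu$ give the near-boundary asymptotics of these masses explicitly, and repeated use of conformal restriction (for both $\mu$ and $\mathbb P_c$) should let me relate the pinned loop-soup bubble to a weighted version of the $c$-soup, ultimately yielding a formula for the relevant exponent as a function of $c$ alone. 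On the SLE side, the analogous exponent for the SLE$_\kappa$ bubble measure — i.e. the rate at which SLE$_\kappa$ from $\eps$ to $0$ in $\HH$, conditioned to disconnect a point from $\infty$, behaves as $\eps \to 0$ — is governed by standard SLE one-sided/disconnection exponents, which are explicit rational functions of $\kappa$.

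The two formulas, set equal, must then be solved; I expect this to reduce to the stated algebraic relation $c = (3\kappa-8)(6-\kappa)/2\kappa$. As a consistency check, note $c \to 0$ forces $\kappa \to 8/3$ (the single Brownian loop / SLE$_{8/3}$ case), and $c = 1$ gives $\kappa = 4$, matching the phase-transition threshold in Theorem \ref{loopsoupsatisfiesaxioms}; and one should verify the right branch of the quadratic is selected by a monotonicity argument (larger $c$ means denser soup, larger clusters, hence larger $\kappa$). The hard part, I anticipate, is \emph{not} the SLE exponent — which is classical — but rather rigorously justifying the loop-soup side: identifying exactly which scale-invariant functional of the pinned cluster boundary is both computable purely from $\mu$ and $c$, and robust enough to pass to the $\eps \to 0$ limit without hidden constants. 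This requires careful bookkeeping of the Brownian loop measure's near-boundary asymptotics and a clean statement of how a cluster's outer boundary ``sees'' the origin, probably via the description of $P^i$ given just before Theorem \ref{t2} (removing the loops meeting $[0,A)$ and mapping the unbounded remaining component back to $\HH$). Once that functional is pinned down, matching it to the SLE$_\kappa$ bubble measure from Theorem \ref{t1} and inverting the resulting relation finishes the proof.
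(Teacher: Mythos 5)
Your overall plan — work with the pinned/bubble measure $P^i$ of the loop-soup CLE, bring in the conformal restriction property and the Brownian loop measure $L(A,A';D)$, and match against the SLE$_\kappa$ bubble measure — is in the right neighborhood, and your consistency checks ($c\to 0$ giving $\kappa\to 8/3$, $c=1$ giving $\kappa=4$) are correct. But the core mechanism you propose is off target, and this is a genuine gap rather than a detail.

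You plan to extract a \emph{scale-invariant exponent} (the near-boundary power law for the pinned cluster) \emph{in closed form as a function of $c$} from loop-soup first principles, and then equate it to the SLE$_\kappa$ boundary exponent $8/\kappa-1$. There is no known way to compute that exponent directly from the loop-soup intensity $c$ without already having the $c\leftrightarrow\kappa$ correspondence; the exponent is exactly the kind of quantity (like the fractal dimensions mentioned in the introduction) whose explicit value one learns \emph{from} the SLE identification, not the other way around. So "yielding a formula for the relevant exponent as a function of $c$ alone" is precisely the step that would fail. The paper avoids this entirely: it does not compute any exponent. Instead it compares, for the loop $\gamma(i)$ conditioned to approach $0$ at scale $\eps$, the Radon--Nikodym derivative between the laws in $\HH$ and in a fixed subdomain $H\subset\HH$, \emph{for each fixed $\eps$}, and passes to the limit. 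On the loop-soup side, the Poissonian structure gives this RN derivative directly: on the event that $\gamma'(i)$ (the loop in the restricted soup) stays in $H$, $\gamma(i)=\gamma'(i)$ precisely when no deleted loop touches $\gamma'(i)$, which has conditional probability $\exp(-c\,L(\gamma'(i),\HH\setminus H;\HH))$ (plus a second-order correction, controlled by the BK inequality). On the SLE side, the LSW restriction martingale gives the RN derivative between SLE$_\kappa$ from $\eps$ to $0$ in $\HH$ and in $H$ as a multiple of $\exp(-c(\kappa)\,L(\gamma,\HH\setminus H;\HH))$ with $c(\kappa)=(3\kappa-8)(6-\kappa)/2\kappa$. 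Since the two descriptions of $P^i$ must agree, $c=c(\kappa)$ drops out immediately by strict monotonicity of $\kappa\mapsto c(\kappa)$ on $(8/3,4]$. The lesson is that the object to match is the \emph{domain-restriction Radon--Nikodym derivative} of the bubble measure, not a power-law exponent; the former is directly computable on both sides, the latter is not computable from the loop-soup alone.
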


\subsection {Main statements and outline: Combining the two steps}

Since every $\kappa \in (8/3,4]$ is obtained for exactly one value of $c \in (0,1]$ in Theorem \ref {kappacorrespondence}, we immediately get thanks to Theorem \ref {t3} that
the random simple loop configurations satisfying the conformal restriction axioms
are precisely the CLE$_\kappa$ where $\kappa \in (8/3,4]$, which completes the proof of Theorem \ref {mainresults} and of the fact
that the following three descriptions of simple loop ensembles are equivalent:
\begin{enumerate}
\item The random loop ensembles traced by branching Schramm-Loewner Evolution (SLE$_\kappa$) curves for $\kappa$ in $(8/3, 4]$.
\item The outer-cluster-boundary ensembles of Brownian loop-soups for $c \leq 1$.
\item The (only) random loop ensembles satisfying the CLE axioms.
\end{enumerate}

Let us now list some further consequences of these results.
Recall from \cite {Be} that the Hausdorff dimension of an SLE$_\kappa$ curve is almost surely $1+ (\kappa /8)$. Our results therefore imply that the boundary of a loop-soup cluster of intensity $c \le 1$ has dimension
$$\frac { 37 -c - \sqrt { 25  + c^2 - 26 c }}{24} .$$
Note that just as for Mandelbrot's conjecture for the dimension of Brownian boundaries \cite {LSW4/3}, this statement does not involve SLE, but its proof does. In fact the
result about the dimension of Brownian boundaries can be viewed as the limit when $c \to 0$ of this one.

Furthermore we may define the {\em carpet}
of the CLE$_\kappa$ to be the random closed set obtained by removing from $\U$ the interiors (i.e. the bounded connected component of their complement) of all the loops $\gamma$ of $\Gamma$,
and recall that SLE methods allowed \cite {SSW} to compute its
``expectation dimension'' in terms of $\kappa$.
The present loop-soup construction of CLE$_\kappa$ enables to prove
(see \cite {NW}) that this expectation dimension is indeed equal to its almost sure Hausdorff dimension $d$, and that in terms of $c$,
\begin {equation}
 d (c) =  \frac {187 - 7c + \sqrt {25 + c^2 - 26 c }}{96}
\end {equation}
The critical loop-soup (for $c=1$) corresponds therefore to a carpet of dimension $15/8$.

Another direct consequence of the previous results is the ``additivity property'' of CLE's:
If one considers two independent CLE's in the same simply connected domain $D$ with non-empty boundary, and looks at the union of these two, then either one can find a cluster whose boundary contains $\partial D$, or the outer boundaries of the obtained outermost clusters in this union form another CLE. This is simply due to the fact that each of the CLE's can be constructed via Brownian loop soups (of some
intensities $c_1$ and $c_2$) so that the union corresponds to a Brownian loop-soup of intensity $c_1+c_2$. This gives for instance a clean direct geometric meaning to the general idea (present on various occasions in the physics literature) that relates in some way two independent copies of the Ising model to the Gaussian Free Field in their large scale limit:
The outermost boundaries defined by the union of two independent CLE$_3$'s in a domain (recall \cite {ChSm} that CLE$_3$ is the scaling limit of the Ising model loops, and note that it corresponds to $c=1/2$) form a CLE$_4$ (which corresponds to ``outermost'' level lines of the Gaussian Free Field, see \cite {SchSh2,Dub} and to $c=1 = 1/2 + 1/2$).

\subsection {Further background}

In order to put our results in perspective, we briefly recall some closely related work on conformally invariant structures.

\medbreak
{\bf Continuous conformally invariant structures giving rise to loops.}
There exist several natural ways to construct conformally invariant structures in a domain $D$. We have already mentioned the Brownian loop-soup that will turn out be instrumental in the present paper when constructing explicitely CLEs.
Another natural conformally invariant structure that we have also just mentioned is the Gaussian Free Field. This is a classical basic object in Field Theory. It has been shown (see \cite {SchSh, Dub}) that it is very closely related to SLE processes, and that one can detect all kinds of SLEs within the Gaussian Free Field. In particular, this indicates that CLEs (at least when $\kappa=4$) can in fact also be defined and found as ``geometric'' lines in a Gaussian Free Field.

\medbreak
{\bf Discrete models.}
A number of discrete lattice-based models have been conjectured to give rise to conformally invariant structures in the fine-mesh limit. For some of these models, these structures can be described by random collections of loops. We have already mentioned that Smirnov \cite {Sm1,Sm2,Sm3} has proved this conjecture for some important models (percolation, Ising model --- see also \cite {LSWlesl,SchShexpl,SchSh} for some other cases). Those models that will be directly relevant to the present paper (i.e., with disjoint simple loops) include the Ising model and the discrete  Gaussian Free Field level lines (\cite {Sm3, ChSm, SchSh, Dub}).
The scaling limits of percolation and of the FK-model related to the Ising model give rise to interfaces that are not disjoint. These are of course also very interesting objects (see \cite {SchSm,CN,Su} for the description of the percolation scaling limit), but they are not the subject of the present paper.
Conjecturally, each of the CLEs that we will be describing corresponds to the scaling limit of one of the
so-called $O(N)$ models, see e.g. \cite {N,KNln}, which are one simple way to define discrete random collections of non-overlapping loops.

In fact, if one starts from a lattice-based model for which one controls the (conformally invariant) scaling limit of an observable (loosely speaking, the scaling limit of the probability of some event), it seems possible (see Smirnov \cite {Sm2}) to use this to actually prove the convergence of the entire discrete ``branching'' exploration procedure to the corresponding branching SLE($\kappa, \kappa -6 $) exploration tree. It is likely that it is not so much harder
to derive the ``full'' scaling limit of all interfaces than to show the convergence of one particular interface to SLE.

Another quite different family of discrete models that might (more conjecturally) be related to the CLEs that we are studying here, are the ``planar maps'', where one chooses at random a planar graph, defined modulo homeomorphisms, and that are conjecturally closely related to the above (for instance via their conjectured relation with the Gaussian Free Field). It could well be that CLEs are rather directly related to random planar maps chosen in a way to contain ``large holes'', such as the ones that are studied in \cite {LGM}. In fact, CLEs, planar maps and the Gaussian Free Field should all be related to each other via Liouville quantum gravity, as described in \cite {DSh}.

\medbreak
{\bf Conformal Field Theory.}
Note finally that Conformal Field Theory, as developed in the theoretical physics community since the early eighties \cite {BPZ}, is also a setup to describe the scaling limits of all correlation functions of these critical two-dimensional lattice models. This indicates that a description of the entire scaling limit of the lattice models in geometric SLE-type terms could be useful in order to construct such fields.
CLE-based constructions of Conformal Field Theoretical objects ``in the bulk'' can be found in Benjamin Doyon's papers \cite {Do1,Do2}.
It may also be mentioned that aspects of the present paper (infinite measures on ``pinned configurations'') can be interpreted naturally in terms of insertions of boundary operators.

\vfill
\eject

\part*{Part one: the Markovian characterization}

\addcontentsline{toc}{part}{Part one: the Markovian characterization}

\section {The CLE property} \label{s.CLEproperty} \label {S.2}

\subsection {Definitions}

A {\bf simple loop} in the complex plane will be the image of the unit circle in the plane under a continuous injective map (in other words we will identify two loops if one of them is obtained by a bijective reparametrization of the other one; note that our loops are not oriented). Note that a simple loop $\gamma$ separates the plane into two connected components that we call its interior $\interior (\gamma)$ (the bounded one) and its exterior (the unbounded one) and that each one of these two sets characterizes the loop. There are various natural distances and $\sigma$-fields that one can use for the space ${\cal L}$ of loops. We will use the $\sigma$-field $\Sigma$ generated by all the events of the type
$ \{ O \subset \interior (\gamma) \}$ when $O$ spans the set of open sets in the unit plane. Note that this $\sigma$-field is also generated by the events of the type $\{ x \in \interior ( \gamma ) \}$ where $x$ spans a countable dense subset $Q$ of the plane (recall that we are considering simple loops so that $O \subset \interior ( \gamma)$ as soon as $O \cap Q \subset \interior (\gamma)$).

In the present paper, we will consider (at most countable) collections $\Gamma= (\gamma_j , j \in J)$ of simple loops. One way to properly define such a collection is to identify it with the point-measure $$\mu_\Gamma= \sum_{j \in J} \delta_{\gamma_j}.$$
 Note that this space of collections of loops is naturally equipped with the $\sigma$-field
 generated by the sets $\{ \Gamma \ : \ \# ( \Gamma \cap A) = k \} = \{ \Gamma \ : \ \mu_\Gamma (A) = k \}$, where $A \in \Sigma$ and $k \ge 0$.

We will say that $(\gamma_j , j \in J)$   is a {\bf simple loop configuration} in the bounded simply connected domain $D$ if
the following conditions hold:
\begin {itemize}
\item For each $j \in J$, the loop $\gamma_j$ is a simple loop in $D$.
\item For each $j \not= j' \in J$, the loops $\gamma_j$ and $\gamma_{j'}$ are disjoint.
\item For each $j \not= j' \in J$, $\gamma_j$ is not in the interior of $\gamma_{j'}$: The loops are not nested.
\item For each $\eps >0$, only finitely many loops $\gamma_j$ have a diameter greater than $\eps$.  We call this the {\bf local finiteness} condition.
\end {itemize}
All these conditions are clearly measurable with respect to the $\sigma$-field discussed above.
\medbreak

We are going to study random simple loop configurations with some special properties. More precisely, we will say that the random  simple loop configuration $\Gamma = ( \gamma_j , j \in J)$ in the unit disc $\U$ is a {\bf Conformal Loop Ensemble} (CLE) if it satisfies the following properties:
\begin {itemize}
\item Non-triviality: The probability that $J \not= \emptyset$ is positive.
\item Conformal invariance:  The law of $\Gamma$ is invariant under any conformal transformation from $\U$ onto itself. This invariance makes it in fact possible to define the law of the loop-ensemble $\Gamma_D$ in any simply connected domain $D \not= \C$ as the law of the image of $\Gamma$ under any given conformal map $\Phi$ from $\U$ onto $D$ (this is because the law of this image does not depend on the actual choice of $\Phi$). We can also define the law of a loop-ensemble in any open domain $D\not= \C$ that is the union of disjoint open simply connected sets by taking independent loop-ensembles in each of the connected components of $D$. We call this law $P_D$.
\item Restriction: To state this important property, we need to introduce some notation. Suppose that $U$ is a simply connected subset of the unit disc. Define
$$I=I(\Gamma, U)= \{j \in J \ : \ \gamma_j \not\subset U \}$$ and $ J^* = J^* (\Gamma, U)=
J \setminus I = \{j \in J \ : \ \gamma_j \subset U \}$. Define the (random) set
$$ U^* =  U^* ( U, \Gamma) = U \setminus \cup_{j \in I} \overline {\interior ( \gamma_j)} .
$$
This set $ U^*$ is a (not necessarily simply connected) open subset of $\U$ (because of the local finiteness condition). The restriction property is that (for all $U$), the conditional law of  $(\gamma_j, j \in J^*)$ given  $U^*$ (or alternatively given the family $(\gamma_j, j \in I)$) is $P_{U^*}$.
\end {itemize}

\medbreak

This definition is motivated by the fact that for many discrete loop-models that are conjectured to be conformally invariant in the scaling limit, the discrete analog of this restriction property holds. Examples include the $O(N)$ models (and in particular the critical Ising model interfaces). The goal of the paper is to classify all possible CLEs, and therefore the possible conformally invariant scaling limits of such loop-models.

The non-nesting property can seem surprising since the discrete models allow nested loops. The CLE in fact describes (when the domain $D$ is fixed) the conjectural scaling limit of the law of the ``outermost loops'' (those that are not surrounded by any other one). In the discrete models, one can discover them ``from the outside'' in such a way that the conditional law of the remaining loops given the outermost loops is just made of independent copies of the model in the interior of each of the discovered loops.
Hence, the conjectural scaling limit of the full family of loops is obtained by iteratively defining CLEs inside each loop.

At first sight, the restriction property does not look that restrictive. In particular, as it involves only interaction between entire loops, it may seem weaker than the conformal exploration property of SLE (or of branching SLE($\kappa, \kappa-6$)), that describes the way in which the path is progressively constructed.  However (and this is the content of Theorems \ref {t1} and \ref {t2}), the family of such CLEs is one-dimensional too, parameterized by $\kappa \in (8/3,4]$.

\subsection {Simple properties of CLEs}

We now list some simple consequences of the CLE definition. Suppose that $\Gamma= (\gamma_j, j \in J)$ is a CLE in $\U$.
\begin {enumerate}
\item
Then, for any given $z \in \U$, there almost surely exists a loop $\gamma_j$ in $\Gamma$ such that $z \in \interior (\gamma_j)$. Here is a short proof of this fact:
Define $u=u(z)$ to be the probability that $z$ is in the interior of some loop in $\Gamma$. By Moebius invariance, this quantity $u$ does not depend on $z$.
Furthermore, since $P (J \not= \emptyset) >0$, it follows that $u> 0$ (otherwise the expected area of the union of all interiors of loops would be zero). Hence, there exists $r \in (0,1)$ such that with a positive probability $p$, the origin is in the interior of some loop in $\Gamma$ that intersects the slit $[r,1]$ (we call $A$ this event). We now define $U = \U \setminus [r,1)$ and apply the restriction property. If $A$ holds, then the origin is in the interior of some loop of $\Gamma$. If $A$ does not hold, then the origin is in one of the connected components of $\tilde U$ and the conditional probability that it is surrounded by a loop
in this domain is therefore still $u$. Hence,
$ u = p + (1-p) u$ so that $u =1$.

\item
The previous observation implies immediately that $J$ is almost surely infinite. Indeed, almost surely, all the points $1-1/n, n \ge 1$ are surrounded by a loop, and any given loop can only surround finitely many of these points (because it is at positive distance from $\partial \U$).

\item Let $M( \theta)$ denote the set of configurations $\Gamma = ( \gamma_j, j \in J)$ such that for all $j \in J$, the radius $[0, e^{i \theta}]$ is never locally ``touched without crossing'' by $\gamma_j$  (in other words, $\theta$ is a local extremum of none of the $\arg (\gamma_j)$'s).
Then, for each given $\theta$,  $\Gamma$ is  almost surely in $M(\theta)$. Indeed, the argument of a given loop that does not pass through the origin can anyway at most have countably many ``local maxima'', and there are also countably many loops. Hence, the set of $\theta$'s such that $ \Gamma \notin M (\theta)$ is at most countable.
 But the law of the CLE is invariant under rotations, so that $P ( \Gamma \in M(\theta))$ does not depend on $\theta$.
Since its mean value (for $\theta \in [0, 2 \pi]$) is $1$, it is always equal to $1$.

If we now define, for all $r >0$,  the Moebius transformation of the unit disc such that $\psi (1)=1$, $\psi (-1) = -1$ and $\psi' (1) =r$, the invariance of the CLE law under $\psi$ shows that for each given $r$, almost surely, no loop of the CLE locally touches $\psi ( [-i,i])$ without crossing it.

\item For any $r <1$, the probability that $r \U$ is entirely contained in the interior of one single loop is positive: This is because each simple loop $\gamma$ that surrounds the origin can be approximated ``from the outside'' by a loop $\eta$ on a grid of rational meshsize with as much precision as one wants. This implies in particular that one can find one such loop $\eta$ in such a way that the image of one loop $\gamma$ in the CLE under a conformal map from $\interior  (\eta)$ onto $\U$ that preserves the origin has an interior containing $r \U$. Hence, if we apply the restriction property to $U = \interior  (\eta)$, we get readily that with positive probability, the interior of some loop in the CLE contains $r\U$. Since this property will not be directly used nor needed later in the paper, we leave the details of the proof to the reader.

\item The restriction property continues to hold if we replace the simply connected domain $U \subset \U$ with the union $U$ of countably many disjoint simply connected domains $U_i \subset \U$.   That is, we still have that the conditional law of $(\gamma_j, j \in J^*)$ given $U^*$ (or alternatively given the family $(\gamma_j, j  \in I)$) is $P_{U^*}$.  To see this, note first that applying the property separately for each $U_i$ gives us the marginal
    conditional laws for the set of loops within each of the $U_i$. Then, observe that
the conditional law of the set of loops in $U_i$ is unchanged when one further conditions on the set of loops in $\cup_{i' \not = i} U_{i'}$.
Hence, the sets of loops in the domains $U_1^*, \ldots, U_i^*, \ldots$ are in fact independent (conditionally on $(\gamma_j, j \in I)$).
\end {enumerate}

\section {Explorations}
\label {explo}

\subsection {Exploring CLEs -- heuristics}
\label {heuristics}
Suppose that $\Gamma= (\gamma_j, j \in J)$ is a CLE in the unit disc $\U$. Suppose that $\eps >0$ is given. Cut out from the disc a little given shape $S= S(\eps) \subset \U$ of radius $\eps$ around $1$.  If $y$ is a point on the unit circle, then
 we may write $yS$ for $y$ times the set $S$ --- i.e., $S$ rotated around the circle via multiplication by $y$. The precise shape of $S$ will not be so important; for concreteness, we may at this point think of $S$ as being equal to the $\eps$-neighborhood of $1$ in the unit disc. Let $U_1$ denote the connected component that contains the origin of the set obtained when removing from $U_1':= \U \setminus S$ all the loops that do not stay in $U_1'$. If the loop $\gamma_0$ in the CLE that surrounds the origin did not go out of $\U \setminus S$, then the (conditional) law of the CLE restricted to $U_1$ (given the knowledge of $U_1$) is again a CLE in this domain (this is just the CLE restriction property). We then define the conformal map $\Phi_1$ from $U_1$ onto $\U$ with $\Phi_1(0)=0$ and $\Phi_1' (0) >0$.

\begin {figure}[htbp]
\begin {center}
\includegraphics [width=5in]{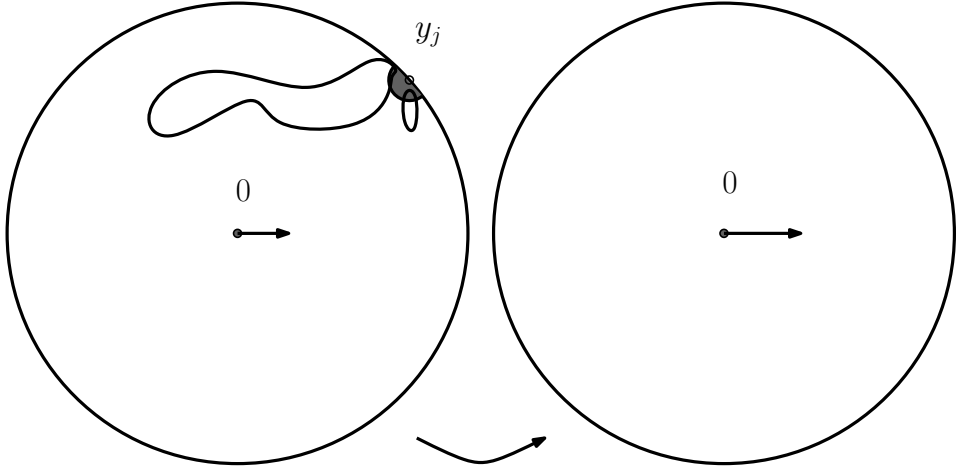}
\caption {The first exploration step (sketch)}
\end {center}
\end {figure}

Now we again explore a little piece of $U_1$: we choose some point $y_1$ on the unit circle and define $U_2'$ to be the domain obtained when removing from $U_1$ the preimage (under $\Phi_1$) of the shape $S$ centered around $y_1$ (i.e., $ U_2'=\Phi_1^{-1} ( \U \setminus y_1 S)$). Again, we define the connected component $U_2$ that contains the origin of the domain obtained when removing from $U_2'$ the loops that do not stay in $U_2'$ and the conformal map $\Phi_2$ from $U_2$ onto $\U$ normalized at the origin.

We then explore in $U_2$ if $\gamma_0 \subset U_2$, and so on.
One can iterate this procedure until we finally ``discover'' the loop $\gamma_0$ that surrounds the origin. Clearly, this will happen after finitely many steps with probability one, because at each step the derivative $\Phi_n' (0)$ is multiplied by a quantity that is bounded from below by a constant $v>1$ (this is because at each step, one composes $\Phi_n$ with a conformal map corresponding to the removal of at least a shape $y_n S$ in order to define $\Phi_{n+1}$). Hence, if we never discovered $\gamma_0$, it would follow from Koebe's $1/4$-Theorem that $d( \partial U_n, 0) \to 0$ as $n \to \infty$, and this would contradict the fact that $\gamma_0$ is almost surely at positive distance from $0$.

We call $N$ the random finite step after which the loop $\gamma_0$ is discovered i.e., such that $\gamma_0 \subset U_N$ but
$\gamma_0 \not \subset U_{N+1}'$.
It it important to notice that at each step until $N$, one is in fact repeating the same experiment (up to a conformal transformation), namely cutting out the shape $S$ from $\U$  and then cutting out all loops that intersect $S$.
Because of the CLE's conformal restriction property, this procedure defines an i.i.d.\ sequence of steps, stopped at the geometric random variable $N$, which is the first step
 at which one discovers a loop surrounding the origin that intersects $\U \setminus y_NS$.
 This shows also that the conditional law of the CLE in $\U$ given the fact that $\gamma_0 \cap S \not= \emptyset$ is in fact identical to the image under  $y_N^{-1} \Phi_{N}$ of the CLE in $U_N$.

In the coming sections, we  will use various sequences $y_n=y_n( \eps)$.
One natural possibility is to simply always choose $y_n =1$. This will give rise to the $\eps$ radial-explorations that will be discussed in Section \ref {S.5}.
However, we first need another procedure to choose $y_n (\eps)$ that will enable to control the behavior of $\Phi_{N(\eps)}$, of $U_{N(\eps)}$ and of $y_{N(\eps)}$ as $\eps$ tends to $0$. This will then allow us show that the conditional law of the CLE in $\U$ given the fact that $\gamma_0 \cap S(\eps) \not= \emptyset$ has a limit when $\eps \to 0$.

\subsection {Discovering the loops that intersect a given set}
\label {explodet}
 The precise shape of the sets $S$ that we will use will in fact not be really important, as long as they are close to small semi-discs.
It will be convenient to define, for each $y$ on the unit circle, the set $D(y, \eps)$ to be the image of the set $\{ z \in \HH \ : \ | z | \le \eps\} $, under the conformal map $\Psi : z \mapsto y (i-z) / (i+z)$ from the upper half-plane $\HH$ onto the unit disc such that $\Psi (i)= 0$ and $\Psi (0)= y$. Note that $|\Psi'(0)| =2$, so that when $\eps$ is very small, the set $D( y, \eps)$ is close to the intersection of a small disc of radius  $2\eps$ around $y$ with the unit disc.
This set $D(1, \eps)$ will play the role of our set $S(\eps)$.

Suppose that $\Gamma= (\gamma_j, j \in J)$ is a given (deterministic) simple loop-configuration in $\U$.  (In this section, we will derive deterministic statements that we will apply to CLEs in the next section.)
We suppose that:

\begin {enumerate}
 \item In $\Gamma$, one loop (that we call $\gamma_0$) has $0$ in its interior.
 \item $A \subset \overline \U$ is a given closed simply connected set such that $\U \setminus A$ is simply connected, $A$ is the closure of the interior of $A$, and the length of $\partial A \cap \partial \U$ is positive.
 \item The loop $\gamma_0$ does not intersect $A$.
 \item All $\gamma_j$'s in $\Gamma$ that intersect $\partial A$ also intersect the interior of $A$.
 \end {enumerate}

Our goal will be to explore almost all (when $\eps$ is small) large loops of $\Gamma$ that intersect $A$ by iterating explorations of $\eps$-discs.

\medbreak
When $\eps >0$ is given, it will be useful to have general critera that imply that
 a subset $V$ of the unit disc contains $D(y, \eps)$ for at least one $y \in \U$:
Consider two independent Brownian motions, $B^1$ and $B^2$ started from the origin, and stopped at their first hitting times $T_1$ and $T_2$ of the unit
circle. Consider $U^a$ and $U^b$ the two connected components of $\U \setminus (B^1 [0,T_1] \cup B^2 [0, T_2])$ that have an arc of $\partial \U$ on their boundary. Note that for small enough $\epsilon$, the probability $p(\eps)$ that both $U^a$ and $U^b$ contain some $D(y, \eps)$ is clearly close to $1$.

Suppose now that $V$ is a closed subset of $\U$ such that $\U \setminus V$ is simply connected, and let $u(V)$ be the probability that one of the two random sets $U^a$ or $U^b$ is a subset of $V$. Then:

\begin {lemma}
\label {UaUb}
For all $u>0$, there exists a positive $\eps_0 = \eps_0 (u)$ such that there exists $y \in \partial \U$ with $D(y, \eps) \subset V$ as soon as $u(V) \ge u$.
\end {lemma}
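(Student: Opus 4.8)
The plan is to run a short inclusion--exclusion (probabilistic pigeonhole) argument on the sample space of the two Brownian motions $B^1,B^2$, playing off the event that one of $U^a,U^b$ fits inside $V$ against the event that both $U^a$ and $U^b$ contain some shape $D(y,\eps)$.

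First I would record a monotonicity in $\eps$: for $\eps'\le\eps$ one has $D(y,\eps')\subset D(y,\eps)$, since both are images under the same fixed Moebius map $\Psi$ of the nested sets $\{z\in\HH:|z|\le\eps'\}\subset\{z\in\HH:|z|\le\eps\}$. Hence the event $\{$both $U^a$ and $U^b$ contain some $D(y,\eps)\}$ only shrinks as $\eps$ grows, so $p(\cdot)$ is non-increasing; together with the observation recorded just before the statement that $p(\eps)\to1$ as $\eps\downarrow0$, this lets me fix $\eps_0=\eps_0(u)>0$ with $p(\eps)>1-u$ for every $\eps\le\eps_0$. To make the limit $p(\eps)\to1$ fully rigorous I would argue that almost surely each of the open sets $U^a,U^b$ contains some $D(y,\eps)$ once $\eps$ is small enough: the boundary of such a component meets $\partial\U$ in a nondegenerate arc, and any point $y$ interior to that arc lies at strictly positive distance from the compact trace $B^1[0,T_1]\cup B^2[0,T_2]$, so for small $\eps$ the set $\overline{D(y,\eps)}$ avoids that trace and is therefore contained in the component; since these events increase as $\eps\downarrow0$, monotone convergence gives $p(\eps)\to1$.

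Then, given $u>0$, I would fix any $\eps\le\eps_0(u)$ and any closed $V\subset\U$ with $\U\setminus V$ simply connected and $u(V)\ge u$, and work on the sample space of $(B^1,B^2)$ with the two events $A_1=\{U^a\subset V\ \text{or}\ U^b\subset V\}$, so that $\P(A_1)=u(V)\ge u$, and $A_2=\{$both $U^a$ and $U^b$ contain some $D(y,\eps)\}$, so that $\P(A_2)=p(\eps)>1-u$. Inclusion--exclusion then gives
$$\P(A_1\cap A_2)\ \ge\ \P(A_1)+\P(A_2)-1\ >\ u+(1-u)-1\ =\ 0,$$
so $A_1\cap A_2\neq\emptyset$; picking any realization in this event, one of the two components --- say $U^a$ --- satisfies $U^a\subset V$ while also containing $D(y,\eps)$ for some $y\in\partial\U$, whence $D(y,\eps)\subset U^a\subset V$, which is exactly the conclusion.

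The combinatorial core is essentially a one-line argument, so there is no serious obstacle there; the only point that requires real care is the input $p(\eps)\to 1$ as $\eps\to0$, i.e.\ verifying that almost surely each of $U^a$ and $U^b$ engulfs a full little shape $D(y,\eps)$ for all sufficiently small $\eps$. That is precisely where the observation about a nondegenerate boundary arc lying at positive distance from the compact Brownian trace does the work.
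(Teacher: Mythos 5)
Your proof is correct and is essentially the same argument as the paper's: both rest on the observation that $\{U^a\subset V \text{ or } U^b\subset V\}$ and $\{\text{both }U^a,U^b\text{ contain some }D(y,\eps)\}$ must intersect once $p(\eps)$ is close enough to $1$, which forces $V$ to contain some $D(y,\eps)$. You additionally spell out why $p(\eps)\to1$ (via the positive distance from a boundary arc point to the compact Brownian trace) and record the monotonicity of $p$, both of which the paper treats as self-evident.
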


\begin {proof}
The definition of $p (\eps)$ and of $u(V)$ shows that $V$ contains some $D(y, \eps)$ with a probability at least
$u - (1-p(\eps))$. Since this is a deterministic fact about $V$, we conclude that the set $V$ does indeed contain some set $D(y, \eps)$ for some $y \in \partial \U$ as soon as
$p (\eps ) \ge 1 - u/2$.  It therefore suffices to choose $\eps_0$ in such a way that $u_0 = 2 (1 - p ( \eps_0))$.
\end {proof}

Define now a particular class of iterative exploration procedures as follows: Let $U_0=\U$ and $\Phi_0(z) = z$. For $j \ge 0$:
\begin {itemize}
\item
Choose some $y_j$ on $\partial \U$ in such a way that $\Phi_j^{-1} ( D( y_j, \eps)) \subset A$.
\item
Define $U_{j+1}$ as the connected component that contains the origin of the set obtained by removing from $U_{j+1}' : = U_j \setminus \Phi_j^{-1} (D(y_j, \eps))$ all the loops in $\Gamma$ that do not stay in $U_{j+1}'$.
\item
Let $\Phi_{j+1}$ be the conformal map from $U_{j+1}$ onto $\U$ such that $\Phi_{j+1} (0)= 0$ and
$\Phi_{j+1}'(0) > 0$.
\end {itemize}

There is only one way in which such an iterative definition can be brought to an end, namely if at some step $N_0$, it is not possible anymore to find a point $y$ on  $\partial \U$ such that $\Phi_{N_0}^{-1} ( D (y, \eps)) \subset A$
(otherwise it means that at some step $N$, one actually has discovered the loop $\gamma_0$, so that $U_{N+1}$ is not well-defined but we know that this cannot be the case because we have assumed that $\gamma_0 \cap A = \emptyset$).
Such explorations $(\Phi_n, n \le N_0)$ will be called {\em $\eps$-admissible explorations of the pair $(\Gamma, A)$}.

\begin {figure}[htbp]
\begin {center}
\includegraphics [width=2.5in]{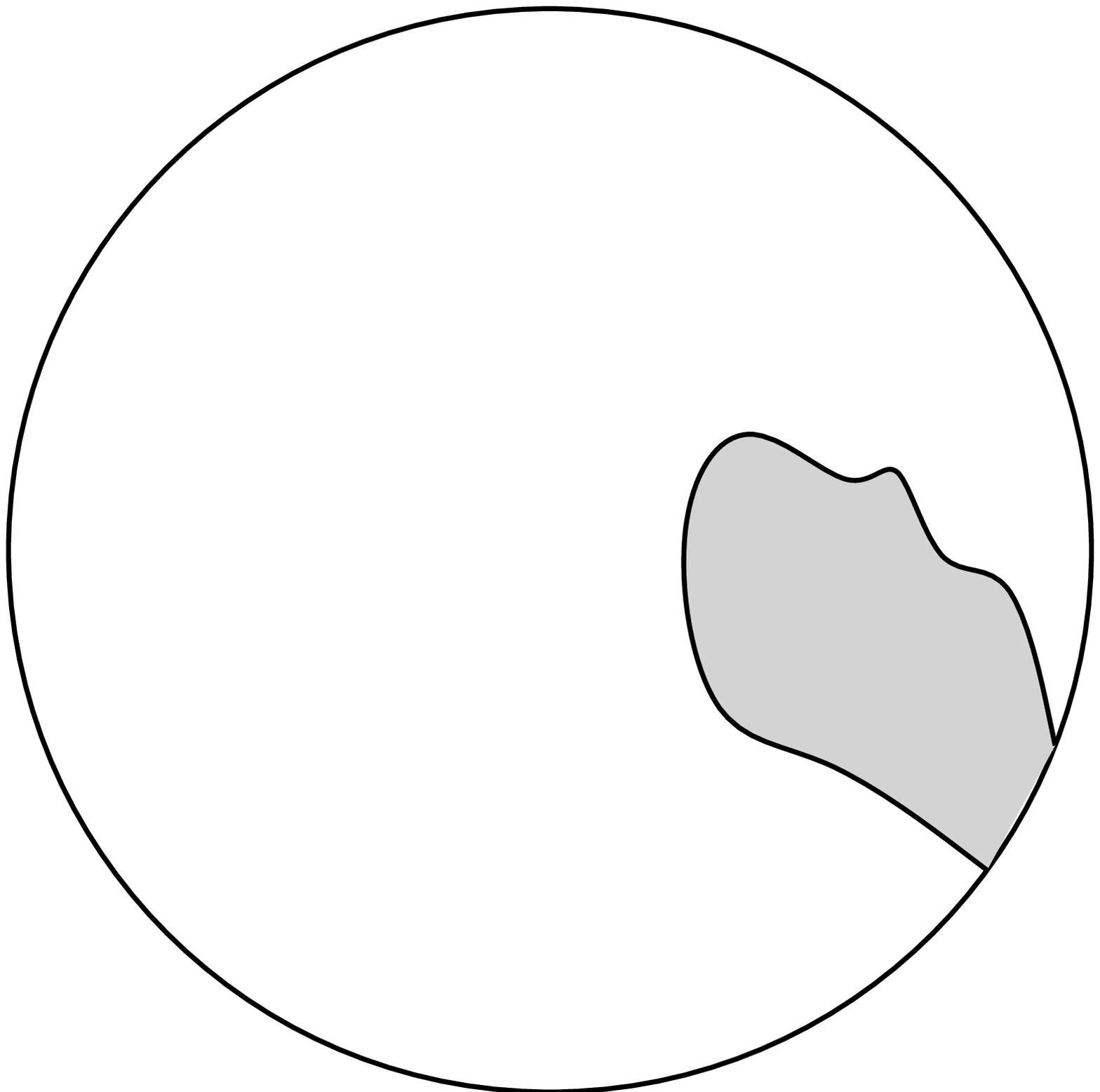}
\includegraphics [width=2.5in]{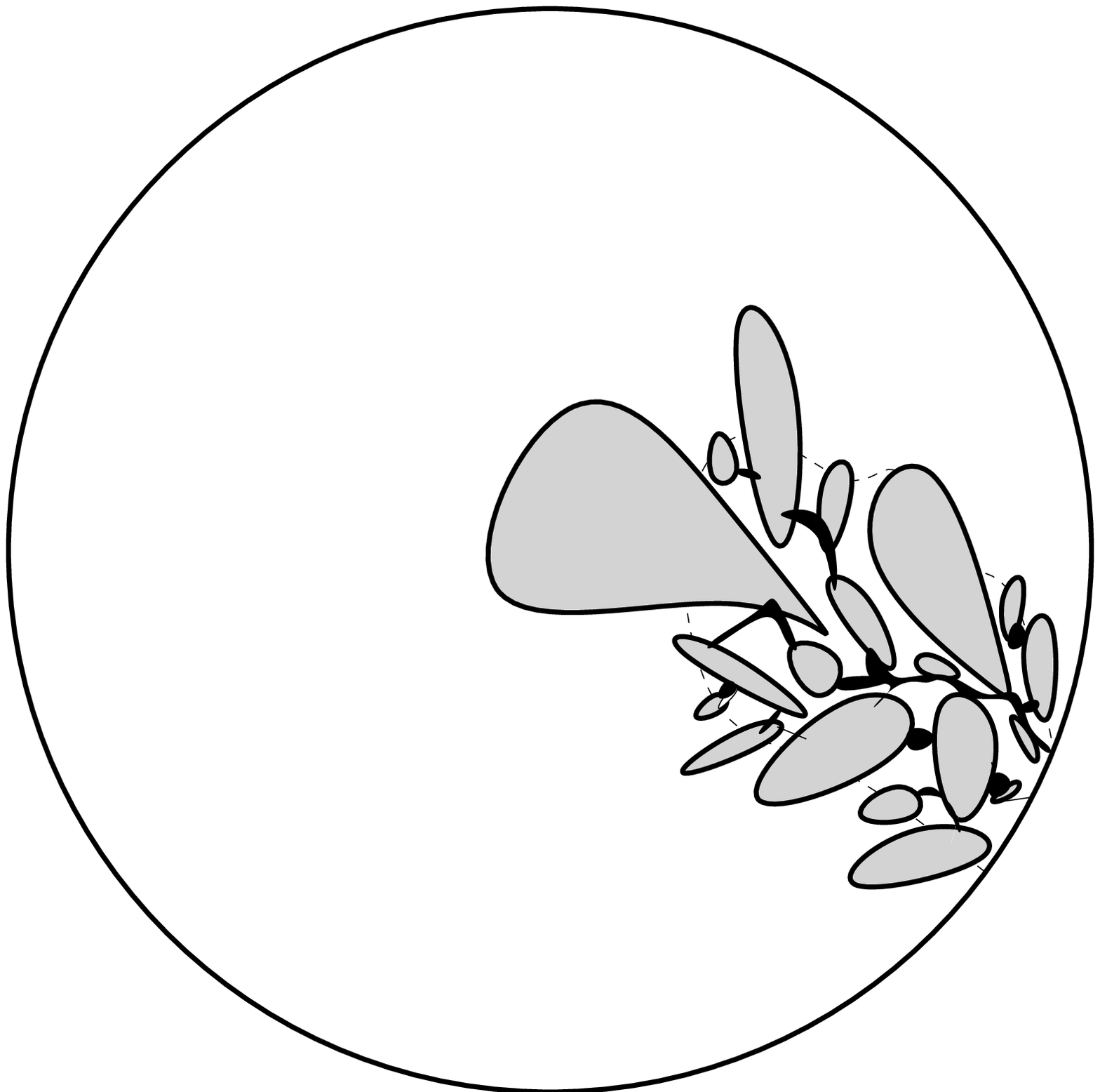}
\caption {A domain $A$, sketch of a completed $\eps$-admissible exploration of $(\Gamma, A)$}
\end {center}
\end {figure}
\medbreak

Our goal is to show that when $\eps$ gets smaller, the set $U_{N_0}$ is close to $\tilde U$, where
$\tilde U$ is the connected component containing the origin of  $U \setminus \cup_{i \in I(U) } \gamma_i $ (here $U= \U \setminus A$).

The local finiteness condition implies that the boundary of $\tilde U$ consists of points that are either on $\partial U$ or on some loop $\gamma_j$ (in this case, we say that this loop $\gamma_j$ contributes to this boundary).

\begin {lemma}
For  every $\alpha> 0$, there exists $\eps_0'= \eps_0' ( \Gamma, \alpha, A)$ such that for all $\eps \le \eps_0'$, every loop of diameter greater than $\alpha$ that contributes to $\partial \tilde U$ is discovered by any $\eps$-admissible exploration of $(\Gamma, A)$.
\end {lemma}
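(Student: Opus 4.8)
I would argue by contradiction, reducing everything to a violation of the maximality in the definition of an $\eps$‑admissible exploration via Lemma~\ref{UaUb}. Fix $\alpha>0$. By local finiteness only finitely many loops of $\Gamma$ have diameter larger than $\alpha$, so it suffices to treat one such loop $\gamma$ that contributes to $\partial\tilde U$ and then take $\eps_0'$ to be the minimum of the thresholds obtained. Assume, for contradiction, that for some sequence $\eps_n\downarrow 0$ there are $\eps_n$‑admissible explorations $(\Phi^{(n)}_j)_{j\le N^{(n)}}$ of $(\Gamma,A)$ that do not discover $\gamma$ (so $\gamma\subseteq U^{(n)}_j$ for all $j$, and the procedure stops at $N^{(n)}$ because no $D(y,\eps_n)$ has its $\Phi^{(n)}_{N^{(n)}}$‑preimage inside $A$). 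Write $U^{(n)}:=U^{(n)}_{N^{(n)}}$, $\Phi^{(n)}:=\Phi^{(n)}_{N^{(n)}}$, $R^{(n)}$ for the union of the explored half‑discs, and $\hat A^{(n)}:=\Phi^{(n)}(\interior(A)\cap U^{(n)})$. Note first that, from Lemma~\ref{UaUb} applied in the contrapositive direction (to the closure of the relevant slit‑like component of $\interior(A)\cap U^{(n)}$, whose complement in $\U$ is simply connected by the hypotheses on $A$), the stopping condition forces $u(\hat A^{(n)})\to 0$. The whole point will be to contradict this.

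\textbf{Topological consequences of $\gamma$ not being discovered.} At each exploration step one deletes from the current domain a connected set — the half‑disc together with the loops meeting it and the interiors they seal off — that touches the current boundary; an easy induction then shows every $U^{(n)}_j$ is simply connected, so $\gamma\subseteq U^{(n)}$ gives $\overline{\interior(\gamma)}\subseteq U^{(n)}$. Moreover no explored half‑disc can meet $\overline{\interior(\gamma)}$: being connected and reaching the current boundary, which is disjoint from the compact set $\overline{\interior(\gamma)}\subseteq U^{(n)}_j$, it would otherwise have to cross $\gamma$, i.e.\ $\gamma$ would be discovered; hence $R^{(n)}\cap\overline{\interior(\gamma)}=\emptyset$. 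Next, since $\gamma$ contributes to $\partial\tilde U$ one first checks that $\gamma\in I(U)$, hence (by hypothesis~(4)) $\gamma$ crosses into $\interior(A)$, so $\interior(\gamma)\cap\interior(A)$ contains a fixed ball $B(q_0,\delta_0)$; and there is a fixed path from the origin to a point of $\gamma$ staying in $\tilde U$. Because $\tilde U$ avoids $A\supseteq R^{(n)}$ and avoids every closed loop‑interior indexed by $I(U)$, which contains all discovered loops together with their interiors, this path and hence $\gamma$ lies in $U^{(n)}$ (so $\gamma$ is genuinely not cut off either), and in fact the fixed domain $\tilde U$ is contained in $U^{(n)}$ for every $n$. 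Finally, writing $\rho_1:=\dist(\overline{\interior(\gamma)},\partial\U)$ and $\delta_1:=\min(\delta_0,\rho_1)>0$, one gets $\dist(q_0,\partial U^{(n)})\ge\delta_1$ for all $n$: indeed $\partial U^{(n)}\subseteq\partial\U\cup\overline{R^{(n)}}\cup\bigcup_{\text{disc.}}\overline{\interior(\gamma_i)}$, and each of these three sets is disjoint from $B(q_0,\delta_0)$ — the last one because disjoint non‑nested Jordan curves have disjoint closed interiors.

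\textbf{Passing to the limit and the reduction.} The inverse maps $\psi^{(n)}:=(\Phi^{(n)})^{-1}:\U\to U^{(n)}$ fix $0$ and satisfy $\psi^{(n)\prime}(0)\ge\dist(0,\gamma_0)>0$ (Schwarz, using $U^{(n)}\supseteq\interior(\gamma_0)$) and $\psi^{(n)\prime}(0)\le 1$, so after extracting a subsequence they converge locally uniformly to a univalent $\psi$; set $U_\infty:=\psi(\U)\supseteq\tilde U$. By Koebe's distortion and quarter theorems together with $\dist(q_0,\partial U^{(n)})\ge\delta_1$, the image $\Phi^{(n)}(B(q_0,\delta_1))$ contains a Euclidean ball about $w_0^{(n)}:=\Phi^{(n)}(q_0)$ of radius at least $c_0\,(1-|w_0^{(n)}|)^{3}$ for a constant $c_0=c_0(\delta_1)>0$; so $\hat A^{(n)}$ always contains such a ``conformally definite'' ball. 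The claim I want is
\[
u(\hat A^{(n)})\ \ge\ c\ >\ 0\qquad\text{for all large }n,
\]
with $c$ depending only on $\Gamma,\gamma,A$. Granting it, Lemma~\ref{UaUb} with $\eps_0':=\eps_0(c)$ produces, for every $\eps_n\le\eps_0'$, a half‑disc $D(y,\eps_n)\subseteq\hat A^{(n)}$, i.e.\ with $(\Phi^{(n)})^{-1}(D(y,\eps_n))\subseteq\interior(A)\subseteq A$, contradicting maximality; and it also contradicts $u(\hat A^{(n)})\to 0$, which is the contradiction we want. (When $w_0^{(n)}$ stays bounded away from $\partial\U$ the definite ball itself is of fixed size and one is already done; the content is the case $w_0^{(n)}\to\partial\U$.)

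\textbf{The main obstacle.} Everything above is bookkeeping with Koebe's theorem and the restriction structure; the crux is the last display, i.e.\ ruling out that the unexplored part of $A$ — while containing the definite ball — degenerates into a thin tentacle near $\partial\U$ (where $u$ could be tiny even though $\hat A^{(n)}$ does reach $\partial\U$ along the image of $\partial A$). I would handle this by controlling the kernel $U_\infty=\psi(\U)$: the inclusion $\tilde U\subseteq U_\infty$ together with the uniform bound $\dist(q_0,\partial U^{(n)})\ge\delta_1$ (which keeps $q_0$, hence a fixed neighbourhood of it, inside the Carathéodory kernel) and the fact that the explored region only grows, force $\interior(A)\cap U^{(n)}$ to retain a component that is ``thick up to $\partial U^{(n)}$'' in a scale‑invariant sense. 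Quantitatively, one fixes — using only $\gamma$, $A$ and $\tilde U$ — a crosscut of the relevant component of $\interior(A)\cap U$ separating $q_0$ from the origin‑side, shows it survives (as a crosscut lying in the unexplored part of $A$) in $U^{(n)}$ for all $n$, and checks via the normal‑family limit that $\Phi^{(n)}$ sends it to a crosscut of $\U$ cutting off a cap whose harmonic measure seen from $0$ is bounded below; since that cap lies inside $\hat A^{(n)}$, it gives $u(\hat A^{(n)})\ge c$. Making this crosscut choice and tracking it through the limit is where the real work lies.
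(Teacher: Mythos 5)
The overall strategy — show that as long as $\gamma$ remains undiscovered one can always push the $\eps$-admissible exploration one more step, contradicting maximality of $N_0$, and that the relevant threshold is a uniform lower bound $u(\hat A^{(n)})\ge c>0$ to feed into Lemma~\ref{UaUb} — is the same as the paper's. But you explicitly stop short of proving that bound: you write that ``making this crosscut choice and tracking it through the limit is where the real work lies,'' and that unproved estimate is precisely the content of the lemma, so this is a genuine gap, not a cosmetic omission. Moreover, the bridge you propose from ``a cap of harmonic measure $\ge h$ seen from $0$ lies in $\hat A^{(n)}$'' to ``$u(\hat A^{(n)})\ge c$'' is not automatic: in the definition of $u(V)$ you need one of the two \emph{random domains} $U^a,U^b$ to be contained in $V$, and two Brownian paths from $0$ that merely terminate on the cap's boundary arc need not cut off a region inside the cap (they may wander back and forth across the crosscut). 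Turning a harmonic-measure lower bound into a lower bound on $u(V)$ requires forcing the two paths to link up and seal off the cap, and that is exactly the part you have not done.

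The paper avoids all of your subsequence/normal-family/Carathéodory-kernel machinery and proves the uniform bound directly. Since $\gamma$ contributes to $\partial\tilde U$, it meets both $\interior(A)$ and $\U\setminus A$, so one can fix once and for all three discs $d_1,d_2,d_3\subset\interior(\gamma)$ with $d_1\subset\U\setminus A$ and $\overline{d_2}\subset d_3\subset A$. As long as $\gamma$ has not been discovered at step $n$, one has $\gamma\subset U_n$ (hence $\overline{\interior(\gamma)}\subset U_n$), and there is an $n$-independent probability $u>0$ that two independent Brownian motions from $0$ (i) reach $d_1$ while staying in $\tilde U$, (ii) pass into $d_2$ through $\interior(\gamma)$, (iii) disconnect $d_2$ from $\partial d_3$ inside $d_3\setminus d_2$ — which forces them to meet there and seal off a region — and (iv) exit to $\partial\U$ staying inside $A$. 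On this event, one of the stopped components $U^a_n,U^b_n$ lies in $U_n\cap A$, giving $u\bigl(\Phi_n(\interior(A)\cap U_n)\bigr)\ge u$ for every $n$, and Lemma~\ref{UaUb} then supplies the admissible point $y_{n+1}$ whenever $\eps\le\eps_0(u)$. Disconnecting $d_2$ from $\partial d_3$ inside the fixed annulus is precisely the device that makes the Brownian paths glue together and cut off a subregion of $A$; this is the ingredient your crosscut sketch is missing, and it makes the probability bound manifestly independent of $n$ without any limiting argument.
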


\begin {proof}
Suppose now that $\gamma_j$ is a loop in $\Gamma$ that contributes to the boundary of $\tilde U$. Our assumptions on $\Gamma$ and $A$ ensure that $\gamma_j$ therefore intersects both the interior of $A$ and $\U \setminus A$.  This implies that we can define three discs $d_1$, $d_2$ and $d_3$ in the interior of $\gamma_j$ such that $d_1 \subset \U \setminus A$, and $\overline {d_2} \subset d_3 \subset A$.

Suppose that for some $n\le N_0$, this loop $\gamma_j$ has not yet been discovered at step $n$.
Since $\gamma_j \cap \partial \tilde U \not= \emptyset$ and
$\tilde U \subset U_n$, we see that $\gamma_j \subset U_n$. Since this loop has a positive diameter, and since $\Gamma$ is locally finite, we can conclude that
with a positive probability $u$ that depends on $(\Gamma, A, \gamma_j, d_1, d_2, d_3)$, two Brownian motions $B^1$ and $B^2$  started from the origin
behave as follows:
\begin {figure}[htbp]
\begin {center}
\includegraphics [width=3.2in]{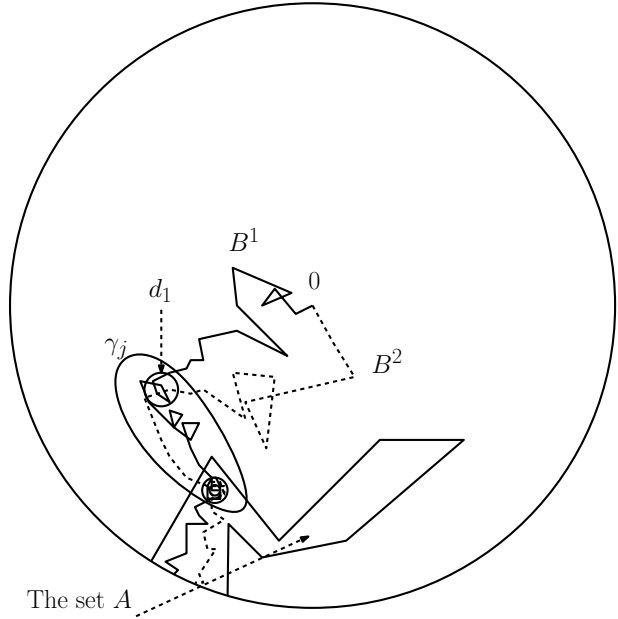}
\caption {The loop $\gamma_j$, the three discs and the Brownian motions (sketch)}
\end {center}
\end {figure}
\begin {itemize}
\item They both enter $d_1$ without hitting $A$ or $\partial U$ or any of the other loops $\gamma_i$ for $i \in I(U)$.
\item They both subsequently enter  $d_2$ without going out of $\interior( \gamma_j)$.
\item They both subsequently disconnect $d_2$ from the boundary of $d_3$ before hitting it.  (This in particular guarantees that the curves hit one another within
the annulus $d_3 \setminus d_2$.)
\item They both subsequently hit $\partial \U$ without going out of $A$.
\end {itemize}
This shows that one of the sets $U^a$ and $U^b$ as defined before Lemma \ref {UaUb} is contained in $A$ with probability at least $u$.
In fact, if we stop the two Brownian motions at their first exit of $U_n$ instead on the hitting time of $\partial \U$, the same phenomenon will hold: One of the two sets
$U^a_n$ and $U^b_n$ (with obvious notation) will be contained in $U_n \cap A$ with probability at least $u$. By conformal invariance of planar Brownian motion, if we apply Lemma \ref {UaUb} to the conformal images of these two Brownian motions under $\Phi_n$, we get that if $\eps$ is chosen to be sufficiently small, then it is
always possible to  find an $\eps$-admissible point $y_{n+1}$. Hence, $N_0 > n$, i.e.\ $n$ is not  the final step of the exploration.

As a consequence, we see that the loop $\gamma_j$ is certainly discovered before $N_0$, i.e., that $\gamma_j \subset \U \setminus  U_{N_0}$, for all $\eps \le \eps_0' ( \gamma_j, \Gamma, A)$. The lemma follows because for each positive $\alpha$, there are only finitely many loops of diameter greater than $\alpha$ in $\Gamma$.
\end {proof}

Loosely speaking, this lemma tells us that indeed, $U_{N_0}$ converges to $\tilde U$ as $\eps \to 0$. We now make this statement more precise, in terms of the conformal maps $\Phi_{N_0}$ and $\tilde \Phi$, where
 $\tilde \Phi$ denotes the conformal map from $\tilde U$ onto $\U$ with $\tilde \Phi(0)=0$ and $\tilde\Phi' (0) >0$.
 Let us first note that
$\tilde U \subset U_{N_0}$  because the construction of $U_{N_0}$ implies that before $N_0$ one can only discover loops that intersect $A$.

Let us now consider a two-dimensional Brownian motion $B$ started from the origin, and define $T$ (respectively $\tilde T$) the first time at which it exits $U$ (resp. $\tilde U$). Let us make a fifth assumption on $A$ and $\Gamma$:
\begin {itemize}
\item {5.}
Almost surely,  $B_T \in \partial \U  \cup ( \cup_j  \interior ( \gamma_j ) )$.
\end {itemize}

Note that this is indeed almost surely the case for a CLE (because $\Gamma$ is then independent of $B_T$ so that $B_T$ is a.s.\ in the interior of some loop if it is not on $\partial \U$).
This assumption implies  that almost surely, either $\tilde T < T$ (and $B_{\tilde T}$ is on the boundary of some loop of
positive diameter) or $B_T=B_{\tilde T} \in \partial \U$. The previous result shows that if $\hat T$ denotes the exit time of $U_{N_0}$ (for some given $\eps$-admissible exploration), then ${\hat T} = {\tilde T}$ for all small enough $\eps$.

It therefore follows that
$\Phi_{N_0}$ converges to $\tilde \Phi$ in the sense that for all proper compact subsets $K$ of $\U$, the functions $\Phi_{N_0}^{-1}$ converge uniformly to $\tilde \Phi^{-1}$ in $K$ as $\eps \to 0$. We shall use this notion of convergence on various occasions throughout the paper.
 Note that this it corresponds to the convergence with respect to a distance $d$, for instance
$$ d ( \varphi_1, \varphi_2) = \sum_{n \ge 1} 2^{-n} \max_{ |z| \le 1 - 1/n}  \| \varphi_1^{-1} (z) - \varphi_2^{-1} (z) \|.$$
We have therefore shown that:
\begin {lemma}
For each given loop configuration and $A$ (satisfying conditions 1-5), $d ( \Phi_{N_0} , \tilde \Phi)$ tends to $0$ as $\eps \to 0$,
uniformly with respect to all $\eps$-admissible explorations of $(\Gamma, A)$.
\end {lemma}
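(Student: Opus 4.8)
The plan is to reduce everything to two ingredients, one of which has just been established. The first is the inclusion $\tilde U\subset U_{N_0}$, valid for all $\eps$ and all $\eps$-admissible explorations, which was noted above. The second is the convergence of the normalized derivative at the origin, namely $\Phi_{N_0}'(0)\to\tilde\Phi'(0)$ as $\eps\to0$, uniformly over $\eps$-admissible explorations. I claim these two facts together already force the locally uniform convergence $\Phi_{N_0}^{-1}\to\tilde\Phi^{-1}$. Indeed, because $\tilde\Phi^{-1}(\U)=\tilde U\subset U_{N_0}$, the composition $G:=\Phi_{N_0}\circ\tilde\Phi^{-1}$ is a well-defined holomorphic self-map of $\U$ with $G(0)=0$, so by the Schwarz lemma $G'(0)=\Phi_{N_0}'(0)/\tilde\Phi'(0)\in(0,1]$ (in particular $\Phi_{N_0}'(0)\le\tilde\Phi'(0)$ always), and by the second ingredient $G'(0)\to1$. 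A quantitative Schwarz lemma then applies: since $G(z)/z$ is holomorphic on $\U$ with modulus $\le1$ and value $G'(0)$ at $0$, Harnack's inequality applied to the positive harmonic function $1-\Re(G(z)/z)$, combined with $|G(z)/z|\le1$, gives, for each fixed $r<1$, an estimate $\sup_{|z|\le r}|G(z)-z|\le\omega(r,1-G'(0))$ with $\omega(r,\delta)\to0$ as $\delta\to0$. A near-identity univalent map has a near-identity inverse on a slightly smaller disc, so $\Phi_{N_0}^{-1}=\tilde\Phi^{-1}\circ G^{-1}$ is uniformly close, on each proper compact subset $K$ of $\U$, to $\tilde\Phi^{-1}$, with error depending only on $K$ and on $1-G'(0)$ (here one uses that $\tilde\Phi^{-1}$ is a fixed conformal map, hence has a fixed modulus of continuity on $K$). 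This is precisely $d(\Phi_{N_0},\tilde\Phi)\to0$, and the uniformity over $\eps$-admissible explorations is inherited verbatim from that of the second ingredient.

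So the real work is in proving the second ingredient. Here I would use the Brownian motion $B$ from the origin already introduced, together with the classical identity relating the conformal radius to Brownian exit: for any simply connected $V\subset\U$ with $0\in V$ and $\varphi_V:V\to\U$ normalized by $\varphi_V(0)=0$, $\varphi_V'(0)>0$, one has $-\log\varphi_V'(0)=\E_0[\log|B_{T_V}|]$, where $T_V$ is the exit time of $B$ from $V$ --- because $z\mapsto\log|\varphi_V(z)|-\log|z|$ is harmonic on $V$, equals $-\log|z|$ on $\partial V$, and equals $\log\varphi_V'(0)$ at the origin. Applying this to $V=U_{N_0}$ and to $V=\tilde U$ and subtracting,
$$\log\frac{\Phi_{N_0}'(0)}{\tilde\Phi'(0)}=\E_0\bigl[\log|B_{\tilde T}|-\log|B_{\hat T}|\bigr],$$
where $\hat T,\tilde T$ are the exit times of $B$ from $U_{N_0},\tilde U$. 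The preceding analysis has shown that for almost every Brownian path there is a threshold $\eps_1>0$, depending only on the path (through the diameter of the loop of $\partial\tilde U$ hit at time $\tilde T$, via the previous lemma) and not on the exploration, such that for all $\eps\le\eps_1$ and every $\eps$-admissible exploration $\hat T=\tilde T$; moreover in that regime the exit positions coincide, $B_{\hat T}=B_{\tilde T}$ (if $B_{\tilde T}$ lies on a loop, that loop has been discovered hence removed, so $B_{\tilde T}\in\partial U_{N_0}$; otherwise $B_{\tilde T}=B_T\in\partial\U$ and $\hat T=T=\tilde T$). Hence the integrand above tends to $0$ almost surely. It is also uniformly bounded, because $0\in\tilde U\subset U_{N_0}$ forces $|B_{\tilde T}|$ and $|B_{\hat T}|$ both to lie in $[\delta_0,1]$ with $\delta_0:=\dist(0,\partial\tilde U)>0$ a fixed constant. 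Dominated convergence gives $\Phi_{N_0}'(0)/\tilde\Phi'(0)\to1$, and since neither $\eps_1$ nor $\delta_0$ depends on the exploration this holds uniformly over all $\eps$-admissible explorations.

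The step I expect to be the main obstacle is exactly this passage from ``Brownian motion eventually stops distinguishing $U_{N_0}$ from $\tilde U$'' to convergence of the conformal maps, and the device above --- routing it through the single scalar $\Phi_{N_0}'(0)$, the reciprocal conformal radius at $0$, which the Brownian exit identity controls directly --- is what makes it clean; the inclusion $\tilde U\subset U_{N_0}$ is then what upgrades convergence of conformal radii to genuine locally uniform convergence via the Schwarz lemma. (Equivalently, one is proving Carathéodory kernel convergence $U_{N_0}\to\tilde U$ with base point $0$.) Everything else --- the diameter lemma, the inclusion $\tilde U\subset U_{N_0}$, and assumption 5 with its consequence that $\hat T=\tilde T$ for small $\eps$ --- has been established already and is used here as a black box; the only genuinely routine point left to check is that $U_{N_0}$ is simply connected so that $\Phi_{N_0}$ makes sense, which is built into the definition of an $\eps$-admissible exploration.
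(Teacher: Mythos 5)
Your proof is correct and takes essentially the same route the paper sketches: it relies on the same two facts --- the inclusion $\tilde U\subset U_{N_0}$ and the almost sure coincidence $\hat T=\tilde T$ for small $\eps$ furnished by assumption 5 and the preceding diameter lemma --- and then supplies the bridge to Carath\'eodory convergence that the paper leaves implicit, by routing through the Brownian-exit representation $-\log\Phi'(0)=\E_0[\log|B_{T}|]$ of the conformal radius together with a quantitative Schwarz lemma. This is a clean and complete way to justify the paper's ``it therefore follows'' step, and the uniformity over $\eps$-admissible explorations is handled correctly (the path-dependent threshold $\eps_1$ is exploration-independent, and the integrand is uniformly bounded by $2|\log\dist(0,\partial\tilde U)|$, so dominated convergence applies).
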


Suppose now that $\gamma_0 $ intersects the interior of $A$. Exactly the same arguments show that there exists $\eps_1 = \eps ( \Gamma, A)$ such that for all
``$\eps$-admissible choices'' of the $y_j$'s for $\eps \le \eps_1$, one discovers $\gamma_0$ during the exploration (and this exploration is then stopped in this way).

\subsection {Discovering random configurations along some given line}

For each small $\delta$, we define the wedge
$W_\delta= \{ u e^{i \theta} \ : u \in (0,1) \hbox { and } |\theta| \le \delta \}$. For each positive $r$, let  $\tilde W_r$ denote the image of the positive half disc
$\{ z \in \U \ : \ \Re (z) > 0 \}$ under the Moebius transformation of the unit disc with $\psi (1)=1$, $\psi (-1) = -1$ and $\psi' (1) =r$.
Note that $r \mapsto \tilde W_r$ is continuously increasing on $(0,1]$ from $\tilde W_{0+} = \{ 1 \}$ to the positive half-disc $\tilde W_1$.
For all non-negative integer $k \le 1/ \delta$, we then define
$$
A^{\delta, k} = W_\delta \cap \tilde W_{k \delta}.
$$
Suppose that $\delta$ is fixed, and that $\Gamma$ is a loop-configuration satisfying conditions 1-5 for all set $A^{\delta, k}$ for $k \le K$,
where
$$
K = K ( \Gamma, \delta) = \max \{k \ : \ \gamma_0 \cap A_k^\delta = \emptyset \}.
$$
We are going to define the conformal maps $\tilde \Phi^{\delta, 1}, \ldots, \tilde \Phi^{\delta, K}$ corresponding to the conformal map $\tilde \Phi$ when $A$ is respectively equal to $A^{\delta, 1}, \ldots, A^{\delta, K}$.

For each given $\delta$ and $\eps$, it is possible to define an $\eps$-admissible chain of explorations of $\Gamma$ and $A^{\delta, 1}, A^{\delta, 2}, \ldots $ as follows:
Let us first start with an $\eps$-admissible exploration of $(\Gamma, A^{\delta,1})$. If $K \ge 1$, then such an exploration does not encounter $\gamma_0$, and we then continue to explore until we get an $\eps$-admissible exploration of $(\Gamma , A^{\delta,2})$, and so on, until the last value $K'$ of $k$ for which the exploration of $(\Gamma, A^{\delta, k})$ fails to discovers $\gamma_0$.
In this way, we define conformal maps
$$ \tilde \Phi_\eps^{\delta, 1}, \ldots , \tilde \Phi_\eps^{\delta, K'}$$
corresponding to the sets discovered at each of these $K'$ explorations. Note that $K' \ge K$.
One can then also start to explore the set $A^{\delta, K'+1}$ until one actually discovers the loop $\gamma_0$.

\begin {figure}[htbp]
\begin {center}
\includegraphics [width=3in]{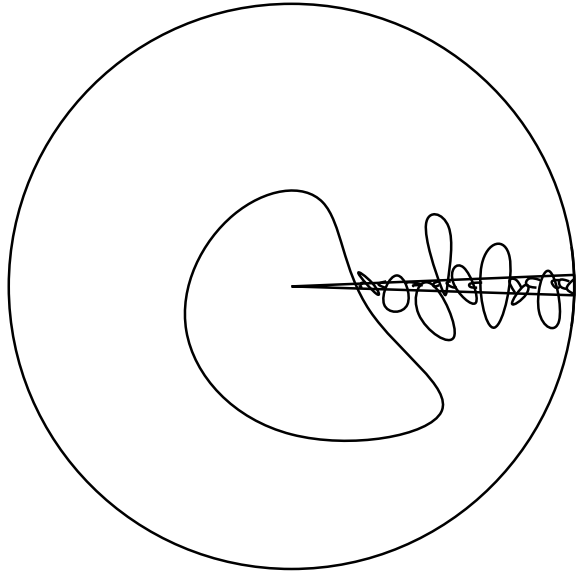}
\caption {An exploration-chain and $\gamma_0$ (sketch)}
\end {center}
\end {figure}

This procedure therefore defines a single $\eps$-admissible exploration (via some sequence $(\Phi_n, y_n)$), that explores the sets $A^{\delta, j}$'s in an ordered way, and finally stops at some step $N$,
i.e., the last step before one actually discovers $\gamma_0$. We call this an $\eps$-admissible exploration chain of $A^{\delta, 1}, A^{\delta, 2}, \ldots$.
Our previous results show that uniformly over all such $\eps$-admissible exploration-chains (for each given $\Gamma$ and $\delta$):
\begin {itemize}
\item $K'=K$ for all sufficiently small $\eps$.
\item $\lim_{\eps \to 0} (\tilde \Phi_\eps^{\delta, 1}, \ldots , \tilde \Phi_\eps^{\delta, K'}) =( \tilde \Phi^{\delta, 1}, \ldots, \tilde \Phi^{\delta, K})$.
\end {itemize}

We now suppose that  $\Gamma$ is a {\em random} loop-configuration. Then, for each $\delta$, $K=K(\Gamma, \delta)$ is random. We assume that for each given $\delta$,
the conditions 1-5 hold almost surely for each of the $K$ sets $A_1^\delta, \ldots, A_K^\delta$.
The previous results therefore hold almost surely;
this implies for instance that for each $\eta >0$, there exists $\eps_2 ( \delta)$ such that for all such $\eps$-admissible exploration-chain of $(\Gamma, A_1^\delta, A_2^\delta,  \ldots)$ with $\eps \le \eps_2 (\delta)$,
$$
P ( K'=K \hbox { and } d ( \tilde \Phi_\eps^{\delta, K}, \tilde \Phi^{\delta, K}) < \eta ) \ge 1- \eta.
$$

We will now wish to let $\delta$ go to $0$ (simultaneously with $\eps$, taking $\eps(\delta)$ sufficiently small)
so that we will (up to small errors that disappear as $\eps$ and $\delta$ vanish) just explore the loops that intersect the
segment $[0,1]$ ``from $1$ to $0$'' up to the first point at which it meets $\gamma_0$. We therefore define
$$ R = \max \{ r \in [0,1] \ , \ r \in \gamma_0 \}.$$
We define the open set $\hat U$ as the connected component containing the origin of the set obtained by removing from
$\U \setminus [R, 1]$ all the loops that intersect $(R, 1]$. Note that $\gamma_0 \subset \hat U \cup \{R \}$.
We let $\hat \Phi$ denote the conformal map from $\hat U$ onto $\U$ such that $\hat \Phi (0)=0$ and $\hat \Phi' (0) > 0$.
We also define
$ \hat y = \hat \Phi (R)$.

\begin {figure}[htbp]
\begin {center}
\includegraphics [width=3in]{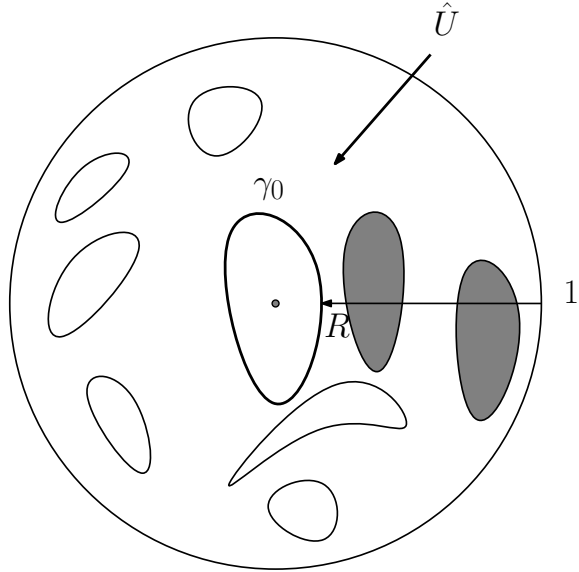}
\caption {Exploring up to $\gamma_0$ (sketch)}
\end {center}
\end {figure}

\begin {proposition}
\label {pwellchosen}
For a well-chosen function $\eps_3= \eps_3 ( \delta)$ (that depends on the law of $\Gamma$ only), for any choice of $\eps(\delta)$-admissible exploration-chain of the random loop-configuration $\Gamma$ and
$A^{\delta,1}, \ldots$ with $\eps(\delta) \le \eps_3 (\delta)$,  the random pair $(\Phi_N, y_N)$ converges almost surely to the pair $(\hat \Phi, \hat y)$
as $\delta \to 0$.
\end {proposition}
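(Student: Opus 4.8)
\medskip
\noindent\emph{Strategy.} We control separately two errors: an ``$\eps$-error'' (the finite-$\eps$ exploration versus its $\eps\to0$ limit, for fixed $\delta$) and a ``$\delta$-error'' (that $\eps\to0$ limit versus $(\hat\Phi,\hat y)$). Fix a small $\delta>0$. On the event $K'=K$, the wedge-pieces touched by the exploration-chain up to the terminal step $N$ are precisely the $A^{\delta,k}$ with $k\le K+1$, whose union is $B^\delta:=W_\delta\cap\tilde W_{(K+1)\delta}$, and at step $N$ exactly one further loop, namely $\gamma_0$, is about to be removed. Accordingly, let $V^\delta$ be the connected component of the origin in the set obtained from $\U$ by deleting $B^\delta$ and every loop of $\Gamma$ \emph{other than} $\gamma_0$ that meets $B^\delta$; let $\Phi^\delta:V^\delta\to\U$ be normalized at the origin; and let $R^\delta$ be a point of $\gamma_0\cap B^\delta$ (any two such points become arbitrarily close as $\delta\to0$, so the choice will be asymptotically irrelevant). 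The two claims are: (i) for every $\eta>0$ there is $\eps_2(\delta,\eta)>0$, depending only on the law of $\Gamma$, so that any $\eps$-admissible exploration-chain with $\eps\le\eps_2(\delta,\eta)$ satisfies $P\big(K'=K,\ d(\Phi_N,\Phi^\delta)<\eta,\ |y_N-\Phi^\delta(R^\delta)|<\eta\big)\ge1-\eta$; and (ii) for a.e.\ realization of $\Gamma$, $\Phi^\delta\to\hat\Phi$ in the metric $d$ and $\Phi^\delta(R^\delta)\to\hat y$ as $\delta\to0$. Granting these, a diagonal choice of $\eps_3$ and Borel--Cantelli finish the proof.

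\medskip
\noindent\emph{Step (ii): a deterministic fact about $\Gamma$.} Since $R=\max\{r\in[0,1]:r\in\gamma_0\}$ we have $R\in\gamma_0$ and $\gamma_0\cap(R,1]=\emptyset$. The map $r\mapsto\tilde W_r$ is continuous and increasing from $\{1\}$ to the right half-disc, while $W_\delta\downarrow[0,1]$; since $K=K(\Gamma,\delta)$ is the largest $k$ with $\gamma_0\cap A^{\delta,k}=\emptyset$, one checks from the thin-wedge shape of $W_\delta$ that $B^\delta\to[R,1]$ in the Hausdorff sense and that $\gamma_0\cap\overline{B^\delta}\to\{R\}$, hence $R^\delta\to R$. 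Now invoke local finiteness: for each $\alpha>0$ only finitely many loops have diameter $>\alpha$, and a loop $\gamma_j$ with $j\neq0$ can neither contain $R$ (loops are disjoint and $R\in\gamma_0$) nor touch $\partial\U$, so if it meets $[R,1]$ it meets it on a compact subset of $(R,1]$ and therefore meets $B^\delta$ for all small $\delta$ (the converse being clear). Consequently the Carath\'eodory kernel of $V^\delta$ at the origin is exactly $\hat U$, so $\Phi^\delta\to\hat\Phi$ in $d$ by the kernel convergence theorem; and since $\gamma_0\cap\partial V^\delta$ shrinks to $\{R\}$, the point $\Phi^\delta(R^\delta)$ converges to $\hat\Phi(R)=\hat y$.

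\medskip
\noindent\emph{Step (i), and assembly.} For a realization of $\Gamma$ satisfying conditions 1--5 for the sets $A^{\delta,k}$, $k\le K$, the lemmas of Section~\ref{explodet} and the chain-exploration discussion preceding this Proposition give, for $\eps$ small, that $K'=K$ and that the completed explorations converge to $(\tilde\Phi^{\delta,1},\dots,\tilde\Phi^{\delta,K})$. For the \emph{terminal} exploration of $A^{\delta,K+1}$ run up to step $N$, the discovery lemma of Section~\ref{explodet} (applied with $A=A^{\delta,K+1}$) shows that every loop of diameter $>\alpha$ other than $\gamma_0$ contributing to $\partial V^\delta$ is discovered before $N$, while $\gamma_0\subset U_N$ throughout; hence $U_N\to V^\delta$ and $\Phi_N\to\Phi^\delta$ in $d$. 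Moreover the little set $\Phi_N^{-1}(D(y_N,\eps))$ whose deletion reveals $\gamma_0$ must meet $\gamma_0$, so as $\eps\to0$ it shrinks onto a boundary point of $V^\delta$ lying in $\gamma_0\cap\overline{B^\delta}$, which forces $y_N=\Phi_N\big(\Phi_N^{-1}(y_N)\big)$ to approach $\Phi^\delta(R^\delta)$. These are deterministic statements holding for a.e.\ $\Gamma$ with a sample-dependent threshold on $\eps$; the usual measure-continuity argument (the one producing the estimate stated just before this Proposition, using that conditions 1--5 hold a.s.) converts them into claim (i). To conclude, fix any sequence $\delta_n\downarrow0$ and set $\eps_3(\delta_n):=\eps_2(\delta_n,2^{-n})$, extended to a decreasing function of $\delta$; for any $\eps(\delta_n)$-admissible exploration-chains with $\eps(\delta_n)\le\eps_3(\delta_n)$, the events $\{K'\neq K\}\cup\{d(\Phi_N,\Phi^{\delta_n})\ge2^{-n}\}\cup\{|y_N-\Phi^{\delta_n}(R^{\delta_n})|\ge2^{-n}\}$ have probability $\le2^{-n}$, so by Borel--Cantelli they a.s.\ occur only finitely often; combined with Step (ii) this gives $(\Phi_N,y_N)\to(\hat\Phi,\hat y)$ almost surely, the statement ``as $\delta\to0$'' being understood along such sequences.

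\medskip
\noindent\emph{Main difficulty.} The crux is the $\eps$-limit of the terminal exploration in Step (i): unlike the completed explorations treated in Section~\ref{explodet}, the exploration of $A^{\delta,K+1}$ is stopped one step before $\gamma_0$ is reached, so $\Phi_N$ is a ``hybrid'' map, and one must show both that it converges to $\Phi^\delta$ (and not to the map that would also remove $\gamma_0$) and that the breakthrough shape $\Phi_N^{-1}(D(y_N,\eps))$ is asymptotically localized near $R$, so that $y_N\to\hat y$; this needs the discovery lemma of Section~\ref{explodet} together with a little care at the stopping step. A secondary, purely measure-theoretic point is that conditions 1--5 must hold a.s.\ simultaneously for all the sets $A^{\delta_n,k}$ with $k\le K(\Gamma,\delta_n)$ and $n\ge1$; this follows from the properties of CLEs established in Section~\ref{s.CLEproperty}, namely that no loop of a CLE locally touches a radius, or a boundary arc of some $\tilde W_r$, without crossing it.
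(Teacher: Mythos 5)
Your Step~(ii) is fine and captures the first line of the paper's argument (that $\Phi^{\delta,K}\to\hat\Phi$ pathwise). The problem is in Step~(i), which is where you yourself locate the ``crux'', and the argument you offer there does not work.

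You introduce $V^\delta$ (the domain obtained by removing $B^\delta=A^{\delta,K+1}$ and \emph{every} loop other than $\gamma_0$ meeting $B^\delta$) and claim $\Phi_N\to\Phi^\delta$ as $\eps\to0$ for fixed $\delta$, citing the discovery lemma of Section~\ref{explodet} with $A=A^{\delta,K+1}$. This lemma is not available here for two separate reasons. First, it is proved under conditions 1--5, including condition~3 that $\gamma_0\cap A=\emptyset$; for $A=A^{\delta,K+1}$ this fails by the very definition of $K$. Second, the lemma concerns a \emph{completed} $\eps$-admissible exploration (run until no admissible point remains), whereas here the exploration is stopped at the random step $N$, just before $\gamma_0$ is revealed. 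And the claim itself is false for a general $\eps$-admissible chain: once the exploration of $A^{\delta,K'}$ is finished, nothing forces a given chain to discover the remaining loops of $\Gamma$ meeting $A^{\delta,K+1}\setminus A^{\delta,K}$ before it hits $\gamma_0$; a chain that ``heads straight for'' $\gamma_0$ will have $U_N$ converging (as $\eps\to0$) to something strictly larger than $V^\delta$. So you cannot establish the intermediate $\eps\to0$ limit you need.

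The paper's route is different precisely to avoid this. Instead of taking an $\eps\to0$ limit of the \emph{truncated} exploration, it compares $\Phi_N$ to $\tilde\Phi_\eps^{\delta,K}$ (the state after the \emph{completed} exploration of $A^{\delta,K}$, to which the discovery lemma \emph{does} apply since $\gamma_0\cap A^{\delta,K}=\emptyset$) and bounds the gap $d(\Phi_N,\tilde\Phi_\eps^{\delta,K})$ directly: since $\gamma_0$ intersects $A^{\delta,K+1}\setminus A^{\delta,K}$, local finiteness and disjointness of the CLE loops force the second-largest loop meeting $A^{\delta,K+1}\setminus A^{\delta,K}$ to have diameter tending to $0$ as $\delta\to0$, so the extra sets and loops removed between step ``end of $A^{\delta,K}$'' and step $N$ have vanishingly small effect, uniformly over admissible chains once $\eps(\delta)$ is small enough. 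The convergence of $\Phi_N$ then comes from $\Phi_N\approx\tilde\Phi_\eps^{\delta,K}\to\Phi^{\delta,K}\to\hat\Phi$, with the approximation controlled by $\delta$, not by an $\eps\to0$ limit at fixed $\delta$. For $y_N$, the paper uses that the little set $\Phi_N^{-1}(D(y_N,\eps))$ meets $A^{\delta,K'+1}\setminus A^{\delta,K'}$ and hence is Euclidean-close to $R$, and then runs a harmonic-measure argument on $\gamma_0$ (if $|\Phi_N(R)-y_N|$ did not vanish, one would exhibit two disjoint arcs of $\gamma_0$ of positive harmonic measure at $0$, both joining $R$ to points of $[R,1]$, contradicting simplicity of $\gamma_0$ and $\gamma_0\cap(R,1]=\emptyset$). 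Your statement that $\Phi_N^{-1}(D(y_N,\eps))$ ``shrinks onto a boundary point of $V^\delta$ in $\gamma_0\cap\overline{B^\delta}$, which forces $y_N$ to approach $\Phi^\delta(R^\delta)$'' glosses over exactly the part where the harmonic-measure argument is needed to rule out $y_N$ tracking a different point of $\gamma_0$ than $\Phi_N(R)$ does. So Step~(i) has a genuine gap, both in the $\Phi_N$ convergence and in the $y_N$ convergence, and the fix is essentially the ``small second-largest loop'' estimate and the harmonic-measure argument that the paper supplies.
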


\begin {proof}
Note first that our assumptions on $\Gamma$ imply that almost surely $\Phi^{\delta, K} \to \hat \Phi$ as $\delta \to 0$ (this is a statement about $\Gamma$ that does not involve explorations).

We know that $\gamma_0$ intersects $A^{\delta, K+1} \setminus A^{\delta, K}$. The local finiteness of $\Gamma$ and the fact that any two loops are disjoint therefore implies that the diameter of the second largest loop (after $\gamma_0$) of $\Gamma$ that intersects this set almost surely tends to $0$ as $\delta \to 0$.
This in particular implies that almost surely, the distance between $\Phi_N$ and  $\tilde \Phi_\eps^{\delta, K}$  tends to $0$ as $\delta \to 0$ (uniformly with respect to the choice of the exploration, as long as $\eps(\delta)$ tends to $0$ sufficiently fast).

Recall finally that for each given $\delta$, $\tilde \Phi_\eps^{\delta, K} \to \Phi^{\delta, K}$ as $\eps \to 0$. Hence, if  $\eps(\delta)$ is chosen small enough, the map $\Phi_N$ indeed converges almost surely to $\hat \Phi$ as $\delta \to 0$.

It now remains to show that $y_N \to \hat y$.
Note that $R \in U_N$ (because at that step, $\gamma_0$ has not yet been discovered),
that $R \in A^{\delta, K'+1} \setminus A^{\delta, K'}$ (with high probability, if $\eps(\delta)$ is chosen
to be small enough).
On the other hand, the definition of the exploration procedure and of $K'$ shows that
$$\Phi_N^{-1} (D(y_N, \eps)) \cap (A^{\delta, K'+1} \setminus A^{\delta, K'}) \not= \emptyset$$
so that if we choose $\eps (\delta)$ small enough, then the Euclidean distance between ${\Phi_N^{-1} (D(y_N, \eps))}$ and $R$
tends to $0$ almost surely.

Let us look at the situation at step $N$: The loop $\Phi_N ( \gamma_0)$ in the unit disc is intersecting $D(y_N, \eps)$ (by definition of $N$), and it contains also the point $\Phi_N (R)$ (because $R \in \gamma_0$).
Suppose that
$|\Phi_N (R) - y_N |$ does not almost surely tend to $0$ (when $\delta \to 0$); then, with positive probability,
 we could find a sequence $\delta_j \to 0$ such that
$\Phi_N(R)$ and $y_N$ converge to different points on the unit circle along this subsequence.
In particular, the harmonic measure at the origin of any of the two parts of the loop
between the moment it visits $\Phi_N(R)$ and the $\eps$-neighborhood of $y_N$ in $\U$ is bounded away from $0$.
Hence, this is also true for the preimage $\gamma_0$ under $\Phi_N$: $\gamma_0$ contains  two disjoint paths from $R$ to
$\Phi_N^{-1} (D( y_N, \eps))$ such that their harmonic measure at $0$ in $\U$ is bounded away from $0$.
Recall that
$ \Phi_N^{-1} ( D( y_N, \eps)) \subset A^{\delta, K'+1} $.
In the limit when $\delta \to 0$, we therefore end up with a contradiction,
as we have two parts of $\gamma_0$ with positive harmonic measure from the origin, that join $R$ to some point of $[R,1]$, which is not possible
because $\gamma_0 \cap (R,1] \not= \emptyset$ and $\gamma_0$ is a simple loop.
Hence, we can conclude that $ |\Phi_N (R) - y_N | \to 0$ almost surely.

Finally, let us observe that $\Phi_N (R) \to \hat \Phi (R)$ almost surely (this follows for instance from the fact that a continuous path that stays inside $\gamma_0$
and joins the origin to $R$ stays both in
all $U_N$'s and in $\hat U$).
It follows that $y_N$ converges almost surely to $\hat y$.
\end {proof}

\section {The one-point pinned loop measure}
\label {S.5}

\subsection {The pinned loop surrounding the origin in $\U$}

We will use the previous exploration mechanisms in the context of CLEs.
It is natural to define the notion of Markovian explorations of a CLE.
Suppose now that $\Gamma$ is a CLE in the unit disc and that $\eps$ is fixed.
When $\gamma_0 \cap D(1, \eps) = \emptyset$, define just as before the set $U_1$ and the conformal map $\Phi_1$ obtained by
discovering the set of loops $\Gamma_1$ of $\Gamma$ that intersect $D(y_0, \eps)$.
Then we choose $y_1$ and proceed as before, until we discover (at step $N+1$) the loop $\gamma_0$ that surrounds the origin.
We say that the exploration is Markovian if for each $n$, the choice of $y_n$ is measurable with respect to the $\sigma$-field generated
by $\Gamma_1, \ldots, \Gamma_n$, i.e., the set of all already discovered loops.

A straightforward consequence of the CLE's restriction property is that for each $n$, conditionally on $\Gamma_1, y_1,  \ldots, \Gamma_n, y_n$ (and $n \le N$), the law of the set of loops of $\Gamma$ that stay in $U_n$  is that of a CLE in $U_n$.
In other words, the image of this set of loops under $\Phi_n$ is independent of  $\Gamma_1, y_1,  \ldots, \Gamma_n, y_n$ (on the event $\{ n \le N \})$.
In fact,  we could have used this independence property as a definition of Markovian explorations (it would allow extra randomness in the choice of the sequence $y_n$).

In other words, an exploration is Markovian if we can choose $y_n$ as we wish using the information
 about the loops that have already been discovered, but we are not allowed to use any information about the yet-to-be-discovered loops.
This ensures that one obtains an iteration of i.i.d.\ explorations as argued in subsection \ref {heuristics}.
In particular, if an exploration is Markovian, the random variable $N$ is geometric
$$
P ( N \ge n ) = P ( \gamma_0 \cap D(1, \eps) = \emptyset)^n,
$$
and $y_N^{-1} \Phi_N^{-1} ( \gamma_0 )$ is distributed according to the conditional law of $\gamma_0$ given $\{\gamma \cap D(1, \eps) \not= \emptyset\}$.

Recall that a CLE is a random loop configuration such that for any given $\delta$ and $k \le 1/\delta$, almost surely, all loops that intersect $A^{\delta, k}$ also intersect its interior.   We can therefore apply Proposition~\ref {pwellchosen}, and use
Markovian $\eps(\delta)$-admissible successive explorations of
$A^{\delta,1}, A^{\delta, 2},  \ldots$ Combining this with our description of  the
conditional law of $\gamma_0$ given $\{\gamma \cap D(1, \eps) \not= \emptyset\}$, we get the following result:

\begin {proposition}
\label {p44}
When $\eps \to 0$, the law of $\gamma_0$ conditioned on the event $\{\gamma_0 \cap D(1,\eps) \not= \emptyset \}$ converges to the law of
$\hat y^{-1}  \hat \Phi (\gamma_0) $ (using for instance the weak convergence with respect to the Hausdorff topology on compact sets).
\end {proposition}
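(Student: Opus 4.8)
The plan is to combine the two ingredients already assembled in the preceding sections: the geometric convergence statement from Proposition~\ref{pwellchosen}, and the algebraic identification of the conditional law of $\gamma_0$ given $\{\gamma_0 \cap D(1,\eps) \not= \emptyset\}$ as the $y_N^{-1}\Phi_N^{-1}$-image of a loop sampled from a CLE in $U_N$ (this identification was recorded just above the statement of the proposition). First I would fix a small $\delta>0$, and then fix $\eps = \eps(\delta) \le \eps_3(\delta)$ small enough that the conclusion of Proposition~\ref{pwellchosen} applies along a Markovian $\eps(\delta)$-admissible exploration-chain of $\Gamma$ and $A^{\delta,1}, A^{\delta,2}, \ldots$, as allowed by the remark that CLEs satisfy conditions~1--5 for all the sets $A^{\delta,k}$ with $k\le K(\Gamma,\delta)$. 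Along such a chain, the exploration stops at the random step $N$ just before $\gamma_0$ is discovered, and by the Markovian property the conditional law of $\gamma_0$ given the discovered data, pushed forward by $y_N^{-1}\Phi_N$, is exactly the law of the loop surrounding the origin in an \emph{independent} CLE in the disc $\U$ (equivalently: $y_N^{-1}\Phi_N^{-1}$ applied to the loop $\gamma^{U_N}_0$ surrounding the origin in a CLE in $U_N$, which is distributed as the conditional law of $\gamma_0$ given $\{\gamma_0\cap D(1,\eps)\not=\emptyset\}$).

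Next I would pass to the limit. By Proposition~\ref{pwellchosen}, as $\delta\to 0$ (with $\eps=\eps(\delta)$ shrinking fast enough) the pair $(\Phi_N, y_N)$ converges almost surely to $(\hat\Phi, \hat y)$. The loop $\gamma_0$ itself is, of course, fixed — it does not depend on $\delta$ or $\eps$ — and we always have $\gamma_0 \subset U_N$, with $\gamma_0 \subset \hat U \cup \{R\}$ in the limit. Since $\Phi_N^{-1} \to \hat\Phi^{-1}$ uniformly on compact subsets of $\U$ and $y_N \to \hat y$, the conditional law of $y_N^{-1}\Phi_N(\gamma_0)$ — which is the conditional law of $\gamma_0$ given $\{\gamma_0\cap D(1,\eps)\not=\emptyset\}$ — converges in the Hausdorff topology on compact sets to the law of $\hat y^{-1}\hat\Phi(\gamma_0)$. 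Here I would be slightly careful about what "the loop $\gamma_0$" means on the two sides: on the left $\gamma_0$ is the loop surrounding the origin in $\Gamma$ conditioned on touching $D(1,\eps)$, while on the right $\gamma_0$ is the (unconditioned) loop surrounding the origin in $\Gamma$, pushed forward by $\hat y^{-1}\hat\Phi$; the bookkeeping device that identifies them is precisely the distributional equality $y_N^{-1}\Phi_N^{-1}(\gamma^{U_N}_0) \stackrel{d}{=} (\gamma_0 \mid \gamma_0\cap D(1,\eps)\not=\emptyset)$ from the Markovian exploration, combined with the fact that $\gamma^{U_N}_0$ under $\Phi_N^{-1}$ is just $\gamma_0$ again (the loop around $0$ in the CLE restricted to $U_N$ is the same as the loop around $0$ in the original $\Gamma$, by the restriction property, since $\gamma_0 \subset U_N$).

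To turn this into a genuine weak-convergence statement I would argue as follows. Fix a bounded continuous functional $F$ on the space of compact sets with the Hausdorff metric. For each $\delta$, $\E[F(y_N^{-1}\Phi_N(\gamma_0))]$ equals $\E[F(\gamma_0)]$ evaluated under the conditional law $P(\,\cdot \mid \gamma_0\cap D(1,\eps(\delta))\not=\emptyset)$. On the other hand, since $(\Phi_N,y_N)\to(\hat\Phi,\hat y)$ a.s.\ and $\gamma_0$ is a fixed compact set at positive distance from $0$ and from $\partial\U$, continuity of the map $(\varphi,y,\gamma)\mapsto y^{-1}\varphi(\gamma)$ (in the relevant topology, using that $\Phi_N^{-1}$ and $\hat\Phi^{-1}$ are conformal maps $\U\to U_N, \hat U$ and $\gamma_0$ lies in a fixed compact subset of the image) together with bounded convergence gives $\E[F(y_N^{-1}\Phi_N(\gamma_0))] \to \E[F(\hat y^{-1}\hat\Phi(\gamma_0))]$. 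Combining the two displays yields $\E_{(\gamma_0\mid \gamma_0\cap D(1,\eps(\delta))\not=\emptyset)}[F] \to \E[F(\hat y^{-1}\hat\Phi(\gamma_0))]$ along the sequence $\eps(\delta)\downarrow 0$; a routine monotonicity/coupling argument then upgrades this to convergence along the full family $\eps\to 0$ rather than just the subsequence $\eps(\delta)$, because the conditional law $P(\,\cdot\mid\gamma_0\cap D(1,\eps)\not=\emptyset)$ varies continuously (indeed monotonically, via nested events) in $\eps$.

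I expect the main obstacle to be the last point — controlling the convergence \emph{uniformly in the choice of Markovian exploration} and along \emph{all} $\eps\to 0$, not merely along the special subsequence $\eps=\eps(\delta)$ produced by Proposition~\ref{pwellchosen}. The resolution I have in mind is that the left-hand side, the conditional law of $\gamma_0$ given $\{\gamma_0\cap D(1,\eps)\not=\emptyset\}$, does not depend on the exploration at all (it is an intrinsic object), so once we know it has a limit along one subsequence, and we separately know (from the nesting of the conditioning events $\{\gamma_0\cap D(1,\eps)\not=\emptyset\}$ as $\eps$ decreases, together with $P(\gamma_0\cap D(1,\eps)\not=\emptyset)\to 0$) that the family is "Cauchy" in the weak topology, the full limit exists and equals the subsequential one. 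The secondary technical point — that $\hat U$ is a domain, $\hat\Phi$ a well-defined conformal map, and $\gamma_0\subset\hat U\cup\{R\}$ so that $\hat\Phi(\gamma_0)$ makes sense as a loop in $\overline\U$ — is handled by the construction of $\hat U$, $R$ and $\hat y$ given just before Proposition~\ref{pwellchosen}, which is why that proposition was set up in exactly that form.
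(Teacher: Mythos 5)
Your proof is correct and follows essentially the same route as the paper's (very terse) proof: apply Proposition~\ref{pwellchosen} along Markovian $\eps(\delta)$-admissible exploration-chains to obtain $(\Phi_N, y_N)\to(\hat\Phi,\hat y)$ a.s., and combine this with the distributional identity that $y_N^{-1}\Phi_N(\gamma_0)$ has the conditional law of $\gamma_0$ given $\{\gamma_0\cap D(1,\eps)\neq\emptyset\}$.

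Two small remarks. First, the parenthetical ``equivalently: $y_N^{-1}\Phi_N^{-1}$ applied to $\gamma^{U_N}_0$\dots'' has the direction of the map backwards: $\Phi_N$ maps $U_N$ onto $\U$, so it is $y_N^{-1}\Phi_N(\gamma_0)$ that is relevant, as in your later (and correct) display $\E[F(y_N^{-1}\Phi_N(\gamma_0))]$. Second, the ``nesting of the conditioning events'' does \emph{not} give any monotonicity or Cauchy-type control on the conditional laws $P(\,\cdot\mid\gamma_0\cap D(1,\eps)\neq\emptyset)$ --- conditioning on nested events yields no comparison between the resulting measures --- so that part of your fallback heuristic is not sound as stated. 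But it is also unnecessary: Proposition~\ref{pwellchosen} applies to \emph{any} choice $\eps(\delta)\le\eps_3(\delta)$, so given any sequence $\eps_n\to 0$ you can choose $\delta_n\to 0$ slowly enough that $\eps_n\le\eps_3(\delta_n)$ and apply the proposition directly; combined with the observation (which you make correctly) that the conditional law of $\gamma_0$ at scale $\eps_n$ is intrinsic, i.e.\ independent of the exploration used to compute it, this already gives convergence along the full family $\eps\to 0$ without any Cauchy argument.
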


Note that local finiteness of the CLE ensures that $\hat \Phi ( \gamma_0)$ is a simple loop in $\overline \U $  that intersects $\partial \U$ only at $\hat y$, so that
$\hat y^{-1} \hat \Phi (\gamma_0)$ is indeed a loop in $\overline \U$ that touches $\partial \U$ only at $1$.

\begin {figure}[htbp]
\begin {center}
\includegraphics [width=2.3in]{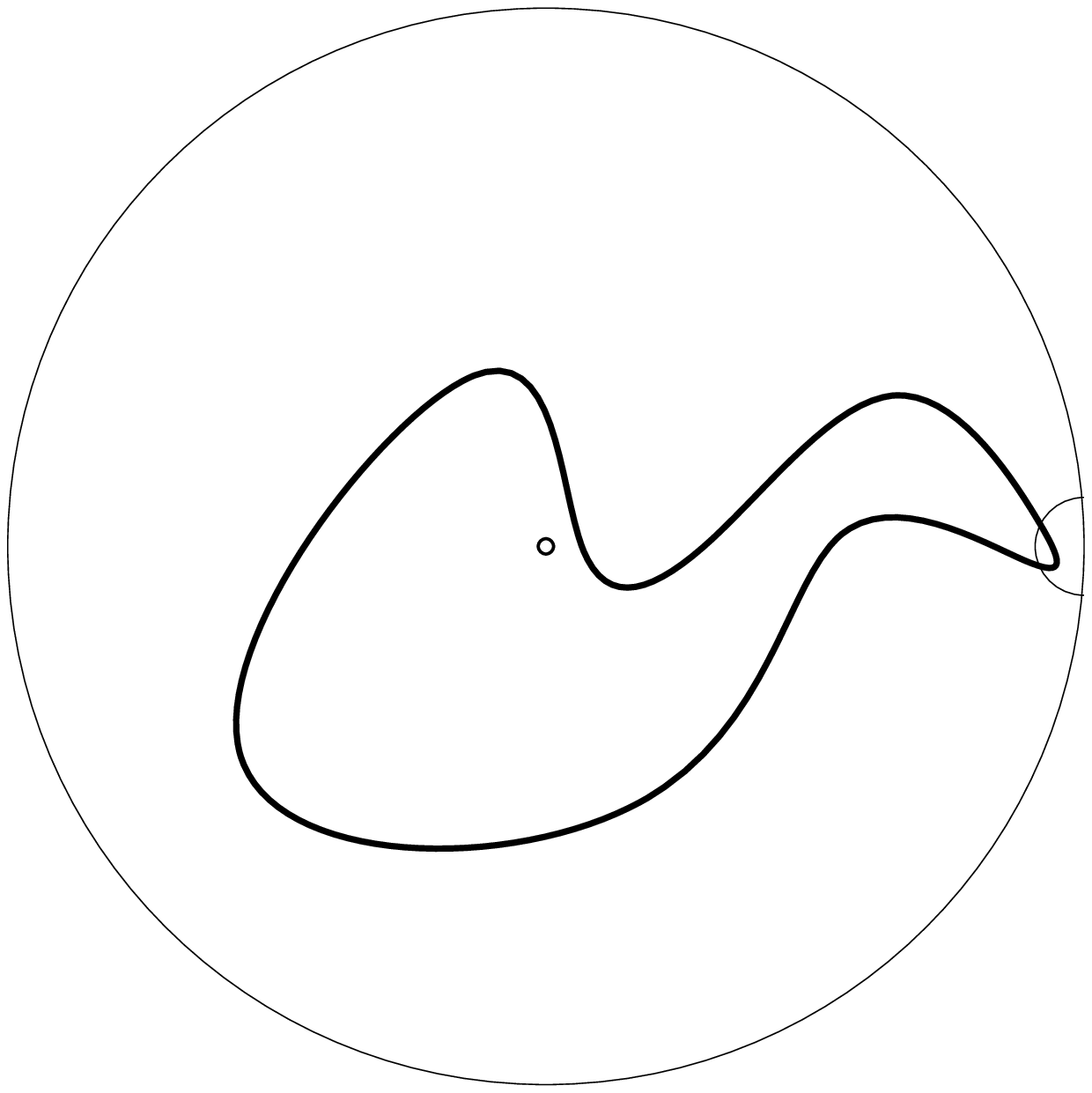}
\includegraphics [width=2.3in]{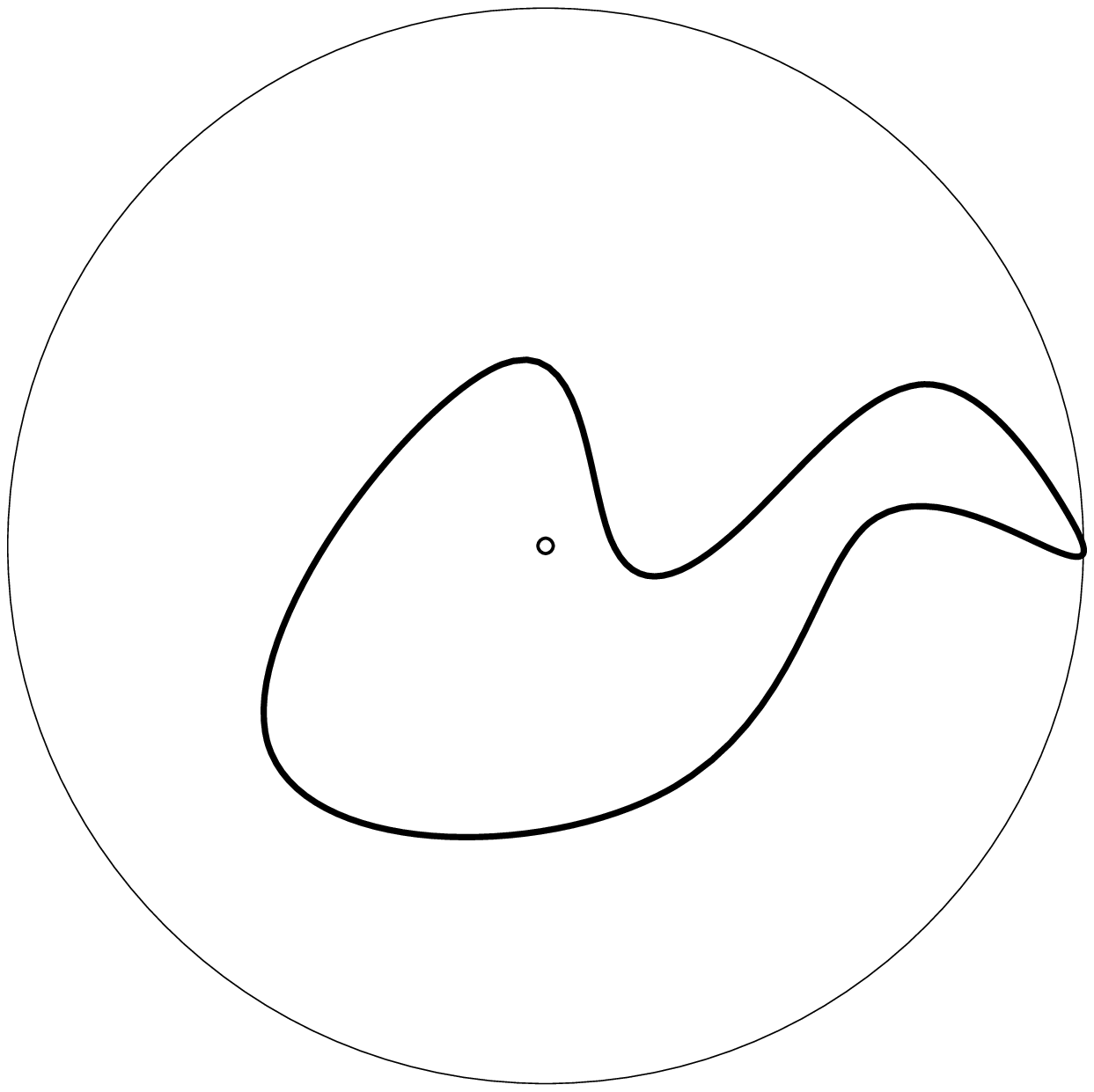}
\caption {The law of $\gamma_0$ conditioned on the event $\{\gamma_0 \cap D(1,\eps) \not= \emptyset \}$ converges (sketch)}
\end {center}
\end {figure}

This limiting law will inherit from the CLE various interesting properties. The loop $\gamma_0$ in the CLE can be discovered along the ray $[1,0]$ in the unit disc as in this proposition, but one could also have chosen any other smooth continuous simple curve from $\partial \U$ to $0$ instead of that ray and discovered it that way. This fact should correspond to some property of the law of this pinned loop.
Conformal invariance of the CLE will also imply some conformal invariance properties of this pinned loop.
 The goal of the coming sections is to derive and exploit some of these features.

\subsection {The infinite measure on pinned loops in $\HH$}

We are now going to associate to each CLE  a natural measure on loops that can loosely be described as the law of a loop ``conditioned to touch a given boundary point''. In the previous subsection, we have constructed a probability measure on loops in the unit disc that was roughly the law of $\gamma_0$ (the loop in the CLE that surrounds the origin)  ``conditioned to touch the boundary point $1$''.
We will extend this to an infinite measure on loops that touch the boundary at one point; the measure will be infinite because we will not prescribe the ``size'' of the boundary-touching loop; it can be viewed as a CLE ``excursion measure'' (``bubble measure'' would also be a possible description).
\begin {figure}[htbp]
\begin {center}
\includegraphics [width=2.3in]{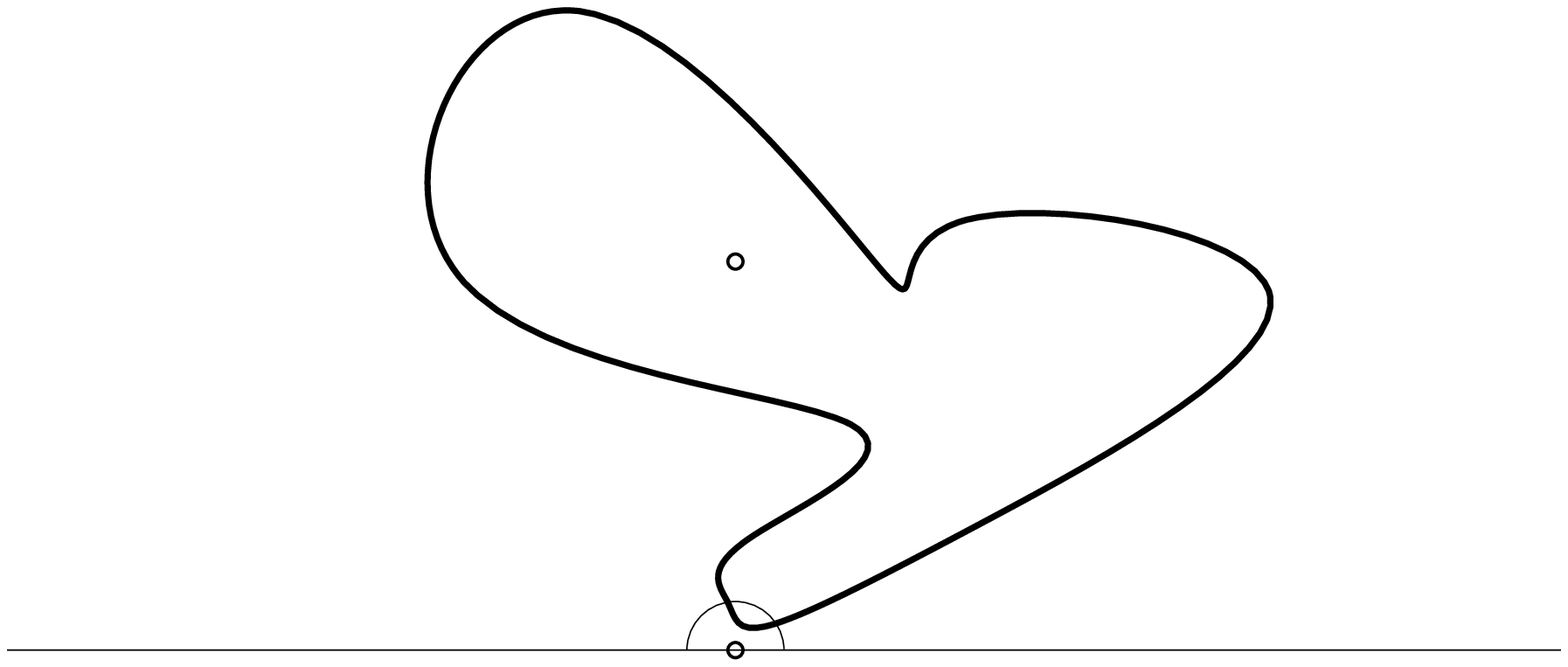}
\includegraphics [width=2.3in]{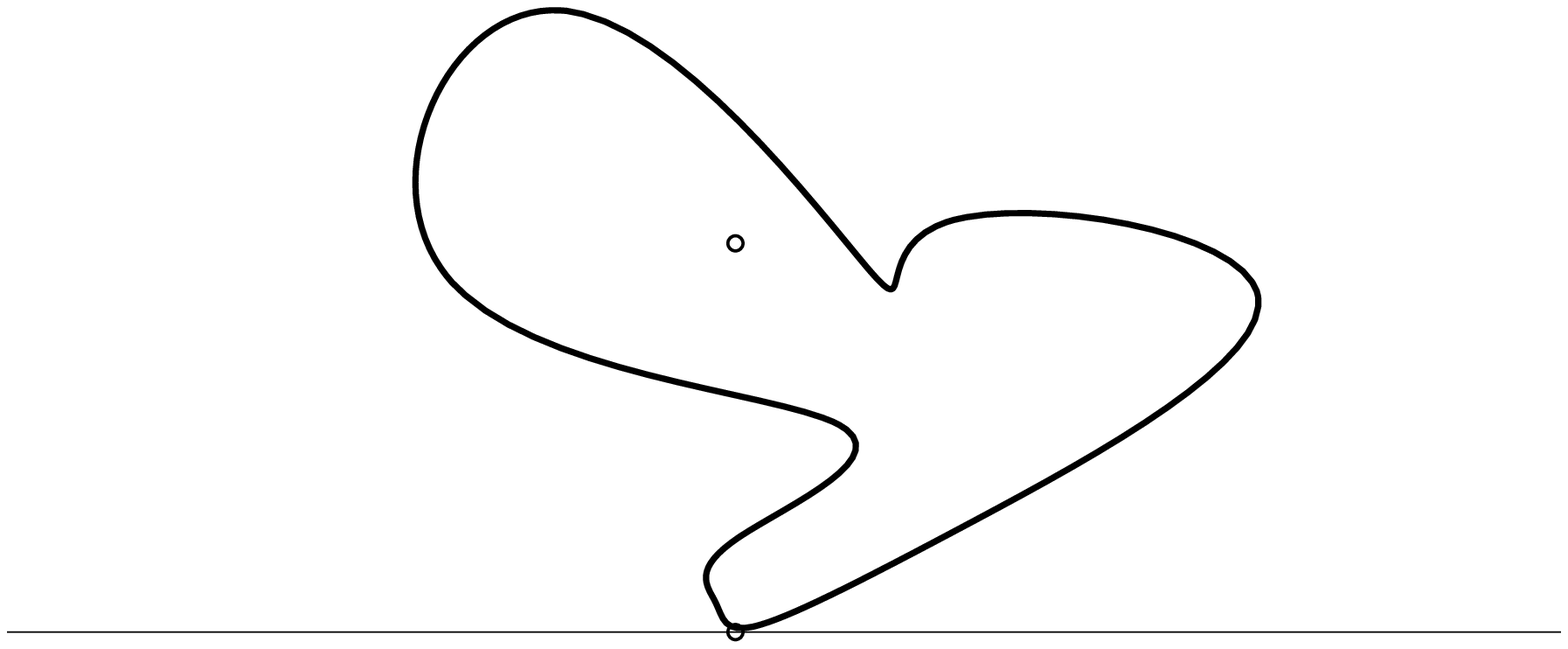}
\caption {The construction of $\mu^i$ (sketch)}
\end {center}
\end {figure}
We find it more convenient at this stage to work in the upper half-plane rather than the unit disc because scaling arguments will be easier to describe
in this setting.
We therefore first define the probability measure $\mu^i$ as the image of the law of $\hat \Phi (\gamma_0) $ under the conformal map from $\U$ onto the half-plane that maps $0$ to $i$, and $\hat y$ to $0$ (we use the notation $\mu^i$ instead of $P^i$ as in the introduction, because we will very soon be handling measures that are not probability measures).
In other words, the probability measure $\mu^i$ is the limit as $\eps \to 0$ of the law of the loop $\gamma (i)$ that surrounds $i$ in a CLE in the upper half-plane, conditioned by the fact that it intersects  the set $C_\eps$ defined by
$$ C_\eps = \{ z \in \HH \ : \ | z | = \eps \} .$$

For a loop-configuration $\Gamma$ in the upper half-plane and $z \in \HH$, we denote by $\gamma (z)$ the loop of $\Gamma$ that surrounds $z$ (if this  loop exists).

When $z = i \lambda $ for $\lambda > 0 $, scale invariance of the CLE shows that the limit as $\eps \to 0$ of the conditional law of $\gamma ( i \lambda) $ given that it intersects the disc of radius $\lambda \eps$ exists, and that it is just the image of $\mu^i$ under scaling.

Let us denote by $u(\eps)$ the probability that the loop $\gamma (i)$ intersects the disc of radius $\eps$ around the origin in a CLE.
The description of the measure $\mu^i$ (in terms of $\hat \Phi$) derived in the previous section shows that at least for almost all $\lambda$ sufficiently close to one, the loop that surrounds $i$ also surrounds $i \lambda$ and $i / \lambda$ with probability at least $1/2$ under $\mu^i$, and that $i / \lambda$ as well as $\lambda i$ are a.s.  not on $\gamma (i)$ (when $\lambda$ is fixed).

Let $O_i$ denote the interior of the loop $\gamma (i)$.
We know that
$$ \lim_{\eps \to 0}
\frac { P \left( \lambda i \in O_i \hbox { and } {\gamma(i) \cap C_{\lambda \eps} \not= \emptyset} \right) }{ u (\lambda \eps)} =
 \mu^i ( \lambda i \in O_i) .
$$
On the other hand, the scaling property of the CLE shows that when $\eps \to 0$,
\begin {eqnarray*}
\lefteqn {\frac { P \left( \lambda i  \in O_i  \hbox { and } {\gamma(i) \cap C_{\lambda \eps} \not= \emptyset} \right) }{ u (\lambda \eps)}}\\
&=& \frac { P \left(  i  \in O_{\lambda i}  \hbox { and } {\gamma(\lambda i) \cap C_{\lambda \eps} \not= \emptyset} \right) }{ u (\lambda \eps)} \\
&=&
\frac { P \left( i / \lambda \in O_i  \hbox { and } {\gamma(i) \cap C_{\eps} \not= \emptyset} \right) }{ u ( \eps)} \times
 \frac {u(\eps)}{u(\lambda \eps)}
 \\
 &\sim&
 \mu^i (  i/ \lambda \in O_i) \times   \frac {u(\eps)}{u(\lambda \eps)}
 \end {eqnarray*}
Hence, for  all $\lambda$ sufficiently close to $1$, we conclude that
$$ \lim_{\eps \to 0} \frac {u (\lambda \eps)}{ u (\eps)}
=
\frac {  \mu^i (  i/ \lambda \in O_i) } { \mu^i ( \lambda i \in O_i )}.
$$
If we call $f(\lambda)$ this last quantity, this identity clearly implies that this convergence in fact holds for all positive $\lambda $, and that $f( \lambda \lambda') = f (\lambda) f( \lambda')$. Furthermore, we see that
$f (\lambda) \to 1$ as $\lambda \to 1$. Hence:

\begin {proposition}
There exists a $\beta \ge 0$ ($\beta$ cannot be negative since $\eps \mapsto u(\eps)$ is non-decreasing) such that for all positive $\lambda$,
$$
f(\lambda) = \lim_{\eps \to 0} \frac { u( \lambda \eps)} {u(\eps)} = \lambda^{\beta}.
$$
\end {proposition}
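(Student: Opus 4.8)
The plan is to extract the claimed power-law behavior from the multiplicative functional equation that has just been established, namely that $f(\lambda) := \lim_{\eps \to 0} u(\lambda\eps)/u(\eps)$ exists for all $\lambda > 0$, satisfies $f(\lambda\lambda') = f(\lambda)f(\lambda')$, and satisfies $f(\lambda) \to 1$ as $\lambda \to 1$. The standard route is to transfer this to an additive Cauchy-type equation: set $g(t) = \log f(e^t)$ for $t \in \R$. Then $g$ is a real-valued function on $\R$ with $g(s+t) = g(s) + g(t)$ and, since $f$ is continuous at $1$, $g$ is continuous at $0$; an additive function continuous at one point is continuous everywhere, and a continuous additive function on $\R$ is linear, so $g(t) = \beta t$ for some real constant $\beta$, i.e.\ $f(\lambda) = \lambda^\beta$.

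Two things still need to be checked. First, continuity of $f$ at $1$ was only asserted above; I would spell it out by noting that from the formula $f(\lambda) = \mu^i(i/\lambda \in O_i)/\mu^i(\lambda i \in O_i)$ (valid for $\lambda$ near $1$), and the fact, recorded just before the proposition, that under $\mu^i$ the points $\lambda i$ and $i/\lambda$ lie in $O_i$ with probability bounded away from $0$ and are almost surely not on $\gamma(i)$ for fixed $\lambda$, the numerator and denominator both tend to $\mu^i(i \in O_i) = 1$ as $\lambda \to 1$ by dominated convergence (the events $\{\lambda i \in O_i\}$ increase to a full-measure event as $\lambda \downarrow 1$ and similarly from the other side, using that $\gamma(i)$ is a simple loop with $i$ in its open interior). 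Hence $f(\lambda) \to 1$. Second, the sign of $\beta$: since $\eps \mapsto u(\eps)$ is non-decreasing (a larger target disc is easier to hit), we have $u(\lambda\eps) \ge u(\eps)$ for $\lambda \ge 1$, so $f(\lambda) \ge 1$ for $\lambda \ge 1$, forcing $\beta \ge 0$.

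I would also remark on why the pointwise identity $f(\lambda) = \lim u(\lambda\eps)/u(\eps)$, initially derived only for $\lambda$ near $1$, extends to all $\lambda > 0$: any $\lambda > 0$ can be written as a finite product $\lambda = \lambda_1 \cdots \lambda_n$ with each $\lambda_k$ in the neighborhood of $1$ where the identity and the multiplicativity hold, and then telescoping $u(\lambda\eps)/u(\eps) = \prod_k u(\lambda_1\cdots\lambda_k\,\eps)/u(\lambda_1\cdots\lambda_{k-1}\eps)$ together with the fact that each factor converges (after the substitution $\eps' = \lambda_1\cdots\lambda_{k-1}\eps \to 0$) gives convergence of the product to $\prod_k f(\lambda_k) = f(\lambda)$. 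This is the only mildly delicate point, and it is really just bookkeeping; the genuine content — existence of the limit, multiplicativity, and continuity at $1$ — has already been done in the paragraphs preceding the statement, so the proof is short. I do not anticipate a serious obstacle; the main thing to be careful about is making the dominated-convergence argument for continuity at $1$ rigorous, i.e.\ ensuring that the relevant $\mu^i$-measures of the events $\{\lambda i \in O_i\}$ and $\{i/\lambda \in O_i\}$ genuinely converge to $1$, which follows from $\mu^i$ being a probability measure supported on loops surrounding $i$ together with the a.s.\ statement that $\lambda i, i/\lambda \notin \gamma(i)$.
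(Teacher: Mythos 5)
Your proposal is correct and follows essentially the same route as the paper, which simply presents the result as an immediate consequence of the multiplicativity $f(\lambda\lambda')=f(\lambda)f(\lambda')$ and the continuity $f(\lambda)\to 1$ as $\lambda\to 1$ established in the preceding paragraph. You have merely spelled out the standard Cauchy functional equation reduction and the dominated-convergence justification of continuity at $1$, which the paper leaves implicit.
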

This has the following consequence:
\begin {corollary}
$u( \eps)= \eps^{\beta+ o(1) }$ as $\eps \to 0+$.
\end {corollary}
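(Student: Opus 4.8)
The plan is to take logarithms and reduce the statement to the elementary fact that a non-decreasing function whose increments over a fixed step converge is asymptotically linear. Recall from the Proposition that $u$ is non-decreasing, that $0 < u(\eps) \le 1$ for all small $\eps > 0$ (the positivity is implicit in the very definition of $\mu^i$ and in the convergence asserted by the Proposition), and that for every fixed $\lambda > 0$,
\[
\lim_{\eps\to 0}\frac{u(\lambda\eps)}{u(\eps)} = \lambda^{\beta}.
\]
Set $H(t) := -\log u(e^{-t})$; this is well defined, non-negative and non-decreasing for $t$ large. Writing $\eps = e^{-t}$ and $\lambda = e^{a}$, taking logarithms in the displayed limit turns it into
\[
H(t+a) - H(t) \;\longrightarrow\; \beta a \qquad (t \to \infty)
\]
for every fixed $a \in \R$, while the assertion $u(\eps) = \eps^{\beta + o(1)}$ is precisely the statement that $H(t)/t \to \beta$ as $t \to \infty$.

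To get $H(t)/t \to \beta$, I would use the case $a = 1$: given $\eta > 0$, fix $T$ with $|H(t+1) - H(t) - \beta| < \eta$ for all $t \ge T$. For $t > T+1$ set $n = \lfloor t - T \rfloor \ge 1$ and $T' = t - n \in [T, T+1)$, and telescope
\[
H(t) = H(T') + \sum_{k=0}^{n-1}\bigl(H(T'+k+1) - H(T'+k)\bigr).
\]
Each base point $T'+k$ is $\ge T$, so every summand lies within $\eta$ of $\beta$, whence $|H(t) - H(T') - n\beta| \le n\eta$. By monotonicity $0 \le H(T') \le H(T+1)$, a constant independent of $t$, and $n/t \to 1$; dividing by $t$ and letting $t \to \infty$ therefore yields $\limsup_{t\to\infty}|H(t)/t - \beta| \le \eta$. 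Since $\eta$ was arbitrary, $H(t)/t \to \beta$, which is the Corollary. (Alternatively one may invoke Karamata's representation theorem for regularly varying functions, the Proposition saying exactly that $u$ is regularly varying of index $\beta$ at $0$.)

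There is no serious obstacle here; the only point that must not be glossed over is that the Proposition delivers $u(\lambda\eps)/u(\eps) \to \lambda^{\beta}$ only \emph{pointwise} in $\lambda$ and only in the limit $\eps \to 0$, which a priori does not rule out $\log u(\eps)/\log\eps$ oscillating as $\eps \to 0$. It is the monotonicity of $\eps \mapsto u(\eps)$ — already recorded in the Proposition — that excludes such oscillation, and the telescoping/averaging step above is exactly what converts the pointwise convergence of increments into the uniform asymptotic $u(\eps) = \eps^{\beta + o(1)}$.
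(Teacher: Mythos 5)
Your proof is correct and is essentially the same as the paper's: both fix a single multiplicative step (the paper uses $\lambda=2$, you use $\lambda=e$ after the logarithmic change of variables), telescope the pointwise convergence of $u(\lambda\eps)/u(\eps)$ along a geometric sequence, and then use the monotonicity of $u$ to interpolate between the grid points. The passage to $H(t)=-\log u(e^{-t})$ is a cosmetic repackaging of the same argument.
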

\begin {proof}
Note that for any $\beta'< \beta < \beta''$, there exists $\eps_0= 2^{-n_0}$ such that for all $\eps \le \eps_0$,
$$ 2^{\beta'} < u (2\eps)/ u (\eps)  < 2^{\beta''}.$$
Hence, for all $n \ge 0$ it follows that
$$u (\eps_0)  2^{-n \beta''}  < u  (\eps_0 2^{-n}) <  u (\eps_0) 2^{-n \beta'}.$$
It follows that $u(\eps_0 2^{-n}) = 2^{-n\beta + o(n)}$ when $n \to \infty$.
Since $\eps \mapsto u(\eps)$ is an non-decreasing function of $\eps$, the corollary follows.
\end {proof}

\medbreak

Because of scaling, we can now define for all $z= i \lambda$, a measure $\mu^z$ on  loops $\gamma (z)$ that surround $z$ and touch the real line at the origin as follows:
$$
\mu^z ( \gamma (z)  \in \A ) = \lambda^{-\beta} \mu^i ( \lambda \gamma(i) \in \A )
$$
for any measurable set $\A$ of loops. This is also the limit of $u( \eps)^{-1}$ times the law of $\gamma (z)$ in an CLE, restricted to the event $\{\gamma (z) \cap C_\eps \not= \emptyset \}$.

\medbreak

Let us now choose any $z$ in the upper half-plane. Let $\psi= \psi_z$ now denote the Moebius transformation from the upper half-plane onto itself with
$\psi ( z) = i$ and $\psi (0) = 0$. Let $\lambda = 1/ \psi' (0)$.
Clearly, for any given $a >1$, for any small enough $\eps$, the image of $C_\eps$ under $\psi$ is ``squeezed'' between the circles
$ C_{ \eps / a\lambda}$ and $C_{a\eps/ \lambda}$. It follows readily (using the fact that $f(a) \to 1$ as $a \to 1$) that the measure
$\mu^z$ defined for all measurable $\A$ by
$$
\mu^z ( \gamma (z) \in \A  ) = \lambda^{-\beta} \mu^i (  \psi^{-1} ( \gamma (i) ) \in \A )
$$
can again be viewed as the limit when $\eps \to 0$ of $u(\eps)^{-1}$ times the distribution of $\gamma (z)$ restricted to $\{ \gamma (z) \cap C_\eps
\not= \emptyset \}$.

\medbreak
Finally, we can now define our measure $\mu$ on pinned loops. It is the measure on simple loops that touch the real line at the origin and otherwise stay in the upper half-plane (this is what we call a pinned loop) such that for all $z \in \HH$, it coincides with $\mu^z$ on the set of loops that surround $z$.
Indeed, the previous limiting procedure shows immediately that for any two points $z$ and $z'$, the two measures $\mu^z$ and $\mu^{z'}$ coincide on the set of loops that surround both $z$ and $z'$. On the other hand, we know that a pinned loop necessarily surrounds a small disc: thus, the requirement that $\mu$ coincides with the $\mu^z$'s (as described above) fully determines $\mu$.

\medbreak

Let us sum up the properties of the pinned measure $\mu$ that we will use in what follows:
\begin {itemize}
\item
For any conformal transformation $\psi$ from the upper half-plane onto itself with
$\psi (0) = 0$, we have
$$ \psi \circ \mu = |\psi' (0)|^{-\beta} \mu.$$
This is the {\em conformal covariance property} of $\mu$.
Note that the maps $z \mapsto -za/(z-a)$ for real $a \not= 0$ satisfy $\psi'(0)=1$ so that $\mu$ is invariant under these transformations.
\item
For each $z$ in the upper half-plane, the mass $\mu ( \{ \gamma :  z \in \interior (\gamma ) \})$ is finite and equal to $\psi'(0)^\beta$, where $\psi$ is the conformal map from $\HH$ onto itself with $\psi (0)=0 $ and $\psi(z)=i$.
\item
For each $z$ in the upper half-plane, the measure $\mu$ restricted to the set of loops that surround $z$ is the limit as $\eps \to 0+$ of $u(\eps)^{-1}$ times the law of $\gamma (z)$ in a CLE restricted to the event $\{ \gamma(z) \cap C_\eps \not= \emptyset \}$. In other words, for any bounded continuous (with respect to the Hausdorff topology, say) function $F$ on the set of loops,
$$
\mu ( 1_{\{z \hbox { is surrounded by } \gamma \}} F ( \gamma ) )
= \lim_{\eps \to 0 } \frac {1}{u(\eps)} E ( 1_{\{\gamma (z) \cap C_\eps \not= \emptyset \}} F ( \gamma (z))).
$$
\end {itemize}

Since this pinned measure is defined in a domain with one marked point, it is also quite natural to consider it in the upper half-plane $\HH$, but to take the marked point at infinity. In other words, one takes the image of $\mu$ under the mapping $z \mapsto -1/z$ from $\HH$ onto itself. This is then a measure $\mu'$ on loops ``pinned at infinity, i.e.\ on double-ended infinite simple curves in the upper-half plane that go from and to infinity. It clearly also has the scaling property with exponent $\beta$, and the invariance under the conformal maps that preserve infinity and the derivative at infinity is just the invariance under horizontal translations.

\subsection {Discrete radial/chordal explorations, heuristics, background}
\label {S43}

We will now (and also later in the paper) use
 the exploration mechanism corresponding to the case where all points $y_n$ are chosen to be equal to $1$, instead of being tailored in order for the exploration to stay in some  a priori chosen set $A$ as in Section \ref {explo}.
Let us describe this discrete radial exploration (this is how we shall refer to it) in the setting of the upper half-plane $\HH$: We fix $\eps >0$, and
we  wish to explore the CLE in the upper half-plane by repeatedly cutting out origin-centered semi-circles $C_\epsilon$ of radius $\eps$ (and the loops they intersect) and applying conformal maps.
The first step is to consider the set of loops that intersect $C_\eps$. Either we have discovered the loop $\gamma (i)$ (i.e., the loop that surrounds $i$), in which case we stop, or we haven't, in which case we define the connected component of the complement of these loops in $\HH \setminus C_\eps$ that contains $i$, and map it back onto the upper half-plane by the conformal map $\varphi_1^\eps$ such that $\varphi_1^\eps (i) = i$ and $(\varphi_1^\eps)'(i) > 0$.
We then start again, and this defines an independent
copy $\varphi_2^\eps$ of $\varphi_1^\eps$. In this way, we define a random geometric number $N=N(\eps)$ of such conformal maps
$\varphi_1^\eps, \ldots , \varphi_N^\eps$. The $N+1$-th map can not be defined because one then discovers the loop that surrounds $i$.
 The probability that $N \ge n$ is equal to $(1- u(\eps))^n$.
The difference with the exploration procedure of Section \ref {explo} is that we do not
try to  explore ``along some prescribed curve'', but we just iterate i.i.d.\ conformal maps in such
a way that the derivative at $i$ remains a positive real, i.e.\ we consistently target the inner point $i$.

Another natural exploration procedure uses a discrete chordal exploration that targets a boundary point: We first consider the set of loops in a CLE in $\HH$ that intersect
$C_\eps$. Now, we consider the unbounded connected component $H_1$ of the complement in $\HH \setminus C_\eps$ of the union of these loops. We map it back onto the upper half-plane using the conformal map $\varphi_1^\eps$ normalized at infinity, i.e.\ $\varphi_1^\eps (z) \sim z + o(1)$ as $z \in \HH$. Then we iterate the procedure, defining an
infinite i.i.d. sequence $\varphi_1^\eps, \varphi_2^\eps, \ldots$ of conformal maps, and a decreasing family of domains $H_n
= ( \varphi_n^\eps  \circ \cdots \circ \varphi_1^\eps)^{-1} ( \HH)$ (unlike the radial exploration, this chordal exploration never stops).

Let us make a little heuristic discussion in order to prepare what follows. When $\eps$ is very small, the law of the sets that one removes at each step in these two exploration mechanisms can be rather well approximated thanks to the measure $\mu$.
Let us for instance consider the chordal exploration. Because of $\mu$'s scaling property, the $\mu$-mass of the set of loops of half-plane capacity (which scales like the square of the radius) larger than $\eta$ should decay as $\eta^{-\beta/2}$. In particular, in the discrete exploration, the average number of exploration steps where one removes a set $\varphi_n^\eps (H_n \setminus H_{n+1})$ with half-plane capacity larger than $x$ is equal (in the $\eps \to 0$ limit) to  $x^{- \beta/2}$ times the average
 number of exploration steps where one removes a set with half-plane capacity larger than one.

Readers familiar with L\'evy processes will probably have recognized that in the $\eps \to 0$ limits, the capacity jumps of the chordal exploration process will be distributed like
the jumps of a $(\beta/2)$-stable subordinator. It is important to stress that we know a priori that the limiting process has to possess macroscopic capacity jumps (corresponding to the discovered macroscopic loops).
Since $\alpha$-stable subordinators exist only for $\alpha \in (0,1)$, we expect that $\beta < 2$. Proving this last fact will be the main goal of this section.

In fact, we shall see that the entire discrete explorations (and not only the process of accumulated half-plane capacities) converge to a continuous ``L\'evy-exploration'' defined using a Poisson point process of pinned loops with intensity $\mu$. We however defer the more precise description  of these L\'evy explorations to Section \ref {S.8}, where we will also make the connection with branching SLE($\kappa, \kappa-6$) processes and show how to reconstruct the law of a CLE using the measure $\mu$ only, i.e., that the measure $\mu$ characterizes the law of the CLE.

In fact, the rest of this part of the paper (until the constructive part using Brownian loop-soups) has two main goals: The first one is to show that the CLE definition yields a description of the pinned measure $\mu$ in terms of SLE$_\kappa$ for some $\kappa \in (8/3, 4]$. The second one is to prove that the pinned measure $\mu$ characterizes the law of the entire CLE and also to make the connection with SLE($\kappa, \kappa-6$).
We choose to start with the SLE-description of $\mu$; in the next subsection, we will therefore only derive those two results that will be needed for this purpose,
leaving the more detailed discussion of continuous chordal explorations for later sections.

For readers who are not so familiar with L\'evy processes or Loewner chains, let us now briefly recall some  basic features of Poisson point processes and
the stability of Loewner chains that will hopefully help making the coming proofs more transparent.

\medbreak
\noindent
{\bf Stability of Loewner chains:}
Let ${\cal S}$ denote the class of conformal maps $\varphi=\varphi_H$ from a subset $H$ of the complex upper half-plane back onto the upper half-plane $\HH$ such that $i \in H$,  $\varphi (i)=i$  and
$\varphi' (i)$ is a positive real.
 Note that (because $H\subset \HH$), $\varphi'(i) \ge 1$.
Let us then define $a(\varphi) = \log \varphi' (i)$. This is a decreasing function of the domain $H$ (the smaller $H$, the larger $a$), which is closely related to the conformal radius of the domain (one can for instance conjugate with $z \mapsto (i-z)/(i+z)$ in order to be in the usual setting of the disc).
It is immediate to check that for some universal constant $C$, for all $r < 1/2$ and for all $\varphi$ such that the diameter of $\HH \setminus H$ is smaller than $r$,
$$ a (\varphi) \le C r^2.$$
On the other hand, for some other universal constant $C$, for all $r < 1/2$ and for all $H$ such that $ir \notin H$,
$$ a ( \varphi ) \ge C' r^2.$$

Suppose now that $\varphi_1, \ldots, \varphi_N$ are $N$ given conformal maps in ${\cal S}$. Define $\Phi =\varphi_N \circ \ldots  \circ \varphi_1$ to be the composition of these conformal maps.
It is an easy fact (that can be readily deduced from simple distortion estimates, or via Loewner's theory to approximate these maps via Loewner chains for instance) that
for any family $(\psi_0^\delta, \ldots , \psi_N^\delta)_{\delta > 0}$ of conformal maps in ${\cal S}$ such that
$$ \lim_{\delta \to 0} a ( \psi_0^\delta) + \ldots + a ( \psi_N^\delta) =  0, $$
the conformal maps
$$\Phi^\delta = \psi_N^\delta \circ \varphi_N \circ \psi_{N-1}^\delta \circ \varphi_{N-1} \circ \ldots \varphi_1 \circ \psi_0^\delta$$
converge in Carath\'eodory topology (viewed from $i$) to $\Phi$ as $\delta \to 0$.
In other words, putting some perturbations of the identity between the iterations of $\varphi$'s  does not change things a lot, as long as the accumulated ``size'' (measured by $a$) of the perturbations is small.

\medbreak
\noindent
{\bf Poisson point processes discrete random sequences:}
We will approximate Poisson point processes via discrete sequences of random variables. We will at some point need some rather trivial facts concerning Poisson random variables, that we now briefly derive.
Suppose that we have a sequence of i.i.d.\ random variables $(X_n, n \ge 1)$ that take their values in some finite set $\{0, 1, \ldots , k\}$ with $P(X_1=0) >0$. Let $N$ denote the smallest $n$ value at which $X_n = 0$. For each $j \in \{ 1, \ldots, k \}$, let $N_j$ denote the cardinality of $\{ n \in \{ 1, \ldots, N-1 \} \ : \ X_n = j \}$. We want to control the joint law of $(N_1, \ldots, N_k)$.

One convenient way to represent this joint law is to consider a Poisson point process $(X_j, T_j)_{j \in J}$ on ${\cal A} \times [0, \infty)$ with intensity ${\cal M} \times dt$, where ${\cal M}$ is some $\sigma$-finite measure on a
 space ${\cal A}$.
Consider disjoint measurable sets $A, A_1, \cdots , A_k$ in ${\cal A}$ with
${\cal M} ( A) = 1$, ${\cal M} (A_1) = a_1 < \infty$, \ldots, ${\cal M} (A_k) = a_k < \infty$.
 We define
$$ T = \inf \{ t > 0 \ : \ \exists j \in J \hbox { such that } T_j \le t \hbox { and } X_j \in A \},$$
i.e., loosely speaking, if we interpret $t$ as a time-variable, $T$ is the first time at which one observes an $X$ in $A$. Since ${\cal M} (A) =1$, the law of $T$ is exponential with parameter $1$.
In particular, $P ( T > 1) = 1/e$.
We now define, for $j=1, \ldots , k$, $$N_j = \# \{ j \ : \  T_j \le T \hbox { and } X_j \in A_j \},$$
i.e., the number of times one has observed an $X$ in $A_j$ before the first time at which one observes an $X$ in $A$.
Then the law of $N_1, \ldots , N_k$ is the same as before, where $P(X_1= j ) = a_j / (1+ a_1 +\ldots + a_j)$.

Because of the independence properties of Poisson point processes, the conditional law of $(N_1, \ldots, N_k)$ given $T$ is that of
$K$ independent Poisson random variables with respective means $T M(A_1), \ldots, T M(A_k)$. Hence, $E(N_j) = E(T) M (A_j) = M(A_j)$.
Furthermore, if we condition on the event  $\{T > 1\}$, the (joint) conditional distribution of $N_1, \ldots , N_k$ ``dominates''
that of $k$ independent Poisson random variables of parameter $a_1, \ldots, a_k$.

 \subsection {A priori estimates for the pinned measure} \label{s.tail}

Let us denote the ``radius''  of a loop $\gamma$ in the upper half-plane by
$$ R (\gamma ) = \max \{ |z|  \ : \ z \in \gamma \} .$$

\begin {lemma}
\label {LR1}
The $\mu$-measure of the set of loops with radius greater than $1$ is finite.
\end {lemma}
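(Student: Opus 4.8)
Here is how I would try to prove Lemma \ref{LR1}.

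\medskip
\noindent\textbf{Overall strategy.} The only inputs available at this point are: the conformal covariance $\psi\circ\mu = |\psi'(0)|^{-\beta}\mu$ for automorphisms $\psi$ of $\HH$ fixing $0$; the fact that for each $z\in\HH$ the mass $\mu(\{\gamma:z\in\interior(\gamma)\})=\psi_z'(0)^\beta$ is finite (where $\psi_z$ fixes $0$ and sends $z$ to $i$); the scaling property (exponent $\beta$); and the realization of $\mu$ (restricted to loops surrounding $z$) as the $\eps\to0$ limit of $u(\eps)^{-1}$ times the law of $\gamma(z)$ in the CLE restricted to $\{\gamma(z)\cap C_\eps\neq\emptyset\}$. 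The plan is to cut up the event $\{R(\gamma)>1\}$ according to \emph{where} the loop reaches far from the origin, normalize each piece by a conformal automorphism of $\HH$ fixing $0$, and sum.

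\medskip
\noindent\textbf{First reduction.} A pinned loop $\gamma$ has a connected interior whose closure contains the point $0$ and a point of modulus $R(\gamma)$; since the modulus is continuous, $\interior(\gamma)$ meets the semicircle $\{|z|=r\}\cap\HH$ for every $r\in(0,R(\gamma))$. Thus, if $R(\gamma)>1$, then $\interior(\gamma)$ surrounds a point $w=w(\gamma)$ on a fixed semicircle, and by the reflection symmetry of $\mu$ we may assume $\Re w\ge 0$, at the cost of a factor $2$. I would then split according to dyadic scales and angles: writing $w=\rho e^{i\alpha}$ with $\rho\in[2^j,2^{j+1})$, $\alpha\in(0,\pi/2]$ (and a dyadic decomposition of $\alpha$ near $0$), applying the scaling map $z\mapsto z/2^j$ (which multiplies $\mu$ by $2^{j\beta}$) reduces the estimate for scale $j$ to the scale-one estimate. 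Summing the resulting geometric series in $j$ is what will both yield finiteness and force $\beta>0$ — so the lemma and the strict positivity of $\beta$ come together. For the angular decomposition, a point $w$ that is bounded away from $\R$ is handled directly: the mass of loops surrounding it is $\psi_w'(0)^\beta<\infty$, and a finite collection of such ``bulk'' points, together with the elementary observation that a loop whose interior meets an arc $J$ either meets $J$ or contains all of $J$ (hence surrounds the midpoint of $J$), controls all loops that reach distance $\sim 1$ from $0$ away from the real axis.

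\medskip
\noindent\textbf{The main obstacle: needle-like loops near $\R$.} The genuinely delicate case is that of loops which reach modulus $>1$ but only do so along a thin tongue hugging the real line, so that every point of modulus near $1$ they surround is very close to $\partial\HH$; for such points $\psi_w'(0)^\beta$ is large, so a naive covering of the semicircle diverges. Here I would use conformal covariance more aggressively: apply the automorphism $\psi_w$ sending the near-boundary point $w$ to $i$, which turns such a loop into a loop surrounding $i$ (total mass $1$) but which inherits a ``thinness near $i$'' constraint, while contributing the factor $\psi_w'(0)^\beta$. One then needs that the $\mu$-mass of loops surrounding $i$ whose boundary comes within distance $\delta$ of $i$ tends to $0$ as $\delta\to0$ (which follows from dominated convergence, since a simple loop cannot simultaneously surround $i$ and pass through $i$), combined with the scaling in $j$ above so that the blow-up of $\psi_w'(0)^\beta$ is beaten by the geometric decay $2^{-j\beta}$ coming from the large modulus these needles must have. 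Making this quantitative — i.e. extracting enough of a rate to sum over the dyadic angular scales uniformly in $j$ — is the hard part; a cleaner route, which I would fall back on if the above bookkeeping is too fragile, is to argue entirely inside the CLE before taking limits: express $u(\eps)^{-1}$ times the probability that the CLE loop around a point reaches modulus $>1$ and meets $C_\eps$ as a sum over discovered loops in a radial exploration, and use the i.i.d. structure of the exploration (geometric number of steps with $P(N\ge n)=(1-u(\eps))^n$, so $E(N)\sim u(\eps)^{-1}$) together with the scaling of $u(\eps)=\eps^{\beta+o(1)}$ to bound the expected number of ``large'' discovered loops. Either way, the crux is controlling the needle-like loops; everything else is routine conformal covariance and a geometric series.
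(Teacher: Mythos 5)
Your primary approach (dyadic decomposition of the semicircle and conformal covariance) has a genuine gap that you yourself identify but do not resolve: the contribution of needle-like loops hugging $\R$. If a pinned loop reaches modulus $>1$ only along a thin tongue near the real axis, the only points of modulus $\approx 1$ it surrounds have small imaginary part, and the surrounding-mass $\psi_w'(0)^\beta$ blows up as $w\to\R$, so a naive covering of the semicircle by points diverges. Your proposed fix — push the near-boundary point $w$ to $i$ and exploit a ``thinness near $i$'' constraint on the image loop, balanced against the scaling factor — is exactly the hard part, and you concede you cannot yet make it quantitative. There is also a logical issue: the summability of your dyadic geometric series requires $\beta>0$, which at this stage of the paper has not been established (the paper derives $\beta>0$ as a \emph{consequence} of this lemma, not an input). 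So the bookkeeping cannot be closed as sketched.

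Your fallback sketch (``argue entirely inside the CLE, use the i.i.d.\ structure of the radial exploration'') does point in the direction the paper takes, but it is missing the engine of the argument. The paper's proof is a proof by contradiction: assume $\mu(R(\gamma)\ge 1)=\infty$; then for any $M$ one finds finitely many points $z_1,\dots,z_n$ at distance $\ge 1$ from $0$ with total surrounding-mass $>M$, so in the $\eps$-radial exploration the number of such large loops discovered before $\gamma(i)$ has mean $\gtrsim M$. The crucial observation you do not mention is that each such discovered loop intersects $C_{1/2}$ and has radius $\ge 1$ in the exploration coordinates, hence has harmonic measure at $i$ bounded below by a universal constant; pulling back through the (conformal) exploration maps preserves this. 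Combined with the a.s.\ positive distance of $\gamma(i)$ from $i$, this forces, with probability bounded away from $0$, infinitely many CLE loops whose conformal images in $\U$ have diameter bounded below — contradicting local finiteness of the CLE. Without identifying that harmonic-measure-to-local-finiteness contradiction, ``bound the expected number of large discovered loops'' does not produce a proof: one still has to say what is impossible if that expectation is infinite.
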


\begin {proof}
Let us  use the radial exploration mechanism.
The idea of the proof is to see that if the
 $\mu$-mass of the set of loops of radius greater than $1$ is infinite, then one ``collects'' too many macroscopic loops
 before finding $\gamma (i)$  in the exploration mechanism, which will contradict the local finiteness of the CLE.

Recall that, at this moment, we know that for any given point $z \in \HH$, the measure $\mu^{z}$ is the limit when  $\eps \to 0$ of $u(\eps)^{-1}$ times the law of $\gamma (z)$ restricted to the event that $\gamma (z)$ intersects the $\eps$-neighborhood of the origin. Furthermore, for any given point $z \in \HH$, the distance between $z$ and $\gamma$ is $\mu$-almost always positive.  This implies that for any given finite family of points $z_1, \ldots, z_n$ in $\HH$, the measure $\mu$ restricted to the set
$\A= \A (z_1, \ldots, z_n)$ of loops that surrounds at least one of these points is the limit when $\eps \to 0$ of $u(\eps)^{-1}$ times the (sum of the) laws of loops in $\A$ that intersect the $\eps$-neighborhood of the origin.

Suppose now that $\mu ( \{ \gamma \  : \   R (\gamma) \ge 1 ) \} = \infty$.
This implies clearly that for each $M >1$, one can find a finite set of points $z_1, \ldots, z_n$ at distance greater than one of the origin such that
 $\mu (\A (z_1, \ldots, z_n)) >M$.
Hence, it follows that when $\eps$ is small enough, at each exploration step, the probability to discover a loop in $\A$ is at least $M$ times bigger than $u(\eps)$.
 The number of
exploration steps before $N$ at which this happens is therefore geometric with a mean at least equal to $M$.
It follows readily that with probability at least $c_0$ (for some universal positive $c_0$ that does not depend on $\eps$), this happens at least $M/4$ times.

Note that the harmonic measure in $\HH$ at $i$ of a loop of radius at least $1$ that intersects also $C_{1/2}$ is bounded from below by some universal positive
constant $c_1$. If at some step $j \le N$, the radius of $\HH \setminus (\varphi_j^\eps)^{-1} ( \HH)$ is greater than $1$, then it means that there is a loop
in the CLE in $\HH$ that one explores at the $j$-th step that has a radius at least $1$ and that intersects $C_{1/2}$. Its preimage under
$\varphi_{j-1}^\eps \circ \ldots \circ \varphi_1^\eps$ (which is a loop of the original CLE that one is discovering) has therefore
 also has a harmonic measure (in $\HH$ and at $i$) that is bounded from below by $c_1$.
Hence, we conclude that with probability at least $c_0$, the original CLE has at least $M/4$ different loops such that their harmonic measure seen from $i$ in $\HH$ is bounded from below by some universal constant $c_1$.

This statement holds for all $M$, so that with probability at least $c_0$, there are infinitely many loops in the CLE
such that their harmonic measure seen from $i$ in $\HH$ is bounded from below by $c_1$.

  On the other hand, we know that $\gamma (i)$ is almost surely at positive distance from $i$, and this implies that for some positive $\alpha$, the probability that some loop in the CLE is a distance less than $\alpha$ of $i$ is smaller than $c_0/2$. Hence, with probability at least $c_0/2$, the CLE contains infinitely many loops that are all at distance at least $\alpha$ from $i$ and all have harmonic measure at least $c_1$. A similar statement is therefore true for the CLE in the unit disc if one maps $i$ onto the origin. It is then easy to check that the previous statement contradicts the local finiteness (because the diameter of the conformal image of all these loops is bounded from below).
    Hence, the $\mu$-mass of the set of pinned loops that reach the unit circle is indeed finite.
\end {proof}

Let us now list various consequences of Lemma \ref {LR1}:

\begin {itemize}
 \item
For all $r >0$, let us define $ \A_r := \{ \gamma \ : \ R (\gamma) > r \}$.
Because of scaling, we know that for all $r >0$,
$ \mu (\A_r ) =  r^{-\beta} \mu ( \A_1)$.
Clearly, this cannot be a constant finite function of $r$; this implies that $\beta > 0$.
Also, we get that for each fixed $r$, $\mu ( R( \gamma) = r ) =0$.

\item
We can now define the function $v(\eps)$ as the probability that in the CLE, there exists a loop that intersects $C_\eps$ and $C_1$. We know that
\begin {equation}
\label {vandu}
\lim_{\eps \to 0+} \frac {v(\eps)}{u(\eps)} = \mu ( R( \gamma) \ge 1 )
\end {equation}
(because $\mu ( R(\gamma) = 1) =0$).
Hence, it follows that for any $\delta <1 $, $v (\delta \eps)/ v (\eps) \to \delta^\beta$ as $\eps \to 0$. This will be useful later on.

\item
We can rephrase in a slightly more general way our description of $\mu$ in terms of limits of CLE loops. Note that
$ \A_r
= \cup_{n} \A (z_1, \ldots, z_n)$, where
 $(z_n, n \ge 1)$ is some given dense sequence on $\{ z \ : \ |z|= r \}$.

For each simple loop configuration $\Gamma$, and for each $\eps$, let us define $\tilde \gamma (\eps)$ to be the loop in the configuration $\Gamma$ that intersects the disc of radius $\eps$ and with largest radius (in case there are ties, take any deterministic definition to choose one).
We know from (\ref {vandu}) that the probability that $R ( \tilde \gamma (\eps)) > r $ decays like $u(\eps) \times r^{-\beta} \mu ( \A_1)$ as $\eps \to 0$ (for each fixed $r>0$).

Furthermore, we note that
the probability that there exist two different loops of radius greater than $r$ that intersect the circle $C_\eps$ decays like $o( u(\eps))$ as $\eps \to 0$.
Indeed, otherwise, for some sequence $\eps_n \to 0$, the probability that in our exploration procedure, two macroscopic loops are discovered simultaneously remains
positive and bounded from below, which is easily shown to contradict the fact that almost surely, any two loops in our CLE are at positive distance from each other.

Hence, for each $n$, the measure $\mu$ restricted to $\A (z_1, \ldots, z_n)$ is
the limit when $\eps \to 0$ of $u(\eps)^{-1}$ times the law of $\tilde \gamma (\eps)$, restricted to the event that it surrounds at least one of the points $z_1, \ldots, z_n$.

We conclude that the measure $\mu$ restricted to $\A_r$ can
be viewed as the weak limit when $\eps \to 0$ of $u(\eps)^{-1}$ times the law of $\tilde \gamma (\eps)$ restricted to $\A_r$.
In other words, if we consider the set of pinned loops of strictly positive size (i.e., the loop of zero length is not in this set) endowed with the Hausdorff metric, we can say that $\mu$ is the vague limit of $u(\eps)^{-1}$ times the law of $\tilde \gamma (\eps)$.
\end {itemize}

Let us finally state another consequence of this result, that will turn out to be useful in the loop-soup construction part of the paper.
Consider a CLE in $\HH$ and let us consider the set of loops that intersect the unit circle. Define ${\cal R}$ to be the radius of the smallest disc centered at the origin that contains all these loops. Note that scaling shows that $P ( {\cal R} > x ) = v (1/x)$ for $x \ge 1$.
\begin {corollary}
\label{boundedness}
 If $\beta > 1$, then $E ( {\cal R}^{(1+\beta)/2}) < \infty$.
\end {corollary}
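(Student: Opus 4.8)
The plan is to show that the tail of $\mathcal R$ decays fast enough by relating it to the measure $\mu$, which we now understand well. Recall that $P(\mathcal R > x) = v(1/x)$ for $x \geq 1$, where $v(\eps)$ is the CLE probability that some loop intersects both $C_\eps$ and $C_1$. From the relation \eqref{vandu} together with the scaling of $v$ proved just above, we know $v(\delta\eps)/v(\eps) \to \delta^\beta$ as $\eps \to 0$, so that $v(\eps) = \eps^{\beta + o(1)}$ by the same argument as in the Corollary following the Proposition on $f(\lambda) = \lambda^\beta$ (monotonicity of $\eps \mapsto v(\eps)$ together with the asymptotic multiplicativity). Consequently $P(\mathcal R > x) = x^{-\beta + o(1)}$ as $x \to \infty$. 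Since $\beta > 1$ by hypothesis, the exponent $(1+\beta)/2$ satisfies $(1+\beta)/2 < \beta$, so $\int_1^\infty x^{(1+\beta)/2 - 1} P(\mathcal R > x)\, dx$ converges, giving $E(\mathcal R^{(1+\beta)/2}) < \infty$.

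First I would make precise the claim $v(\eps) = \eps^{\beta + o(1)}$: pick $\beta' < \beta$, and note that from $v(\delta\eps)/v(\eps) \to \delta^\beta$ applied with $\delta = 1/2$ there is $\eps_0 = 2^{-n_0}$ such that $v(2\eps)/v(\eps) < 2^{\beta''}$ and $> 2^{\beta'}$ for all $\eps \leq \eps_0$, whenever $\beta' < \beta < \beta''$. Iterating gives $v(\eps_0 2^{-n}) = 2^{-n\beta + o(n)}$, and monotonicity of $v$ fills in the intermediate values; this is verbatim the argument used for $u(\eps)$ in the Corollary above. From this, $P(\mathcal R > x) = v(1/x) \leq x^{-\beta'}$ for all $x \geq x_0(\beta')$, for any fixed $\beta' < \beta$.

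Then I would finish with the layer-cake formula: for $p = (1+\beta)/2$,
$$
E(\mathcal R^{p}) = \int_0^\infty p\, x^{p-1}\, P(\mathcal R > x)\, dx \leq x_0^{p} + \int_{x_0}^\infty p\, x^{p-1} x^{-\beta'}\, dx,
$$
which is finite as soon as $\beta'$ is chosen with $p - 1 - \beta' < -1$, i.e.\ $\beta' > p = (1+\beta)/2$. Such a choice of $\beta'$ with $(1+\beta)/2 < \beta' < \beta$ exists precisely because $\beta > 1$, which is exactly the hypothesis. This completes the proof.

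The only mildly delicate point — and the place I would be most careful — is justifying that the scaling relation $v(\delta\eps)/v(\eps) \to \delta^\beta$ (stated in the excerpt only "for any $\delta < 1$" and "as $\eps \to 0$") can indeed be bootstrapped to the clean power-law bound $P(\mathcal R > x) \leq x^{-\beta'}$ uniformly for all large $x$; but this is a routine monotonicity argument identical in structure to the one already carried out for $u(\eps)$, so there is no real obstacle. Everything else is the elementary layer-cake integration, where the hypothesis $\beta > 1$ enters exactly to make room for an exponent $\beta'$ strictly between $(1+\beta)/2$ and $\beta$.
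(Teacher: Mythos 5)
Your proof is correct and takes essentially the same route as the paper: both deduce the power-law tail $P(\mathcal{R} > x) = v(1/x) = x^{-\beta + o(1)}$ from the scaling of $v$ (which follows from $v(\eps)/u(\eps) \to \mu(R(\gamma) \ge 1) \in (0,\infty)$ and $u(\eps) = \eps^{\beta + o(1)}$), and then integrate the tail to conclude. Your version with an explicit $\beta' \in ((1+\beta)/2, \beta)$ spells out the $o(1)$-handling a bit more carefully than the paper's one-line display, but the substance is identical.
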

\begin {proof}
Just note that
\begin {eqnarray*}
E ( {\cal R}^{(1+ \beta)/2}) & = &  \int_0^\infty dr P ( {\cal R}^{(1+\beta)/2} \ge r ) \\
& \le&  1 + \int_1^\infty dr \ v( r^{- 2/(1+ \beta)} ) \\
&\le
& 1 + \int_1^\infty \frac {dr}{r^{2\beta/ (1+ \beta) +o(1)}}
\  < \ \infty
\end {eqnarray*}
because $2 \beta > 1 +\beta$.
\end {proof}

With the $8/ \kappa = 1+ \beta$ identification that we will derive later, $\beta >1 $ corresponds to $\kappa < 4$ and then $(1+\beta)/2 = 4/ \kappa$.

\medbreak

The next proposition corresponds to the fact that $\alpha$-stable subordinators exist only for $\alpha \in (0,1)$.

\begin {proposition}
The scaling exponent $\beta$ described above lies in $(0, 2)$.
\end {proposition}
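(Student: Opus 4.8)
The plan is to show $\beta<2$ by combining the radial/chordal exploration picture with the heuristic explained just before this proposition: in the $\eps\to0$ limit the accumulated half-plane capacity of the discrete chordal exploration converges to a $(\beta/2)$-stable subordinator, and stable subordinators only exist for exponents in $(0,1)$. Concretely, I would work with the discrete chordal exploration of a CLE in $\HH$ targeting $\infty$, which at scale $\eps$ produces an i.i.d.\ sequence of conformal maps $\varphi_1^\eps,\varphi_2^\eps,\dots$, each removing a small set of loops intersecting $C_\eps$ together with the half-plane capacity increment it carries. The $\mu$-mass of loops of half-plane capacity larger than $x$ scales like $x^{-\beta/2}$ by the scaling property of $\mu$ (capacity scales like radius squared, and $\mu(\A_r)=r^{-\beta}\mu(\A_1)$ by Lemma~\ref{LR1} and the remarks following it; in particular $\beta>0$ so this is a genuine decay). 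Using the Poisson-point-process approximation recalled above, the sum of capacity increments accumulated over the first $\lfloor t/u(\eps)\rfloor$ exploration steps converges, as $\eps\to0$, to a pure-jump nonnegative Lévy process (subordinator) whose Lévy measure is $\mathrm{const}\cdot x^{-1-\beta/2}\,dx$ — i.e.\ a $(\beta/2)$-stable subordinator.

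The key point, and what makes the argument work, is that this limiting subordinator genuinely has jumps: we know a priori from the CLE structure that the exploration discovers macroscopic loops, each of which removes a set of order-one half-plane capacity, so the limiting capacity process cannot be deterministic or continuous. A pure-jump subordinator with Lévy measure $c\,x^{-1-\alpha}\,dx$ on $(0,\infty)$ is finite (does not explode instantaneously) precisely when $\int_0^1 x\cdot x^{-1-\alpha}\,dx = \int_0^1 x^{-\alpha}\,dx<\infty$, i.e.\ when $\alpha<1$. If $\beta\ge2$, then $\alpha=\beta/2\ge1$ and this integral diverges, forcing the total capacity accumulated in any positive time interval to be infinite almost surely. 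To turn this into a contradiction at the finite-$\eps$ level: if $\beta\ge 2$, then for each fixed $r>0$ the probability that a single exploration step at scale $\eps$ removes a loop of radius $\ge r$ is $\sim u(\eps)\,r^{-\beta}\mu(\A_1)$, and summing the small loops shows the expected accumulated capacity per step behaves like $u(\eps)\int_{\eps'}^{1} x\,d(x^{-\beta/2})$, which blows up as the lower cutoff $\eps'$ (the effective smallest scale resolved) tends to $0$ — meaning that before discovering $\gamma(i)$ (which happens after a geometric-with-mean-$1/u(\eps)$ number of steps, hence $\Theta(1)$ of "capacity time") the exploration accumulates infinite half-plane capacity. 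But the total capacity swallowed up to the discovery of $\gamma(i)$ is bounded by (a constant times) the capacity of the complement of the domain $\hat U$ surrounding $i$ relative to $\HH$, which is finite almost surely because $\gamma(i)$ is at positive distance from $i$ and $\HH\setminus\hat U$ is a bounded set (local finiteness of the CLE). This contradiction forces $\beta<2$; together with $\beta>0$ (already noted), we get $\beta\in(0,2)$.

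To implement this rigorously, the main technical work is the convergence statement: that the rescaled discrete exploration — conformal maps plus accumulated capacities — converges to a continuous Lévy exploration driven by a Poisson point process of intensity $\mu$. This is where the "stability of Loewner chains" paragraph and the Poisson-point-process approximation paragraph recalled in the excerpt are used: one needs that interposing small identity-perturbations between the i.i.d.\ maps $\varphi_n^\eps$ does not affect the Carathéodory limit, so that the only thing that matters in the limit is the point process of macroscopic removed sets, and that the discrete i.i.d.\ removals converge (after the $\eps\to0$ rescaling with time parametrized by $u(\eps)$) to the Poisson point process with intensity $\mu$. Strictly speaking, for the purpose of this proposition one does not even need the full convergence of the exploration: it suffices to control the accumulated half-plane capacity (a one-dimensional functional), via the domination statement for Poisson random variables from the recalled lemma, to derive that $\beta\ge2$ forces infinite accumulated capacity before an event of probability bounded below, contradicting the a priori finiteness.

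I expect the main obstacle to be making precise and rigorous the claim that the accumulated half-plane capacity, rescaled appropriately as $\eps\to0$, really does converge to (or at least is stochastically dominated below by) a process governed by the Lévy measure $x^{-1-\beta/2}\,dx$, uniformly enough to extract the blow-up when $\beta\ge2$. The subtlety is two-fold: first, the half-plane capacity is not exactly additive under composition of the $\varphi_n^\eps$ (it is additive for the Loewner parametrization but the discrete removals are not infinitesimal), so one must check the total capacity is comparable to the sum of per-step increments; second, one must rule out that the per-step capacity increments are somehow systematically suppressed by a compensating effect of the conformal maps. Both are handled by the distortion/Loewner-stability estimates, but getting the bookkeeping right — relating $u(\eps)$, $v(\eps)$, the capacity cutoffs, and the Poisson domination — is the delicate part. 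Once that is in place, the contradiction with local finiteness of the CLE (via the finiteness of $\mathrm{hcap}(\HH\setminus\hat U)$) closes the argument cleanly.
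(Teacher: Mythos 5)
Your proposal follows the heuristic outlined just before this proposition in Section \ref{S43} (accumulated capacity should behave like a $(\beta/2)$-stable subordinator, which cannot exist for $\beta\ge 2$), and the overall strategy --- Poisson domination, blow-up of the accumulated additive quantity, contradiction with a.s.\ finiteness --- is the right one. However, there is a concrete gap in the implementation: you use the \emph{chordal} exploration (maps normalized at infinity) together with half-plane capacity as the additive quantity, yet you then claim that ``discovering $\gamma(i)$'' happens after a geometric number of steps of mean $1/u(\eps)$. That claim is false for the chordal exploration: because the maps are normalized at $\infty$ rather than at $i$, the image of $i$ in the uniformized half-plane changes from step to step, so the conditional probability of discovering the loop that surrounds (the image of) $i$ at the next step is not $u(\eps)$, and the time at which $\gamma(i)$ is discovered is not geometric. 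Indeed, as noted in Section~\ref{S43}, ``unlike the radial exploration, this chordal exploration never stops''. Without a geometric stopping time with mean $1/u(\eps)$, the bookkeeping that produces the $\Theta(1)$ ``capacity time'' window and the final contradiction does not go through.

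The paper avoids this by running the \emph{radial} exploration (maps $\varphi\in{\cal S}$ with $\varphi(i)=i$, $\varphi'(i)>0$), for which the discovery of $\gamma(i)$ genuinely happens at a geometric time of mean $1/u(\eps)$. In the radial normalization, half-plane capacity is no longer the natural additive quantity; instead one uses $a(\varphi)=\log\varphi'(i)$, the log-conformal-radius defect seen from $i$. This quantity is additive under composition of maps in ${\cal S}$ and, like half-plane capacity, scales like the square of the radius of the removed set, so all the Poisson-domination and scaling estimates ($\mu(\A_j)\ge 4^j\mu(\A_0)$ when $\beta\ge 2$) carry over unchanged. The a priori finiteness of the accumulated quantity is then immediate from Koebe's $1/4$-theorem combined with the fact that $\gamma(i)$ is a.s.\ at positive distance from $i$. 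If you replace ``chordal exploration + hcap'' by ``radial exploration + $a(\varphi)$'', your argument matches the paper's proof. Alternatively, one could try to salvage the chordal version with a different stopping rule (e.g.\ the first step at which a loop of radius $\ge 1$ is discovered, which is geometric with mean $1/v(\eps)$), but then the a priori finiteness of the accumulated hcap is less immediate; the radial version is cleaner.
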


\begin {proof}
We use the radial exploration mechanism again.
Let us now assume that $\beta \ge 2$ and focus on the  contribution of the ``small'' loops that one discovers before actually discovering $\gamma (i)$.

Let $\A_1, \ldots, \A_k$ be $k$ fixed disjoint (measurable) sets of loops that do not surround $i$, with $a_l = \mu ( \A_l) < \infty$. We suppose that all $\A_l$'s for $l \le k$  belong to the algebra of events generated by the events of the type $ \{ \gamma \ : \ \gamma \hbox { surrounds } z \}$.
We let $N_\eps (\A_l)$ denote the number of loops in $\A_l$ that one has discovered in this way before one actually discovers the loop that surrounds $i$.
Our previous results show that when $\eps \to 0$, the joint law of $N_\eps (\A_1), \ldots , N_\eps (\A_k)$ converges to that of the $N_1, \ldots , N_k$ that we described
at the end of the previous subsection.

For each integer $j$, we define $\A_j$ to be  the set of  loops that surround $2^{-j} i$ but that do not surround any point  $2^{-k} i$ for $k < j$.
The scaling property of $\mu$  implies  that for all $j$, $\mu (\A_j) = 2^{j\beta} \mu (\A_0) $ and it is easy to check that $\mu (\A_0) > 0$ (because the loops that surround $i$ have a finite radius so that $\sum_{j \le 0} \mu (\A_j) \ge 1 > 0$). Hence, for all positive $j$,
$$ \mu (\A_j ) \ge 4^j \mu (\A_0).$$
Recall that if $\psi$ is a conformal map in ${\cal S}$
from a simply connected subset $H$ of $\HH$ that does contain $i$ but not $2^{-j_0} i $ for some  $j_0 \ge 1$, then
$$a( \psi) \ge C' 4^{-j_0}.$$
Furthermore, for each fixed $j_0$,  when $\eps$ is small enough, we can compare the number of loops in $\A_1, \ldots , \A_{j_0}$ that have been discovered before $\gamma (i)$ via the chordal exploration mechanism with i.i.d.\ Poisson random variables $N_1, \ldots , N_{j_0}$.
Note also that when one composes conformal maps in ${\cal S}$, the derivatives at $i$ get multiplied and the $a$'s therefore add up.

Hence, it follows immediately that if $\beta \ge 2$, then  for each $j_0$, with a probability that is bounded from below independently of $j_0$, when $\eps$ is small enough,
$$
a (\varphi_1 \circ \cdots \circ \varphi_N) \ge  \frac {C'}{2}  \sum_{j=1}^{j_0} N_{j} 4^{-j}
.$$
Hence, we conclude that there exists $c_2 >0$ such that for each $M > 0$, if one chooses $\eps$ small enough, the probability that
$a  (\varphi_1 \circ \cdots \circ \varphi_N) \ge M$ is at least $c_2$.
But this contradicts the fact that $\gamma (i)$ is at positive distance from $i$ (for instance using Koebe's $1/4$ Theorem).
Hence, we conclude that $\beta$ is indeed smaller than $2$.
\end {proof}

\section {The two-point pinned probability measure}

\subsection {Restriction property of the pinned measure}

We now investigate what sort of ``restriction-type'' property the pinned measure $\mu$ inherits from the CLE. Note that $\mu$ is an infinite measure on single pinned loops (rather than
a probability measure on loop configurations), so the statement will necessarily be a bit different from the restriction property of CLE.

Suppose now that $A$ is a closed bounded set such that $\HH\setminus A$ is simply connected, and that $d (0, A) > 0$. Our goal is to find an alternative description of the infinite measure $\mu$ restricted to the set of loops $\gamma$ that do not intersect $A$.

Suppose that a deterministic pinned (at zero) loop $\gamma$ (in the upper half-plane) is given.
Sample a CLE $\Gamma^\#$ in the upper half-plane.  This defines a random $H^\#$ which is the connected component that has the origin on its boundary of the set obtained by removing from $\HH \setminus A$ all loops of $\Gamma^\#$ that intersect $A$.
 Then, we define a conformal map $\psi^\#$ from $\HH$ onto $H^\#$ such that
$$ \psi^\# (0) = 0 \hbox { and } (\psi^\#)' (0)= 1.$$
In order to fix $\psi^\#$, another normalization is needed. We can for instance take $\psi^\# (\infty) = \infty$, but all of what follows would still hold if one
replaced $\psi^\#$ by the map $G^\#$ such that $G^\# (z)=  z + o(z^2)$ as $z \to 0$ (we will in fact also use this map in the coming sections).
Finally, we define $\gamma^\# =  \psi^\# (\gamma)$. Clearly, this is a pinned loop that stays in $ H^\#$ and therefore avoids $A$ almost surely.

Suppose now that we use the product measure $\mu \otimes P$ on pairs $(\gamma, \Gamma^\#)$ (where $P$ is the law of the same CLE that was used to define the pinned measure $\mu$).
For each pair $(\gamma, \Gamma^\#)$ we define the loop $\gamma^\# = \gamma^\# (\gamma, \Gamma^\#) = \psi^\# (\gamma)$ as before,
and we define $\mu_A$ to be the image measure of $\mu \otimes P$ via this map.  This $\mu_A$ is an infinite measure on the set of loops that do not intersect $A$.

\begin {figure}[htbp]
\begin {center}
\includegraphics [width=5in]{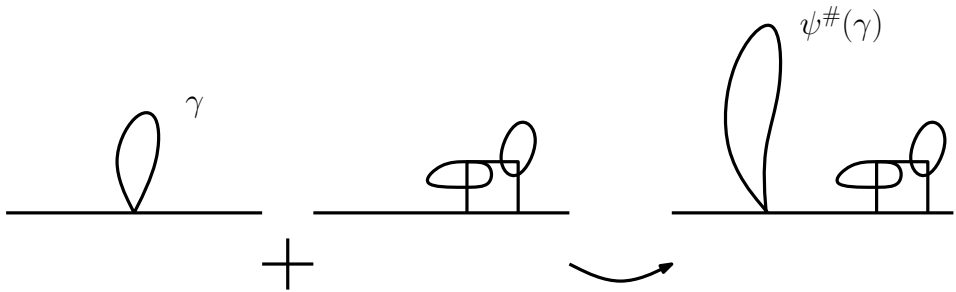}
\caption {Construction of $\mu_A$ (sketch)}
\end {center}
\end {figure}
We are now ready to state the pinned measure's restriction property:

\begin {proposition}[Restriction property of $\mu$]
\label {muA}
 The measure $\mu$ restricted to the set $\{\gamma \cap A = \emptyset\}$ is equal to $\mu_A$.
\end {proposition}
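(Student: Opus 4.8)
The plan is to compare the two infinite measures by testing them against bounded continuous functionals $F$ supported on loops that avoid $A$, and to realize both sides as $\eps\to 0$ limits of the same CLE quantity. Fix a point $z\in\HH$ with $d(z,A)>0$ and a functional $F$ supported on the set of loops surrounding $z$ (by the description of $\mu$ at the end of Section \ref{S.5}, such functionals determine $\mu$). We know that
$$
\mu\bigl(1_{\{z\text{ surrounded by }\gamma\}}\,1_{\{\gamma\cap A=\emptyset\}}\,F(\gamma)\bigr)
= \lim_{\eps\to 0}\frac{1}{u(\eps)}\,E\bigl(1_{\{\gamma(z)\cap C_\eps\neq\emptyset\}}\,1_{\{\gamma(z)\cap A=\emptyset\}}\,F(\gamma(z))\bigr),
$$
where $\gamma(z)$ is the loop surrounding $z$ in a CLE in $\HH$. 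So the left-hand side of the proposition is identified as a CLE limit; the task is to identify the right-hand side, $\mu_A$, with the same limit. First I would run a ``discover the loops meeting $A$'' exploration of the CLE (the machinery of Section \ref{explodet}, with the roles of $\U$ and $\HH$ interchanged by a fixed conformal map, and $A$ playing the role of the set to be explored). On the event that $\gamma(z)$ does not meet $A$, this exploration terminates having revealed exactly the loops of $\Gamma$ intersecting $A$, together with the domain $H^\#$ which is the component of $\HH\setminus A$ minus those loops that has $0$ on its boundary, and by the CLE restriction property the conditional law of the remaining loops — in particular of $\gamma(z)$ — given $H^\#$ is that of an independent CLE in $H^\#$.

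Now transport this CLE in $H^\#$ back to $\HH$ via $(\psi^\#)^{-1}$, where $\psi^\#:\HH\to H^\#$ is the conformal map with $\psi^\#(0)=0$, $(\psi^\#)'(0)=1$ (and some fixed third normalization). The image is a CLE in $\HH$, and the loop surrounding $(\psi^\#)^{-1}(z)$ has, after conditioning on meeting $C_\eps$, the law whose $u(\eps)^{-1}$-rescaled limit is $\mu^{(\psi^\#)^{-1}(z)}$, i.e.\ $\mu$ restricted to loops surrounding $(\psi^\#)^{-1}(z)$. The key point is a change-of-variables/absolute-continuity computation: the event $\{\gamma(z)\cap C_\eps\neq\emptyset\}$ pulls back, under $\psi^\#$, to the event that the image loop meets $(\psi^\#)^{-1}(C_\eps)$, which for small $\eps$ is squeezed between $C_{\eps/a}$ and $C_{a\eps}$ with $a=a(\eps)\to |(\psi^\#)'(0)|^{-1}=1$ — but since $(\psi^\#)'(0)=1$ exactly, the conformal covariance exponent $\beta$ contributes a factor $|(\psi^\#)'(0)|^{-\beta}=1$, so no Jacobian appears and in the limit one obtains precisely $\mu$ restricted (to loops surrounding $(\psi^\#)^{-1}(z)$ and then pushed forward by $\psi^\#$). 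Concretely, conditionally on $H^\#$ (equivalently on $\psi^\#$), one has
$$
\lim_{\eps\to 0}\frac{1}{u(\eps)}\,E\bigl(1_{\{\gamma(z)\cap C_\eps\neq\emptyset\}}\,F(\gamma(z))\,\big|\,\psi^\#\bigr)
= \int F\bigl(\psi^\#(\gamma)\bigr)\,1_{\{(\psi^\#)^{-1}(z)\text{ surrounded by }\gamma\}}\,d\mu(\gamma),
$$
using the conformal covariance property $\psi^\#\circ\mu = |(\psi^\#)'(0)|^{-\beta}\mu = \mu$ and the fact that $(\psi^\#)^{-1}(C_\eps)$ is asymptotically $C_\eps$ up to a multiplicative factor tending to $1$ (whose $\beta$-power also tends to $1$). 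Taking expectation over $\psi^\#$ (i.e.\ over $\Gamma^\#$) and recognizing the right-hand side as the definition of $\mu_A$ tested against $F$, the two limits coincide.

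Finally, to pass from the conditional statement to the unconditional one, I would use dominated convergence: the integrand is bounded (since $F$ is bounded and the probabilities are $\le 1$), and the $u(\eps)^{-1}$-normalized quantities are controlled — this is where one needs a uniform integrable bound, supplied by Lemma \ref{LR1} and its consequences (finiteness of $\mu(\A_r)$, and the estimate $v(\delta\eps)/v(\eps)\to\delta^\beta$), guaranteeing that the contribution of loops of very large or very small radius is negligible uniformly in $\eps$. Since $F$ was an arbitrary bounded continuous functional supported on loops surrounding some $z$ with $d(z,A)>0$, and since every pinned loop avoiding $A$ surrounds some such rational $z$, this identifies $\mu\,|_{\{\gamma\cap A=\emptyset\}}$ with $\mu_A$ as measures.

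\medbreak\noindent\textbf{Main obstacle.} The delicate point is the uniformity of the $\eps\to 0$ limit \emph{after} the random conformal distortion by $\psi^\#$: one is conditioning on a random domain $H^\#$ whose boundary can come arbitrarily close to $0$ (if many small loops cluster near the origin), so $(\psi^\#)'$ near $0$ is not uniformly controlled and the ``squeezing'' of $(\psi^\#)^{-1}(C_\eps)$ between $C_{\eps/a}$ and $C_{a\eps}$ holds only for $\eps$ below a random threshold. Handling this requires either restricting first to the event $\{\gamma(z)\text{ avoids }A\}\cap\{d(z,\partial H^\#) > \eta\}$, proving the limit there, and then letting $\eta\to 0$ using that $\mu$ gives finite mass to loops surrounding $z$; or, more cleanly, exploiting that the third normalization can be chosen ($G^\#(w) = w + o(w^2)$) so that the relevant conditional expectations are monotone/comparable and the a priori tail bounds of Section \ref{s.tail} dominate the error. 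This interchange of the random conformal map with the vanishing-$\eps$ limit is the crux; the rest is bookkeeping with conformal covariance and the CLE restriction property.
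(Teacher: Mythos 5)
Your proof follows essentially the same route as the paper's: condition on the random domain $H^\#$ determined by the loops meeting $A$, use the CLE restriction property to see that the conditional law of the remaining loops is an independent CLE in $H^\#$, and exploit the normalization $(\psi^\#)'(0)=1$ so that no Jacobian factor survives the $\eps\to 0$ limit defining $\mu$. The paper phrases this as an identity between two pre-limit measures on the largest loop $\tilde\gamma(\eps)$ hitting $C_\eps$ (both having the same total mass) and then takes vague limits, while you instead test against functionals supported on loops surrounding a fixed $z$ with $d(z,A)>0$ — equivalent bookkeeping; your explicit flagging of the interchange-of-limits subtlety (the random conformal distortion by $\psi^\#$ near $0$ versus the vanishing $\eps$) is a reasonable refinement of what the paper dispatches with ``it follows readily from our previous considerations.''
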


Recall that we constructed the pinned measure by ``exploring'' a CLE in $\U$ until we discover $\gamma_0$.
If we started with a pinned loop --- together with a CLE in the complement of the pinned loop --- we might try to ``explore'' this configuration
until we hit the pinned loop. This would be a way of constructing a natural measure on loops pinned at two points.  We will essentially carry out such a procedure later on, and the above proposition will be relevant.

\begin {proof}
Let us now consider the set $A$ and a CLE $\Gamma^\#$, and define $H^\#$ and the map $\psi^\#$ as before. For each $\eps$, the event that no loop in $\Gamma^\#$
 intersects both $C_\eps$ and $A$ is identical to the event that $C_\eps \subset H^\#$.
Let us now condition on the set $H^\#$ (for a configuration where $C_\eps \subset H^\#$); the restriction property of the CLE tells us that the conditional law of the CLE-loops that stay in $H^\#$ is exactly that of an ``independent CLE'' defined in this random set $H^\#$.
Hence, we get an identity between the following two measures:
\begin {itemize}
 \item The law of $\tilde \gamma (\eps)$  in the CLE (the loop with largest radius that intersects $C_\eps$), restricted to the event that no loop in the CLE intersects both $C_\eps$ and $A$.
\item
Sample first a CLE $\Gamma^\#$, define $H^\#$, restrict ourselves to the event where $C_\eps \subset H^\#$, define $\psi^\#$, consider an independent CLE $\tilde \Gamma$ in the upper half-plane and its image $\psi^\# ( \tilde \Gamma)$, and look at the law of the loop with largest radius in this family that intersects $C_\eps$.
\end {itemize}
Note that the total mass of these two measures is the probability that no loop in the CLE intersects both $C_\eps$ and $A$.

Now, we consider the vague limits when $\eps \to 0$ of $1/u(\eps)$ times these two measures. It follows readily from our previous considerations that:
\begin {itemize}
\item
For the first construction, the limit is just $\mu$ restricted to the set of pinned loops that do not hit $A$.
\item
For the second construction, the limit is just $\mu_A$ (recall that $\psi^\#$ has been chosen in such a way that  $(\psi^\#)' (0) =1$, so that when $\eps$ is small $ \psi^\# (C_\eps)$ is very close to $C_\eps$).
\end {itemize}
\end {proof}

In fact, it will be useful to upgrade the previous result to the space of ``pinned configurations'': We say that $(\bar \gamma, \bar \Gamma)$ is a {\em pinned configuration}
 if $\bar \gamma$ is a pinned loop in the upper half-plane and if $\bar \Gamma$ is a loop-configuration in the unbounded connected component of $\HH \setminus \bar \gamma$.
\begin {figure}[htbp]
\begin {center}
\includegraphics [width=3.5in]{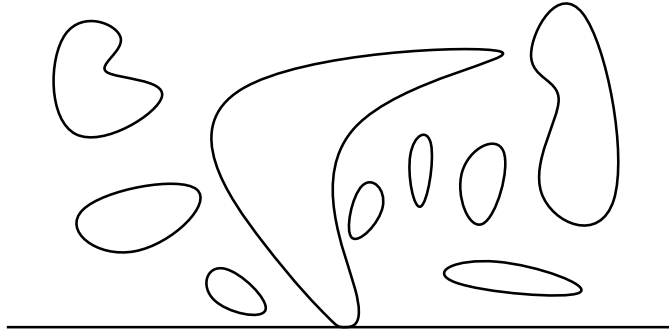}
\caption {A pinned configuration (sketch)}
\end {center}
\end {figure}

Let us define a first natural measure on the space of pinned configurations. Suppose that one is given a pinned loop $\gamma$ and a loop-configuration $\Gamma$ (that is not necessarily disjoint from $\gamma$).
Define $H_\gamma$ the unbounded connected component of the complement of $\gamma$, and $\phi_\gamma$ the conformal map from $\HH$ onto $H_\gamma$ that is normalized at infinity ($\phi_\gamma (z) = z + o(1)$). Then $(\bar \gamma, \bar \Gamma)= ( \gamma, \phi_\gamma ( \Gamma))$ is clearly a pinned loop configuration.
We now let $\bar \mu$ denote the image of the product measure $\mu\otimes P$ under this map $(\gamma, \Gamma) \mapsto (\bar \gamma, \bar \Gamma)$. We call it the pinned CLE configuration measure.
Clearly, the marginal measure of $\bar \gamma$ (under $\bar \mu$) is $\mu$ (because $P$ is a probability measure and $\bar \gamma = \gamma$).

\begin {figure}[htbp]
\begin {center}
\includegraphics [width=5in]{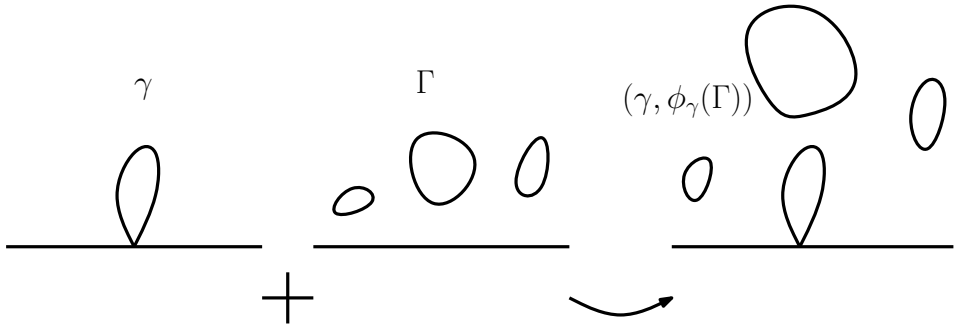}
\caption {Construction of $(\bar \gamma, \bar \Gamma)$ (sketch)}
\end {center}
\end {figure}

Suppose now that $A$ is a set as before.
If $(\bar \gamma , \bar \Gamma )$ is a pinned configuration, then we define:
\begin {itemize}
 \item $\tilde \Gamma_A$ to be the set of  loops of $\bar \Gamma$ that intersect $A$.
\item $\bar H_A$ to be the unbounded connected component of $\HH \setminus (\tilde \Gamma_A \cup A)$
\item $\bar \Gamma_A$ to be the set of loops of $\bar \Gamma$ that stay in $\bar H_A$.
\end {itemize}
 Hence, when $\bar \gamma \cap  A = \emptyset$, we define a triplet $(\bar \gamma, \bar \Gamma_A, \tilde \Gamma_A)$. Let $\bar \mu_A$ denote the image of $\bar \mu$ (restricted to the set  $\bar \gamma \cap  A = \emptyset$) under this transformation.

We now construct another measure that will turn out to be identical to
 $\bar \mu_A$: Start on the one hand with a pinned measure configuration $(\bar \gamma, \bar \Gamma)$ and on the other hand with a loop-configuration in $\HH$ that we denote by $\Gamma^\#$. We define $\psi^\#$ as before (using $A$ and $\Gamma^\#$). We also let $\Gamma_A^\#$ denote the loops in $\Gamma^\#$ that intersect $A$.
Then we consider the triplet $(\psi^\# ( \bar \gamma), \psi^\# ( \bar \Gamma), \Gamma_A^\# )$. This triplet is a function of $((\bar \gamma, \bar \Gamma), \Gamma^\# )$.
We now define $\mu_A^\#$  to be the image of the product measure $\bar \mu \otimes P$ under this mapping.

\begin {figure}[htbp]
\begin {center}
\includegraphics [width=5in]{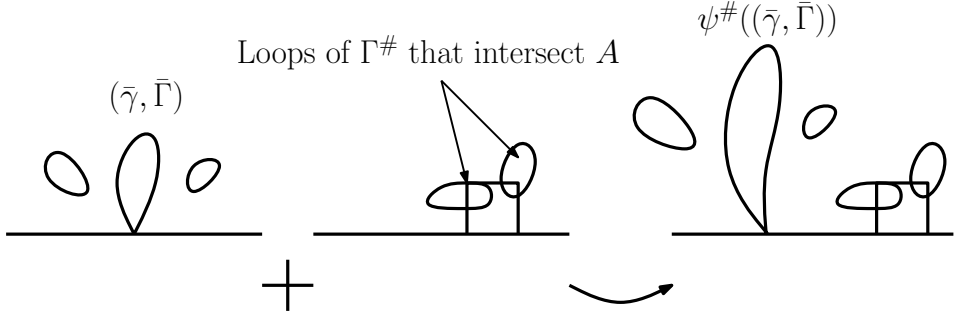}
\caption {The measure $\mu_A^\#$ (sketch)}
\end {center}
\end {figure}

\begin {proposition}[Restriction property for pinned configurations]
\label {P72}
 The two measures $\bar \mu_A$ and $\mu_A^\#$ are identical.
\end {proposition}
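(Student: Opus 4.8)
The strategy is to reduce Proposition~\ref{P72} to Proposition~\ref{muA} by working at the discrete ($\eps$) level first and then passing to the vague limit, exactly as in the proof of Proposition~\ref{muA}, but now keeping track of the entire surrounding configuration rather than just the pinned loop. First I would set up the approximating picture: fix $\eps>0$ and, on the one side, sample the pinned configuration $(\bar\gamma,\bar\Gamma)$ from $\bar\mu$ and restrict to the event $\mathcal E_\eps$ that the loop of largest radius meeting $C_\eps$ equals the ``$\tilde\gamma(\eps)$'' of the underlying CLE and that no loop of $\bar\Gamma$ meets both $C_\eps$ and $A$; on the other side, sample $(\bar\gamma,\bar\Gamma)$ from $\bar\mu$ independently of a CLE $\Gamma^\#$ in $\HH$, form $H^\#$ and $\psi^\#$ (normalized by $(\psi^\#)'(0)=1$, $\psi^\#(\infty)=\infty$), and restrict to $\{C_\eps\subset H^\#\}$. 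In both cases I multiply the law by $1/u(\eps)$.

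The key identity to establish at finite $\eps$ is that, conditionally on $H^\#$ (equivalently on $\Gamma_A^\#$) and on the event $\{C_\eps\subset H^\#\}$, the triplet $(\bar\gamma,\bar\Gamma_A,\tilde\Gamma_A)$ obtained from the \emph{first} construction has the same law as $(\psi^\#(\bar\gamma),\psi^\#(\bar\Gamma),\Gamma_A^\#)$ from the \emph{second}. This is where the CLE restriction property enters: conditioned on $H^\#$, the loops of the ambient CLE that stay inside $H^\#$ form an independent CLE in $H^\#$, hence (transporting by $\psi^\#$) an independent CLE in $\HH$; since $\bar\Gamma=\phi_\gamma(\Gamma)$ was built from such a CLE $\Gamma$ together with $\mu$-distributed $\bar\gamma$, and $\psi^\#$ is the conformal map used to define $\mu_A$, the two recipes for producing ``the part of the configuration avoiding $A$, together with the conformal image of $(\bar\gamma,\bar\Gamma)$'' agree. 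One must be careful that the normalization $(\psi^\#)'(0)=1$ makes $\psi^\#(C_\eps)$ close to $C_\eps$ up to $o(1)$ as $\eps\to0$, so that ``largest radius meeting $C_\eps$'' is asymptotically preserved under $\psi^\#$; this is exactly the point exploited in the proof of Proposition~\ref{muA}, and it carries over verbatim for the $\bar\gamma$-coordinate. Finally one takes the vague limit $\eps\to0$: the left-hand family of measures converges to $\bar\mu$ restricted to $\{\bar\gamma\cap A=\emptyset\}$, then pushed forward to $(\bar\gamma,\bar\Gamma_A,\tilde\Gamma_A)$, i.e.\ to $\bar\mu_A$; the right-hand family converges to $\mu_A^\#$ by the same reasoning applied jointly to the $\bar\gamma$-coordinate (governed by $\mu$, by Proposition~\ref{muA}) and to the $\bar\Gamma$-coordinate (governed by the independent CLE, which survives the limit unchanged).

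The main obstacle I expect is purely bookkeeping rather than conceptual: one needs the joint vague convergence of the rescaled laws of the full triplet $(\tilde\gamma(\eps),\bar\Gamma_A,\tilde\Gamma_A)$, not just of $\tilde\gamma(\eps)$ as in Proposition~\ref{muA}. The point is that the extra coordinates $(\bar\Gamma_A,\tilde\Gamma_A)$ are, conditionally on the loop $\tilde\gamma(\eps)$ and on $A$, essentially \emph{decoupled} from the small-$\eps$ limiting procedure — they live at macroscopic scale and their conditional law is continuous in the relevant topology as $\tilde\gamma(\eps)$ degenerates to a $\mu$-typical pinned loop. Concretely one fixes a bounded continuous test functional $F(\bar\gamma)G(\bar\Gamma_A,\tilde\Gamma_A)$ and runs the same argument, using that conditionally on the ambient configuration away from $C_\eps$ the law of the large loop $\tilde\gamma(\eps)$ is what was already analyzed, while $G$ depends only on that away-from-$C_\eps$ data. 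One should also check the harmless technical point that $\bar H_A$ and $H^\#$ are genuinely simply connected and that $d(0,A)>0$ keeps all the conformal maps well behaved and the total masses ($=$ probability that no loop meets both $C_\eps$ and $A$) matching on the two sides, so that the normalizations $1/u(\eps)$ are legitimate on both constructions. Once these continuity and measurability points are in place, Proposition~\ref{P72} follows by the same ``$\eps\to0$ after CLE restriction'' scheme as Proposition~\ref{muA}.
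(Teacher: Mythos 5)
Your proposal is correct and follows essentially the same route as the paper's own (very brief) proof sketch: repeat the finite-$\eps$ argument of Proposition~\ref{muA} while also tracking the remaining loops, use the CLE restriction property to match the two constructions conditionally on $H^\#$ at each fixed $\eps$, and pass to the vague limit by testing against observables that depend on finitely many loops (the paper phrases this as ``first derive the result for the law of finitely many loops,'' you phrase it via bounded continuous test functionals $F(\bar\gamma)G(\bar\Gamma_A,\tilde\Gamma_A)$ — the same idea). The one imprecision worth flagging is in your opening setup: at the approximating stage one should sample a CLE (not $\bar\mu$) and look at $u(\eps)^{-1}$ times the law of $(\tilde\gamma(\eps),\text{other loops})$ restricted to the event that no loop meets both $C_\eps$ and $A$, rather than ``sampling from $\bar\mu$ and restricting''; your subsequent description of the key finite-$\eps$ identity and the limiting step makes clear you have the right picture, so this is only a wording issue.
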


If we consider the marginal measures on the pinned loops of $\bar \mu_A$ and $\mu_A^\#$, we recover
Proposition \ref {muA}.
The proof is  basically identical to that of Proposition \ref {muA}: One just needs also
to keep track of the remaining loops, and this in done on the one hand thanks to the CLE's restriction property, and on the other hand (in the limiting procedure) thanks to the fact that loops are disjoint and at positive distance from the origin. In order to control the $\eps \to 0$ limiting procedure applied to loop configurations, one can first derive the result for the law of finitely many loops in the loop configuration (for instance those that surround some given point).

\subsection {Chordal explorations}
As in the case of the CLE, we are going to progressively explore a pinned configuration,
cutting out recursively small (images of) semi-circles, and trying to make use of the
pinned measure's restriction property in order to define a ``two-point pinned measure''. However,  some caution
will be needed in handling ideas involving independence because
$\bar \mu$ is not a probability measure.

Recall that $\bar \mu$ is a measure on configurations in a simply connected domain with one special
marked boundary point (i.e., the origin, where the pinned loop touches the boundary of the domain).
It will therefore be natural to work with chordal $\eps$-admissible explorations instead of radial ones.
This will be rather similar to the radial case, but it is nonetheless useful to describe it in some detail:

\medbreak
It is convenient to first consider the domain to be the upper half-plane and to take $\infty$ as the marked boundary point instead of the origin.
Suppose  that $\Gamma$ is a loop configuration (with no pinned loop) in the upper half-plane $\HH$ and choose some
 bounded closed simply connected set $A \subset \overline \HH$ such that \begin{enumerate}
  \item $\HH \setminus A$ is simply connected,
  \item $A$ is the closure of the interior of $A$,
  \item the interior of $A$ is connected, and
  \item the length of $\partial A \cap \partial \HH$ is positive.
 \end{enumerate}
Suppose furthermore that all $\gamma_j$'s in $\Gamma$ that intersect $\partial A$ also intersect the interior of $A$.

Here, when $x \in \R$, $D(x,\eps)$ will denote the set of points in $\HH$ that are at distance less than $\eps$ from $x$.
We choose some $x_1$ on the real line, such that $D(x_1, \eps) \subset A$. Then, we define the set $H_1$ to be the unbounded connected component of the set obtained when removing from $\HH \setminus D(x_1, \eps)$ all loops of $\Gamma$ that intersect $D(x_1, \eps)$. We also define the conformal map $g_1$ from $H_1$ onto $\HH$ that is normalized at infinity ($g_1 (z) = z + o(1)$ as $z \to \infty$), and we let $A_2 = g_1 ( H_1 \cap A)$.

Then, we proceed inductively: For each $n \ge 1$, we choose (when it is possible) $x_{n+1}$ on the real line such that
$D(x_n, \eps) \subset  A_n$. Then
we consider the set $H_{n+1}$ to be the unbounded connected component of the set obtained when removing from $\HH \setminus D(x_n, \eps)$ all loops of $g_n \circ \ldots \circ g_1 (\Gamma)$ that intersect $D(x_n, \eps)$. We also define the conformal map $g_{n+1}$ from $H_{n+1}$ onto $\HH$ that is normalized at infinity ($g_{n+1} (z) = z + o(1)$ as $z \to \infty$), and we let $A_{n+1} = g_{n+1} ( H_{n+1} \cap A_n)$.

This procedure necessarily has to stop at some step $N$, i.e., at this step $N$, it is not possible to find a point $x$ such that
$D (x, \eps ) \subset H_N$. This is just because of additivity of the half-plane capacity under composition of the conformal maps (and because the half-plane capacity of $A$ is finite). We say that such an exploration is an {\em $\eps$-admissible (chordal) exploration of $(\Gamma, A)$ rooted at infinity}. The results of Subsection \ref {explodet} and their proofs can be immediately adapted to the present setting. It is for instance easy to check that for any given $\Gamma$, for any given loop $\gamma_j$ in $\Gamma$ that intersects such a given $A$, there exists a positive $\eps_0  = \eps_0 ( \Gamma, A, \gamma_j)$ such that any
 $\eps$-admissible chordal exploration of $(\Gamma, A)$ necessarily discovers the loop $\gamma_j$ as soon as
$\eps \le \eps_0$.
Hence, after the last step $N$ of this $\eps$-exploration of $(\Gamma, A)$, for all positive $\alpha$, when $\eps$ is chosen to be sufficiently small,
one has necessarily discovered all loops of diameter greater than $\alpha$ of $\Gamma$ that intersect $A$.

\medbreak

Let us now suppose that $\Gamma$ is a loop configuration and $A$ is a set satisfying the same conditions as before, except that we also assume that it is at positive distance of the origin.
We can now define chordal $\eps$-explorations of $(\Gamma, A)$ that are {\em rooted at $0$}: These are just the image under $z \mapsto -1/z$ of the previous explorations of $(-1/ \Gamma, -1/A )$, where $-1/\Gamma$ denotes the loop configuration obtained when one considers the loops $(-1/\gamma_j)$.
The difference between the exploration rooted at the origin and the exploration rooted at infinity in fact only lies in what we call $D(x, \eps)$, and how one renormalizes the conformal maps $g_n$ at each step. In the exploration rooted at the origin, $D(x,\eps)$ would have to be replaced by the conformal image of $D(x, \eps)$ under the conformal mapping from $\HH$ onto $\HH$ that fixes $x$, has derivative $1$ at $x$ and maps $\infty$ onto $0$, and the conformal map $g_n^{-1}$ would have to be replaced by the conformal map
$G_n$ from $\HH$ onto $H_n$ such that $G_n(z) = z + o (z^2)$ when $z \to 0$.

Let us describe more precisely an iteration step in this case:
Let us define, for each $x \in \R \setminus \{ 0 \}$ and each small $\eps$, $C_\eps (x)$ to be the image of $C_\eps$ under the Moebius transformation
$I_x : \ z \mapsto -x / (xz-1)$ of $\HH$ that maps $0$ onto $x$ and $\infty$ onto $0$. Note that for all $x$ and $y$ in $\R \setminus \{ 0 \}$,
 $I_{x} \circ I_{y}^{-1} (z) \sim z$ when $z \to 0$ and
that $I_x \circ I_y^{-1} ( C_\eps (y)) = C_\eps (x)$.
When $\eps$ is very small, $C_\eps (x)$  is very close to the semi-circle of radius $x^2 \eps$ around $x$ (i.e., it is squeezed between to semi-circles of radii close to $x^2 \eps$) because $I_x'(0) = x^2$.

Suppose that $H_n$, $G_n$ are already defined. The choice of $x_n$ is then said to be $\eps$-admissible if $C_\eps (x_n) \subset G_n^{-1} ( H_n \cap A)$.
We then define $H_{n+1}$ to be the unbounded connected component the domain by removing from $H_n \setminus G_n ( C_\eps (x_n) )$ all the loops that intersect $G_n (C_\eps (x_n))$.

Clearly, after the last step $N$ of this exploration rooted at the origin (when no $\eps$-admissible point can be found), we again have a set $H_N$ that is in some sense close to $H$ (which is the unbounded connected component of the domain obtained when removing from $\HH \setminus A$ all loops that intersect $A$).
One way to make this more precise is to use the Carath\'eodory topology ``seen from the origin''. Even if the origin is a boundary point of the simply connected domains $H_N$ and $H$, we can symmetrize these domains (by considering their union with their symmetric sets with respect to the real axis), and therefore view the conformal maps
$G_N$ and $G$ as conformal maps normalized at the origin, which is now an inner point of the domain. We will implicitely use this topology for domains in the upper half-plane ``viewed from the origin''. Then, just as in Subsection \ref {explodet}, we get that (for each given $\Gamma$ and $A$)  $H_N$ converges to $H$, when $\eps \to 0$, uniformly with respect to all possible $\eps$-admissible explorations.

\subsection {Definition of the two-point pinned measure}

 We know that the $\mu$-mass of the set of pinned loops that intersect the segment $[1, 1+i]$ is positive and finite. By scaling, we can choose $a$ in such a way that the $\mu$-mass of the set of pinned loops that intersect $[a, a(1+i)]$ is equal to $1$. On this set of configurations, $\mu$ can therefore be viewed as a probability measure. Similarly, the pinned configuration measure $\bar \mu$ restricted to the set
$$ \A = \A ([a, a + ia]) = \{ ( \bar \gamma , \bar \Gamma) \ : \  \bar \gamma \cap [a, a+ia] \not= \emptyset \}$$
 is a probability measure that we will denote by $\tilde P_{[a, a +ia]}$.

Let us now consider a pinned configuration $(\bar \gamma, \bar \Gamma )$ in $\A$.
Define $u= \min \{ y \ : \ a+ iy \in \bar \gamma \}$ so that $a+iu$ is the lowest point of $\bar \gamma \cap [a, a+ia]$.
Let $\psi$ denote the conformal map from
 the unbounded connected component $H_\psi$ of the set obtained by removing from $\HH \setminus [0,a+iu]$ all the loops of $\bar \Gamma$ that intersect this segment back onto $\HH$, normalized by $\psi (0)=0$, $\psi' (0)= 1$ and $\psi (a+iu)= 1$.
We then define $\tilde P$ to be the distribution of $\tilde \gamma = \psi (\bar \gamma)$, i.e.\ the image of the probability measure $\tilde P_{[a,a+ia]}$ under the transformation
$(\bar \gamma, \bar\Gamma) \mapsto \psi (\bar \gamma)$.
\begin {figure}[htbp]
\begin {center}
\includegraphics [width=6in]{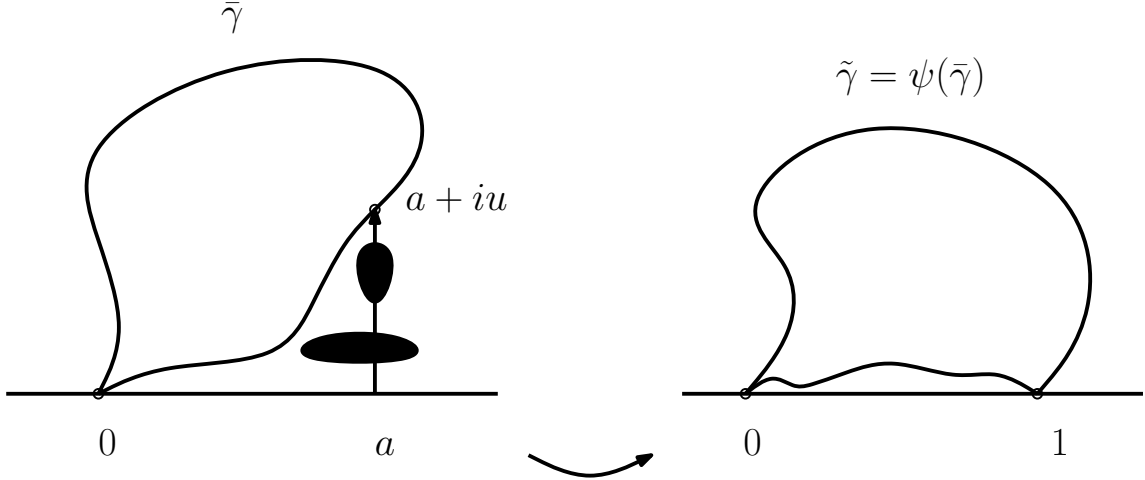}
\caption {The construction of of the two-point pinned loop (sketch)}
\end {center}
\end {figure}
A by-product of the coming arguments will be that in fact:
\begin {proposition}
\label {P81bis}
Under $\tilde P_{[a, a +ia]}$, the loop $\tilde \gamma$ is independent of $\psi$.
\end {proposition}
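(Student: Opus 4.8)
The plan is to read Proposition~\ref{P81bis} off the exploration-based construction of $\tilde P$, using the restriction property for pinned configurations (Proposition~\ref{P72}) as the driving mechanism, exactly as the one-point pinned measure $\mu$ was read off the radial exploration of a CLE. Note that although $\bar\mu$ is an infinite measure, its restriction to $\A=\A([a,a+ia])$ is a probability measure, so the conditionings below are legitimate.

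First I would set up a chordal $\eps$-admissible exploration of the pinned configuration $(\bar\gamma,\bar\Gamma)$ rooted at the origin that, as $\eps\to 0$, discovers precisely the loops of $\bar\Gamma$ meeting the segment $[0,a+iu]$ together with that segment, and stops upon reaching $\bar\gamma$: concretely one first locates $a+iu$ by exploring the vertical segment $[a,a+ia]$ downward until $\bar\gamma$ is encountered, and then explores $[0,a+iu]$, at every step choosing the cut-out point in a Markovian way from the loops already discovered. By the Carath\'eodory-stability of Loewner chains recalled in Section~\ref{S43} and the argument of Proposition~\ref{pwellchosen} (transported to the chordal rooted-at-$0$ setting, whose building blocks are set up exactly as in Subsection~\ref{explodet}), the composition $\Psi_n$ of the conformal maps produced by the exploration --- each taking the current remaining domain onto $\HH$ --- converges as $\eps\to 0$ to $\psi$, and the last cut-out semicircle converges to the point $a+iu$. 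The structural point I would stress is that the total data $\mathcal{D}$ revealed by the exploration --- the loops of $\bar\Gamma$ meeting $[0,a+iu]$ and the point $a+iu$, equivalently the segment $[0,a+iu]$ --- is exactly what determines $H_\psi$ and hence $\psi$, so $\psi$ is a measurable function of $\mathcal{D}$.

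Second I would iterate Proposition~\ref{P72}: at each step before $\bar\gamma$ is reached, applying the restriction property for pinned configurations with $A$ the current removed set shows that, conditionally on the loops discovered so far (equivalently, on $\Psi_n$), the remaining pinned configuration pushed forward to $\HH$ by $\Psi_n$ is distributed as $\bar\mu$ restricted to the relevant event; in particular the conditional law of $\Psi_n(\bar\gamma)$ does not depend on the discovered loops. Passing to the $\eps\to 0$ limit --- handled as in the passage from CLE loops to $\mu$ in Section~\ref{s.tail}, using that the loops are disjoint and at positive distance from $0$ --- one gets that, conditionally on $\mathcal{D}$, the loop $\psi(\bar\gamma)=\tilde\gamma$ has a \emph{fixed} law, namely that of $\mu$ restricted to $\A$ conditioned, in the standard picture, on the pinned loop touching the image of the vertical segment at the normalized point $1$; this law is the same for every value of $\mathcal{D}$ because after mapping everything is in standard position. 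Since $\psi$ is $\sigma(\mathcal{D})$-measurable, a one-line conditioning argument ($\E[f(\tilde\gamma)g(\psi)] = \E[\E[f(\tilde\gamma)\mid\mathcal{D}]\,g(\psi)] = (\int f\,d\nu)\,\E[g(\psi)] = \E[f(\tilde\gamma)]\,\E[g(\psi)]$) yields that $\tilde\gamma$ is independent of $\psi$ under $\tilde P_{[a,a+ia]}$.

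The step I expect to be the main obstacle is the bookkeeping of these limits: one must check that the exploration discovers, in the limit, exactly the loops meeting $[0,a+iu]$ --- neither more nor fewer --- that stopping at $\bar\gamma$ reveals in the limit only the single point $a+iu$ and no portion of $\bar\gamma$ itself, and that the conditional-independence statement survives the vague limit over the exploration, all of which leans on the a priori tail estimates for $\mu$ (Lemma~\ref{LR1} and its consequences) and on Proposition~\ref{muA}/\ref{P72}. A secondary subtlety is that the conditioning on the measure-zero event ``$a+iu$ is the lowest intersection of $\bar\gamma$ with $[a,a+ia]$'' has to be realized as an $\eps\to 0$ limit of the positive-probability events $\{\bar\gamma\cap C_\eps(\cdot)\ne\emptyset\}$, exactly in the spirit of the earlier definitions of $\mu^z$ and $\mu$.
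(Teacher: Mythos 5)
Your proposal follows the paper's own argument closely: the paper also proves Propositions \ref{P81} and \ref{P81bis} together by running a Markovian chordal $\eps$-admissible exploration rooted at $0$ of the thin rectangles $A_{\delta,k}$ around the vertical segment through $a$, with Corollary \ref{C555} (the operational form of Proposition \ref{P72}) supplying the conditional independence of $\psi_{\bar N}(\bar\gamma)$ from $(\bar N,H_{\bar N})$ at each $\eps$, and the arguments of Proposition \ref{pwellchosen} giving $\psi_{\bar N}\to\psi$ almost surely. One small slip in your description: to locate $a+iu$, the \emph{lowest} intersection of $\bar\gamma$ with the vertical line, the exploration must grow the thin column upward from the real axis (as the increasing rectangles $A_{\delta,k}$ do), not downward from $a+ia$; the single upward sweep then discovers the relevant loops and stops at $\bar\gamma$ in one pass, so no separate second exploration is needed.
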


Define on the other hand the probability measure $\tilde P_\eps$ to be the measure $\mu$ restricted to the set of pinned loops that intersect $C_\eps (1)$, and
renormalized in order to be a probability measure. The main first goal of this subsection is to derive the following proposition (which we will later prove together with Proposition \ref{P81bis}):

\begin {proposition}[Definition of the two-point pinned measure $\tilde P$]
\label {P81}
As $\eps \to 0$, $\tilde P_\eps$ converges to $\tilde P$.
\end {proposition}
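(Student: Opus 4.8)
The plan is to mimic, in the chordal setting, the construction that produced the one-point pinned loop $\hat y^{-1}\hat\Phi(\gamma_0)$ in Proposition~\ref{p44}. Fix $\eps$ and consider $\tilde P_{[a,a+ia]}$, the probability measure on pinned configurations $(\bar\gamma,\bar\Gamma)$ conditioned on $\bar\gamma$ intersecting the segment $[a,a+ia]$. As with the CLE exploration, I would ``explore'' the configuration by iterating small chordal $\eps$-admissible explorations rooted at $0$ along a family of sets $A^{\delta,k}$ that approximate the segment $[0,a]$ (the analogue of the wedges $W_\delta\cap\tilde W_{k\delta}$ used in Section~\ref{explodet}), stopping the exploration at the last step $N$ before one hits $\bar\gamma$. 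The key point is that the restriction property of $\bar\mu$ (Proposition~\ref{P72}) plays here exactly the role that the CLE restriction property played in Section~\ref{explo}: conditionally on the loops discovered so far (those that intersect $A^{\delta,k}$, together with $A$), the law of the remaining configuration is an independent pinned CLE configuration in the explored domain, up to conformal map. This is precisely what makes the exploration steps ``i.i.d.\ up to a conformal change of coordinates'', so that one obtains, as in Proposition~\ref{p44}, that the law of $\bar\gamma$ conditioned on $\{\bar\gamma\cap C_\eps(1)\neq\emptyset\}$ (i.e.\ $\tilde P_\eps$) is the image, under the composed conformal map of the exploration truncated at $N$, of $\tilde P_{[a,a+ia]}$'s pinned loop.

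The main analytic input is then the convergence of the truncated exploration maps as $\eps,\delta\to 0$, which is the chordal analogue of Proposition~\ref{pwellchosen}. Concretely: the composition $g_N\circ\cdots\circ g_1$ (equivalently the map $G_N$ rooted at $0$) converges, uniformly over $\eps$-admissible choices, to the conformal map $\psi$ appearing in the definition of $\tilde P$ --- namely the map from $H_\psi$ (the unbounded component of $\HH\setminus[0,a+iu]$ with the loops of $\bar\Gamma$ meeting that segment removed) onto $\HH$ with $\psi(0)=0$, $\psi'(0)=1$, $\psi(a+iu)=1$. The ingredients are already in place: the lemmas of Subsection~\ref{explodet}, adapted in the last subsection to the chordal setting (``$H_N$ converges to $H$ uniformly over all $\eps$-admissible explorations''), give that every macroscopic loop of $\bar\Gamma$ meeting the segment is discovered before $N$; the a~priori estimate Lemma~\ref{LR1} guarantees that only finitely many loops of diameter $\geq\alpha$ meet any fixed bounded set, so the ``leftover'' small loops near the segment contribute a vanishing distortion (as in the proof of Proposition~\ref{pwellchosen}, using additivity of half-plane capacity and the stability of Loewner chains recalled in Subsection~\ref{S43}); and the argument identifying $y_N\to\hat y$ in Proposition~\ref{pwellchosen}, transferred here, shows that the marked point $G_N^{-1}(C_\eps(x_N))$ converges to $a+iu$, which pins down the third normalization $\psi(a+iu)=1$ in the limit. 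Combining these, $\tilde P_\eps$, which is the image of $\tilde P_{[a,a+ia]}$ under the (pinned-loop marginal of the) truncated exploration map, converges weakly to the image under $\psi$, i.e.\ to $\tilde P$; and since Proposition~\ref{P72} makes the discovered loops independent of the not-yet-discovered configuration, one simultaneously reads off Proposition~\ref{P81bis}, the independence of $\tilde\gamma=\psi(\bar\gamma)$ from $\psi$ (the latter being a function of the loops of $\bar\Gamma$ meeting $[0,a+iu]$ only).

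The hard part, I expect, is the bookkeeping needed because $\bar\mu$ is an \emph{infinite} measure rather than a probability measure, so one cannot naively condition or invoke i.i.d.\ structure: the exploration must be run on $\tilde P_{[a,a+ia]}$ (the normalized restriction of $\bar\mu$ to loops meeting a fixed segment of unit mass) and one must check, as in the remarks after Proposition~\ref{P72}, that all the limiting statements about loop configurations can be obtained by first controlling finitely many loops (say those surrounding a finite dense family of points) and then passing to the limit --- the vague-convergence version of the arguments in Subsection~\ref{s.tail}. A second, more technical obstacle is that the exploration is chordal and rooted at $0$, so the relevant quantity controlling termination is half-plane capacity (finite for $A$, additive under composition), and one must verify the corresponding distortion and continuity estimates for the maps $G_n$ normalized by $G_n(z)=z+o(z^2)$ at $0$, rather than the conformal-radius/Koebe estimates used in the radial case; this is routine but needs the stability-of-Loewner-chains input of Subsection~\ref{S43} applied with the right normalization. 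Once these two points are handled, the proof closes exactly along the lines of Proposition~\ref{p44} and Proposition~\ref{pwellchosen}.
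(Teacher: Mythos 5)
Your approach is essentially the paper's: a Markovian chordal $\eps$-admissible exploration rooted at $0$ along thin sets approximating the relevant segment, the restriction property of $\bar\mu$ (Proposition~\ref{P72}, specialized in Corollary~\ref{C555}) in the role that CLE restriction played in the radial case, and convergence of the truncated exploration maps as in Proposition~\ref{pwellchosen}. Two cautions, though. First, the sets you explore should be the thin rectangles $A_{\delta,k}=[a-\delta/2,a+\delta/2]\times[0,ak\delta]$, growing upward from near $a$ and approximating the \emph{vertical} segment $[a,a(1+i)]$ on which the lowest hit point $a+iu$ lies --- not the horizontal segment $[0,a]$, which no loop of $\bar\Gamma$ can meet anyway.

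Second, and more substantively, your assertion that the exploration steps become ``i.i.d.\ up to a conformal change of coordinates'' is wrong, and the paper explicitly warns against it just before this proof: the discovery step $\bar N$ is \emph{not} geometric and the iteration steps are \emph{not} i.i.d., because $\bar\mu$ is an infinite measure, and conditioning a pinned configuration on $\bar\gamma$ missing the already-explored region does not reproduce the pinned configuration measure --- the target set shrinks and the normalization changes at each step. What the argument actually needs --- and all it needs --- is the weaker statement of Corollary~\ref{C555}: conditionally on $\bar N$, on the explored domain $H_{\bar N}$, and on the targeted semicircle $x_{\bar N}$, the law of the renormalized pinned loop $\psi_{\bar N}(\bar\gamma)$ is exactly $\tilde P_\eps$, independent of all the exploration data. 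You do acknowledge in your last paragraph that the infinite-measure nature of $\bar\mu$ forbids naive i.i.d.\ reasoning, so the rest of your proof survives once the correct conditional-law mechanism is substituted for the i.i.d.\ shorthand; with that substitution your proposal matches the paper's proof.
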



As we have already indicated, the basic idea of the proof will be similar in spirit with the construction of the pinned measure itself: We will use discrete explorations
of pinned configurations, and use the restriction property of the pinned measure in order to deduce some independence property between the already discovered loops and the yet-to-be-discovered ones along the exploration; this will enable us to conclude.

\medbreak

Suppose now that $A$ is a set that is at positive distance from  the origin, and that satisfies the conditions 1-4 described in the previous subsection.
Clearly, if we define
$$ \bar \A (A) = \{ ( \bar \gamma, \bar \Gamma) \ : \ \bar \gamma \cap A \not= \emptyset \},$$
then $\bar \mu (\bar \A (A))$ is finite and positive. We can therefore define $\tilde P_{A}$ to be $\bar \mu$ restricted to this set, and renormalized in order to be a probability measure.

Suppose now that $H\subset \HH$ is a simply connected domain with $d ( 0, \HH \setminus H ) > 0$. Define $\phi$ to be the conformal map from $H$ onto $\HH$ such that
$\phi (z) = z + o(z^2)$ when $z \to  0$, and suppose that $C \subset H$ is a set such that $A:= \phi (C)$ satisfies the same conditions as before.
We then define $\tilde P_{C}^H$ to be the image of $\tilde P_{A}$ under $\phi^{-1}$.  Basically, $\tilde P_{C}^H$ is the ``law'' of a pinned configuration in $H$ rooted at the origin and ``conditioned'' on the event that the pinned loop intersects $C$.

Suppose now that $B$ is another set at positive distance of the origin that satisfies the four conditions described in the previous subsection and such that $ A \subset B$.
We are now going to define, for each such $B$ and $A$, the measure $\bar \mu_{B , A}$ to be the measure $\bar \mu$ on pinned
configurations $(\bar \gamma, \bar \Gamma)$, restricted to the set $\bar \A ( B, A) = \bar \A (B) \setminus \bar \A (A)$ of configurations such that $\bar \gamma$ intersects $B$ but not $A$.
When $\bar \mu ( \bar \A (B)) > \bar \mu ( \bar \A (A))$, we then define
$\tilde P_{B, A}$ to be $\bar \mu_{B, A}$ normalized to be a probability measure.

Finally, when we are given $\bar \Gamma$ and $A$, we can define the set $\bar H_{A}$ as before (i.e.,  the unbounded connected component of the set obtained by removing from $\HH \setminus A$ all the loops of $\bar \Gamma$ that intersect $A$). Then, it follows immediately from the pinned measure's restriction property (Proposition \ref {P72})
that:

\begin {corollary}
\label {C555}
 Suppose that $(\bar \gamma, \bar \Gamma)$ is sampled according to $\tilde P_{B,A}$. Then the conditional law of $\bar \gamma$ given $\tilde \Gamma_{A}$ (i.e., the loops of $\bar \Gamma$ that intersect $A$) is $\tilde P^{H}_{B \cap H}$ where $H= \bar H_{A}$.
\end {corollary}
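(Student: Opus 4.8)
The plan is to deduce this directly from the pinned configurations' restriction property, Proposition~\ref{P72}, together with an elementary conditioning argument inside the product measure $\bar\mu\otimes P$.

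First I would fix, once and for all, the normalization of $\psi^\#$ to be the variant $G^\#$ with $G^\#(z)=z+o(z^2)$ as $z\to 0$ mentioned right after the definition of $\mu^\#_A$; this is precisely the normalization for which $\phi^{-1}=\psi^\#$, where $\phi\colon H\to\HH$ is the conformal map entering the definition of the measures $\tilde P^{H}_{C}$. Since $\bar\mu(\bar\A(B,A))\in(0,\infty)$ by hypothesis and every configuration in $\bar\A(B,A)$ has $\bar\gamma\cap A=\emptyset$, the measure $\tilde P_{B,A}$ is an honest probability measure, so conditional laws under it are unambiguous. Now push $\bar\mu$, restricted to $\{\bar\gamma\cap A=\emptyset\}$, through the triple-valued map $(\bar\gamma,\bar\Gamma)\mapsto(\bar\gamma,\bar\Gamma_A,\tilde\Gamma_A)$ and restrict further to $\{\bar\gamma\cap B\neq\emptyset\}$; by Proposition~\ref{P72} the result equals $\mu^\#_A$ restricted to $\{\psi^\#(\bar\gamma)\cap B\neq\emptyset\}$. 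After normalization, the image of $\tilde P_{B,A}$ under the triple map is the normalized restriction of $\mu^\#_A$ to that event, and hence the conditional law of $\bar\gamma$ (indeed of the pair $(\bar\gamma,\bar\Gamma_A)$) given $\tilde\Gamma_A$ under $\tilde P_{B,A}$ equals the conditional law of the first coordinate (resp. of the first two coordinates) given the third coordinate under that normalized restriction of $\mu^\#_A$.

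The point is then to read off this last conditional law on the $\mu^\#_A$ side, where it is transparent. By construction $\mu^\#_A$ is the image of the product $\bar\mu\otimes P$ under $((\bar\gamma,\bar\Gamma),\Gamma^\#)\mapsto(\psi^\#(\bar\gamma),\psi^\#(\bar\Gamma),\Gamma^\#_A)$, and the third coordinate $\Gamma^\#_A$ together with $A$ determines $\psi^\#$ and the domain $H^\#$ (the unbounded component of $\HH\setminus(\Gamma^\#_A\cup A)$). Since $\bar\mu\otimes P$ is a product and $\Gamma^\#_A$ is a function of the $P$-coordinate only, conditioning on $\Gamma^\#_A$ leaves $(\bar\gamma,\bar\Gamma)$ distributed according to $\bar\mu$; therefore, conditionally on $\Gamma^\#_A$, the pair $(\psi^\#(\bar\gamma),\psi^\#(\bar\Gamma))$ is distributed as the image of $\bar\mu$ under the now deterministic conformal map $\psi^\#\colon\HH\to H^\#$. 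Restricting to $\{\psi^\#(\bar\gamma)\cap B\neq\emptyset\}$, noting that $\psi^\#(\bar\gamma)$ touches $\partial H^\#$ only at the origin, which is at positive distance from $B$, so that this event coincides with $\{\psi^\#(\bar\gamma)\cap(B\cap H^\#)\neq\emptyset\}$, and normalizing, one obtains exactly $\tilde P^{H^\#}_{B\cap H^\#}$: indeed, unwinding the definition, $\tilde P^{H}_{C}$ is by construction the image of $\bar\mu$ under $\phi^{-1}=\psi^\#$, restricted to the event that the pinned loop meets $C$ and normalized. Finally, the equality $\bar\mu_A=\mu^\#_A$ identifies the respective third coordinates, so $\bar H_A$ corresponds to $H^\#$, and we conclude that conditionally on $\tilde\Gamma_A$ the law of $\bar\gamma$ is $\tilde P^{H}_{B\cap H}$ with $H=\bar H_A$.

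I expect the only genuinely delicate point — beyond the bookkeeping of the normalization $(\psi^\#)'(0)=1$ and of the product structure of $\bar\mu\otimes P$, which is the step that will demand the most care — to be checking that, for $\bar\mu_A$-almost every value of $\tilde\Gamma_A$, the random set $B\cap\bar H_A$ genuinely satisfies the hypotheses under which $\tilde P^{\bar H_A}_{B\cap\bar H_A}$ was defined (boundedness, positive distance from the origin, and the topological conditions~1--4 on its image under the uniformizing map of $\bar H_A$); this is where the assumptions on $B$ and the local finiteness of the CLE are used. One should also, strictly speaking, pass to the finite-mass event $\{\bar\gamma\cap B\neq\emptyset\}$ and normalize before invoking disintegration, since $\bar\mu$, $\bar\mu_A$ and $\mu^\#_A$ are infinite $\sigma$-finite measures. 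Apart from these routine matters, the whole argument is a direct specialization of Proposition~\ref{P72}.
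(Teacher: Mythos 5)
Your argument is correct and takes essentially the same approach as the paper, which simply invokes Proposition~\ref{P72} and declares the corollary immediate; your proof supplies the details — fixing the $G^\#$-normalization so that $\psi^\# = \phi^{-1}$, disintegrating the product $\bar\mu\otimes P$ against the $\Gamma^\#_A$-coordinate, and identifying the event $\{\psi^\#(\bar\gamma)\cap B\neq\emptyset\}$ with the one in the definition of $\tilde P^{H^\#}_{B\cap H^\#}$ — that the paper leaves to the reader. The two caveats you flag (first restricting to the finite-mass event before disintegrating the $\sigma$-finite $\bar\mu\otimes P$, and verifying that $B\cap\bar H_A$ almost surely satisfies conditions 1--4) are exactly the right points to record in a careful write-up.
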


Suppose now that we are given a configuration $(\bar \gamma, \bar \Gamma)$ that is sampled according to $\tilde P_A$. For each given $\eps$, we can perform a ``Markovian'' $\eps$-admissible exploration as before (Markovian can simply mean here that we have chosen some deterministic procedure to choose each $x_n$). When $\bar \gamma$ intersects $A$,  this exploration procedure can discover $\bar \gamma$ at some (random) step that we call $\bar N+1$, and we know that when $\eps \to 0$, the probability that $\bar N $ exists tends to $1$.

Corollary \ref {C555} shows that conditionally on $\bar N=n$, on $H_n$ and on $x_n$, the law of $\bar \gamma$ is $\tilde P_{G_n (C_\eps (x_n))}^{H_n}$.
This implies in particular that the conditional law of $I_1 \circ I_{x_{\bar N}}^{-1} \circ G_{\bar N}^{-1} ( \bar \gamma )$ is $\tilde P_\eps$ (recall that $\psi_{\bar N}:= I_1 \circ I_{x_{\bar N}}^{-1} \circ G_{\bar N}^{-1}$ is just the conformal map from $H_{\bar N}$ onto $\HH$ that maps the origin onto itself, has derivative at the origin equal to $1$, and that maps $G_{\bar N} (x_{\bar N})$ onto $1$).
Hence (on the event ${\bar N} < \infty$), $\psi_{\bar N} ( \bar \gamma )$ is in fact independent of ${\bar N}$ and of $H_{\bar N}$.

It is worth stressing at this point that the law of $\bar N$ is not geometric as in the CLE case, and that the iteration steps are not i.i.d.\ anymore, but this
will not prevent us from now proving Propositions \ref {P81} and \ref {P81bis}:

\begin {proof}
The arguments are again close in spirit to the definition of the pinned measure itself.
Suppose that $K$ is a large integer, that $\delta= 1/K$, and consider the rectangles
$A_{\delta,k} = [a-\delta/2, a+ \delta/2] \times [0, ak \delta]$ for $k=1, \ldots, K$.
Note that because of scale-invariance and translation invariance, for each given $\delta$, up to a set of configurations of zero measure,
all the loops of $(\bar \gamma, \bar \Gamma)$ that intersect some $A_{\delta, k}$ also intersect its interior  (when $\delta$ is fixed).
Furthermore, the probability measure $\tilde P_{A_{\delta, K}}$ converges to $\tilde P_{[a, a (1+i)]}$.

For each fixed $\delta = 1/ K$, we can start to explore each $(\bar \Gamma \cup \{ \bar \gamma\}, A_{\delta, 1})$ by a Markovian chordal $\eps$-admissible exploration rooted at $0$, then continue exploring $(\bar \Gamma \cup \{ \bar \gamma \} , A_{\delta, 2})$ and so on until we either complete a chordal $\eps$-admissible exploration of $A_{\delta , K} = [a - \delta , a +\delta] \times [0, a]$, or we have discovered $\bar \gamma$. When $(\bar \gamma, \bar \Gamma)$ is sampled according to $\tilde P$, the probability to discover $\bar \gamma$ is going to $1$ as $\eps \to 0+$. Let us call (as before) $\bar N +1 $ the (random) step at which $\bar \gamma$ is discovered.

\begin {figure}[htbp]
\begin {center}
\includegraphics [width=3.5in]{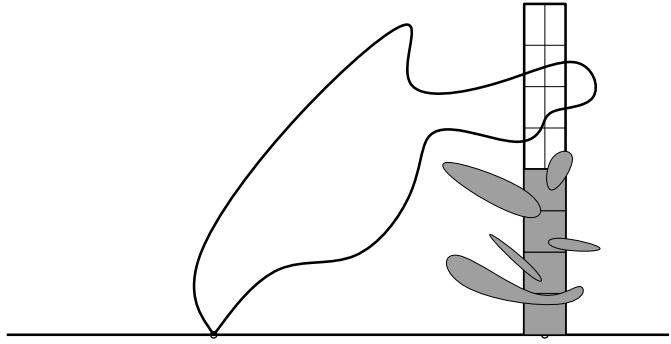}
\caption {Exploration of the thin rectangle (sketch)}
\end {center}
\end {figure}

Furthermore, for each given $\delta$, it is possible to choose $\eps$ small enough, so that the probability that the exploration processes misses no loop of $\bar \Gamma$ of diameter greater than $4\delta$ that intersects $[a, a+ ia]$ is as close to $1$ as we want.

The very same arguments as in the proof of
Proposition \ref {pwellchosen} then show that if $\eps (\delta)$ is chosen to be sufficiently small, then the set $H_{\bar N}$ converges to $H_\psi$ almost surely, and that
$\psi_{\bar N}$ converges almost surely to $\psi$.
We then conclude using the independence between $\psi_{\bar N}$ and $\psi_{\bar N} ( \bar \gamma)$. We safely leave the details to the reader.
\end {proof}

A consequence of Proposition \ref{P81} is that the probability measure $\tilde P$ is invariant under all Moebius transformations of $\HH$ onto itself that
have $0$ and $1$ as fixed points. This follows immediately from the conformal covariance property of $\mu$ and the description of $\tilde P$ as limits of $\tilde P_\eps$
given in the proposition.

\medbreak

In fact, our proof of Proposition \ref {P81} works also if we replace the straight segment $[a, a(1+i)]$ by any given other piecewise linear path starting at $1$ such that the $\mu$ mass of the set of pinned loops that it intersects is finite (we can then just multiply  $\mu$ by a constant to turn it into a probability measure on this set): Suppose that $\eta$ is some finite piecewise linear path that starts on the real line, has no double points, and such that all of its segments are horizontal or vertical. We parametrize it ``from the real axis to the tip''.

For each pinned configuration $(\bar \gamma, \bar \Gamma)$ such that $\bar \gamma \cap \eta \not= \emptyset$, we can define the first meeting time $T$ of $\eta$ with $\bar \gamma$, i.e.\ $T = \min \{ t \ : \ \eta (t) \in \bar \gamma \}$. Let us now consider $\tilde \Gamma_\eta$ to be the set of loops of $\bar \Gamma$ that intersect
$\eta [0,T]$, and let $\psi$ denote the conformal map as before (that maps the set obtained by removing from $\HH \setminus \eta [0,T]$ all the loops of $\bar \Gamma$ that intersect $\eta [0,T]$ back onto $\HH$, normalized by $\psi (0)=0$, $\psi' (0)= 1$ and $\psi (\eta(T))= 1$. Note that $\psi ( \bar \gamma)$
 is a pinned loop (the map $\psi$ is smooth near the origin). Note also that
$\bar \mu ( \bar \gamma \cap \eta \not= \emptyset )$ is positive and finite.
Then:
\begin {proposition}
If we consider the measure $\bar \mu$ on the event $\{ \bar \gamma \cap \eta \not= \emptyset \}$ and if we renormalize it to be a probability measure, then
under this probability measure, $\tilde \gamma = \psi (\bar \gamma)$ is distributed according to the two-point pinned distribution $\tilde P$. Furthermore, $\tilde \gamma$ is independent of the conformal map $\psi$.
\end {proposition}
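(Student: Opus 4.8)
The plan is to follow the scheme of the proofs of Propositions \ref{P81} and \ref{P81bis} essentially verbatim, the only structural change being that the family of rectangles $A_{\delta,k}$ used there to ``slide up along the segment $[a,a(1+i)]$'' is replaced by a nested family of thin admissible neighbourhoods that slide along $\eta$ from the real axis to its tip. Concretely, for each small $\delta$ I would choose an increasing finite sequence of closed sets $B^{\delta,1}\subset\cdots\subset B^{\delta,M_\delta}$, each at positive distance from the origin and each satisfying the chordal admissibility conditions 1--4 of Subsection \ref{explodet} (complement simply connected in $\HH$, closure of a connected interior, positive-length trace on $\R$), with $B^{\delta,k}\setminus B^{\delta,k-1}$ a $\delta$-thin tube around a subarc of $\eta$ traversed in order, with $B^{\delta,M_\delta}$ a $\delta$-neighbourhood of all of $\eta$, and---choosing the tube width among the co-countably many widths for which this holds $\bar\mu$-a.e.---with every loop of $\bar\Gamma\cup\{\bar\gamma\}$ that meets some $\partial B^{\delta,k}$ also meeting its interior. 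The hypothesis that $\eta$ is piecewise horizontal/vertical with no double points is used only here, to guarantee that such a ``staircase'' family of admissible sets exists; since $\bar\mu(\{\bar\gamma\cap B^{\delta,M_\delta}\neq\emptyset\})\to\bar\mu(\{\bar\gamma\cap\eta\neq\emptyset\})$ as $\delta\to0$, it is then enough to argue under the renormalization $\tilde P_\eta:=\bar\mu(\,\cdot\mid\bar\gamma\cap\eta\neq\emptyset)$.

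Then I would run, for each $\delta$ and each small $\eps$, a Markovian chordal $\eps$-admissible exploration rooted at $0$ of $(\bar\Gamma\cup\{\bar\gamma\},B^{\delta,1})$, then of $(\bar\Gamma\cup\{\bar\gamma\},B^{\delta,2})$, and so on, stopping at the step $\bar N+1$ at which $\bar\gamma$ is first discovered (or when an exploration of $B^{\delta,M_\delta}$ is completed). Exactly as in the proof of Proposition \ref{P81}, for each fixed $\delta$ one picks $\eps$ small enough that no loop of $\bar\Gamma$ of diameter $>4\delta$ meeting $\eta$ is missed and that $\bar\gamma$ is discovered with probability as close to $1$ as one wishes; and Corollary \ref{C555} gives, conditionally on $\bar N=n$, $H_{\bar N}$ and $x_{\bar N}$, that the law of $\bar\gamma$ is $\tilde P^{H_{\bar N}}_{G_{\bar N}(C_\eps(x_{\bar N}))}$, so that the renormalized image $\psi_{\bar N}(\bar\gamma)=I_1\circ I_{x_{\bar N}}^{-1}\circ G_{\bar N}^{-1}(\bar\gamma)$ has law $\tilde P_\eps$ and is independent of $(\bar N,H_{\bar N},x_{\bar N})$, hence of $\psi_{\bar N}$. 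Finally I would let $\eps=\eps(\delta)\to0$ sufficiently fast and then $\delta\to0$: by the deterministic convergence statements of Subsection \ref{explodet} in their chordal, rooted-at-$0$ form, together with local finiteness of $\bar\Gamma$ and the positive distance of $\bar\gamma$ from $\R$ away from $0$, one gets---as in the proof of Proposition \ref{pwellchosen}---that almost surely $H_{\bar N}\to H_\psi$, the unbounded component of $\HH\setminus\eta[0,T]$ with the loops of $\bar\Gamma$ meeting $\eta[0,T]$ removed, that $\psi_{\bar N}\to\psi$ and $\psi_{\bar N}(\bar\gamma)\to\psi(\bar\gamma)=\tilde\gamma$, and that $\tilde P_\eps\to\tilde P$ by Proposition \ref{P81}; passing the independence of $\psi_{\bar N}$ and $\psi_{\bar N}(\bar\gamma)$ to the limit then yields that under $\tilde P_\eta$ the loop $\tilde\gamma$ has law $\tilde P$ and is independent of $\psi$.

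I expect the one genuinely new point---and the main obstacle---to be the chordal analogue of Proposition \ref{pwellchosen} along a general staircase path, specifically the claim that the exploration stops at the \emph{first} meeting point $\eta(T)$ of $\eta$ with $\bar\gamma$ rather than at a later one, i.e.\ that $G_{\bar N}(x_{\bar N})$ tracks the image of $\eta(T)$. This is precisely the ``$y_N\to\hat y$'' step of the proof of Proposition \ref{pwellchosen}: if, along a subsequence $\delta_j\to0$, the loop $\psi_{\bar N}(\bar\gamma)$ reached $\partial\HH$ with positive harmonic measure from $i$ at a point bounded away from $G_{\bar N}(x_{\bar N})$, then pulling back under $\psi_{\bar N}$ would produce two disjoint subarcs of $\bar\gamma$, each of positive harmonic measure from the origin, joining a point of $\eta[0,T]$ to $\eta(T)$; since $\bar\gamma$ is a simple loop and $\bar\gamma\cap\eta[0,T)=\emptyset$, this is impossible. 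Apart from this step (and the routine bookkeeping needed to set up the nested admissible sets $B^{\delta,k}$ and to check that the chordal convergence lemmas of Subsection \ref{explodet} transfer to the present setting), every ingredient is already available in Subsection \ref{explodet} and in the proofs of Propositions \ref{pwellchosen}, \ref{P81} and \ref{P72}, so I would present the remainder as ``the same arguments apply mutatis mutandis''.
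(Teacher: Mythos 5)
Your proposal is correct and is essentially the argument the paper has in mind: the paper offers no separate proof of this proposition, instead stating just before it that ``our proof of Proposition \ref{P81} works also if we replace the straight segment $[a,a(1+i)]$ by any given other piecewise linear path,'' and your sketch fills in exactly the needed adaptations (a staircase of thin admissible tubes $B^{\delta,k}$ along $\eta$ in place of the rectangles $A_{\delta,k}$, genericity of the tube width so that the admissibility conditions hold $\bar\mu$-a.e., Corollary \ref{C555} for the conditional law, and the ``$y_N\to\hat y$''-type simple-loop argument to ensure the exploration terminates at the first contact $\eta(T)$). Nothing is missing.
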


\subsection {Independence property of two-point pinned loops}

We are now going to derive the key independence property of the two-point pinned measure.
Suppose that $\tilde \gamma$ is a two-point pinned loop in $\HH$ that touches the real line at $0$ and $1$. We can define two simple paths $\gamma^*$ and $\gamma_*$ from $0$ to $1$ in $\HH$ in such a way that their union is $\tilde \gamma$ and that $\gamma_*$ is ``below'' $ \gamma^*$.
We now define $\psi_*$ to be the conformal map from the unbounded connected component $H_*$ of $\HH \setminus  {\gamma}_*$ onto $\HH$ that fixes the three boundary points $0$, $1$ and $\infty$, and finally, we define $U(\tilde \gamma) = \psi_*  ( \gamma^*)$.
\begin {figure}[htbp]
\begin {center}
\includegraphics [width=5in]{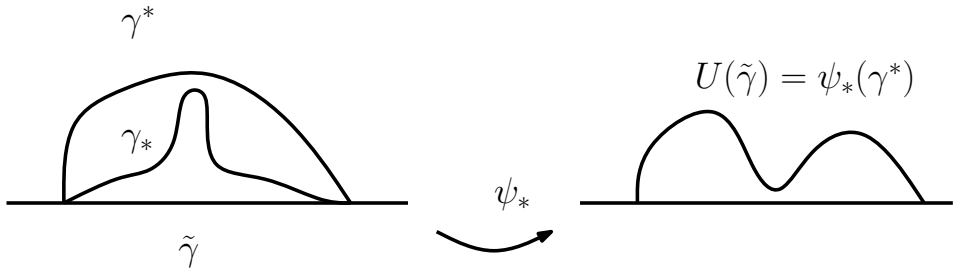}
\caption {Construction of $U (\tilde \gamma)$ (sketch)}
\end {center}
\end {figure}
\begin {proposition}
\label {Pkey}
If $\tilde \gamma$ is chosen according to the two-point pinned measure $\tilde P$, then
 $U(\tilde \gamma)$ and $\gamma_*$ are independent.
\end {proposition}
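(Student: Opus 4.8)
The plan is to show that $\tilde\gamma$ can be generated in two stages: first sample the lower arc $\gamma_*$ according to its marginal law, and then, conditionally on $\gamma_*$, set $\gamma^*=\psi_*^{-1}(U)$ where $U$ is a random arc from $0$ to $1$ in $\HH$ whose law $\nu$ does not depend on $\gamma_*$. Since $U(\tilde\gamma)=\psi_*(\gamma^*)=U$, this decomposition gives the proposition at once. To produce it I would reveal $\gamma_*$ ``from below'', i.e.\ by an exploration that approaches $\tilde\gamma$ through the bounded region $D_{\mathrm{below}}$ lying between $\gamma_*$ and the boundary segment $(0,1)$. It is convenient to run this argument not on $\tilde P$ directly but on the pinned configuration measure $\bar\mu$, so that the exterior CLE supplies genuine loops to discover and Proposition~\ref{P72} can be applied; by Proposition~\ref{P81} and its piecewise-linear-path variant, $\tilde P$ is exactly the marginal of $\tilde\gamma$ in this $\bar\mu$-based construction, and $\gamma_*$ and $U(\tilde\gamma)$ are functions of $\tilde\gamma$ alone, so an independence statement established for the full configuration descends to the one we want.

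\textbf{Main steps.} After the unfolding that makes the pinned loop touch $\partial\HH$ exactly at $0$ and $1$, every loop of the exterior CLE coming close to the open segment $(0,1)$ lies in $D_{\mathrm{below}}$ (such loops are disjoint from $\tilde\gamma$ and cannot cross $\gamma_*$, which meets $\partial\HH$ only at $0,1$). I would perform an $\eps$-admissible chordal-type exploration sweeping along $(0,1)$ from $1$ towards $0$, discovering all exterior-CLE loops of diameter $\ge\alpha$ that meet $\eps$-neighbourhoods of points of $(0,1)$, and terminating as soon as the frontier can no longer advance without discovering $\tilde\gamma$ itself. By the arguments of Sections~\ref{explo}--\ref{explodet} (in the form of Proposition~\ref{pwellchosen} and the Loewner-stability estimates of Section~\ref{S43}), as $\eps,\alpha\to0$ the explored region fills $D_{\mathrm{below}}$ up to $\gamma_*$, the complementary unbounded domain converges in the Carath\'eodory sense to $H_*$, and the accumulated conformal normalisations converge to $\psi_*$; here one uses that, $\tilde P$-almost surely, $\gamma_*$ stays at positive distance from $\{0,1\}$ except at those two points, so that $\psi_*$ depends continuously on $\gamma_*$. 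At each finite stage Proposition~\ref{P72} gives that, conditionally on the already discovered loops (hence in particular conditionally on the frontier, which in the limit determines $\gamma_*$), the law of the remaining configuration, mapped by the current normalisation back onto $\HH$, is again a $\bar\mu$-type measure in the standard half-plane, renormalised to a probability measure and \emph{independent} of everything discovered. Passing to the limit, and absorbing the discovered CLE loops exactly as the restriction property of $\mu$ (Proposition~\ref{muA}) was used in the construction of the two-point measure, one gets: conditionally on $\gamma_*$, the image under $\psi_*$ of the remaining part $\gamma^*$ of the loop has a law $\nu$ not depending on $\gamma_*$. Hence $U(\tilde\gamma)=\psi_*(\gamma^*)$ is independent of $\gamma_*$.

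\textbf{Main obstacle.} The hard part will be the convergence step: one must show that the from-below exploration really does reconstruct $D_{\mathrm{below}}$ in the limit and that the accumulated conformal maps converge to $\psi_*$, uniformly enough to push the finite-stage conditional-independence statement through to the limit. This is precisely where the Carath\'eodory-convergence and Loewner-stability machinery of Sections~\ref{explo} and~\ref{S43}, together with the a.s.\ regularity of $\gamma_*$ near its two contact points, is needed. The bookkeeping is made heavier by the fact that $\bar\mu$ is an infinite measure, so each conditioning and independence assertion must be phrased after restriction to an event of finite positive mass and renormalisation, and by the need at the end to forget the exterior CLE --- which is legitimate precisely because of Proposition~\ref{muA}. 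Modulo these points, all of the same nature as steps already carried out in the constructions of $\mu$ and of the two-point pinned measure $\tilde P$, the argument closes.
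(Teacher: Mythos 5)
Your proposal takes a genuinely different route from the paper's: the paper does not run any exploration here. It proves the proposition in a few lines directly from Proposition~\ref{explicit}, relying on two geometric observations that you do not mention and that carry the whole argument: for a hull $A$ attached to $(0,1)$, the event $\{\tilde\gamma\cap A=\emptyset\}$ is the same as $\{\gamma_*\cap A=\emptyset\}$, and $U(\varphi_\#^{-1}(\tilde\gamma))=U(\tilde\gamma)$ because $U$ is invariant under the conformal reparametrizations $\varphi_\#^{-1}$ that fix $0$ and $1$. Plugging $F\circ U$ into Proposition~\ref{explicit}, the Radon--Nikodym weight $\varphi_\#'(0)^\beta$ then factors out in both numerator and denominator precisely because of the invariance of $U$, and the conditional expectation of $F(U(\tilde\gamma))$ given $\{\gamma_*\cap A=\emptyset\}$ collapses to the unconditional one; independence follows by a $\pi$-system argument over such $A$.

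This identifies the gap in your plan. You invoke Proposition~\ref{P72}/Corollary~\ref{C555} at each exploration step and then ``absorb the discovered CLE loops exactly as in Proposition~\ref{muA}''. But the restriction statements you cite are formulated in the one-point pinned picture, where the conformal maps are normalised so that $\psi(0)=0$, $\psi'(0)=1$; $\psi_*$ is normalised at three boundary points $0,1,\infty$, and $\psi_*'(0)\neq1$ in general. The mismatch is exactly what produces the conformal-covariance factor $\varphi_\#'(0)^\beta$ in Proposition~\ref{explicit}. That factor does \emph{not} disappear for a generic functional of $\tilde\gamma$, so your passage-to-the-limit step does not by itself show that the conditional law of $\psi_*(\gamma^*)$ given $\gamma_*$ is a fixed $\nu$: taken naively it would show a weighted version, which is a product law only after the weight is shown to cancel. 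The cancellation is precisely the invariance of $U$, and nothing in your exploration scheme replaces it. (In fact, if the bare restriction-and-renormalise argument sufficed, it would prove the same conditional-law statement for the whole $\tilde\gamma$ rather than just for $U(\tilde\gamma)$, which is false.) Your stated ``main obstacle'' about Carath\'eodory/Loewner convergence is real but secondary; the substantive missing ingredient is the identification of $U$ as a conformal invariant of $(\gamma_*,\gamma^*)$ under maps of $\HH$ fixing $0$ and $1$, together with the resulting cancellation in the two-point restriction formula.
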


 Recall on the one hand that $\tilde P$ is the limit of $\tilde P_\eps$ when $\eps \to 0$.
Let us now consider a hull $A \subset \overline \HH$ that is attached to the interval $(0,1)$ that satisfies the conditions 1-4 as before.
Let $B = A \cup C_\eps (1)$ where $\eps$ is sufficiently small, so that $C_\eps (1) \cap A = \emptyset$.
Let us now describe the law of $\tilde \gamma$ conditioned to avoid $A$, trying to use Corollary \ref {C555}. Remark first that it is the limit when $\eps \to 0$
of the pinned measure ``renormalized in order to be a probability'' on the event $\A ( B, A)$ i.e.\ on the event where $\gamma$ intersects $C_\eps (1)$ but not $A$.

But this measure can be described using Corollary \ref {C555}, and we could use this to prove Proposition \ref {Pkey} directly.
Let us however describe first  the ``explicit restriction-type-property'' for the law $\tilde P$ of the two-point pinned loop $\tilde \gamma$.
Suppose that $A$ is such that $A \cap \R \subset (0,1)$, and that $\HH \setminus A$ is simply connected. We would like describe the conditional law of
$\tilde \gamma$ given that it does not intersect $A$.

In order to do so, let us use a CLE $\Gamma^\#$ that is independent of $\tilde \gamma$.
We will use $\tilde E$, $ E^\#$ and $\tilde E^\#$ to denote the expectation with respect to $\tilde \gamma$, to $\Gamma^\#$ and to both.
 We define $H^\#$, the unbounded connected component of the set obtained by removing from $\HH \setminus A$ all the loops of $\Gamma^\#$ that intersect $A$. Then, we define the conformal map $\varphi_\#$ from $H^\#$ onto $\HH$ such that $\varphi_\# (0)= 0$, $\varphi_\#(1) = 1$ and
$\varphi_\#'(1)=1$. It is easy to check that $\varphi_\#'(0) \le 1$.  The loop $\varphi_\#^{-1} ( \tilde \gamma )$ is clearly  two-point pinned loop that avoids $A$ almost surely.

\begin {proposition}
\label {explicit}
 The conditional law of $\tilde \gamma$ given $\tilde \gamma \cap A = \emptyset$ satisfies, for a bounded continuous function $F$ on the set of loops,
$$\tilde E ( F( \tilde \gamma) | \tilde \gamma \cap A = \emptyset ) = \frac{ \tilde E^\# ( \varphi_\#'(0)^\beta F ( \varphi_\#^{-1} ( \tilde  \gamma)  ))}
{E^\# (( \varphi_\#'(0))^\beta)}.$$
\end {proposition}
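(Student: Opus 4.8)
The plan is to realise the conditional law of $\tilde\gamma$ given $\{\tilde\gamma\cap A=\emptyset\}$ as a vague limit of one-point pinned measures localised near the point $1$, then to substitute the restriction property of $\mu$ (Proposition~\ref{muA}) and read off the weight $\varphi_\#'(0)^\beta$ from the conformal covariance of $\mu$.

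Since $\tilde P=\lim_{\eps\to0}\tilde P_\eps$ by Proposition~\ref{P81}, and since $\{\gamma\cap A=\emptyset\}$ is a $\tilde P$-continuity set of positive mass (just as one proves $\mu(R(\gamma)=r)=0$, one checks that $\tilde P$ charges no loop meeting $\partial A$ without entering the interior of $A$), the conditional law of $\tilde\gamma$ on $\{\tilde\gamma\cap A=\emptyset\}$ is the $\eps\to0$ limit of $\mu$ restricted to $\{\gamma\cap C_\eps(1)\neq\emptyset\}\cap\{\gamma\cap A=\emptyset\}$ and renormalised to a probability. By Proposition~\ref{muA}, $\mu\cdot 1_{\{\gamma\cap A=\emptyset\}}=\mu_A$, the image of $\mu\otimes P$ under $(\gamma,\Gamma^\#)\mapsto\psi^\#(\gamma)$, where $\psi^\#:\HH\to H^\#$ is normalised by $\psi^\#(0)=0$, $(\psi^\#)'(0)=1$ (the leftover normalisation is harmless, since $\mu$ is invariant under the maps $z\mapsto -za/(z-a)$). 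Hence for bounded continuous $F$,
$$\mu\bigl(1_{\{\gamma\cap C_\eps(1)\neq\emptyset,\ \gamma\cap A=\emptyset\}}F(\gamma)\bigr)=E^\#\Bigl(\int F(\psi^\#(\gamma))\,1_{\{\psi^\#(\gamma)\cap C_\eps(1)\neq\emptyset\}}\,d\mu(\gamma)\Bigr).$$

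Now fix $\Gamma^\#$ and write $\psi^\#=\varphi_\#^{-1}\circ m$, where $m:=\varphi_\#\circ\psi^\#$ is a Möbius transformation of $\HH$ fixing $0$ with $m'(0)=\varphi_\#'(0)$. Changing variables $\gamma'=m(\gamma)$ and using that the conformal covariance of $\mu$ gives $\int g(m(\gamma))\,d\mu(\gamma)=m'(0)^\beta\int g(\gamma)\,d\mu(\gamma)$, the inner integral equals $\varphi_\#'(0)^\beta\int F(\varphi_\#^{-1}(\gamma'))\,1_{\{\gamma'\cap\varphi_\#(C_\eps(1))\neq\emptyset\}}\,d\mu(\gamma')$. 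Put $w(\eps):=\mu(\gamma\cap C_\eps(1)\neq\emptyset)$; since $C_{c\eps}(1)=m_c(C_\eps(1))$ for the Möbius map $m_c(z)=z/(c+(1-c)z)$ fixing $0$ with $m_c'(0)=1/c$, the same covariance yields the exact scaling $w(c\eps)=c^\beta w(\eps)$, and since $\varphi_\#$ fixes $1$ with $\varphi_\#'(1)=1$ the curve $\varphi_\#(C_\eps(1))$ is squeezed, for $\eps$ small, between $C_{(1-o(1))\eps}(1)$ and $C_{(1+o(1))\eps}(1)$. Combining these with $\tilde P_\eps\to\tilde P$ and with the boundedness and continuity of $F\circ\varphi_\#^{-1}$, one gets for fixed $\Gamma^\#$
$$\frac1{w(\eps)}\int F(\varphi_\#^{-1}(\gamma'))\,1_{\{\gamma'\cap\varphi_\#(C_\eps(1))\neq\emptyset\}}\,d\mu(\gamma')\ \xrightarrow[\eps\to0]{}\ \tilde E\bigl(F(\varphi_\#^{-1}(\tilde\gamma))\bigr).$$
Dividing the preceding display by $w(\eps)$, letting $\eps\to0$ and interchanging the limit with $E^\#$ yields $\tilde E^\#(\varphi_\#'(0)^\beta F(\varphi_\#^{-1}(\tilde\gamma)))$; taking $F\equiv1$ identifies the normalisation as $E^\#(\varphi_\#'(0)^\beta)$, which gives the claimed formula.

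The main obstacle is this last interchange of the limit with $E^\#$, since the $o(1)$ controlling $\varphi_\#(C_\eps(1))$ reflects the distortion of $\varphi_\#$ near $1$, which is not uniform over $\Gamma^\#$. I would handle it as in the proof of Proposition~\ref{muA}, working on the event $\{C_\eps(1)\subset H^\#\}$: on it, no loop of $\Gamma^\#$ meeting $A$ crosses $C_\eps(1)$, so $\mathrm{dist}(1,\partial H^\#)\ge\eps$ and the rescaled inner integrals are uniformly bounded (by $w(c_0\eps)/w(\eps)=c_0^\beta$ for a universal $c_0$, using the exact scaling); on the further, asymptotically full event $\{\mathrm{dist}(1,\partial H^\#)\ge\sqrt\eps\}$ the $o(1)$ becomes uniform in $\Gamma^\#$, yielding uniform convergence; and the complementary events, of probability tending to $0$, contribute negligibly in view of the crude bound $\varphi_\#'(0)^\beta\le1$. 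Everything else is routine bookkeeping with the conformal covariance of $\mu$.
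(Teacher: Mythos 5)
Your proposal is correct and follows the same route as the paper's proof: start from Proposition \ref{P81}, substitute Proposition \ref{muA}, factor $\psi^\#$ through the Möbius map $m=\varphi_\#\circ\psi^\#$ (note the paper has a typo writing $\psi^\#\circ\varphi_\#$), use conformal covariance to pull out $\varphi_\#'(0)^\beta$, and pass to the limit $\eps\to 0$. The only difference is that you flesh out the final interchange of limit and $E^\#$—splitting according to $\mathrm{dist}(1,\partial H^\#)$, using the exact scaling $w(c\eps)=c^\beta w(\eps)$ for uniform integrability and the bound $\varphi_\#'(0)\le 1$ on the bad event—whereas the paper compresses all of this into a single "by dominated convergence"; your more explicit treatment is a reasonable expansion of that step, not a different argument.
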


\begin {proof}
Proposition \ref {P81} shows that
$$
\tilde E ( F(\tilde \gamma) | \tilde \gamma \cap A = \emptyset )
= \lim_{\eps \to 0}
\frac {\mu ( F (\gamma) 1_{ \gamma \cap A = \emptyset } 1_{ \gamma \cap C_\eps (1) \not= \emptyset} ) }
{ \mu (  1_{ \gamma \cap A = \emptyset } 1_{ \gamma \cap C_\eps (1) \not= \emptyset} ) }.
$$
The measure $\mu$ restricted to the set of
loops that do not intersect $A$, is equal to $\mu_A$ by Proposition \ref {muA}. Recall that
in order to define $\mu_A$, we use another CLE $\Gamma^\#$, that we consider the set $H^\#$ and the conformal map $\psi^\#$ from $\HH$ onto $H^\#$ such that
$\psi^\# (0)= 0$, $(\psi^\#)' (0) = 1$ and $\psi (\infty)= \infty$, and that $\mu_A$ is the image of the product measure $\mu \otimes P$ under the
mapping
$(\gamma, \Gamma^\#) \mapsto \psi^\# (\gamma)$.
Hence,
$$
\tilde E ( F(\tilde \gamma) | \tilde \gamma \cap A = \emptyset )
= \lim_{\eps \to 0}
\frac {E^\# ( \mu ( F(\psi^\#( \gamma))  1_{\psi^\# (\gamma) \cap C_\eps (1) \not= \emptyset} )) }
{  E^\# ( \mu (  1_{\psi^\# (\gamma) \cap C_\eps (1) \not= \emptyset} ))}.
$$
But when $\eps$ is very small, the $\eps$-neighborhood of $1$ is not much distorted by $\varphi_\#$. Furthermore, $\psi^\# \circ \varphi_\#$ is a conformal mapping from the upper half-plane into itself, that depends on $\Gamma^\#$ only, that maps the origin onto itself and its derivative at the origin is equal to $\varphi_\#'(0)$.
 Recall also that $\mu$ satisfies conformal covariance with the
exponent $\beta$. It follows easily by dominated convergence that
\begin {eqnarray*}
\lefteqn {
 \tilde E (  F( \tilde \gamma) | \gamma \cap A = \emptyset  ) } \\
&=& \lim_{\eps \to 0} \frac { E^\#( \varphi_\#'(0)^\beta \mu ( F (\varphi_\#^{-1} (\gamma) )  1_{\gamma \cap C_\eps (1) \not= \emptyset} ) )}
{ E^\# (   \varphi_\#'(0)^\beta \mu ( (   1_{\gamma \cap C_\eps (1) \not= \emptyset} ) ))} \\
&=& \lim_{\eps \to 0}
\left(
\frac { E^\#( \varphi_\#'(0)^\beta \mu ( F (\varphi_\#^{-1} (\gamma) )  1_{\gamma \cap C_\eps (1) \not= \emptyset} ) )}
{\mu ( 1_{\gamma \cap C_\eps (1) \not= \emptyset} ) }
\times \frac { \mu ( 1_{\gamma \cap C_\eps (1) \not= \emptyset }) }
{ E^\# (   \varphi_\#'(0)^\beta \mu (    1_{\gamma \cap C_\eps (1) \not= \emptyset} )) } \right) \\
&=& \frac { E^\# ( \varphi_\#' (0)^\beta \tilde E ( F ( \varphi_\#^{-1} ( \tilde \gamma ) )))}
{ E^\# ( \varphi_\#' (0)^\beta ) }.
\end {eqnarray*}
\end {proof}

We now use this to derive Proposition \ref {Pkey}:

\begin {proof}
Let us consider an independent CLE $\Gamma^\#$ as before, and
for each $A$ as before, such that $A \cap \R \subset (0,1)$,  define $\varphi_\#$. It is important to observe that if $\tilde \gamma$ is a two-point pinned loop, then
 $\varphi_\#^{-1} (\tilde \gamma)$ is always a two-point pinned loop, and that $U( \varphi_\#^{-1} (\tilde \gamma))= U(\tilde \gamma)$.
Furthermore, note that the event $A \cap \tilde \gamma =\emptyset$ is identical to the event that $A \cap  \gamma_*  = \emptyset$.

\begin {figure}[htbp]
\begin {center}
\includegraphics [width=4.5in]{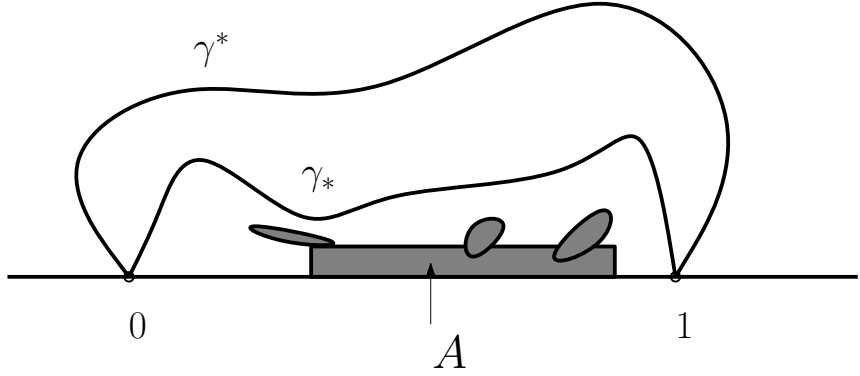}
\caption {Proof of independence (sketch)}
\end {center}
\end {figure}

Hence, we get that for all continuous bounded function $F$ on the space of loops (recall that $\gamma$ and $\varphi_\#$ are independent)
\begin {eqnarray*}
 \tilde E( F( U(\tilde \gamma))  | \gamma_*  \cap A = \emptyset )
&=& E^\# (\varphi_\#'(0)^\beta )^{-1} \tilde E^\# ( \varphi_\#'(0)^\beta F (U ( \varphi_\#^{-1} (\tilde \gamma)) ))\\
&=& E^\# (\varphi_\#'(0)^\beta )^{-1} \tilde E^\# (\varphi_\#'(0)^\beta  F (U(\tilde \gamma))) \\
&=& E^\# (\varphi_\#'(0)^\beta )^{-1} E^\# (\varphi_\#'(0)^\beta ) \tilde E ( F (U(\tilde \gamma))) \\
&=& \tilde E ( F( U(\tilde \gamma))).
\end {eqnarray*}
Since this is true for all such $A$, it follows that $U(\tilde \gamma)$ and $\gamma_*$ are indeed independent.
\end {proof}

In the sequel, we call $P^*$ the law of $U(\tilde \gamma)$. This is a probability measure on paths joining $1$ to $0$ in the upper half-plane.
Note here that $P^*$ is invariant under the Moebius transformations from $\HH$ onto itself that preserve $0$ and $1$. This follows easily from the corollary and from
the fact that the same is true for the two-point pinned measure.

Let us define $P^{**}$ to be the image of $P^*$ under the conformal map $z \mapsto 1 - (1/z)$ of $\HH$ onto itself (that maps $1$ to $0$ and $0$ to $\infty$). $P^{**}$ is then a probability measure on simple paths in $\HH$ from $0$ to infinity, that is scale-invariant (because the multiplications by positive constants are the Moebius transformations in $\HH$ that leave $0$ and $\infty$ invariant).

\section {Pinned loops and SLE excursions}

\subsection {Two-point pinned measure and SLE paths}

The goal of this section is show that the pinned measure $\mu$ associated to a CLE is necessarily one of the  ``SLE excursion measures''.
Let us first combine the results of the previous section and reformulate them into a single tractable
statement: Consider the pinned configuration measure $\bar \mu$ and some
 finite piecewise linear path $\eta$ in $\HH$ that starts on the positive real half-line, has no double points, and such that all of its segments are horizontal or vertical. We parametrize it ``from the real axis to the tip''.
We also suppose that
$$
\mu ( \{ \gamma \ : \ \gamma \cap \eta \not= \emptyset \} ) = 1 .
$$
so that we can view $\mu$ (and $\bar \mu$ on the corresponding set of configurations) as a probability measure.

For each pinned configuration $(\bar \gamma, \bar \Gamma)$ such that $\bar \gamma \cap \eta \not= \emptyset$, we can define the first meeting time $T$ of $\eta$ with $\bar \gamma$, i.e.\ $T = \min \{ t \ : \ \eta (t) \in \bar \gamma \}$. Let us now consider $\tilde \Gamma_\eta$ to be the set of loops of $\bar \Gamma$ that intersect
$\eta [0,T]$. We also call $\gamma_-$ the part of $\gamma$ (when oriented counterclockwise) between $0$ and $\eta (T)$, and $\gamma^+$ the other part.
Let us now define the set $H_-$ to be the unbounded connected component of the set obtained by removing from $\HH$ the union of $\gamma_-$, $\eta [0,T]$ and the loops of
$\tilde \Gamma_\eta$.
We let $\psi_-$ denote the conformal map from $H_-$ onto $\HH$ such that $\psi_- (0-) = 0$, $\psi_- ( \eta_T) = 1$ and $\psi_- (\infty)= \infty$.

Note that $\psi_-$, $\gamma_-$ and $\tilde \Gamma_\eta$ are all deterministic functions of the triple $( \bar \gamma, \bar \Gamma, \eta)$. We will sometimes write
$\psi_{-, \eta}$ and $\gamma_{-, \eta}$, $\gamma^{ +, \eta}$ to indicate the dependence in $\eta$.

\begin {figure}[htbp]
\begin {center}
\includegraphics [width=4in]{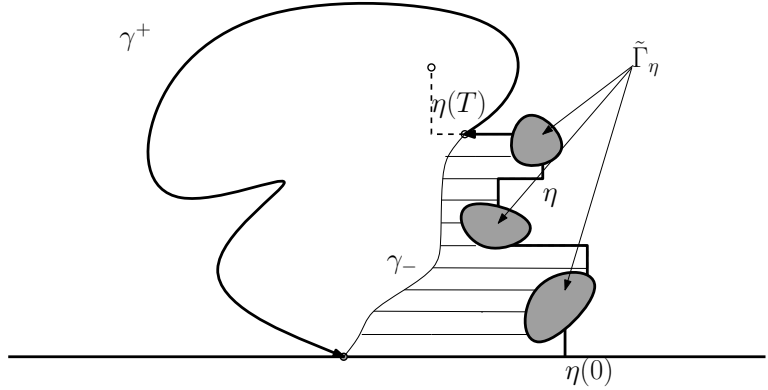}
\caption {Use of independence (sketch)}
\end {center}
\end {figure}
\begin {proposition}
\label {propo}
$\psi_- ( \gamma^+)$ is independent of $\psi_-$ and $\gamma_-$, and its law is $P^*$.
\end {proposition}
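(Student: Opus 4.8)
The plan is to reduce Proposition~\ref{propo} to the independence already obtained in Proposition~\ref{Pkey}, by matching up the conformal maps that appear; throughout we fix $\eta$ and work under $\bar\mu$ restricted to $\{\bar\gamma\cap\eta\neq\emptyset\}$, renormalized to a probability. First I would invoke the last proposition of Section~6.3: writing $\psi$ for the conformal map from the unbounded component $H_\psi$ of $\HH\setminus(\eta[0,T]\cup\tilde\Gamma_\eta)$ onto $\HH$ with $\psi(0)=0$, $\psi'(0)=1$, $\psi(\eta(T))=1$, the loop $\tilde\gamma:=\psi(\bar\gamma)$ has law $\tilde P$ and is independent of $\psi$. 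Put $\gamma_*:=\psi(\gamma_-)$ and $\gamma^*:=\psi(\gamma^+)$; since $\psi$ is orientation preserving and the two domains near $0$ have matching boundary orders, these are exactly the lower and upper arcs of $\tilde\gamma$ in the sense of Section~7. Proposition~\ref{Pkey} then gives $U(\tilde\gamma)=\psi_*(\gamma^*)\perp\gamma_*$, where $\psi_*$ maps the unbounded component of $\HH\setminus\gamma_*$ onto $\HH$ fixing $0,1,\infty$. As $\gamma_*$ and $U(\tilde\gamma)$ are functions of $\tilde\gamma$, combining this with $\tilde\gamma\perp\psi$ yields the joint independence of $U(\tilde\gamma)$ and $(\psi,\gamma_*)$, since $\E[f(U(\tilde\gamma))g(\gamma_*)h(\psi)]=\E[f(U(\tilde\gamma))g(\gamma_*)]\,\E[h(\psi)]=\E[f(U(\tilde\gamma))]\,\E[g(\gamma_*)]\,\E[h(\psi)]$ for bounded test functions.

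Next I would identify $\psi_-$. The domain of $\psi_*\circ\psi$ is $\psi^{-1}$ of the unbounded component of $\HH\setminus\gamma_*$, which is exactly the unbounded component $H_-$ of $(\mathrm{dom}\,\psi)\setminus\gamma_-$; thus $\psi_-$ and $\psi_*\circ\psi$ are both conformal bijections $H_-\to\HH$, so $\psi_-=m\circ\psi_*\circ\psi$ for some Moebius self-map $m$ of $\HH$. From $(\psi_*\circ\psi)(0)=0=\psi_-(0)$ and $(\psi_*\circ\psi)(\eta(T))=\psi_*(1)=1=\psi_-(\eta(T))$ we get that $m$ fixes $0$ and $1$. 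Moreover $\mathrm{dom}\,\psi$, $\gamma_-=\psi^{-1}(\gamma_*)$ and $\eta(T)=\psi^{-1}(1)$ — hence $H_-$, the map $\psi_-$, and $m=\psi_-\circ\psi^{-1}\circ\psi_*^{-1}$ — are deterministic functions of $(\psi,\gamma_*)$. Since $\gamma^+$ is a crosscut of $H_-$ from $0$ to $\eta(T)$, applying $\psi_-$ gives a curve from $0$ to $1$, namely $\psi_-(\gamma^+)=m(\psi_*(\psi(\gamma^+)))=m(\psi_*(\gamma^*))=m(U(\tilde\gamma))$; and the pair $(\psi_-,\gamma_-)$ is a function of $(\psi,\gamma_*)$.

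Finally I would conclude using the invariance of $P^*$ under Moebius self-maps of $\HH$ fixing $0$ and $1$ (noted after Proposition~\ref{Pkey}). Conditionally on $(\psi,\gamma_*)$, the map $m$ is deterministic while $U(\tilde\gamma)$ still has law $P^*$ by the joint independence above, so for bounded continuous $F$, $\E[F(\psi_-(\gamma^+))\mid\psi,\gamma_*]=\int F(m(\ell))\,P^*(d\ell)=\int F\,dP^*$, which does not depend on $(\psi,\gamma_*)$. This shows at once that $\psi_-(\gamma^+)$ has law $P^*$ and that it is independent of $(\psi,\gamma_*)$, hence of the pair $(\psi_-,\gamma_-)$.

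The main work is the conformal-map bookkeeping in the second step — verifying that $\psi^{-1}$ of the ``above $\gamma_*$'' region is precisely $H_-$, and that arcs and orientations match so that $\psi(\gamma^+)=\gamma^*$ — together with checking that the independence inputs from Section~6.3 and Proposition~\ref{Pkey} are available in the required joint form. The slick point is that the Moebius-invariance of $P^*$ renders the random conjugation $m$ harmless.
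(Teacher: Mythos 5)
Your proof is correct and takes essentially the same route as the paper: combine the last proposition of Section~6.3 (independence of $\tilde\gamma$ and $\psi$) with Proposition~\ref{Pkey}, then use Moebius invariance to match normalizations. The only cosmetic difference is where the Moebius correction is absorbed: the paper switches immediately to $\psi_\eta$ (normalized to fix $\infty$ rather than to have $\psi'(0)=1$), for which $\psi_*\circ\psi_\eta=\psi_-$ holds exactly, invoking Moebius invariance of $\tilde P$ once at the start; you keep the $\psi'(0)=1$ normalization and instead introduce the explicit $m=\psi_-\circ\psi^{-1}\circ\psi_*^{-1}$, paying for it later with the Moebius invariance of $P^*$. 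Both versions are correct and equally rigorous — your explicit handling of $m$ makes the normalization issue visible, which the paper's terse statement leaves implicit.
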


\begin {proof}
 This is a direct combination of our previous results. Let $\psi_\eta$ denote the conformal map from $\HH \setminus ( \eta [0,T] \cup \tilde \Gamma_\eta)$ onto $\HH$
with $\psi_\eta (0)= 0$, $\psi_\eta (\eta (T)) = 1$ and $\psi_\eta ( \infty) = \infty$. Conformal invariance of $\tilde P$ under the maps that preserve $0$ and $1$, and the definition of the two-point pinned measure shows that $\psi_\eta (\gamma)$ is independent of $\psi_\eta$ and that its law is the two-point pinned measure $\tilde P$.

If we now define $\gamma_*= \psi_\eta ( \gamma_-)$ and $\gamma^* = \psi_\eta (\gamma^+)$, and $\psi_*$ (which depends on $\gamma_*$ alone) as before, we know from Proposition \ref {Pkey} that
$\psi_* (\gamma^*)$ is independent of $\gamma_*$, and its law is $P^*$. Hence, $\psi_* ( \gamma^* ) $ is independent of $(\gamma_*, \psi_*, \psi_\eta)$.

We can now conclude, noting that $\psi_* ( \gamma^*) = \psi_- ( \gamma^+)$ and that $\gamma_- = \psi_\eta^{-1} ( \gamma_*)$.
\end {proof}

Suppose now that $\eta : [0, l] \to \HH \cup \eta (0)$ is a finite piecewise linear path as before, except that
$\mu ( \{ \gamma \ : \ \gamma \cap \eta [0,l] \not= \emptyset \} ) $ is not equal to one. We can use scaling in order to find some constant $\lambda$, in such a way that we can apply the previous statement to $\lambda \eta$, and it therefore follows that Proposition \ref {propo} still holds if we consider the measure $\mu$ on the event
$ \{ \gamma \ : \ \gamma \cap \eta [0,l] \not= \emptyset \} $, and renormalized in order to be a probability measure on this event.
Since this is true for all $l$, it also follows that:

\begin {corollary}
\label {finalco}
 The same statement holds
 if we consider the measure $\mu$ restricted to the event that $\gamma$ intersects only the last segment of $\eta$ (and not the previous ones), and if we renormalize the measure $\mu$ to be a probability measure on this event.
\end {corollary}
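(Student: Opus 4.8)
The plan is to obtain Corollary~\ref{finalco} from Proposition~\ref{propo} by ``subtracting'' the instance of that proposition for the path obtained from $\eta$ by deleting its last segment from the instance for $\eta$ itself. Write $\eta = \eta_1\cup\cdots\cup\eta_k$ as the concatenation of its successive segments and, for $1\le j\le k$, let $\eta^{(j)} := \eta_1\cup\cdots\cup\eta_j$, parametrized from the real axis toward its tip. If $k=1$ the assertion is Proposition~\ref{propo} itself (in the form valid for an arbitrary finite piecewise linear path, obtained by the rescaling argument recorded just before the corollary, which amounts to conditioning $\mu$ on the relevant event), so assume $k\ge 2$. The event that $\gamma$ meets only the last segment of $\eta$ is then $E' := \{\gamma\cap\eta\neq\emptyset\}\setminus\{\gamma\cap\eta^{(k-1)}\neq\emptyset\}$, since a loop meeting $\eta$ but not $\eta^{(k-1)}$ must meet $\eta_k$. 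Both masses $\mu(\gamma\cap\eta^{(k-1)}\neq\emptyset)$ and $\mu(E')$ are positive and finite (finiteness by Lemma~\ref{LR1} and compactness; positivity of $\mu(E')$ follows from the restriction property of $\mu$, Proposition~\ref{muA}, together with the non-triviality of $\mu$, and is in any case implicit in the statement, which renormalizes $\mu$ on $E'$).

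The first step is to observe that on the overlap event $\{\gamma\cap\eta^{(k-1)}\neq\emptyset\}$ the objects built in Proposition~\ref{propo} from $\eta$ and from $\eta^{(k-1)}$ coincide. Indeed, since $\eta^{(k-1)}$ is traversed entirely before $\eta_k$, if $\bar\gamma$ meets $\eta^{(k-1)}$ then the first meeting time $T$ of $\eta$ with $\bar\gamma$ lies in the parameter range of $\eta^{(k-1)}$, so $\eta[0,T]=\eta^{(k-1)}[0,T]$; hence the arcs $\gamma_-$ and $\gamma^+$, the family $\tilde\Gamma_\eta$, the domain $H_-$ and the map $\psi_-$ agree for the two paths. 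Consequently the push-forward of the restriction of $\mu$ to $\{\gamma\cap\eta^{(k-1)}\neq\emptyset\}$ under $(\bar\gamma,\bar\Gamma)\mapsto(\psi_-,\gamma_-,\psi_-(\gamma^+))$ is the same whether this map is formed using $\eta$ or using $\eta^{(k-1)}$.

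Next I would apply Proposition~\ref{propo} to both $\eta$ and $\eta^{(k-1)}$. Let $\hat\mu$, $\hat\mu_{k-1}$, $\hat\mu'$ be $\mu$ conditioned on $\{\gamma\cap\eta\neq\emptyset\}$, on $\{\gamma\cap\eta^{(k-1)}\neq\emptyset\}$, and on $E'$ respectively, and set $p := \mu(\gamma\cap\eta^{(k-1)}\neq\emptyset)/\mu(\gamma\cap\eta\neq\emptyset)\in(0,1)$, so that $\hat\mu = p\,\hat\mu_{k-1}+(1-p)\,\hat\mu'$. By Proposition~\ref{propo} applied to $\eta$, the push-forward of $\hat\mu$ under $(\psi_-,\gamma_-,\psi_-(\gamma^+))$ (formed using $\eta$) is a product measure $\nu\otimes P^*$; by Proposition~\ref{propo} applied to $\eta^{(k-1)}$ together with the coincidence noted above, the push-forward of $\hat\mu_{k-1}$ under the same map is $\nu_{k-1}\otimes P^*$, with the \emph{same} second factor $P^*$ (which does not depend on the path). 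Subtracting, the push-forward of $(1-p)\hat\mu'$ equals $(\nu - p\,\nu_{k-1})\otimes P^*$; this is a nonnegative measure of total mass $1-p$, so after dividing by $1-p$ it is a probability measure of the form $\nu'\otimes P^*$, and since it is a probability measure its first factor $\nu'$ is automatically a probability measure. Thus, under $\hat\mu'$, the loop $\psi_-(\gamma^+)$ is independent of $(\psi_-,\gamma_-)$ with law $P^*$ --- which is exactly the conclusion of Proposition~\ref{propo} for the event $E'$.

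The only slightly delicate ingredients are the coincidence of the two families of conformal maps on the overlap event --- immediate from the first-meeting-time definition and the ordering of the segments --- and the positivity $\mu(E')>0$; the conceptual point that makes the subtraction legitimate is simply that the law $P^*$ furnished by Proposition~\ref{propo} is the same for every admissible path. I do not anticipate any real obstacle.
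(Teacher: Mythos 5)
Your proof is correct and is essentially the argument the paper compresses into ``Since this is true for all $l$, it also follows that'': you decompose $\{\gamma\cap\eta\neq\emptyset\}$ according to whether $\gamma$ already meets $\eta^{(k-1)}$, observe that the explorations (first meeting time, $\gamma_-$, $\gamma^+$, $\tilde\Gamma_\eta$, $\psi_-$) built from $\eta$ and from $\eta^{(k-1)}$ coincide on the overlap, and subtract the two product measures, using that $P^*$ is the same factor in both. A marginally shorter route notes that $E'$ is $\gamma_-$-measurable (it is determined by whether the tip $\eta(T)$ of $\gamma_-$ lies on the last segment), so the independence in Proposition \ref{propo} applied to $\eta$ alone already persists after restricting to $E'$; but your subtraction argument is equally sound and matches the paper's wording more closely.
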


We now want to deduce from this corollary that
$P^*$ is the law of some SLE$_\kappa$ from $1$ to $0$ in $\HH$ i.e., that $P^{**}$ is the law of some chordal SLE$_\kappa$
in $\HH$.
We will use Oded Schramm's conformal Markov property characterization of SLE that we now briefly recall.
Suppose that one is given a continuous simple curve $\xi$ in $\HH \cup \{ 0 \}$ starting at $0$. Let us parametrize $\xi$ according to its half-plane capacity, i.e., in such a way that for each $t$, there
exists a conformal map $g_t$ from $H_t := \HH \setminus \xi [0,t]$ onto $\HH$ such that $g_t (z)  = z + 2t / z + o (1/z)$ as $z \to \infty$. Suppose that the half-plane capacity of $\xi$ is not bounded (i.e., that $\xi$ is defined for all $t \ge 0$), and define $f_t (z) = g_t (z) - g_t ( \gamma_t)$, i.e.\ the conformal map from $H_t$ onto $\HH$ with $f_t  (\xi_t ) =0$ and $f_t (z) \sim z $ at infinity.
Then:
\begin {lemma}[Schramm \cite {Sch}]
\label {schmarkov}
 {If} the law of $\xi$ is scale-invariant in distribution (i.e., the law of $(\lambda^{-1} \xi_{\lambda^2 t}, t \ge 0)$ does not depend on $\lambda$),
 and if for all $t \ge 0$, the conditional law of $f_t ( \xi [t, \infty))$ given $\xi [0,t]$ is identical to the initial law of $\xi$, then $\xi$ is an SLE curve.
\end {lemma}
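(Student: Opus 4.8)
The plan is to translate the two hypotheses into statements about the Loewner driving function of $\xi$, and then to invoke Loewner's correspondence together with a soft characterization of Brownian motion.

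First I would recall the basic input from Loewner's theory (see \cite{Sch, RS}): since $\xi$ is a continuous simple curve in $\HH \cup \{0\}$ parametrized by half-plane capacity, the maps $g_t$ solve the Loewner equation $\partial_t g_t(z) = 2/(g_t(z) - W_t)$ with $g_0 = \mathrm{id}$ and continuous driving function $W_t := g_t(\xi_t)$, and conversely $\xi$ is recovered deterministically from $W$; in particular the $\sigma$-field generated by $(\xi_u : u \le t)$ coincides with the $\sigma$-field $\ev F_t$ generated by $(W_u : u \le t)$. Hence it suffices to identify the law of $(W_t)_{t \ge 0}$: the conclusion will follow once we show $W_t = \sqrt{\kappa}\, B_t$ for a standard Brownian motion $B$ and some $\kappa \ge 0$, since that is precisely the definition of chordal SLE$_\kappa$ in $\HH$.

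Next I would record two elementary functoriality facts about the capacity parametrization. First, if $\xi$ has driving function $W$, then for $\lambda > 0$ the rescaled curve $(\lambda^{-1}\xi_{\lambda^2 t})_{t \ge 0}$ is again parametrized by half-plane capacity (half-plane capacity scales like the square of the spatial scale) and has driving function $(\lambda^{-1} W_{\lambda^2 t})_{t\ge 0}$, by the scaling covariance of the Loewner equation. Second, for fixed $t$ the curve $s \mapsto f_t(\xi_{t+s})$, with $f_t = g_t - W_t$, is parametrized by half-plane capacity — because half-plane capacity is additive under composition of the corresponding conformal maps — and its driving function is $s \mapsto W_{t+s} - W_t$. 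Using the first fact, the scale-invariance hypothesis says exactly that $(\lambda^{-1}W_{\lambda^2 t})_{t\ge 0}$ has the same law as $(W_t)_{t\ge 0}$ for every $\lambda > 0$. Using the second fact, the conditional Markov hypothesis says exactly that, conditionally on $\ev F_t$, the process $(W_{t+s} - W_t)_{s\ge 0}$ has the law of $(W_s)_{s\ge 0}$; in particular its increments are stationary and independent of the past, so $W$ is a Lévy process started at $0$, with a.s.\ continuous paths by the first paragraph.

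Finally I would apply the classical fact that a Lévy process with a.s.\ continuous paths is of the form $W_t = b t + \sigma B_t$ with $b \in \R$, $\sigma \ge 0$ and $B$ a standard Brownian motion. The Brownian scaling identity then forces $b = 0$: indeed $\lambda^{-1} W_{\lambda^2 t} = \lambda b t + \sigma\,(\lambda^{-1} B_{\lambda^2 t})$ has the law of $\lambda b t + \sigma B_t$, which can agree with the law of $b t + \sigma B_t$ for all $\lambda$ and $t$ only if $b = 0$. Setting $\kappa = \sigma^2 \ge 0$, we obtain $W_t = \sqrt{\kappa}\, B_t$, so $\xi$ is chordal SLE$_\kappa$ (and necessarily $\kappa \in [0,4]$ because $\xi$ is simple, though this last point is not needed here). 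The only genuinely delicate ingredient is the material in the first two paragraphs — that a capacity-parametrized continuous simple curve really does generate a Loewner chain with a continuous driving function, and that conditioning and rescaling the curve correspond to the stated operations on $W$; these are standard but non-trivial facts of Loewner theory, and I would cite them rather than reprove them. Everything else in the argument is soft.
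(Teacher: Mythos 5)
Your argument is correct and is essentially the same as the paper's: both pass to the driving function $W_t = g_t(\xi_t)$, deduce Brownian scaling and independent (stationary) increments from the two hypotheses, and conclude $W = \sqrt{\kappa}B$; you simply make explicit the Loewner-functoriality bookkeeping and the step where scaling rules out a drift, which the paper's sketch leaves implicit.
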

 Recall that this result can be easily understood using Loewner's theory of slit mappings that shows that
the curve $\xi$ is characterized by the function $ t \mapsto W_t := g_t (\xi_t)$. Indeed,
 the previous conditions imply on the one hand that the random function $t \mapsto W_t$ has the Brownian scaling property (because $\xi$ is scale-invariant in distribution) and on the other hand that it is a continuous process with independent increments.
These two fact imply that $W$
it is a multiple of a standard Brownian motion (and one can call this multiplicative constant $\sqrt {\kappa}$). As the function $W$ fully characterizes $\xi$, this determines the law of the path $\xi$ (up to this one-parameter choice). Furthermore, SLE computations (see \cite {Sch}) show that the fact that the curve is simple implies that $\kappa \le 4$.

\medbreak

There are several natural ways that one can use in order to parametrize a pinned loop. We are going to choose one that is tailored for our purpose, i.e., to recognize SLE
excursions.
Let us first define $r_0$ in such a way that
$$ \mu ( R (\gamma) \ge r_0 ) = 1.$$
Then, the measure $\mu$ restricted to the set $\mathcal Q= \{ \gamma \ : \ R(\gamma ) \ge r_0 \}$ is a probability measure that we will call $Q$ in the present section. Until the rest of the present subsection, we will assume that $\gamma$ is sampled from this probability measure.

Suppose now that
 $\gamma \in \mathcal Q$ and let us orient it ``anti-clockwise'', i.e., $\gamma$ starts at $0$, makes a simple anti-clockwise loop in $\HH$ and comes back to $0$.
On the way, there is the first intersection point $z_0$ of $\gamma$ with $\{ z \ : \ |z| = r_0 \}$.
We  define $b_0 := b_0(\gamma)$ the beginning part of the loop between $0$ and $z_0$ (with the anti-clockwise orientation) and we call $e_0:= e_0(\gamma)$ the other (end-)part
of the loop $\gamma$ between $z_0$ and $0$.

Let $h_0$ denote the conformal mapping from $ \HH \setminus b_0$ onto $\HH$  normalized by $h_0 ( z_0) = 0$, $h_0( \infty) = 1$ and $h_0(0-) = \infty$ (where $h_0 (0-)$ denotes the image of the left-limit of $0$ in $\HH \setminus b_0 $). The end-part $e_0= e_0 (\gamma)$ of the loop between $z_0$ and $0$ is a continuous simple path from $z_0$ to $0-$ in $(H \setminus b_0)  \cup \{ z_0, 0- \}$. Hence, its image under $h_0$ is a simple path from $0$ to infinity in the upper half-plane that we now call $\xi$. We
 parametrize $\xi$ according to its half-plane capacity as before (and define the conformal maps $g_t$ and $f_t$). This therefore defines (via the map $h_0^{-1}$) a parametrization of the path $e_0$.

\begin {figure}[htbp]
\begin {center}
\includegraphics [width=4.2in]{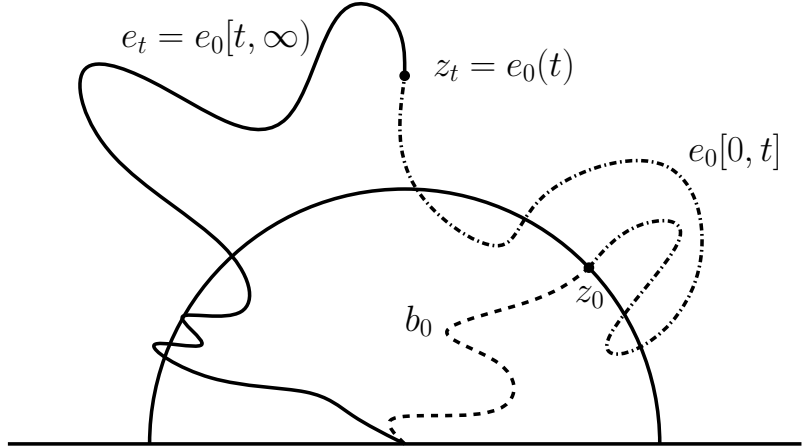}
\caption {Definition of $e_t(\gamma)$ and $b_t(\gamma)$ (sketch)}
\end {center}
\end {figure}

For each $t \ge 0$, we now define $z_t = e_0 (t)$,
$$b_t := b_t (\gamma) = b_0 ( \gamma) \cup e_0 [0,t] \hbox { and } e_t := e_t (\gamma) = e_0 [t, \infty).$$
Let us insist on the fact that $b_t$ and $e_t$ are paths (and not points).
$h_t$ will denote the conformal map from $\HH \setminus b_t( \gamma)$  with
$h_t ( z_t) = 0$, $h_t (0-) =\infty$ and $h_t (\infty)= 1$.
We will now prove the following statement that we will then combine with Lemma \ref {schmarkov}:
\begin {lemma}
 Under the probability measure $Q$, for each given $t \ge 0$, $h_t ( e_t (\gamma)) = f_t ( \xi [t, \infty))$ is independent of $b_t(\gamma)$.
Furthermore, its law is $P^{**}$.
\end {lemma}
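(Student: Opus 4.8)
The strategy is to reduce this statement to Corollary~\ref{finalco} (equivalently Proposition~\ref{propo}) by realizing the decomposition $\gamma = b_t \cup e_t$ of a $Q$-sampled loop as an instance of the decomposition of a pinned configuration along a piecewise-linear path $\eta$ that ``follows'' the beginning part $b_t$. The key point is that $b_t(\gamma) = b_0(\gamma) \cup e_0[0,t]$ is itself a simple curve from the boundary point $0$ into $\HH$ (not coming back to the boundary), and $z_t = e_0(t)$ is its tip; what remains, $e_t(\gamma)$, is a simple curve from $z_t$ back to $0$. Up to conformal maps this is exactly the situation of Proposition~\ref{propo}, where a pinned loop is cut at the first point where it is hit by an exploring curve: here the role of the ``exploring curve'' $\eta$ is played by a fattened/discretized version of $b_t$, and the loop $\gamma^+$ that remains after the cut is $e_t$. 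So the heart of the argument is an approximation step: approximate $b_t$ by piecewise-linear horizontal/vertical paths $\eta_n$ from the real axis, so that conditioning $\mu$ on ``$\gamma$ is hit by $\eta_n$ only on its last segment, at a point close to $z_t$'' converges to the conditional law $Q$ as $n\to\infty$, and check that under this approximation $\psi_-(\gamma^+)$ converges to $f_t(\xi[t,\infty))$ and that the conditioning $\sigma$-field converges to $\sigma(b_t)$.

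\textbf{Main steps, in order.} First I would set up the parametrization carefully and note that $b_t$ and $e_t$ are deterministic measurable functions of $\gamma$, and that the map $h_t$ is a deterministic function of $b_t$ alone (it depends only on the curve $b_t$, its tip $z_t$, and the marked point $0-$). Second, I would invoke scale invariance (via Corollary~\ref{finalco}) to remove the normalization nuisance and work with the probability measure $Q$ directly. Third — the crucial step — I would show that the pair $(b_t, h_t(e_t))$ under $Q$ arises as a limit of the pairs $(\text{``}\eta\text{ and the cut-off loops''},\ \psi_-(\gamma^+))$ from Proposition~\ref{propo}, as the exploring path $\eta$ is chosen to trace out $b_t$ more and more finely. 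Concretely: given $\gamma$, the beginning part $b_t$ together with the tip $z_t$ determines, for each $n$, a lattice path $\eta_n$ of horizontal/vertical segments approximating $b_t$; one restricts $\mu$ to loops that first meet $\eta_n$ on its last segment and near $z_t$, applies Proposition~\ref{propo} (in the form of Corollary~\ref{finalco}), and lets $n\to\infty$. On the event in question the conformal map $\psi_{-,\eta_n}$ converges to $h_t$ (both are normalized conformal maps off curves that converge in the Carathéodory sense), so $\psi_{-,\eta_n}(\gamma^{+,\eta_n}) \to h_t(e_t) = f_t(\xi[t,\infty))$; and the conditioning data $(\psi_{-,\eta_n},\gamma_{-,\eta_n})$ generates, in the limit, exactly $\sigma(b_t)$ — here one uses that there are no extra loops $\tilde\Gamma_\eta$ to worry about because under $\mu$ we are looking at a single pinned loop, not a configuration. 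Fourth, since Proposition~\ref{propo} asserts that $\psi_{-,\eta_n}(\gamma^{+,\eta_n})$ is independent of $(\psi_{-,\eta_n},\gamma_{-,\eta_n})$ with law $P^*$, passing to the limit gives that $f_t(\xi[t,\infty))$ is independent of $b_t$ with a law that is the image of $P^*$ under the coordinate change — which is precisely $P^{**}$ (the conformal map $z\mapsto 1-1/z$ is exactly the change of normalization between $h_0$'s target normalization $(0,\infty,1)$ and $P^*$'s $(1,0,\cdot)$, by construction of $\xi$ via $h_0$).

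\textbf{Expected main obstacle.} The genuinely delicate part is the approximation/limiting step: one must choose the discretizing paths $\eta_n$ measurably in $\gamma$ (so that the conditioning makes sense), verify that $\mu$ gives positive finite mass to the relevant ``hit only on the last segment near $z_t$'' events so Corollary~\ref{finalco} applies, and — most importantly — check that conditioning on these shrinking events converges to conditioning on $\sigma(b_t)$ in the appropriate sense, with simultaneous convergence of the conformal maps $\psi_{-,\eta_n}\to h_t$ uniformly on compacts. The Carathéodory-convergence input is routine once one knows $\HH\setminus \eta_n[0,T_n] \to \HH\setminus b_t$, but controlling the tip behavior (that $\eta_n$ first meets $\gamma$ at a point converging to $z_t$, and that the capacity-parametrization of $\xi$ is stable under this) requires the local-finiteness and simple-loop hypotheses and a short argument ruling out pathological first-hitting behavior, much in the spirit of the harmonic-measure argument in the proof of Proposition~\ref{pwellchosen}. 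A secondary point to get right is bookkeeping of the several conformal normalizations ($h_t$ sends $z_t\mapsto 0$, $0-\mapsto\infty$, $\infty\mapsto 1$, whereas $P^*$ lives on paths from $1$ to $0$), so that one correctly identifies the limiting law as $P^{**}$ rather than $P^*$; this is purely formal but must be stated cleanly since the next lemma (Schramm's characterization, Lemma~\ref{schmarkov}) will be applied to $P^{**}$.
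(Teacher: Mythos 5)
Your proposal follows essentially the same route as the paper's proof: approximate $b_t(\gamma)$ by grid paths $\beta_n$ hugging it from one side, apply the independence/restriction statement Corollary~\ref{finalco} (equivalently Proposition~\ref{propo}) to the exploration along each fixed grid path $\eta$, and pass to the limit $n\to\infty$ using Carath\'eodory convergence of the normalized conformal maps. The ingredient you do not spell out, and which is in fact the hinge of the whole argument, is the exact measurability observation that makes the conditioning legitimate: for a fixed grid path $\eta$, the event $\{\beta_n=\eta\}$ must be checkable from $\gamma_-$ alone (the portion of $\gamma$ up to its first hit of $\eta$), because only then does Proposition~\ref{propo}'s independence of $\psi_-(\gamma^+)$ from $(\psi_-,\gamma_-)$ carry through after restricting to $\{\beta_n=\eta\}$. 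The paper builds $\beta_n$ specifically to have this property (approximating $b_{t-1/n}$ from one side so that which grid path arises is determined before $\gamma$ first crosses it), and then the identity
$Q\bigl(F(b_t)\,G(\psi_t(e_t))\bigr)=\lim_n\sum_\eta Q\bigl(1_{\{\beta_n=\eta\}}F(\gamma_{-,\eta})\bigr)\cdot P^*(G)$
is exact for each $n$, not merely a limit of conditional laws; your framing in terms of ``conditioning on shrinking events converging to conditioning on $\sigma(b_t)$'' is a softer formulation that would require extra justification to turn into a proof.

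One concrete slip to flag: your remark that ``there are no extra loops $\tilde\Gamma_\eta$ to worry about because under $\mu$ we are looking at a single pinned loop, not a configuration'' is not quite right. Proposition~\ref{propo} is a statement about the pinned \emph{configuration} measure $\bar\mu$, and the map $\psi_{-,\eta}$ is defined by removing $\eta[0,T]$ together with the loops of $\bar\Gamma$ hitting it, so $\bar\Gamma$ does enter. What saves the day — and what the paper argues explicitly — is that $\bar\Gamma$ lives at positive distance from $\bar\gamma$, so as $\beta_n$ tracks $\gamma$ ever more closely, the probability that $\beta_n$ meets a macroscopic loop of $\bar\Gamma$ vanishes, and $\psi_{n,-}\to\psi_t$ (equivalently $h_t$, after your $z\mapsto 1-1/z$ bookkeeping). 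This step is subsumed in your claim that $\psi_{-,\eta_n}\to h_t$, but it should be stated as a fact to verify rather than a non-issue.
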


\begin {proof}
Let us fix $t \ge 0$. Define $\psi_t$ the conformal map from $\HH \setminus b_t( \gamma)$ on $\HH$ with $\psi_t ( z_t )= 1$, $\psi_t (\infty) = \infty$,
$\psi_t ( 0-) = 0$. Then, the lemma is clearly equivalent to the fact that
$\psi_t ( e_t)$ is independent of $b_t(\gamma)$, and that its law is $P^{*}$.

 Define $b_t (\gamma)$ as before (when $\gamma$ is defined under the probability measure $Q$).
The proof will be based on the independence property of the two-point pinned measure.
In order to use this, we will need to discover the pinned loop via some grid-based path that turns out to be close to $b_t$.  Take $n \ge 1$. Let us try to associate to each $b=b_t( \gamma)$ a grid approximation of $b_t$ ``from the right side'', that we will call $\beta_n$.

First of all, let us note that the diameter of $b_{t+2/n} \setminus b_{t-1/n}$ goes to $0$ almost surely when $n \to \infty$
and that we can find some sequence $\eps_n \to 0$ such that
$$ \lim_{n \to \infty} Q ( d( b_{t-1/2n}, e_t) \ge \eps_n ) = 1 $$
 (this follows easily from the fact that $\gamma$ is $Q$-almost surely a simple curve), where here $d$ denotes the usual Euclidean distance.
Then, when $\delta$ is a mesh-size, we try to define a particular grid-approximation of $b_{t-1/n}$ on the grid $(\delta \N) \times (\delta \Z)$ as follows:
Delete all the edges from the grid that are distance less than $\delta$ from $b_{t-1/n}$, and consider the unbounded connected component $C$ of the graph that one obtains. Let $f$ denote the first edge of $C$ that $\gamma$ hits (it is therefore after ``time'' $t-1/n$). It is clearly on the ``boundary of $C$ seen from $b_{t-1/n}$'', and one can find the simple nearest-neighbor path from the positive real axis to $f$ (where $f$ is its last edge) on $C$ that is ``closest'' to $b_{t-1/n}$. We call this path
$\beta = \beta ( b, t, n ,\delta)$.

\begin {figure}[htbp]
\begin {center}
\includegraphics [width=4in]{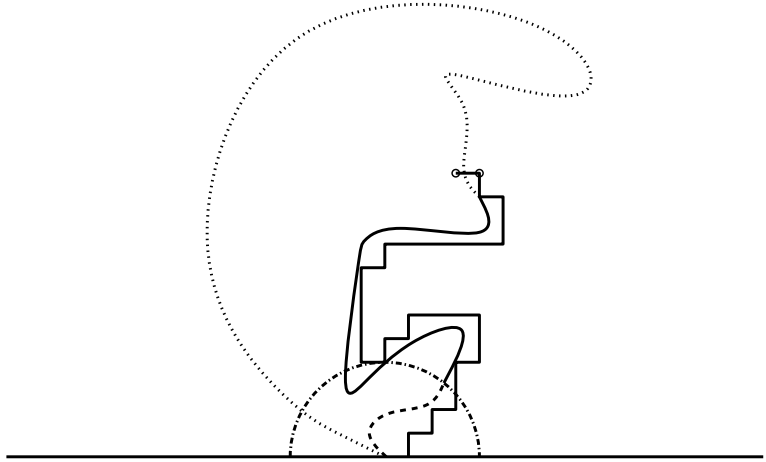}
\caption {Grid-approximation of some $b_t(\gamma)$ (sketch)}
\end {center}
\end {figure}

Clearly, when $t$, $n$ and $b$ are fixed, then when $\delta \to 0$, the (Hausdorff) distance between $\beta$ and  $b$ is going to $0$.
Hence, we can choose $\delta_n$ in such a way that
$$ \lim_{n \to \infty} Q ( d_H ( \beta, b_{t-1/n} ) \le \eps_n^2 ) =1 $$
(where $d_H$ denotes the Hausdorff distance).

Once these sequences $\delta_n$ and $\eps_n$ are chosen, we denote for each given $n$,
the grid approximation $ \beta ( b, t, n , \delta_n)$ by $\beta_n$.
The previous estimates also ensure that
$$
 \lim_{n \to \infty}
Q ( (\gamma \cap  \beta_n) \subset  b_{t+1/n} \setminus b_{t-1/n}) = 1.
$$

Furthermore, we can observe that if we are given $b_t$ and a finite grid-path $\eta$ on $\delta_n \Z \times \delta_n \N$, it suffices to look at the path $b_t$ only until the moment it hits $\eta$ for the first time to check whether $\beta_n = \eta$ or not.

Suppose now that $\eta$ is some given grid-path that has a positive probability to be equal to $\beta_n$.
Let us consider a pinned configuration $(\gamma, \bar \Gamma)$ such that $\gamma$ intersects $\eta_T$,
and let us define $\psi_- = \psi_{-, \eta}$, $\gamma_-= \gamma_{-, \eta}$ and $\gamma^+ = \gamma^{+, \eta}$ as before (recall that $\gamma_-$ and $\gamma^+$ are the two parts of $\gamma$, before and after it hits $\eta (T)$, which is the first point of  $\eta$ that intersects $\gamma$, and that $\psi_-$ corresponds to the domain where
one from $\HH \setminus \eta[0,T]$  the loops of  $\bar \Gamma$ that  intersect $\eta [0,T)$).
Then, we know on the one hand that $\psi_- (\gamma^+)$ is independent of $\psi_-$ and $\gamma_-$, and that its law is $P^*$.
On the other hand, we have just argued that the event $\{\beta_n = \eta \}$ is measurable with respect to $\gamma_-$ (this is because $\gamma_-$ contains the part of $\gamma$ up to the first time it hits $\eta$), so that the conditional law of
$\psi_- ( \gamma^+)$ given $\{ \beta_n= \eta \}$ is also $P^*$.

When $\beta_n= \eta$, we define $\psi_{n, -}$ to be this map $\psi_-$, and we also define $\gamma^{n, +}$ and
 $\gamma_{n, -}$ to be these $\gamma^+$ and $\gamma_-$.
Note that when $n$ is very large,
the probability that $\beta_n$ intersects some macroscopic loop of $\bar \Gamma$ is very small (this is because they are all at positive distance of $\bar \gamma$, and that $\beta_n$ is very close to $\gamma$). This, and our definition of $\beta_n$ ensures readily that $\gamma_{n,-}$ converges (in Hausdorff topology) to $b_t$, and that $\psi_{n,-} ( \gamma^{n,+})$ converges to
$\psi_t (e_t)$.

We are now almost ready to conclude:
For any continuous bounded functions $F$ and $G$ with respect to the Hausdorff topology, we get that:
\begin {eqnarray*}
 Q ( F ( b_t ( \gamma)) G ( \psi_t ( e_t )))
&=&
\lim_{n \to \infty }
Q ( F ( \gamma_{n,-}) G ( \psi_{n , -} ( \gamma^{n, +}))) \\
&=&
\lim_{n \to \infty}
\sum_\eta Q ( 1_{\{ \beta_n = \eta \}} F ( \gamma_{-, \eta }) G ( \psi_{ -, \eta } ( \gamma^{+, \eta })) ) \\
&=&
\lim_{n \to \infty}
\sum_\eta
 Q ( 1_{\{ \beta_n = \eta \}} F ( \gamma_{-, \eta } )) \times P^* ( G) \\
&=& \lim_{n \to \infty} P^* ( G) \times Q (F ( \gamma_{n,-})) \\
&=& P^* ( G) \times Q ( F (b_t (\gamma)))
\end {eqnarray*}
where we have use the independence property of Corollary \ref {finalco} between the second and the third line.
\end {proof}

This lemma has a number of consequences:
It implies (taking $t=0$) that the law of $\xi$ itself is  $P^{**}$. We know also that $P^{**}$ is scale-invariant.
Hence, we get that $\xi$ is a continuous simple curve from $0$ to infinity in the upper half-plane, that is scale-invariant in law, and such that for all $t$, the conditional law of
$f_t (\xi [t, \infty))$ given $\xi [0,t]$ is the same as the law of $\xi$ itself. The previous characterization of SLE therefore implies that:
\begin {corollary}
\label {recognize}
 The curve $\xi$ is an SLE$_\kappa$ for some $\kappa \in [0, 4]$. Furthermore, $\xi$ is independent of $b_0 (\gamma)$.
\end {corollary}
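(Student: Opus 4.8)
The plan is to read Corollary~\ref{recognize} straight off the preceding lemma together with Schramm's characterization of SLE (Lemma~\ref{schmarkov}), exactly in the spirit in which SLE was originally singled out: conformal Markov property plus scale invariance. First I would check that $\xi$ is a bona fide curve to which that machinery applies. Under the probability measure $Q$ the loop $\gamma$ is a.s.\ a simple loop, so $b_0(\gamma)$ is a simple arc from $0$ to $z_0$ in $\overline\HH$, the domain $\HH\setminus b_0(\gamma)$ is a Jordan domain, and $h_0$ extends to a homeomorphism of the closures; hence $\xi=h_0(e_0(\gamma))$ is a simple curve in $\overline\HH$ that starts at $0$ and ends at $h_0(0-)=\infty$, since $h_0$ sends the left prime end at the origin to $\infty$ and $e_0$ actually reaches that prime end. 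Being a simple curve with $\infty$ as an endpoint, $\xi$ is unbounded and so has unbounded half-plane capacity, which legitimizes the parametrization of $\xi$ by half-plane capacity used in Lemma~\ref{schmarkov}.

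Next I would record the three hypotheses of Lemma~\ref{schmarkov}. Taking $t=0$ one has $g_0=f_0=\mathrm{id}$, so the preceding lemma gives that the law of $\xi=f_0(\xi[0,\infty))$ is $P^{**}$; in particular this law is scale-invariant, since $P^{**}$ was shown in the previous section to be scale-invariant. For the conformal Markov property, fix $t\ge0$: by construction $b_t(\gamma)=b_0(\gamma)\cup e_0[0,t]$ and $\xi[0,t]=h_0(e_0[0,t])$, so $\xi[0,t]$ is a measurable function of $b_t(\gamma)$, i.e.\ $\sigma(\xi[0,t])\subseteq\sigma(b_t(\gamma))$. The preceding lemma states that $f_t(\xi[t,\infty))$ is independent of $b_t(\gamma)$ with law $P^{**}$; a fortiori it is independent of $\xi[0,t]$, so the conditional law of $f_t(\xi[t,\infty))$ given $\xi[0,t]$ equals its marginal law $P^{**}$, which is the law of $\xi$ itself. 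Since all hypotheses of Lemma~\ref{schmarkov} are met, $\xi$ is an SLE$_\kappa$ curve for some $\kappa\ge0$; and since $\xi$ is simple, the SLE computations recalled after Lemma~\ref{schmarkov} force $\kappa\le4$, giving $\kappa\in[0,4]$. Finally, the independence of $\xi$ from $b_0(\gamma)$ is precisely the $t=0$ instance of the preceding lemma.

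I do not expect a genuine obstacle here: all the substance lives in the preceding lemma and in Schramm's characterization, both of which may be assumed. The points needing care are purely bookkeeping: confirming that $\xi$ is an honest unbounded simple curve so that the capacity parametrization and Loewner's theory apply; checking the measurability $\sigma(\xi[0,t])\subseteq\sigma(b_t(\gamma))$ so that conditioning on $b_t(\gamma)$ controls conditioning on the natural filtration $\sigma(\xi[0,t])$ appearing in Lemma~\ref{schmarkov}; and noting that the preceding lemma identifies $f_t(\xi[t,\infty))$ with $h_t(e_t(\gamma))$ only up to a scaling fixing $0$ and $\infty$ (their normalizations differ), a discrepancy which is harmless because $P^{**}$ is scale-invariant.
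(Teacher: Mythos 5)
Your proof is correct and follows essentially the same route as the paper: the corollary is read off from the preceding lemma together with Schramm's characterization (Lemma~\ref{schmarkov}), with scale invariance of $P^{**}$ and simplicity of $\xi$ supplying the range $\kappa\in[0,4]$. The paper's own passage between the lemma and the corollary is only a few lines and leaves the bookkeeping implicit; you have correctly filled in the two points that deserve care: the measurability inclusion $\sigma(\xi[0,t])\subseteq\sigma(b_t(\gamma))$, which upgrades independence from $b_t(\gamma)$ to the conformal Markov property with respect to the filtration of $\xi$ itself, and the fact that $h_t(e_t(\gamma))$ and $f_t(\xi[t,\infty))$ agree only up to a scaling factor (determined by $b_t(\gamma)$), which is harmless precisely because $P^{**}$ is scale-invariant and that factor is $\sigma(b_t(\gamma))$-measurable.
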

The independence between $b_0(\gamma)$ and $\xi$ shows that the probability measure $Q$ can be constructed as follows: First sample $b_0(\gamma)$; this defines its tip
 $z_0$. Then draw an SLE$_\kappa$ (for some given value of $\kappa$) from $z_0$ to $0-$ in $H\setminus b_0$.
Standard SLE computations (see \cite {SchPerc})
 show that the probability that an SLE$_\kappa$ from $1$ to $0$ in $\HH$ hits the (semi-)circle of radius $\rho > 1$ around the origin decays like
$\rho^{- (8/\kappa) + 1 + o (1)}$ when $\rho \to \infty$.
It follows immediately that if $\kappa$ is the value associated to the pinned measure $\mu$,
$$ \mu ( \{ \gamma \ : \ R (\gamma) \ge \rho \} ) = Q ( \{\gamma \ : \  R( \gamma) \ge \rho \}  )
= \rho^{ - (8/ \kappa) +1 + o(1)}$$
 as $\rho \to \infty$. If we compare this with the scaling property of $\mu$, i.e., the fact that
$$ \mu ( \{ \gamma \ :\ R (\gamma ) \ge \rho ) = \rho^{-\beta} \mu ( \{ \gamma \ : \ R( \gamma) \ge 1\} ),$$
we get that:
\begin {corollary}
One necessarily has $\kappa = 8 / (1+ \beta)$, $\beta \in [1,2)$ and $\kappa \in (8/3, 4]$.
\end {corollary}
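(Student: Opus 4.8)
The plan is to read off the exponent relation $\kappa = 8/(1+\beta)$ by comparing two expressions for the tail $\mu(\{\gamma : R(\gamma)\ge\rho\})$ as $\rho\to\infty$, and then to intersect the resulting constraint with the a priori bounds on $\kappa$ and $\beta$ already available.

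First I would use the structure of $Q$ described just above the statement. Recall that $Q$ (which is $\mu$ restricted to $\mathcal Q=\{\gamma : R(\gamma)\ge r_0\}$ and renormalized, with $r_0$ chosen so that $\mu(\mathcal Q)=1$) can be generated as follows: sample the initial arc $b_0(\gamma)$, which determines its tip $z_0$ on the circle $\{|z|=r_0\}$; then, with $\xi$ an $\mathrm{SLE}_\kappa$ from $0$ to $\infty$ in $\HH$ drawn independently of $b_0$, set $e_0(\gamma)=h_0^{-1}(\xi)$, so that, given $b_0$, the arc $e_0$ is an $\mathrm{SLE}_\kappa$ from $z_0$ to $0-$ in $\HH\setminus b_0$ for some fixed $\kappa\in[0,4]$ (Corollary \ref{recognize}). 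Since $b_0$ stays in $\overline{B(0,r_0)}$ and touches the circle $\{|z|=r_0\}$ at $z_0$, we have $R(b_0)=r_0$, so for every $\rho>r_0$ the event $\{R(\gamma)\ge\rho\}$ coincides with the event that $e_0$ reaches the circle $\{|z|=\rho\}$. The standard $\mathrm{SLE}$ boundary‑hitting estimate (see \cite{SchPerc}) states that an $\mathrm{SLE}_\kappa$ from a point at distance $1$ to $0$ in $\HH$ hits the semicircle of radius $\rho$ with probability $\rho^{-(8/\kappa)+1+o(1)}$ as $\rho\to\infty$. Rescaling by $r_0$ and observing that replacing the starting point by the (random) point $z_0/r_0$ on the unit circle, and replacing $\HH$ by the domain $\HH\setminus b_0$ — which agrees with $\HH$ outside radius $r_0$ — affects only the multiplicative constant and is therefore absorbed into the $o(1)$, I obtain
$$\mu(\{\gamma : R(\gamma)\ge\rho\}) \;=\; Q(\{\gamma : R(\gamma)\ge\rho\}) \;=\; \rho^{-(8/\kappa)+1+o(1)}, \qquad \rho\to\infty .$$

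On the other hand, the scaling property of $\mu$ established in Section \ref{s.tail} gives the exact power law $\mu(\{\gamma:R(\gamma)\ge\rho\})=\rho^{-\beta}\,\mu(\{\gamma:R(\gamma)\ge 1\})$, where $\mu(\{\gamma:R(\gamma)\ge 1\})\in(0,\infty)$ by Lemma \ref{LR1} together with $\beta>0$. Matching the logarithmic decay rates of these two expressions forces $\beta=(8/\kappa)-1$, i.e. $\kappa=8/(1+\beta)$. It then remains to feed in the two available bounds. From the Proposition in Section \ref{s.tail} we have $\beta\in(0,2)$, hence $\kappa=8/(1+\beta)>8/3$; and from Corollary \ref{recognize} (simplicity of $\xi$) we have $\kappa\le 4$, which via $8/(1+\beta)\le 4$ is equivalent to $\beta\ge 1$. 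Therefore $\beta\in[1,2)$ and $\kappa=8/(1+\beta)\in(8/3,4]$, as claimed.

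The step I expect to require the most care is the transfer of the $\mathrm{SLE}$ hitting estimate to the situation at hand: the estimate in \cite{SchPerc} is for a curve in the clean half‑plane started at a fixed point, whereas here the curve lives in the random slit domain $\HH\setminus b_0$ and is started at the random tip $z_0$, so one must check that the exponent $(8/\kappa)-1$ is genuinely unchanged (only the hidden constant moves). This holds because reaching radius $\rho$ with $\rho\to\infty$ is a large‑scale event governed by the far field, where $\HH\setminus b_0$ and $\HH$ coincide and the precise location of the starting point at scale $r_0$ is irrelevant; but one should make this uniform in $b_0$, for instance by conditioning on $b_0$, applying the estimate conditionally, and integrating, using that $R(b_0)=r_0$ is deterministic. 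Everything else in the argument is just comparison of exponents.
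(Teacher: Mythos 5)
Your argument is essentially identical to the paper's: both use the decomposition of $Q$ into the initial arc $b_0(\gamma)$ followed by an independent $\mathrm{SLE}_\kappa$, invoke the $\rho^{-(8/\kappa)+1+o(1)}$ hitting estimate from \cite{SchPerc}, match against the exact scaling $\mu(R(\gamma)\ge\rho)=\rho^{-\beta}\mu(R(\gamma)\ge 1)$, and then combine the a priori bounds $\beta<2$ and $\kappa\le 4$ to pin down the ranges. Your final paragraph makes explicit a robustness check that the paper dispatches with ``it follows immediately,'' but the route is the same.
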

The last two statements in the corollary are just due to the fact that we know already that $\beta < 2$ (so that $\kappa > 8/3$) and also that $\kappa \le 4$ (because $\gamma$ does not touch the boundary elsewhere than at the origin), which implies that $\beta \ge 1$.

\subsection {Pinned measure and SLE excursion measure}
\label {pinnedSLE}

In order to show that Corollary \ref {recognize} in fact characterizes the measure $\mu$,
 let us first recall the following rather standard facts concerning SLE processes and excursions of Bessel processes:

\begin {lemma}
\label {l.besexc}
Suppose that $\kappa \in (8/3, 4]$, and that $P^\eps$ denotes the law of an SLE$_\kappa$ from $\eps$ to $0$ in the upper half-plane. Then, the measures
$\eps^{1- (8/\kappa)} P^\eps$ converge (vaguely) to an infinite measure $\nu$ on the set of pinned loops, that we call the SLE$_\kappa$ excursion measure.
\end {lemma}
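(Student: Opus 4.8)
The plan is to realize the SLE$_\kappa$ excursion measure $\nu$ as the push-forward of a Bessel excursion measure under the Loewner map, exactly as one builds the It\^o excursion measure of a Bessel process of dimension $d = 1 + 4/\kappa \in (2,3)$ (here $d<2$ would be the relevant regime for $\kappa>4$, but for $\kappa\le 4$ we have $2<d\le 3$, so the process is recurrent and the origin is polar for the reflected Bessel; the excursion theory is that of the reflected process $|W_t - W^{\mathrm{Bes}}_t|$ or, more precisely, that of $\mathrm{SLE}_{\kappa,\kappa-6}$-type driving which, for the bubble, reduces to a Bessel-type excursion). Concretely, recall that an SLE$_\kappa$ from $\eps$ to $0$ in $\HH$ is driven by $W_t$ with $dW_t = \sqrt\kappa\, dB_t + \tfrac{\kappa-6}{W_t - O_t}\,dt$ where $O_t$ tracks the image of the target $0$, started from $W_0 = \eps$, $O_0 = 0$; equivalently, after a time change, $W_t - O_t$ is (a constant multiple of) a Bessel process of dimension $d = 1 + 4/\kappa$ started from $\eps$ and run until it first hits $0$. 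So the whole random loop is a deterministic (Loewner) functional of a single Bessel path from $\eps$ to $0$.

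First I would make this reduction precise: write $P^\eps$ as the law of the loop obtained by feeding into Loewner's equation the driving pair coming from a $d$-dimensional Bessel excursion from $\eps$ to $0$, and check that the Loewner-to-loop map is continuous (in the Hausdorff topology on compact sets) on the relevant set of driving functions — this is standard, using the a priori H\"older estimates of \cite{RS} for $\kappa\le 4$, together with the fact that the tip converges back to $0$ because the capacity of the whole excursion is a.s. finite (the loop is bounded, cf. Lemma \ref{LR1}-type reasoning, or directly: a Bessel$(d)$ excursion has finite duration a.s.). Second, recall the classical fact about the It\^o excursion measure $\mathbf{n}$ of the reflected Bessel$(d)$ process for $2<d<3$ — or more directly, the entrance law: if $n_\eps$ denotes the law of a Bessel$(d)$ bridge-type path from $\eps$ to $0$ (the path of the process started at $\eps$ and killed at its hitting time of $0$), then $\eps^{-(d-2)} n_\eps \to \mathbf{n}$ vaguely as $\eps\to 0$, where $\mathbf{n}$ is the It\^o excursion measure (away from $0$). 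The exponent is $d - 2 = 4/\kappa - 1 = -(1 - 8/(2\kappa))\cdot$... — careful: $d-2 = 4/\kappa -1$ and the SLE hitting exponent from the previous corollary is $8/\kappa - 1$; these differ, so the correct normalization comes not from the dimension alone but from the probability $\eps^{1-8/\kappa+o(1)}$ that the bubble is macroscopic, which is precisely the hitting-probability exponent computed in \cite{SchPerc} and already recorded above. So the right statement to prove is: the measures $\eps^{1-8/\kappa} P^\eps$, viewed as measures on bounded loops, converge vaguely, and the limit is obtained by pushing forward the corresponding (suitably normalized) Bessel entrance/excursion law through the continuous Loewner map.

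Concretely the steps are: (i) Fix a test set $\A_r = \{R(\gamma) > r\}$ (or a continuity set thereof) and show $P^\eps(\A_r) = \eps^{8/\kappa - 1 + o(1)} \rho_r$ for an explicit constant $\rho_r$, with the $o(1)$ uniform — this follows from the SLE hitting estimate of \cite{SchPerc} plus scaling (an SLE$_\kappa$ from $\eps$ to $0$ is $\eps$ times an SLE$_\kappa$ from $1$ to $0$), so in fact $P^\eps(\A_r) = P^1(\A_{r/\eps})$ and one uses the tail asymptotics directly. (ii) Show that, conditioned on $\A_r$, the law of the loop under $P^\eps$ converges weakly as $\eps \to 0$: this is where the Bessel realization does the work, since conditioning the Bessel$(d)$ path from $\eps$ to $0$ on reaching level $\gtrsim r$ before hitting $0$, and then letting $\eps\to0$, converges by the strong Markov property to the Bessel process started from its first hitting of that level (run to the hitting of $0$), with the pre-that-level piece filled in by the entrance law — i.e. exactly the It\^o excursion description. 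Push this through the continuous Loewner map to get weak convergence of the conditioned loop laws. (iii) Combine (i) and (ii): $\eps^{1-8/\kappa} P^\eps|_{\A_r} = \eps^{1-8/\kappa} P^\eps(\A_r) \cdot P^\eps(\cdot\mid\A_r) \to \rho_r \cdot (\text{limit law})$, and check the family over $r>0$ is consistent, defining a single $\sigma$-finite measure $\nu$ on pinned loops of positive radius. (iv) Note $\nu$ has the scaling property with exponent $\beta = 8/\kappa - 1$ and is conformally covariant, inheriting these from the exact scaling $P^\eps = \eps\cdot P^1$ of SLE.

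The main obstacle is step (ii): proving the weak convergence of the conditioned loop laws with enough uniformity to get \emph{vague} (not just tail) convergence of $\eps^{1-8/\kappa}P^\eps$. One must control two things simultaneously: the small-scale behavior near the origin (where the loop pinches — this is handled by the entrance law of the Bessel process, but one needs that the Loewner map is continuous \emph{up to} the tip returning to $0$, which requires the H\"older continuity of SLE traces for $\kappa \le 4$ and some care that no mass escapes to loops of zero radius), and tightness of the conditioned measures (which follows from the SLE boundary-hitting estimates, giving uniform control on $R(\gamma)$ and on the modulus of continuity). A clean way to organize this is to prove convergence of the driving-function laws first — i.e., that the $d$-dimensional Bessel excursion measure is the vague limit of $\eps^{-(d-2)}$ (Bessel-from-$\eps$) — this is entirely classical (It\^o excursion theory / the explicit Bessel transition density), and then invoke continuity of the Loewner chain map, reducing everything to a soft functional-analytic step plus well-known facts about Bessel processes.
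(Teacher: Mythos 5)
Your overall strategy---realize the loop as the Loewner image of a Bessel-type driving excursion, build the excursion/entrance measure, and push it through the (continuous) Loewner map---is exactly the route the paper takes. However, there is a concrete error in your Bessel identification that, if followed literally, would make the argument collapse, and you half-notice it but resolve it incorrectly.

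The driving process of the SLE$_\kappa$ from $\eps$ to $0$, written as $X_t = U_t - g_t(0)$, satisfies $dX_t = \sqrt\kappa\, dB_t + \tfrac{\kappa-4}{X_t}\,dt$, which after the trivial rescaling $Y = X/\sqrt\kappa$ is a Bessel process of dimension
$$\delta = 3 - \frac{8}{\kappa} \in (0,1] \quad\text{for } \kappa\in(8/3,4],$$
not $d = 1 + 4/\kappa$ as you wrote. This is not a cosmetic point. For your claimed $d\in[2,5/2)$ the Bessel process is transient and never hits $0$, so there is no excursion measure at the origin at all; your parenthetical ``so the process is recurrent'' is false in that regime. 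With the correct $\delta\in(0,1]$ the origin is hit, the reflected process has an It\^o excursion theory, and the entrance-law exponent $2-\delta = 8/\kappa - 1$ \emph{agrees exactly} with the hitting-probability exponent from \cite{SchPerc}---there is no mismatch to ``fix.'' The ``discrepancy'' you flag between $d-2 = 4/\kappa -1$ and $8/\kappa -1$ is an artifact of the arithmetic slip, and your attempted repair (use the SchPerc exponent for the normalization, but continue to speak of Bessel$(1+4/\kappa)$) leaves the final paragraph internally inconsistent: the ``clean'' reorganization at the end reverts to $\eps^{-(d-2)} = \eps^{1-4/\kappa}$, which at $\kappa=4$ gives no normalization at all.

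The paper sidesteps your normalization question entirely by observing that $X_t^\beta$ is a local martingale for $\beta = 8/\kappa - 1 = 2-\delta$, and normalizes the excursion measure $\nu$ so that the mass of excursions reaching level $\eps$ is $\eps^{-\beta}$; the gambler's-ruin identity $\P_\eps(\text{hit }\eps' \text{ before }0) = (\eps/\eps')^\beta$ then gives both the exponent and the consistency for free, and the comparison with $P^\eps$ reduces to the observation that the normalizing conformal map from $\HH\setminus\gamma[0,\tau_\eps]$ to $\HH$ tends to the identity away from $0$. Your steps (i)--(iv) are a correct (and slightly more verbose) unpacking of this, once you substitute $\delta = 3-8/\kappa$ throughout and delete the phantom discrepancy. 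So: right approach, real and not-merely-typographical error in the central object.
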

Vague convergence here means that for all $r>0$, when restricted on the set of paths with diameter greater than $r$, $\eps^{1-(8/\kappa)} P^\eps$ converges weakly (with respect to the Hausdorff topology for instance).

\begin {proof}
Let us fix a positive $\eps$ and consider an SLE$_\kappa$ curve (for $\kappa \in (8/3,  4]$) from $\eps$ to $0$ in $\HH$.
It is possible to parametrize this curve as a Loewner chain in the upper half-plane, with time measured via the half-plane capacity. If we use this parametrization, then the SLE path is defined up to some (random) time $\tau$, and one can define for each $t \ge 0$, the conformal map $g_t$ from $\HH \setminus \gamma [0,t]$ normalized at infinity, and it satisfies Loewner's equation $\partial_t g_t (z) = 2 / (g_t (z) - U_t)$ for all $t \le \tau$ and $z \notin \gamma [0,t]$, where $U_t= g_t (\gamma_t)$.
Note also that $\tau$ is the hitting time of $0$ by $U_t - g_t (0)$ (because $\gamma$ is almost surely a simple curve).

It is quite easy to argue that the process $X_t := U_t - g_t (0)$ is necessarily a Markov process with the same scaling property as Brownian motion, and therefore the multiple of some Bessel process. In fact, direct computations (see for instance \cite {SchW}) shows that $\gamma$ is a so-called SLE($\kappa, \kappa-6$) process stopped at its first swallowing time of $0$, i.e., that
$$ dU_t = \sqrt {\kappa } d B_t + \frac {\kappa - 6}{U_t - g_t (0)} dt$$
and
$$ dX_t = \sqrt {\kappa} d B_t + \frac {\kappa -4}{X_t} dt$$
where $B$ is a standard Brownian motion. Note that $U$ can be recovered from $X$ because
$$ U_t - U_0 = X_t - X_0  + \int_0^t 2 dt / X_t .$$

An important feature here is (and this follows immediately from It\^o's formula) that for $\beta = 8/\kappa - 1$,
$(X_t)^\beta$ is a local martingale up to its first hitting time of $0$.
For instance, when $X_0 = \eps$, then the probability that $X$ hits $\eps' > \eps$ before hitting $0$ is $(\eps/ \eps')^\beta$.
This makes it possible (in the standard procedure to define Bessel excursions, see for instance \cite {PY})
to define a measure $\nu$ on the space of one-dimensional excursions $(Y_t, t \le \tau)$ (i.e., such that $Y_0=Y_\tau=0$ and $Y (0,\tau) \subset (0, \infty)$) in such a way that the $\nu$-mass of the  set of paths that reach level $\eps$ (at some time $\tau_\eps$) is $\eps^{-\beta}$, and that the $(\eps^{\beta} \nu)$-law of
$( Y_{t + \tau_\eps}, t \in [0, \tau - \tau_\eps])$ is the law of $X$ as before started from level $\eps$ and stopped at its first hitting of $0$. Furthermore, one can check (for instance via scaling considerations) that for $\nu$-almost all $Y$, $\int_0^\tau dt / Y_t < \infty$.

Each such excursion $Y$ clearly defines (via Loewner's equation, replacing $X$ by $Y$ and using the fact that $\int_0^\tau dt /Y_t $ is finite) a two-dimensional pinned loop $\gamma$. So,
 we can view the measure $\nu$ as a measure on the set of loops, and it satisfies the following two statement:
\begin {itemize}
 \item The $\nu$-mass of the set of pinned loops $\gamma$ such that $Y$ hits $\eps$ is $\eps^{-\beta}$.
\item On this set, the measure $\nu^\eps = \eps^{\beta} \nu$ is a probability measure,
and one can sample $\gamma$ by first sampling $\gamma [0, \tau_\eps]$, and then
finishing the curve by an SLE$_\kappa$ in $\HH \setminus \gamma [0, \tau_\eps]$ from $\gamma (\tau_\eps)$ to $0$.
\end {itemize}
It follows that the difference between the two probability measures $\nu^\eps$ and $P^\eps$ is just due to the mapping by the random conformal mapping normalized at infinity from
$\HH \setminus \gamma [0, \tau_\eps]$ onto $\HH$. But, when $\eps \to 0$,  this mapping clearly converges to the identity away from the origin, which proves the claim.
\end {proof}

\begin{proposition} \label{muisSLEexcursion}
The pinned measure $\mu$ is the multiple of the SLE$_\kappa$-excursion measure $\nu$ for $\kappa = 8 /(1+\beta)$ (where $\beta$ is the ``exponent'' associated to $\mu$), that is normalized in such a way that $ \mu ( \gamma \hbox { surrounds } i ) = 1$.
\end{proposition}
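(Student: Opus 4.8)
The plan is to compare $\mu$ and $\nu$ through the Loewner description of pinned loops. Trace a pinned loop anticlockwise from $0$, parametrize it by half-plane capacity, let $(U_t)$ be its driving function and $(g_t)$ the associated conformal maps, and set $X_t=U_t-g_t(0-)$, the displacement of the growing tip from $0$ read off from the side along which the loop eventually closes up (the prime end playing the role of $0-$ in the definition of $h_0$). Then $(X_t)_{0\le t\le\zeta}$ is an excursion ($X_0=X_\zeta=0$, $X>0$ on the interior, with $\zeta$ and $\int_0^\zeta dt/X_t$ finite for every pinned loop), and the loop is a measurable function of $X$. By Lemma~\ref{l.besexc} and its proof, $\nu$ is exactly the image under this correspondence of the It\^o excursion measure of the diffusion $dX_t=\sqrt\kappa\,dB_t+\frac{\kappa-4}{X_t}\,dt$ killed at $0$, normalized so that $(X_t)^\beta$ is a local martingale and the mass of $\{X\text{ reaches }\eps\}$ is $\eps^{-\beta}$, with $\beta=8/\kappa-1$. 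So it suffices to prove that the image of $\mu$ under $\gamma\mapsto X$ is a constant multiple of this excursion measure; the normalization $\mu(\gamma\text{ surrounds }i)=1$ then pins down the constant, both measures assigning finite positive mass to that event.

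The second step is the Markov property of $\mu$'s driving excursion. Fix $\eps>0$; the event $\{X\text{ reaches }\eps\}$ differs only by bounded conformal distortion from ``the loop has diameter $\asymp\eps$'', so it has finite $\mu$-mass, equal by the scaling property of $\mu$ to $C\eps^{-\beta}$ with $C$ independent of $\eps$. The claim is that, under $\mu$ restricted to $\{X\text{ reaches }\eps\}$ and normalized, the shifted process $(X_{\sigma_\eps+t})_{t\ge0}$ — $\sigma_\eps$ being the first hitting time of $\eps$ by $X$ — is the above diffusion started from $\eps$ and run to $0$, independent of $(X_s)_{s\le\sigma_\eps}$. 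To prove it, use the scaling invariance of $\mu$ to run the construction of Corollary~\ref{recognize} with a circle of radius $r$ small enough that $\mu$-a.e.\ the loop reaches $X$-level $\eps$ only after first meeting $\{|z|=r\}$, so that $\sigma_\eps$ corresponds to a stopping time of the curve $\xi$ of that corollary. Since $\xi$ is an $\mathrm{SLE}_\kappa$, hence has the conformal Markov property at stopping times, and is independent of the initial piece $b_0$, and since $\mathrm{SLE}_\kappa$ from the tip of a hull to the boundary point $0-$ is, in the capacity parametrization, the $\mathrm{SLE}_\kappa(\kappa-6)$ process whose ``$X$-process'' is precisely the displayed diffusion (the computation recalled in the proof of Lemma~\ref{l.besexc}), the claim follows. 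One could equally re-run the grid-approximation argument behind Corollary~\ref{recognize} with the cut taken at the first hitting of $X$-level $\eps$ instead of at a circle.

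The last step identifies the entrance law and concludes. By the previous step, for each $\eps$ the image of $\mu|_{\{X\text{ reaches }\eps\}}$ under $\gamma\mapsto X$ is $C\eps^{-\beta}$ times the law obtained by sampling $(X_s)_{s\le\sigma_\eps}$ from some probability measure $\varrho_\eps$ and then, given $X_{\sigma_\eps}=\eps$, running the diffusion to $0$; the excursion measure underlying $\nu$ has the identical form with $\varrho_\eps$ replaced by its own entrance-to-level-$\eps$ law $\varrho_\eps^{\mathrm{exc}}$. Applying the previous step at a smaller level $\eps'<\eps$ shows that $\varrho_\eps$ is obtained from $\varrho_{\eps'}$ by appending a run of the diffusion from $\eps'$, conditioned to reach $\eps$ before $0$ and stopped on reaching $\eps$; as $\eps'\to0$ the initial piece degenerates and, by the standard construction of the excursion measure as the vague limit of $\delta^{-\beta}$ times the law of the diffusion started from $\delta$ (as in the proof of Lemma~\ref{l.besexc}; see \cite{PY}), this limit is exactly $\varrho_\eps^{\mathrm{exc}}$. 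Hence $\varrho_\eps=\varrho_\eps^{\mathrm{exc}}$, so $\mu|_{\{X\text{ reaches }\eps\}}=C\,\nu|_{\{X\text{ reaches }\eps\}}$ for every $\eps$; letting $\eps\to0$, and noting these events exhaust the pinned loops of positive size, gives $\mu=C\nu$ with $C=\nu(\gamma\text{ surrounds }i)^{-1}$. The step I expect to be delicate is exactly this last one: Corollary~\ref{recognize} only delivers the Markov property at the single ``first hit of a circle'' time, and converting that — via the scaling of $\mu$ and the $\eps'\to0$ limit — into the assertion that $\mu$'s driving excursion \emph{is} a multiple of the Bessel excursion measure, with the correct entrance law, is where the real work lies; everything else is bookkeeping with the $\mathrm{SLE}_\kappa(\kappa-6)$/Loewner dictionary already set up in the proof of Lemma~\ref{l.besexc}.
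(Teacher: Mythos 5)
Your proof is correct and follows essentially the same approach as the paper: restrict $\mu$ to loops reaching a small scale, use the independence and SLE$_\kappa$ identification of Corollary~\ref{recognize} together with scale invariance to get the Markov property of the Loewner driving excursion, and pass to the limit of small scales to identify the entrance law and conclude $\mu=C\nu$. The paper's proof is more telegraphic (it phrases the stopping in terms of a circle of radius $\eps r_0$ followed by the first hitting of $Y$-level $K_0\eps$, and compresses the entrance-law identification into ``we conclude readily''), whereas you spell out the excursion-theoretic bookkeeping — in particular the nested identity $\varrho_\eps=\varrho_{\eps'}\oplus(\text{conditioned diffusion from }\eps'\text{ to }\eps)$ and its $\eps'\to0$ limit — but the ingredients and logic are the same.
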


 A consequence of this proposition is that two CLE's with the same exponent $\beta$ have the same pinned measure.

\begin{proof}
 Recall that we have the following description of $\mu$ (this is just the combination of the scaling property of $\mu$ and of the definition of $r_0$): For each $\eps \ge 0$, the $\mu$-mass of the set of loops with radius at least $\eps r_0$ is
$\eps^{-\beta}$. If we restrict ourselves to this set of loops, and renormalize $\mu$ so that it is a probability measure on this set, we can first sample the part of
$\gamma$ up to the point at which it reaches the circle of radius $\eps$, and then complete it with an SLE$_\kappa$ back to the origin in the remaining domain.

We can note that for some absolute constant $K_0$, if we consider a loop that does not reach the circle of radius $r_0$, then the corresponding Loewner-type function $Y$ as before does not reach $K_0$.
Let us now sample the beginning of the loop $\gamma$ as before up to the first hitting point of the circle of radius $\eps r_0$. Then continue the SLE up to the first time (if it exists) at which the function $Y$ associated to the loop reaches $K_0 \eps$. A fixed positive fraction $K_1$ (independent of $\eps$ because of scale-invariance)
 of these SLEs succeed in doing so. Then, after this point, we still are continuing with an SLE$_\kappa$ in the remaining domain, so that $\gamma$ is close to a sample of $P^\eps$.

Hence, we conclude readily that $\mu$ is a constant multiple of $\nu$ (where the constant is given in terms of $K_0$, $K_1$ and $\beta$).
Another way to describe this constant is to recall that $ \mu ( \gamma \hbox { surrounds } i ) = 1$.
\end{proof}

Let us now make the following comment: Suppose that a CLE in $\HH$ is given. This defines a random collection of loops $(\gamma_j)$. If we now define the
symmetry $S$ with respect to the imaginary axis (i.e.\ the map $z \mapsto -  \bar z$), then it is clear from our CLE axioms that the family $(S(\gamma_j))$ is also a CLE in $\HH$.
We have just seen that each CLE defines a pinned measure $\mu$ that happens to be the (multiple of an) SLE$_\kappa$ excursion measure. But, by construction, the pinned measure associated to the CLE $(S(\gamma_j))$ is simply the image of $\mu$ by the map $S$. Therefore (noting that both these CLEs correspond to the same exponent $\beta$),
the CLEs $(S(\gamma_j))$ and $(\gamma_j)$ correspond to the same SLE excursion measure. Hence:

\begin {corollary}
\label {reversibility}
 If the pinned measure of a CLE is the SLE$_\kappa$ excursion measure for some $\kappa$, then it implies that the law of the corresponding
 SLE$_\kappa$ bubble (as defined in the introduction) is reversible:
The trace of SLE$_\kappa$ bubble traced clockwise has the same law as that of an SLE$_\kappa$ bubble traced anti-clockwise.
\end {corollary}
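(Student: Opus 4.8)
The plan is to use the reflection symmetry of the CLE axioms, as sketched in the discussion preceding the statement, and then to translate the resulting invariance of the pinned measure into the asserted reversibility. Let $S(z)=-\bar z$ be the anticonformal involution of $\HH$ fixing $0$ (reflection in the imaginary axis). First I would check that $S$ maps CLEs to CLEs: non-triviality, conformal invariance, restriction and local finiteness are all phrased purely in terms of conformal maps between simply connected subdomains, and $S$ conjugates conformal maps to conformal maps, so the image $S(\Gamma)=(S(\gamma_j))$ of the given CLE $\Gamma=(\gamma_j)$ in $\HH$ is again a CLE in $\HH$. Moreover the whole construction of the pinned measure is canonical and uses only data ($\HH$, the point $0$, the point $i$, the semicircles $C_\eps$) fixed by $S$, so the pinned measure of $S(\Gamma)$ is the pushforward $S_*\mu$, and the exponent $\beta$ of $S(\Gamma)$ --- read off from $u(\eps)$, which is literally unchanged by $S$ --- equals that of $\Gamma$.

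The second step is Proposition~\ref{muisSLEexcursion}: since two CLEs with the same exponent $\beta$ have the same pinned measure, we obtain $S_*\mu=\mu$. Combining this with the identification (also Proposition~\ref{muisSLEexcursion}) of $\mu$ with a constant multiple of the SLE$_\kappa$-excursion measure $\nu$, and with the description of $\nu$ in Lemma~\ref{l.besexc} as the limit of $\eps^{1-(8/\kappa)}$ times the law of an SLE$_\kappa$ from $\eps$ to $0$, we conclude that the SLE$_\kappa$-excursion measure --- hence also the SLE$_\kappa$ bubble measure of the introduction, which is obtained from $\nu$ by the $S$-symmetric operation of restricting to loops surrounding $i$ and normalising --- is invariant, as a measure on unoriented pinned loops rooted at $0$, under $S$.

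Finally, I would read off reversibility. The SLE$_\kappa$ bubble built from SLE$_\kappa$ from $\eps$ to $0$ emerges from the positive side of $0$ and closes up toward $0$, so it carries a deterministic rotational sense; since a pinned loop rooted at $0$ is a Jordan curve, this sense is a measurable function of the trace, so the unoriented measure $\mu$ lifts unambiguously to the oriented bubble measure (one orientation convention) and to its reversal (the other). The anticonformal map $S$ reverses the rotational sense of every loop, so $S$ sends the (say, counterclockwise) SLE$_\kappa$ bubble to a clockwise pinned loop; the identity $S_*\mu=\mu$ then says that this clockwise loop has the same law as the clockwise loop obtained instead by \emph{reversing} the original bubble. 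Equivalently, reversing the SLE$_\kappa$ bubble has the same effect in law as reflecting it, which is the stated reversibility (the ``SLE$_\kappa$ bubble traced clockwise'' and ``traced anticlockwise'' being interchanged by $S$). I do not expect a genuine obstacle: everything reduces to the already-established $S$-invariance of the axioms and to Proposition~\ref{muisSLEexcursion}, and the only point requiring a little care is the bookkeeping of orientations --- in particular that ``reversibility'' here must be understood up to the harmless reflection $S$, since the ``SLE-start germ'' and the ``closing germ'' of the bubble at $0$ are a priori different and the content of the corollary is precisely that they become interchangeable after reflection.
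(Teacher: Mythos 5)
Your proposal is correct and follows the paper's argument: apply the CLE axioms to the reflected family $S(\Gamma)$, deduce $S_*\mu=\mu$ from Proposition~\ref{muisSLEexcursion} (both CLEs share the same exponent $\beta$ and the same normalization on loops surrounding $i$), and read off reversibility. The caveat you raise at the end about orientations is resolved by the standard reflection symmetry of SLE$_\kappa$ itself, under which $S$ exchanges the clockwise and anticlockwise bubbles, so the conclusion holds cleanly without any residual ``up to reflection'' proviso.
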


It has been proved by Dapeng Zhan that SLE$_\kappa$ for $\kappa \le 4$ is indeed reversible (see \cite {Zh}), which in fact implies this last statement.
The present set-up therefore provides an alternative approach to reversibility of SLE paths for these values of $\kappa$.

\section {Reconstructing a CLE from the pinned measure}
\label {S.8}

The goal of the present section is to show that if one knows $\mu$, then one can recover the law of the initial CLE.
This will conclude the proof of Theorem \ref {t2}.

The rough idea is to use the radial exploration and to use the same idea as in the proof of the fact that $\beta < 2$, but ``backwards'':
The fact that $\beta < 2$ will ensure that this exploration (described in Section \ref{s.tail}) can be approximated by keeping only those steps that discover ``large'' loops, because the cumulative contribution of the small ones vanish. Furthermore, these large loops can be described using a Poisson point process of intensity $\mu$. This will lead to the description of a ``continuous exploration mecanism''  that we will relate to SLE($\kappa, \kappa-6$) processes in the next section.

\subsection {The law of $\gamma (i)$}
\label {onepoint}

It is useful to first show how one can recover the law of the loop in the CLE that contains $i$.
Recall that the set-up is the following: We suppose that we are given the law $P$ of a CLE.
The previous sections enable us to define its pinned measure $\mu$ and we have seen that it is necessarily equal (up to a multiplicative constant) to
the SLE$_\kappa$ excursion measure for some $\kappa$ in $(8/3, 4]$.

\begin {lemma} \label{l.mugivesgammailaw}
 The pinned measure $\mu$ characterizes the law of $\gamma(i)$ under $P$.
In other words, if two (laws of) CLEs define the same pinned measure, then $\gamma(i)$ is distributed in the same way for both CLEs.
\end {lemma}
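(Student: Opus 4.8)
The plan is to run the discrete \emph{radial} exploration of the CLE in $\HH$ aimed at $i$, as described in Subsection~\ref{S43}, and to show that, as the mesh parameter $\eps\to0$, this exploration converges to a continuous ``L\'evy exploration'' driven purely by a Poisson point process of intensity $\mu$. Since the loop $\gamma(i)$ is exactly the object produced at the end of this exploration, its law then becomes a measurable functional of $\mu$ alone, which is the assertion. Concretely, fix $\eps>0$ and carry out the radial $\eps$-exploration $\varphi_1^\eps,\dots,\varphi_N^\eps$ of the CLE in $\HH$ targeting $i$, so that at each step one removes the semicircle $C_\eps$ together with the loops it meets from the current domain and maps back to $\HH$ fixing $i$ with positive derivative. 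Write $\Psi_n=\varphi_n^\eps\circ\cdots\circ\varphi_1^\eps$, mapping the still-undiscovered domain $D_n\ni i$ onto $\HH$ with $\Psi_n(i)=i$, $\Psi_n'(i)>0$ (and $D_0=\HH$, $\Psi_0=\mathrm{id}$). By the conformal invariance and restriction axioms the steps are i.i.d., $N$ is geometric with $P(N\ge n)=(1-u(\eps))^n$, and conditionally on the first $N$ steps the image under $\Psi_N$ of the loops of the CLE that stay in $D_N$ is again a CLE in $\HH$. In particular $\Psi_N(\gamma(i))$, conditioned on the exploration reaching step $N$ and discovering $\gamma(i)$ at step $N+1$, is independent of $(\varphi_1^\eps,\dots,\varphi_N^\eps)$ and distributed as the law of $\gamma(i)$ in a CLE in $\HH$ conditioned on $\{\gamma(i)\cap C_\eps\neq\emptyset\}$; by the construction of $\mu$ this conditional law converges, as $\eps\to0$, to the probability measure $\mu|_{\{\gamma\ \mathrm{surrounds}\ i\}}$, which has total mass $1$ by the normalization of Proposition~\ref{muisSLEexcursion}.

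It therefore remains to identify the limit of $\Psi_N$. Fix a macroscopic threshold $\alpha>0$ and keep track only of discovered loops of radius larger than $\alpha$. Using the description of $\mu$ as the vague limit of $u(\eps)^{-1}$ times the law of the largest loop meeting $C_\eps$, together with the Poisson-approximation facts recalled in Subsection~\ref{S43} (and the clock $n\mapsto n\,u(\eps)$, which rescales the geometric time $N$ to an exponential time of rate $\mu(\{\gamma\ \mathrm{surrounds}\ i\})=1$), one checks that, in the $\eps\to0$ limit, the discovered loops of radius $>\alpha$, placed in the successive domains via the normalized conformal maps, form a Poisson point process on (pinned loops)$\,\times[0,\infty)$ with intensity $\mathbf 1_{\{R(\gamma)>\alpha\}}\,\mu\times dt$, stopped at the first time a loop surrounding $i$ appears. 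The remaining ``small'' loops (radius $\le\alpha$) contribute conformal maps whose logarithmic derivatives at $i$ sum, in expectation, to a quantity of order $\int_0^{\alpha^2}a^{-\beta/2}\,da$, which tends to $0$ as $\alpha\to0$ precisely because $\beta<2$. By the stability of Loewner chains recalled in Subsection~\ref{S43}, inserting these vanishingly small perturbations between the macroscopic steps does not affect the limit. Letting first $\eps\to0$ and then $\alpha\to0$, the maps $\Psi_N$ converge, in the Carath\'eodory topology from $i$, to a conformal map $\Psi_\infty\colon D_\infty\to\HH$ that is a measurable function of the Poisson point process of intensity $\mu$ alone, and that is independent of the final loop. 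Consequently $\gamma(i)=\Psi_\infty^{-1}(\text{final loop})$, with the final loop distributed as $\mu|_{\{\gamma\ \mathrm{surrounds}\ i\}}$ independently of $\Psi_\infty$; since both ingredients are built from $\mu$, the law of $\gamma(i)$ is determined by $\mu$.

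The main obstacle is making this limit rigorous. One has to control the composition of the conformal maps through infinitely many (in the limit) small loops --- this is exactly where $\beta<2$ is essential, via the convergence of $\int_0^{\cdot}a^{-\beta/2}\,da$ near $0$ combined with the Loewner-chain stability estimate. One must also verify that the $\mu$-driven Loewner evolution does not degenerate before the first $i$-surrounding loop arrives; this follows from the uniform control provided by Koebe's $1/4$-theorem together with the fact that $\gamma(i)$ stays at an a.s.\ positive distance from $i$. Finally one needs the joint convergence of $(\Psi_N,\Psi_N(\gamma(i)))$ with the two coordinates asymptotically independent, which again comes from the CLE restriction property applied at the (random) step $N$.
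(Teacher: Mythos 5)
Your proposal is correct and follows essentially the same route as the paper's own proof: run the discrete radial exploration aimed at $i$, show that the macroscopic-loop steps converge to a Poisson point process of intensity $\mu$ stopped at the first loop surrounding $i$, use $\beta<2$ together with the stability of Loewner chains to show that the cumulative effect of the small-loop steps vanishes uniformly in $\eps$ as the cut-off shrinks, and then read off $\gamma(i)$ as the preimage under the limiting map of an independent sample from $\mu$ restricted to loops surrounding $i$. The only cosmetic difference is that you invoke Proposition~\ref{muisSLEexcursion} for the normalization $\mu(\gamma\text{ surrounds }i)=1$, while the paper uses only the earlier normalization of $\mu^i$; both are equivalent and neither requires the SLE identification.
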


\begin {proof}
Let us use on the one hand the radial exploration mechanism in $\HH$. This defines (for each $\eps$) the geometric number $N$ of exploration steps, the conformal maps
$ \varphi_1^\eps, \ldots , \varphi_{N}^\eps$ that are normalized at $i$ (i.e., $\varphi_n^\eps (i) = i$ and the derivative at $i$ is positive).
 We also define for all $n \le N$,
$$\Phi_n^\eps= \varphi_{n}^\eps \circ \cdots \circ \varphi_1^\eps $$
and $\Phi^\eps = \Phi_N^\eps$.
Recall that:
\begin {itemize}
 \item The random variable $N$ is geometric with mean $1/ u(\eps)$.
\item Conditionally on the value of $N$, the maps $\varphi_1^\eps, \ldots , \varphi_{N}^\eps$ are i.i.d.\ (and their law does not depend on $N$ --- these are the maps corresponding to the CLE exploration conditioned on the fact that $\gamma (i)$ does not intersect $C_\eps$).
\item What one ``discovers'' at the $(N+1)$-th step is independent of the value of $N$ and of the maps $\varphi_1^\eps, \ldots , \varphi_{N}^\eps$. It is just a CLE conditioned by the event that the loop surrounding $i$ intersects $C_\eps$.
\end {itemize}

On the other hand, define a Poisson point process of pinned loops $(\bar \gamma_t, t \ge 0)$ with intensity $\mu$, and  let
$T$ be the first time at which one $\bar \gamma_t$ surrounds $i$. The $\mu$-mass of the set of loops that surround $i$ is equal to $1$,
so that $T$ is an exponential random variable of parameter $1$ (see for instance \cite {RY} for background on Poisson point processes).
For each of the countably many  $t$ in $(0,T)$ such that $\bar \gamma_t$ exists, we denote by $f_t$ the conformal map normalized at $i$ by $f_t (i)=i$ and $f_t'(i) >0$, from the unbounded connected component of $\HH \setminus \bar \gamma_t$ onto $\HH$.
The fact that $\beta < 2$ shows that
\begin {eqnarray*}
 E \left ( \sum_{t < T} a ( f_t) 1_{R (\bar \gamma_t) < 1/2} \right) &\le&  E(T) \mu ( a(f) 1_{R(\gamma) < 1/2 }) \\
&
\le & C \mu ( R(\gamma)^2 1_{R(\gamma) < 1/2}) \ \le \  C' \int_0^{1/2} \frac {x^2 dx}{x^{1+\beta}} \  < \  \infty .
\end {eqnarray*}
Since $\mu ( R ( \gamma ) \ge 1/2 ) < \infty$, the number of times $t$ before $T$  at which $R(\bar \gamma_t) \ge 1/2$ is almost surely finite, so that
almost surely, $\sum_{t < T} a (f_t) $ is finite
(here and in this section, we shall re-use notations and arguments that have been presented in the ``stability of Loewner chains'' paragraph of Section \ref {S43}).
Hence, if for each $r > 0$, we define the iteration $\Psi^r$ of the finitely many $f_t$'s for $t <T$ (in their order of appearance, let us call the corresponding times $t_1(r), \ldots , t_k(r)$) that correspond to pinned loops $\bar \gamma_{t_j}$'s of radius greater than
$r$, we get that as $r \to 0+$, the maps $\Psi^r$ converge (in Carath\'eodory topology, with respect to the marked point $i$) to some conformal map $\Psi$ that can be interpreted as
the iteration of all the conformal maps $(f_t, t < T)$ in their order of appearance.
Note that $\Psi$ maps some open set onto the upper half-plane, but that we have no information at this point on the regularity of the boundary of this open set (whether it is a curve or not, etc.).

Our goal now is to prove that when $\eps$ is small, then
$\Psi$ and $\Phi^\eps$ are very close (in law -- with respect to the Carath\'eodory topology from $i$). This will ensure that the law of
$\gamma (i)$ in the CLE is identical to that of $\Psi^{-1} ( \bar \gamma_T)$, i.e.\ the image under $\Psi^{-1}$ of an independent sample of $\mu^i$.
In order to do so, we are going to introduce the cut-off $\Psi^r$ as before. For each $r > 0$, we define
$n_1(r), \ldots, n _k(r)$ the steps before $N$ at which
 one discovers a loop intersecting $C_r$ (note that this number of steps $k$ and the values of the $n_j$'s are random, and depend on $\eps$ as well).
 The description of $\mu$ shows that for each fixed $r>0$,
the joint law of
$(\varphi_{n_1(r)}^\eps, \ldots , \varphi_{n_{k(\eps)} (r)}^\eps)$ converges precisely to that of the corresponding $(f_{t_1(r)}, \dots , f_{t_k(r)})$;
the law of their composition $\varphi_{n_k(r)}^\eps \circ \cdots \circ \varphi_{n_1 (r)}^\eps$ therefore converges to that of $\Psi^r$.

It now just remains to control
$$ \sum_{n \le N} 1_{n \notin \{ n_1 (r), \ldots, n_k (r)\}} a (\varphi_n^\eps) $$
as $r \to 0$, uniformly with respect to $\eps$.
We will bound the expected value of this quantity. Recall that $\beta < 2$, that the half-plane capacity of a set contained in a disc of radius smaller than $r$ around the origin is bounded by a constant $C$ times $r^2$, that $N$ follows a geometric random variable of mean $1/ u(\eps)$, and that conditionally on
$N$, the maps $\varphi_1^\eps, \ldots , \varphi_N^\eps$ are i.i.d.\
It follows that
$$
E ( \sum_{n \le N} 1_{n \notin \{ n_1 (r), \ldots, n_k (r)\} } a (\varphi_n^\eps) )
\le E ( N) E ( a (\varphi_1^\eps) 1_{ n_1 (r) \not= 1 })
$$
and that
$$
E ( a (\varphi_1^\eps) 1_{ n_1 (r) \not= 1} ) \le C \times E ( R(\tilde \gamma (\eps))^2 1_{R (\tilde \gamma(\eps)) \le r }).
$$
This last expectation will be controlled thanks to the bound $\beta < 2$:
Let us first fix $\delta =1/2$ and choose some $a \in (0,  (2 - \beta)/2)$, for instance $a= (2 -\beta)/4$. We know that
$$ \lim_{\eta \to 0 } \frac {v (\delta \eta) }{v (\eta) } \ge \delta^{2-2a} . $$
 In particular, for some $\eta_0 >0$, we get that for all $\eta< \eta_0$,
$$\frac {v( \delta \eta ) }{v (\eta)} \ge \delta^{2-a}.$$
Hence, there exists a constant $C'$ such that for all $m \ge n \ge 1$
$$\frac { v (\delta^m ) }{ v (\delta^n) } \ge C' (\delta^{m-n})^{2-a}.$$
Hence, if $\delta^{n_0+1} \le \eps \le \delta^{n_0} \le \delta^{n_1+1} \le r \le \delta^{n_1}$, we get
that
\begin {eqnarray*}
\lefteqn {E ( \sum_{n \le N} 1_{n \notin \{ n_1 (r), \ldots, n_k (r)\}} a (\varphi_n^\eps) ) }\\
& \le & {E(N) \times E ( R(\tilde \gamma (\eps))^2 1_{R (\tilde \gamma(\eps))^2 \le r })}
\\
& \le & \frac {C}{ u(\delta^{n_0})} \sum_{j \in [n_1, n_0]}  \delta^{2j} P ( R (\tilde \gamma ( \delta^{n_0} )) \ge \delta^{j+1} )
\\
&\le & C '' \sum_{j \in [n_1, n_0 ]} \delta^{2j} \frac {v( \delta^{n_0-j-1})}{ u( \delta^{n_0}) } \\
& \le & C''' \sum_{j \ge n_1} \delta^{2j} (\delta^{j})^{-2+a} \\
& \le & C''' r^a
\end {eqnarray*}
where all the constants do not depend on $r$ and $\eps$, so that this quantity
converges to $0$ as $r \to 0+$ independently of $\eps$; this completes the proof of the fact that
the law of $\Phi^\eps$ converges to $\Psi$ (in Carath\'eodory topology).

Recall that for each $\eps >0$, the law of $\gamma(i)$ is described as follows: We consider on the one hand a sample of
 the conformal map $\Phi^\eps= \Phi_N^\eps$, and on the other hand, an independent sample of the loop  $\bar \gamma^\eps$ that surrounds $i$ in a CLE in $\HH$ conditioned to intersect $C_\eps$. Then, $\gamma(i)$ is distributed exactly as $(\Phi^\eps)^{-1} ( \bar \gamma^\eps)$.

We have just seen that $\Phi^\eps$ converges in law to $\Psi$ and we have also proved that the (conditional) law of $\bar \gamma^\eps$ converges to $\mu^i$.
 It follows therefore that the law of $\gamma (i)$ is described as the image under $\Psi^{-1}$ of an independent sample of $\mu^i$. This description is based solely on the measure $\mu$, so that the law of $\gamma (i)$ can indeed be fully recovered from $\mu$.
\end {proof}

We may observe that the previous proof also shows that if $(z_1, \ldots, z_m)$ are other points in $\HH$, if we restrict ourselves to the event where all these $m$ other points are surrounded by some macroscopic loops that one discovered in this radial exploration mechanism before (or at) the $N$-th step, the joint distribution of these $m$ loops (some of which may be the same if some loops surround several of these points) in the $\eps \to 0$ limit is also described via the Poisson point process of pinned loops i.e.\ via $\mu$.

\subsection {Several points} \label{s.severalpoints}

We now want to prove the more general result:

\begin {proposition}
Two CLE probability measures that define the same pinned measure $\mu$ are necessarily equal.
\end {proposition}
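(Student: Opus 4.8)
The plan is to reduce the statement to finite‑dimensional data and then induct on the number of points. Recall that a CLE $\Gamma$ is almost surely locally finite and non‑nested, and every one of its loops has non‑empty interior; hence, for a fixed countable dense set $Q\subset\HH$, we have $\Gamma=\{\gamma(z)\ :\ z\in Q\}$ almost surely, and therefore the law of $\Gamma$ is determined by the joint laws of the finite families $(\gamma(z_1),\dots,\gamma(z_m))$ over all $m$ and all $z_1,\dots,z_m\in Q$. So it suffices to show: if $P$ and $P'$ are two CLE laws with the same pinned measure $\mu$, then for every $m$ and every $z_1,\dots,z_m$ the families $(\gamma(z_1),\dots,\gamma(z_m))$ have the same law under $P$ and under $P'$. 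I would prove this by induction on $m$; the case $m=1$ is exactly Lemma~\ref{l.mugivesgammailaw}.

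For the inductive step, fix $z_1,\dots,z_m$ and run a Markovian radial exploration targeted at $z_1$, as in Section~\ref{onepoint}, stopped at the step $N+1$ at which the loop $\gamma(z_1)$ is discovered; then let $\eps\to0$. Running the argument of the proof of Lemma~\ref{l.mugivesgammailaw}, but now keeping track of \emph{all} the discovered macroscopic loops and of the successive domains (the cutoff estimate on $\sum_n a(\varphi_n^\eps)$, which is uniform in $\eps$ precisely because $\beta<2$), one obtains that the ``discovered data'' $\mathcal D$ --- consisting of $\gamma(z_1)$, of the whole collection of loops discovered along the exploration, and hence of the family $(D_a)_a$ of connected components of the complement of all these loops together with their interiors --- converges in law, as $\eps\to0$, to a \emph{functional of the Poisson point process of intensity $\mu$}. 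In particular, the law of $\mathcal D$ depends only on $\mu$, hence is the same under $P$ and under $P'$. Moreover, applying the CLE restriction property at each exploration step (and, at the last step, using that $\gamma(z_1)$ conditioned to touch the removed semicircle becomes, in the $\eps\to0$ limit, pinned to a boundary point of the domain it sat in, so that the relevant complementary components are simply connected), one gets that, conditionally on $\mathcal D$, the loops of the CLE lying in the various $D_a$ form \emph{independent} CLEs, one in each $D_a$; this independence structure is a consequence of the CLE axioms alone and so holds for both $P$ and $P'$.

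It remains to combine these ingredients. Among the points $z_1,\dots,z_m$: the point $z_1$ is ``resolved'' in the sense that $\gamma(z_1)$ is already part of $\mathcal D$; every other $z_j$ is either resolved as well (if it lies inside a loop of $\mathcal D$, then $\gamma(z_j)$ is that loop) or lies in one of the domains $D_a$. Since $z_1$ lies inside $\gamma(z_1)$ and hence in none of the $D_a$, each $D_a$ contains \emph{at most} $m-1$ of the points $z_1,\dots,z_m$. By conformal invariance the CLE in a given simply connected $D_a$ is the conformal image of a CLE in $\HH$, so by the induction hypothesis the joint law of the loops $(\gamma(z_j))_{z_j\in D_a}$ inside $D_a$ is the same under $P$ and under $P'$. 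Therefore, for bounded continuous $f_1,\dots,f_m$,
\[
\E_P\Big[\prod_{j=1}^m f_j(\gamma(z_j))\Big]
=\E_{\mathcal D}\Big[\Big(\prod_{j:\,z_j\ \text{resolved}} f_j(\cdot)\Big)\ \prod_a \E_{P_{D_a}}\Big[\prod_{z_j\in D_a} f_j(\gamma(z_j))\Big]\Big],
\]
and every ingredient on the right‑hand side --- the law of $\mathcal D$, the conditional independence of the CLEs in the $D_a$'s, and the $(\le m-1)$‑point joint laws inside the subdomains --- is common to $P$ and $P'$. Hence the $m$‑point law is the same under $P$ and under $P'$, which closes the induction and proves the proposition, and thereby Theorem~\ref{t2}.

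The main obstacle I expect is the first half of the second paragraph: rigorously upgrading the one‑point analysis of Section~\ref{onepoint} to a statement about the \emph{entire} discovered configuration together with all of its complementary domains. This means making precise the joint convergence, as $\eps\to0$, to a Poisson‑point‑process functional in an appropriate topology (Carath\'eodory convergence of the domains $D_a$ alongside Hausdorff convergence of the loops), controlling the cumulative contribution of the small discovered loops uniformly in $\eps$ exactly as in the proof of Lemma~\ref{l.mugivesgammailaw}, and checking that the limiting domains $D_a$ are simply connected because the discovered loops become boundary‑pinned in the relevant limits. Once this bookkeeping is in place, the restriction property and the induction close the argument essentially formally.
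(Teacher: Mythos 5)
Your overall plan — reduce to finite-dimensional marginals, induct on $m$, and use a Markovian radial exploration together with the restriction property and the Poisson point process of pinned loops — is the same strategy the paper uses, and you correctly identify the base case as Lemma~\ref{l.mugivesgammailaw}. But the step you flag as ``the main obstacle'' is not mere bookkeeping; it is a genuine gap, and plugging it is where the paper's proof differs from yours in a way that actually matters.

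The specific difficulty is this. You propose to run the radial exploration targeted at $z_1$ all the way to the step $N+1$ at which $\gamma(z_1)$ is discovered, and to claim that the full discovered configuration $\mathcal D$ (all discovered loops and all complementary domains $D_a$) converges, as $\eps\to0$, to a functional of the Poisson point process with intensity $\mu$. The paper explicitly points out why this fails as stated: during the exploration, the image $a_k^j=\Phi_k^\eps(z_j)$ of some $z_j$ may come arbitrarily close to the origin. When that happens, $z_j$ can be separated from $z_1$ either by a \emph{microscopic} loop intersecting $C_\eps$ that surrounds $a_k^j$, or by a pinching of the explored region formed by a chain of tiny loops together with $C_\eps$. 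Neither mechanism is visible in the macroscopic PPP limit. Moreover, the cutoff estimate on $\sum_n a(\varphi_n^\eps)$ in Lemma~\ref{l.mugivesgammailaw} controls Carath\'eodory convergence of the maps \emph{viewed from $z_1$} only; it gives no control on the conformal radius of the domain $D_a$ seen from $z_j$, which is exactly what you would need to make the $D_a$ converge. In the continuum, the radial SLE$(\kappa,\kappa-6)$ does with positive probability separate $z_j$ from $z_1$ without tracing a loop around $z_j$ (paper, end of Section~\ref{s.slekrbackground}), so this is not a negligible boundary case that one can wish away by ``checking that the limiting domains are simply connected.''

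The paper's fix is to \emph{not} run the exploration all the way to $\gamma(z_1)$. Fix a small $\delta>0$ and stop the exploration at the first step $K'$ at which some $|a_k^j|\le\delta$ (or at $K' = K$ if $\gamma(z_1)$ is discovered first, in which case your argument works verbatim). Up to step $K'$, all the $a_k^j$'s are $\ge\delta$ from the origin, so a microscopic discovered loop cannot swallow any $z_j$, and the macroscopic-PPP convergence of Lemma~\ref{l.mugivesgammailaw} goes through with no additional trouble. At step $K'$ one then performs a single exploration with a semicircle of radius $\delta^{1/j}$ for a suitably chosen $j\in\{2,\ldots,m+1\}$ (chosen so that the harmonic measure of this semicircle from every $a_k^j$ is uniformly small); this splits the domain into components, each containing at most $m$ of the points, at a cost on the conformal radius from each $z_j$ that vanishes as $\delta\to0$. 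One then recurses into the pieces and lets $\delta\to0$ at the end. So the induction you have in mind is applied at the $K'$-stopping, not after $\gamma(z_1)$ is fully discovered, and there is an outer $\delta\to0$ limit; the $\delta$-dependent splitting step, together with the choice of the cutting radius among $\delta^{1/2},\ldots,\delta^{1/(m+1)}$, is the missing ingredient in your argument.
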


In other words, the pinned measure $\mu$ characterizes the law of the corresponding CLE. This clearly implies Theorem \ref {t2}.

\begin{proof}

Note that the law of the CLE is characterized by its ``finite-dimensional marginals'', i.e., by the joint distribution of 
$$ ( \gamma (i), \gamma (z_1) ,  \gamma (z_2), \ldots, \gamma (z_m))$$
for any finite set of points $z_1, \ldots, z_m$ in $\HH$ (if we consider the CLE defined in the upper half-plane).

Let us first focus on the law of $( \gamma (i), \gamma (z))$ for some given $z \in \HH$ (the general case will be very similar).
In order to motivate what follows, let us informally describe what could happen if we simply try to use exactly the same procedure as in the previous
subsection.
That is, we consider the conformal maps
$ \varphi_n^\eps$,
$\Phi^\eps_n$ for $n \le N$ as before.
 Let $K$ denote the largest $k \le N$ such that $z \in (\Phi^\eps_k)^{-1} ( \HH)$ (and write $K = N$ if $z$ is not swallowed during the first $N$ steps).
We want to keep track of what happens to $z$, so we define  $a_k = \Phi_k^\epsilon(z)$
(which will be defined up to $k \le K$). Let us now fix some small positive $\delta$ and consider $\eps < \delta$.

We will also introduce another step $K' \le K$, which is the first step $k$ at which either $| a_k | \le \delta$ or $k=K$.
Note that if $K' < K$, this means that after the $K'$-th step, neither $\gamma (z)$ nor $\gamma (i)$ have been discovered, but that the points $z$ and $i$ are ``conformally'' quite far away from each other in the domain
$H_{K'}=(\Phi_{K'}^\eps)^{-1} ( \HH)$ (i.e., the Green's function $G_{H_{K'}}(z,i)$ is  small --- recall that $\delta$ is small). This can happen if the exploration gets close to one of the two points $i$ or $z$ in $\HH$, or if it almost disconnects one from the other in $\HH$ (and this is in fact a scenario that might really occur with positive probability, even in the $\delta \to 0$ limit).

We can subdivide the case  $K' = K$ into three possibilities: The loop $\gamma (z)$ has been discovered before the $N$-th step (in other words: $K <N$), the loop $\gamma (z)$ is discovered at the $N$-th step (then with high probability $\gamma (i) = \gamma (z)$ when $\eps \to 0$), or it has still to be discovered after then $N$-th step. In all these three cases, the arguments of the previous subsection allow us to conclude that the joint law of $(\gamma (i), \gamma (z))$ on the event that $K' = K$ can be described via the Poisson point process of pinned loops, i.e.\ thanks to the knowledge of $\mu$.

When $K' < K$, we could just try to continue the exploration process after $K'$, but at some later step $k$, it could happen that the exploration procedure captures $z$ because $|a_k| \le \eps$. Or more generally, the loop $\gamma (z)$ could be discovered in the discrete exploration process at some step $k$ via some loop that intersects $C_\eps$ and that has a very small radius (this does not necessarily contradict the fact the $\gamma (z)$ is at positive distance of $z$). In such a  case, the previous argument will clearly not work, as the exploration step corresponding to the discovery of $\gamma (z)$ would not be apparent in the limiting Poisson point process of ``macroscopic'' pinned loops.

Here is a simple way of fixing this: At the step $K'$, instead of doing an exploration using the semi-circle $C_\eps$, we use the semi-circle $C_{\sqrt {\delta}}$ of radius $\sqrt {\delta}$.  Note that we are only exploring the loops that hit the semicircle (not all the loops hitting the semi-disc) and that $| a_{K'} | \le \delta$. The harmonic measure of this semi-circle in $\HH$ seen from $a_{K'}$ or from $i$ is small (if $\delta$ is small). After this $K'$-th step, $z$ and $i$ are then in two different domains. Furthermore, the probability to discover a loop that intersects also $C_{\delta^{3/4}}$ or $C_{\delta^{1/4}}$ at this $K'$-th step is very small.

\begin {figure}[htbp]
\begin {center}
\includegraphics [width=5in]{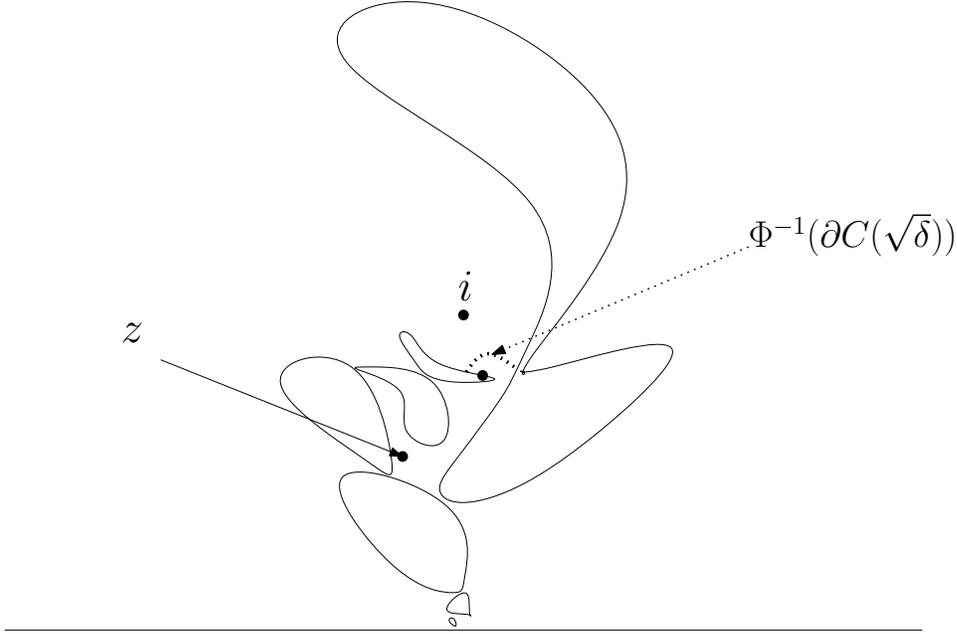}
\caption {Near separation of $z$ from $i$ (sketch).}
\end {center}
\label{imx9}
\end {figure}

The same  arguments as above show convergence (as $\epsilon \to 0$) of the laws of the $\Phi^\epsilon_{K'}$ (in the Carath\'eodory sense, with normalization at $i$).  With high probability, the $K'$-th exploration step changes the log conformal radius viewed from either $z$ or $i$ by a fraction that goes to $0$ when $\delta \to 0$.  If we keep in mind that the restriction property still holds for non-simply connected subsets of $\HH$ (this is the final observation of Section \ref {S.2}), we see that we can then continue the discrete radial exploration (aiming towards $i$) with $\eps$-semi-circles in the connected component containing $i$ after the $K'$-th step, and independently another discrete radial exploration (aiming at $z$) in the domain containing $z$. In both cases, we continue until we discover $\gamma (i)$ and $\gamma (z)$, and the same arguments as above allow to approximate the laws of these two loops in the two respective domains via two Poison point processes of loops.

We can therefore  conclude that up to an error that tends to $0$ with $\delta$, the joint law of $(\gamma(i), \gamma(z))$ on the event where $K' < K$ is also fully described via a procedure that is based on the knowledge of $\mu$ only. Hence, the joint law of $(\gamma (i), \gamma (z))$ is fully determined by the pinned measure $\mu$.

Let us now consider the case where there is more than one additional point, i.e.,
when we look at the joint law of $(\gamma (i), \gamma (z_1), \ldots, \gamma (z_m))$.
 We then use the same argument as above, stopping at the first time that {\em at least one} of the images of the other points gets to a distance less than $\delta$ from the origin, i.e., at the step
$$
K' = \min \{ k \ : \ \min ( | a_k^1 | , \ldots, | a_k^m | )  \le \delta \},
$$
where $a_k^j = \Phi^\eps_k ( z_j) $. Note that at this step, it could happen that several $|a_k^j|$ are in fact quite small simultaneously.
 If we would explore as before by cutting out the semi-circle
$ C_{\sqrt \delta}$, we might be unlucky and have some $a_k^j$ that lies very close to this semi-circle.
However, it is clear that for at least one semi-circle $ C_{\delta^{1/j}}$ out of the $m$ semi-circles, $ C_{\delta^{1/2}}$, $ C_{\delta^{1/3}}$, \ldots, $C_{\delta^{1/m+1}}$,
the harmonic measure of $C_{\delta^{1/j}}$ in $\HH$ seen from {\em any} of the $m$ points $a_k^1, \ldots, a_k^m$ is very small (i.e., tends uniformly to $0$ when $\delta \to 0$).
We therefore choose to use this particular semi-circle for the $(K' + 1)$-th exploration step. This reduces the problem to two independent explorations in two new domains as before, and each of the two domains contains strictly less than $m+1$ points. We can then inductively continue the discrete exploration procedure.

For each fixed $\delta$, this shows that up to a small error (that vanishes when $\delta \to 0$), the $\eps \to 0$ limit of this exploration procedure is well-approximated via Poisson point processes with intensity $\mu$. This enables to conclude that the joint law of the loops
$(\gamma (i), \gamma (z_1), \ldots, \gamma (z_m))$ can be fully described in terms of $\mu$.
\end {proof}

\section {Relation to CLE$_\kappa$'s}
\label {S.8bis}

It now remains to make the connection with the CLE$_\kappa$ families defined in \cite {Sh} via SLE($\kappa, \kappa-6$) branching trees.
This will prove of Theorem \ref {t3}.

\subsection{Background about SLE($\kappa, \kappa -6$) processes} \label{s.slekrbackground}
We first  briefly review
some of the properties of SLE($\kappa, \kappa-6$), in the case where $\kappa \le 4$ that we are focusing on in the present paper. We refer to \cite {Sh} for the more precise statements and their proofs.
We have already encountered the SLE($\kappa, \kappa -6$) process in Section \ref {pinnedSLE}. Recall that a process $(X_t, t \ge 0)$ started from $X_0 = x \not= 0 $ is called a Bessel process of dimension $\delta \ge 0$ if it is the solution to the ordinary differential equation
\begin {equation}
\label {SDE}
 dX_t = d B_t + \frac {\delta -1}{2 X_t} dt
\end {equation}
and that this process is well-defined up to its first hitting time of the origin (which is almost surely finite if $\delta < 2$).
When $\delta > 0$, it is possible to define it (uniquely) also after it hits the origin, in such a way that $X_t \ge 0$ when $t \ge 0$ and that the Lebesgue measure of the time spent by $X$ at the origin is 0. This is the ``instantaneously reflected Bessel process'', that can also be defined by concatenating a Poisson point process $(e_\ell)_{\ell \ge 0}$ of Bessel excursions (see for instance \cite {RY} for background on Bessel processes).
We will also use a variation of this process obtained when tossing a fair coin for each excursion of the Bessel process in order to decide if it is positive or negative (this process is therefore defined on a larger probability space than the Brownian motion $B$), which is the ``symmetrized Bessel process''.

To each excursion $e$ of the Bessel process of dimension $\delta$, we associate $J(e)$ the integral of $dt / e(t)$ on this excursion. Recall that $(X_t)^{\beta}$ is a local martingale as long as $X_t$ stays away from the origin (for $\beta = 2 - \delta$); it follows from a simple scaling argument that the $J(e)$'s are finite and
that the intensity of the  Poisson point process $(J(e_\ell), \ell \ge 0)$ is a multiple of $dx / x^{1+\beta/2}$.

It is already apparent in Section \ref {pinnedSLE} that it will be essential to try to make sense of a quantity like
 $\int_0^t ds / X_s$ when $X$ is a Bessel process. In fact, we will need to focus on the case where $\delta = 3 - 8/ \kappa   \in (0,1]$ and $\beta \in [1, 2)$, where
this integral is infinite as soon as $X$ starts touching the origin (this is because $\int_0 dx / x^{1+(\beta/2)}$
 diverges so that almost surely, $\sum_{\ell \le 1} J(e_\ell)
= \infty$). There are two ways around it this problem. The first one involves the notion of ``principal values'' and is described in \cite {Sh}. The other one, which works for all $\delta \in (0,1]$, is simply to consider the symmetrized Bessel process that chooses its signs at random according to independent fair coin tosses on each excursion. In this case, the integral $\int_0^t ds / X_s$ is not absolutely convergent, but the process
$t \mapsto I_t := \int_0^t ds / X_s$  can be nevertheless easily defined (this corresponds
exactly to the existence of symmetric $\alpha$-stable processes for $\alpha \in [1,2)$, even if $\alpha$-stable subordinators do not exist when $\alpha \ge 1$). We refer to \cite {Sh} for a more detailed description of the possible options.
What we will need here is that in all these cases, one defines a continuous Markov process $(X_t, I_t)_{t \ge 0}$ such that:
\begin {itemize}
\item $X$ is a solution to the SDE (\ref {SDE}) when it is away from the origin.
\item The Lebesgue measure spent by $X$ at the origin is $0$.
\item On the set of times where $X_t$ is away from the origin, $t \mapsto I_t$ is differentiable and its derivative is $1/X_t$.
\item $(X_t, I_t)_{t \ge 0}$ satisfies the Brownian scaling property.
\end {itemize}

The way to define an SLE($\kappa, \kappa-6$) process out of this couple $(X_t, I_t)_{t \ge 0}$ goes as follows: Define for all $t \ge 0$, the continuous process
$$ U_t = \sqrt {\kappa} X_t + 2 I_t / \sqrt {\kappa} $$
and construct the chordal Loewner chain driven by this function, i.e., solve the Loewner differential equation for all $z \in \HH$
$$ \partial g_t (z) = 2 / (g_t (z) - U_t)$$
started from $g_0(z)=z$
in order to define a random Loewner chain. If we define $O_t= I_t / \sqrt {\kappa}$, then when $X_t$ is away from the origin, i.e., when $U_t \not= O_t$, one has
$ \partial_t O_t = 2 / (O_t - U_t)$, i.e., $O_t$ follows the image of some boundary point under the Loewner flow.
Hence, putting our various equations together (definitions of $X_t$, $I_t$, $O_t$, and the relation between $\kappa$ and $\delta$), we get that
 on the set of times where $O_t \not= U_t$,
$$ dU_t = \sqrt {\kappa} dB_t + (\kappa -6)  \frac {dt}{O_t -U_t} \hbox  { and } dO_t = \frac {2 dt} {O_t - U_t }.$$
This continuous driving function $t \mapsto U_t$ defines a Loewner chain, but it is not clear whether this chain is almost surely generated by a path.
However, on those time-intervals where $X_t \not= 0$, the law of $t \mapsto U_t / \sqrt {\kappa}$ is (locally) absolutely continuous to that of a standard Brownian motion, and the Loewner chain is therefore tracing streches of simple paths during these intervals. In fact, each of these intervals correspond to a {\em simple loop} traced by the SLE($\kappa, \kappa-6$) Loewner chain. We can say that the positive excursions of $X$ correspond to anti-clockwise loops while the negative ones correspond to clockwise loops.

We have already mentioned that such chordal SLE($\kappa, \kappa-6$) chains are of particular interest because of the following
properties \cite{SchW}: They are target-independent, and if $X_t > 0$, then the process locally continues exactly as an SLE$_\kappa$ targeting  $g_t^{-1} (O_t)$.
This makes it is possible (see \cite {Sh}) to construct a branching  SLE($\kappa, \kappa - 6$) process whose law is invariant under conformal automorphisms of $\HH$ that fix the origin. In fact, it is also possible to define a radial version of the SLE($\kappa, \kappa-6$) process (see \cite {SchW, Sh}) in order to
also explore the connected components that are ``cut out'' without having a point on the boundary of $\HH$. This can be either defined using a ``radial-chordal'' equivalence (like that of SLE(6)) or  viewed as a rather direct consequence of the target-independence of the (chordal) SLE($\kappa, \kappa-6$) (just interpreting the past of the SLE as new boundary points).
The obtained set is then dense in the upper half-plane, and it is therefore possible to define the family of all loops that are traced in this way: these are exactly the CLE$_\kappa$ defined in \cite {Sh}. In particular, for any given choice of the root (say at the origin i.e., the SLE($\kappa, \kappa-6$) is started from $(0, 0)$), any given $z$ in the upper half-plane is almost surely surrounded by an (outermost) loop in this CLE$_\kappa$, and we will denote it by $\hat \gamma (z)$.

Let us make the following simple observation that will be useful later on: Suppose that $z_1$ and $z_2$ are two points in the upper half-plane, and that the (random) time $\tau$ at which this (branching) SLE($\kappa, \kappa-6$) process separates $z_1$ from $z_2$ is finite. Then, there are two possible ways in which this can happen:
\begin {itemize}
 \item Either $\tau$ is the end-time of an excursion of $X$, where the SLE traced a closed loop that surrounds one of the two points but not the other.
 \item Or the SLE has not yet traced a closed loop that surrounds one of the two points and not the other, but the Loewner chain traced by the paths nevertheless
separates the two points from one another. This can for instance happen if the SLE($\kappa, \kappa-6$) touches the boundary of the domain at a limit time that occurs after the tracing of infinitely many small loops (recall that the SLE($\kappa, \kappa-6$) is {not} a simple path).
\end {itemize}

\subsection {SLE($\kappa, \kappa-6$) and Poisson point process of pinned loops}

The definition of SLE($\kappa, \kappa-6$) clearly shows that it is possible to reconstruct the process $(X_t, t \ge 0)$ starting from a Poisson point process of (positive) Bessel excursions $(e_\ell)_{\ell \ge 0}$ of the corresponding dimension. More precisely (for the symmetrized Bessel process), one defines also an i.i.d.\ family $( \eps_\ell)_{ \ell \ge 0}$
 (with $P( \eps_\ell= 1) = P ( \eps_\ell = -1) = 1/2$) of ``random signs'' and then defines $X$ by concatenating the excursions $( \eps_\ell e_\ell, \ell \ge 0)$.
Each individual excursion corresponds to a pinned loop $\bar \gamma_\ell$ in the upper-half plane. In fact, this pinned loop is an SLE excursion, and one has tossed the
 fair coin in order to decide whether to trace it clockwise or anti-clockwise. Hence, an SLE($\kappa, \kappa-6$)
defines the same Poisson point process of
pinned loops $(\bar \gamma_\ell)_{\ell \ge 0}$ with intensity $\mu$ as in Section \ref {onepoint}.

But there is a difference in the way this Poisson point process is used in order to construct the ``composition'' of the corresponding conformal maps $f_\ell$.
Here, one does not compose the conformal maps $f_\ell$ that are normalized to have real derivative at $i$.  Instead, one keeps track of the previous location of the tip of the curve and continues growing from there.
More precisely, suppose that a pinned loop $\bar \gamma$ is given, and that one decides to trace it clockwise. Consider the unbounded connected component $H$ of its complement in $\HH$ and let $0+$ denote the boundary point of $H$ defined as ``the origin seen from the right''. Define the conformal map $\tilde f$ from $H$ onto $\HH$ such that
$\tilde f(0+) = 0$ and $\tilde f (i) = i$. If the loop was traced counterclockwise, just replace $0+$ by $0-$ in this definition of $\tilde f$.

Then, if one focuses on the radial SLE($\kappa, \kappa-6$) only at those times $t$ at which $X_t = 0$ and before it draws a loop surrounding $i$, we have exactly  the iteration of the conformal maps $\tilde f_\ell$.

In other words, let us define $c_\ell \in ( - \pi, \pi]$ in such a way that $\tilde f_\ell' (i) = e^{i c_\ell}$; then the difference between the radial SLE exploration procedure and the continuous iteration of maps $(f_\ell)$ described in Section \ref {onepoint} is this ``Moebius rotation''  at the end of each discovered loop. More precisely, let $\theta_c$ denote the Moebius transformation of $\HH$ onto itself with
$\theta_c (i)=i$ and $\theta_c'(i)= e^{i c}$; then $\tilde f_\ell = \theta_{c_\ell} \circ f_\ell$.

This gives a motivation to modify the discrete radial exploration mechanism of a CLE that we used in Lemma \ref {l.mugivesgammailaw} as follows: After each step, instead of
normalizing the maps $\varphi_n^\eps$ at $i$ by $\varphi_n^\eps (i) = i $ and $(\varphi_n^\eps)' (i) > 0$, we use the modified map $\tilde \varphi_n^\eps$ defined by
$\tilde \varphi_n^\eps (i) = i$ and by tossing a fair coin in order to decide whether $\tilde \varphi_n^\eps (-\eps) = 0$ or $\tilde \varphi_n(\eps) = 0$.
Because the law of a CLE in $\HH$ is invariant under Moebius transformations, this does not change the fact that the $(\tilde \varphi_n^\eps)$'s are i.i.d., that $N$ is a geometric random variable with mean $1/ u( \eps)$, and that what one discovers at the $(N+1)$-th step is independent of the value of $N$ and of the maps
$\tilde \varphi_1^\eps, \ldots, \tilde \varphi_N^\eps$.

We would now like to see that this discrete exploration mechanism converges (as $\eps \to 0$) exactly to the radial SLE($\kappa,\kappa-6$) procedure described before.
We already know from the proof of Lemma \ref {l.mugivesgammailaw} that if we focus on the discovered pinned loops of radius larger than $r$ (for each fixed $r$), then the corresponding discovered loops converge in law to the corresponding SLE loops (and that the laws of the corresponding conformal maps therefore converge too).
The point is therefore only to check that the effect of all these additional ``Moebius rotations'' remains under control (in other words, that it tends to $0$ when $r \to 0$, uniformly with respect to $\eps$). But as we shall now see, this follows from the same arguments that allow us to define symmetrized Bessel processes (respectively symmetric $\alpha$-stable processes) as almost sure limits of processes obtained by the cut-off of all Bessel excursions of small height or length (resp. the cut-off of all jumps of small size) i.e., it is a straightforward consequence of the fact that $\beta < 2$:
For each $n \le N$, we define $\varphi_n^\eps$ and $\tilde \varphi_n^\eps$ as before, and choose $c_n^\eps$ in such a way that
 $\tilde \varphi_n^\eps = \theta_{c_n^\eps} \circ \varphi_n^\eps$.

Note that there exists a universal constant such that as soon as $R(\tilde \gamma (\eps)) \le 1/2$, the corresponding $c^\eps$ satisfies
$ | c^\eps | \le C \times  R ( \tilde \gamma (\eps))$
(because harmonic measure scales like the diameter).
It follows, using the independence between the exploration steps, and the symmetry (i.e., the fact that  $c_1^\eps$ and $-c_1^\eps$ have the same law) that
$$
E \Big( ( \sum_{n \le N} 1_{n \notin \{ n_1 (r), \ldots, n_k (r)\} } c_n^\eps )^2 \Big)
= E ( N) E ( (c_1^\eps)^2 1_{1 \not= n_1 (r)}) \le \frac {C}{ u(\eps)} E ( R(\tilde \gamma ( \eps))^2 1_{R(\tilde \gamma (\eps)) < r })
.$$
We have already shown in the proof of Lemma \ref {l.mugivesgammailaw} that this last quantity goes to $0$ as $r \to 0$, uniformly with respect to $\eps$.
Note also that
$m \mapsto \sum_{n \le \min (m,N)} 1_{n \notin \{ n_1 (r), \ldots, n_k (r)\} } c_n^\eps$ is a martingale. Hence, Doob's inequality ensures that uniformly with respect to
$\eps$,
$$ \lim_{r \to 0+} E \Big( \sup_{ m \le N}  (\sum_{n \le m } 1_{n \notin \{ n_1 (r), \ldots, n_k (r)\} } c_n^\eps )^2 \Big) \to 0 .$$
From this, it follows that forgetting or adding these ``rotations'' to the exploration procedure does not affect it much (in Carath\'eodory sense, seen from $i$); one can for instance use the Loewner chain interpretation of this exploration procedures, and the above bound on the supremum of the cumulative rotations implies that the radial Loewner driving functions (with or without these rotations) are very close.

\subsection{CLE and the SLE ($\kappa, \kappa -6$) exploration ``tree''}

In this section, we finally complete the proof of Theorem \ref{t3} by showing that indeed the loops in CLE necessarily have the same law as the set of loops generated by an SLE($\kappa, \kappa -6$) (as constructed in \cite {Sh}).  As before, it is enough to prove that for every finite set $\{z_1, \ldots, z_m\}$ of points in $\HH$, the joint law of the $\gamma(z_j)$ is the same for the CLE as for the CLE$_\kappa$ loops (more precisely, as the outermost loops surrounding these points in the CLE$_\kappa$).

Fix the points $\{z_1, \ldots, z_m\}$ in $\HH$ and consider a radial SLE($\kappa, \kappa-6$) targeting $z_1$, say.  We do not know whether SLE($\kappa, \kappa-6$) is a continuous curve, but we recall that it is continuous at times in the interior of the loop-tracing intervals (during the excursions of the Bessel process).
For each $t >0$, the hull of the process $K_t \subset \HH$ is compact and its complement $H_t = \HH \setminus K_t$ contains $z_1$.  The chain $(K_t)$ also traces loops as explained before. We define $\tau$ to be the first time at which at least
one of the points $z_2, \ldots, z_m$ is ``swallowed'' by $K_t$, i.e.\ separated from $z_1$.
We will use $K_{t-}$ to denote $\cup_{s < t} K_s$.
Note that by the target invariance property, the law of $(K_t, t < \tau)$ does not depend on the fact that we singled out $z_1$ i.e., $\tau$ is just the first time at which $K_{t-}$ separates the set of $m$ points into at least two parts.
 As mentioned earlier, the time $\tau$ can occur in two different ways: either the SLE has traced a loop surrounding some $z_j$ or it has simply disconnected the domain into two parts.

\begin{lemma} \label{l.SLECLEmix}
Consider the following method of generating a random loop $\tilde \gamma(z_j)$ surrounding each $z_j$.
\begin{enumerate}
\item For each $j$, if $z_j$ is surrounded by one of the loops traced by $(K_t, t \le \tau)$, then we let $\tilde \gamma(z_j)$ be that loop.
\item In each component of $\HH \setminus K_{\tau-}$ that is not surrounded by a loop traced by $(K_t, t \le \tau)$,
we then construct an independent copy of the CLE (conformally mapped to that ensemble), and for each $z_j$ in that component, we let $\tilde \gamma(z_j)$ be the loop that surrounds $z_j$ in this CLE.
\end{enumerate}
Then the set $\{\tilde \gamma(z_1), \tilde \gamma(z_2), \ldots \tilde \gamma(z_m)\}$ agrees in law with $\{\gamma(z_1), \gamma(z_2), \ldots \gamma(z_m)\}$.
\end{lemma}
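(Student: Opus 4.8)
The plan is to run the \emph{modified} discrete radial exploration of the CLE --- the one from the previous subsection, in which at the end of each discovered loop one tosses an independent fair coin to decide the extra ``Moebius rotation'' $\theta_{c_n^\eps}$ --- rather than the plain radial exploration, so that as $\eps\to 0$ it converges to the radial SLE$(\kappa,\kappa-6)$ targeting the singled-out interior point. Concretely, I would single out $z_1$ as the target (after a conformal coordinate change this is the role played by $i$ in Lemma~\ref{l.mugivesgammailaw}), record at each step $n\le N$ the images $a_n^j$ of all the points $z_1,\dots,z_m$, and stop at the first step $K'$ at which either some $z_j$ has already been surrounded by a discovered loop, or $\min_j |a_n^j|\le\delta$ for a fixed small $\delta>0$. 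Exactly as in the proof of the Proposition of Section~\ref{s.severalpoints}, at step $K'$ I would perform one further exploration step using the semicircle $C_{\delta^{1/l}}$, with $l\in\{2,\dots,m+1\}$ chosen so that its harmonic measure seen from every $a_{K'}^j$ in $\HH$ is small; this disconnects $\HH\setminus K_{K'}$ into components, each containing strictly fewer of the points than before and with $z_1$ in the unbounded one.

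The structural heart of the argument is the CLE restriction property in its non-simply-connected version (the last observation of Section~\ref{S.2}): conditionally on the loops discovered up to and including step $K'$, the loops of $\Gamma$ lying in each of the resulting components form \emph{independent} CLEs in those components. Consequently, the collection $\{\gamma(z_1),\dots,\gamma(z_m)\}$ of CLE loops is generated as the union of (i) those of the discovered-before-$K'$ loops that happen to surround one of the $z_j$'s, and (ii) for each component of $\HH\setminus K_{K'}$ not yet surrounding one of the points, the loops surrounding the $z_j$'s lying in it drawn according to an independent CLE sample in that component.

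It then remains to take the limit $\eps\to 0$ and then $\delta\to 0$. Letting $\eps\to 0$ with $\delta$ fixed: by the previous subsection together with the estimates of Lemma~\ref{l.mugivesgammailaw} (in particular the inequality $\beta<2$, which simultaneously controls the neglected small loops and the accumulated Moebius rotations), the modified exploration converges in Carath\'eodory topology to the radial SLE$(\kappa,\kappa-6)$, the finitely many discovered loops of radius bounded below converge in law to the loops it traces, and $K'$ converges to the first time $t_\delta$ at which either a traced loop already surrounds some $z_j$ or the conformal distance between two of the $z_j$'s in $\HH\setminus K_{t-}$ falls below the $\delta$-threshold; in particular $\HH\setminus K_{K'-}$ converges in the Carath\'eodory sense to $\HH\setminus K_{t_\delta-}$. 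Letting then $\delta\to 0$: using that $\int_0^\tau ds/X_s<\infty$ for $\nu$-almost every excursion and that SLE$(\kappa,\kappa-6)$ is continuous at the interiors of its loop-tracing intervals, together with the dichotomy for how separation can occur, one gets $t_\delta\to\tau$, while the auxiliary $C_{\delta^{1/l}}$-step perturbs the log conformal radius of each relevant component seen from its points by an amount tending to $0$ and hence has no effect on the generated loops in the limit. Combining the two limits with the decomposition of the previous paragraph reproduces precisely the generating procedure of the statement.

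The part I expect to be the main obstacle is making this double limit rigorous, and especially identifying the discrete near-separation step $K'$ with the SLE$(\kappa,\kappa-6)$ separation time $\tau$ in a manner compatible with Carath\'eodory convergence of the cut-out domains --- most delicately in the scenario where $\tau$ is reached as an accumulation point of infinitely many tiny loops followed by the Loewner chain touching $\partial\HH$, which is exactly why the $\delta$-cutoff (rather than literal separation) is imposed. One must verify that at the limiting step no loop surrounding some $z_j$ is produced ``invisibly'', that the resulting components genuinely partition the points into strictly smaller groups, and that the errors coming from the cut-off of small loops, from the cumulative Moebius rotations, and from the auxiliary separating semicircle all vanish uniformly in $\eps$. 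Granted the earlier sections, the convergence of the discovered loops themselves and the invocation of the restriction property are by now routine.
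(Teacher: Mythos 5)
Your proposal follows essentially the same route as the paper's proof: replace the separation time $\tau$ by a $\delta$-cutoff stopping rule, prove the claim for the cut-off version using the modified discrete radial $\eps$-exploration (with the fair-coin "rotations") and its convergence to the radial SLE$(\kappa,\kappa-6)$, appeal to the non-simply-connected CLE restriction property for the conditional independence of the ensembles in the resulting components, and then let $\delta\to 0$ using a.s.\ Carathéodory convergence of the cut-out domains. The paper states its cutoff $\tau_\delta$ directly in terms of the continuous SLE$(\kappa,\kappa-6)$ driving function and deliberately waits until the Bessel process next returns to zero, whereas you formulate the cutoff at a discrete exploration step and then add the explicit $C_{\delta^{1/l}}$ separating semicircle from Section~\ref{s.severalpoints} — but these are presentation-level variants of the same argument; the paper itself simply cites Section~\ref{s.severalpoints} for that part. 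Your list of what must be checked (invisible loop production, vanishing of the separating-step perturbation, uniform-in-$\eps$ control of small-loop and rotation errors via $\beta<2$) is exactly the set of verifications the paper's proof rests on.
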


Given this lemma, Theorem \ref{t3} follows immediately by induction. Indeed, suppose that the collections $\{\hat \gamma(z_j'), j \le m-1\}$ and $\{\gamma(z_j'), j \le m-1 \}$ agree in law for all collections of $(z_j', j \le m-1)$.
By construction, each component of $\HH \setminus K_{\tau-}$ has fewer than $m$ points; moreover, conditioned on $(K_t, t\le \tau)$,
 the remainder of the branching radial SLE ($\kappa, \kappa -6$) consists of an independent radial SLE($(\kappa, \kappa -6$) in each component of $\HH \setminus K_{\tau-}$ not surrounded by a loop---by inductive hypothesis, the loops traced by such a process agree in law with the $\tilde \gamma(z_j)$.

\medbreak

 We now proceed to prove Lemma \ref{l.SLECLEmix}.

\begin{proof}
First, we claim that it is enough to prove (for each given $\delta>0$) the statement of Lemma \ref{l.SLECLEmix} where the time $\tau$
is replaced by $\tau_\delta$ defined as follows. Let first $\sigma_\delta$ denote the first time at which at least one of the images of the $z_j$ gets within distance
$\delta$ of $U_t$. Note that typically, at time $\sigma_\delta$, the Bessel process $X$ will be in the middle of some excursion away from $0$.
  Then, let $\tau_\delta$ be the first time $t$ at which either:
\begin{enumerate}
 \item The SLE($\kappa, \kappa-6$) completes a  loop that surrounds some $z_j$.
 \item The SLE completes the loop it is tracing at $\sigma_\delta$ (i.e., $t \geq \sigma_\delta$ and $X_t =0$).
\end{enumerate}
Indeed,  we know that each of the loops $\gamma(z_j)$ is at  finite distance from the $z_j$. Hence, almost surely, the conformal radius of $H_{\tau_\delta-}$ from each of the $z_j$'s remains bounded as $\delta \to 0$; thus, in the Carath\'eodory sense seen from $z_j$, the domain surrounding $H_{\tau_\delta-}$  tends to a limit as $\delta \to 0$ almost surely (note that the $\tau_\delta$ increase as $\delta$ decreases).  It is therefore clear that the law of the $\tilde \gamma$'s (conditioned on $K_{\tau_\delta-}$)
tends to a limit as $\delta \to 0$ and also that (by the arguments of Section \ref{s.severalpoints}) this limit is indeed that of an independent CLE in each
of the component of $H_{\tau_\delta-}$.

Now it remains only to prove the claim for these $\tau_\delta$.  To do this, it suffices to return to the modified radial $\epsilon$-exploration scheme (the one including the $c_n^\eps$ ``rotations'') that we have just defined and studied, and use the fact that for each fixed $\delta$, up to $\tau_\delta$, it approximates the continuous mechanism corresponding to the SLE($\kappa, \kappa-6$) excursions.
\end{proof}

\vfill 
\eject

\part*{Part two: construction via loop-soups}

\addcontentsline{toc}{part}{Part two: construction via loop-soups}

\section{Loop-soup percolation}
\label {S.2bis}
We now begin the second part of the paper, focusing on properties of clusters of Brownian loops.
The next three sections are structured as follows. We first study some properties of the Brownian loop-soup and of the clusters it defines. The main result of the present section is that when $c$ is not too large (i.e.\ is subcritical), the outer boundaries of outermost Brownian loop-soup clusters form a random collection of disjoint
simple loops that does indeed satisfy the conformal restriction axioms.
By the main result of the first part, this implies that they are CLE$_\kappa$ ensembles for some $\kappa$. In Section \ref {S.3}, we compare how loop-soups and SLE$_\kappa$ curves behave when one changes the domain that they are defined in, and we deduce from this the relation between $\kappa$ and $c$ in this subcritical phase.
In Section \ref {S.4}, we show that if the size of the clusters in a Brownian loop-soup satisfy a certain decay rate property, then the corresponding $c$ is necessary {\em strictly} subcritical. This enables to show that the loop-soup corresponding to $\kappa= 4$ is the only possible critical one, and completes the identification of all CLE$_\kappa$'s for $\kappa \in (8/3, 4]$ as loop-soup cluster boundaries.

We will in fact directly use SLE results on only three distinct occasions: we use the standard SLE restriction properties from \cite {LSWr} and the description of CLE in terms of SLE excursions in Section \ref {S.3}, and we use an estimate about the size of an SLE$_\kappa$ excursions
 in Section \ref {S.4}.

Recall that we consider a Brownian loop-soup $\Gammabis$ in $\U$ with intensity $c$ (which is in fact a random countable collection of simple loops because we take the outer boundaries of Brownian loops). Note that almost surely, for any two loops in the loop-soup, either the two loops are disjoint or their interiors are not disjoint. We say that two loops $l$ and $l'$ in $\Gammabis$ are in the same cluster of loops if one can find a finite chain of loops $l_0, \ldots, l_n$ in $\Gammabis$ such that $l_0= l$, $l_n=l'$ and $l_j \cap l_{j-1} \not= \emptyset$ for all $j \in \{1, \ldots, n \}$. We then define $\overline \Gamm$ to be the family of all closures of loop-clusters. Finally, we let $\Gamma$ denote the family of all outer boundaries of outermost elements of $\overline \Gamm$ (i.e. elements of $\overline \Gamm$ that are surrounded by no other element of $\overline \Gamm$).

The goal of this section is to prove the following proposition:

\begin{proposition} \label{p.satisfiesconformal} There exists a positive constant $c_0$ such that
 for all $c$ in $(0, c_0)$, the set $\Gamma$ satisfies the conformal restriction axioms, whereas when $c$ is (strictly) greater than $c_0$, $\Gammabis$ has only one cluster almost surely.
\end{proposition}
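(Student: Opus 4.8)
The plan is to introduce $c_0$ as the critical intensity of a percolation-type event for the loop-soup, then to check that for $c<c_0$ the ensemble $\Gamma$ satisfies the three conformal restriction axioms and is moreover a locally finite family of disjoint non-nested simple loops, and that for $c>c_0$ the soup $\Gammabis$ has a single cluster. \emph{The phase transition.} Since a Poisson point process of intensity $(c_1+c_2)\mu$ is the superposition of two independent ones of intensities $c_1\mu$ and $c_2\mu$, one couples loop-soups of different intensities by inclusion, and adding loops only merges clusters; hence the event $E_c=\{\overline C$ is a compact subset of the domain for every cluster $C$ of $\Gammabis\}$ (equivalently, in $\U$, every cluster stays at positive distance from $\partial\U$) is decreasing in $c$. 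This event is conformally invariant (a conformal map preserves the loops' intersection pattern, hence the cluster structure, and is a homeomorphism of the open domains), so transporting $\mathbb P_c$ to $\HH$ and using that the resulting Poisson point process is ergodic --- indeed mixing --- under horizontal translations gives $\mathbb P_c(E_c)\in\{0,1\}$; set $c_0=\sup\{c>0:\mathbb P_c(E_c)=1\}$, so by monotonicity $\mathbb P_c(E_c)=1$ for $c<c_0$ and $=0$ for $c>c_0$. For $c>c_0$ I would then argue that $\Gammabis$ has a \emph{single} cluster: once $E_c$ fails, the probability that a fixed loop is ``isolated'' (surrounded by some annulus that no soup loop crosses) is again $0$ or $1$, nonincreasing in $c$, and must vanish once the soup percolates to $\partial\U$; countability of the loops then forces all of them into one cluster. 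Finally $c_0<\infty$ because for $c$ large, keeping only the loops of $\Gammabis$ of diameter comparable to a fixed fine mesh already dominates a supercritical Bernoulli percolation on that mesh.

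\emph{The subcritical regime.} To get $c_0>0$ one runs the standard percolation argument: by scale invariance of $\mu$, the probability $q(c)$ that the union of all soup loops meeting a given (conformal image of a) dyadic semi-annulus joins its two boundary semicircles inside its closure is a constant independent of the annulus, with $q(c)\to 0$ as $c\to 0$; a first-moment plus multiscale comparison with subcritical percolation then shows that for $c$ small, around any boundary point, infinitely many such semi-annuli are a.s.\ uncrossed and hence act as barriers, so no cluster reaches $\partial\U$, i.e.\ $c_0>0$. Summing the same estimates over scales shows, in this regime, that only finitely many clusters of diameter $>\eps$ meet any compact $K\subset\U$, and exhausting $\U$ gives the local finiteness of $\Gamma$. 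For the geometry: distinct outermost clusters $C,C'$ have disjoint fillings (otherwise their boundaries would cross, forcing $\overline C\cap\overline{C'}\ne\emptyset$, i.e.\ $C=C'$), and neither filling contains the other by outermostness, so the loops $\partial F(C)$ are disjoint and non-nested. The point genuinely specific to loop-soups is that each $\partial F(C)$ is a simple (Jordan) loop: using that Brownian loop outer boundaries are $\mathrm{SLE}_{8/3}$ loops and the conformal restriction characterization of $\mu$, one rules out pinch points of $\partial F(C)$ (a pinch point would be a single point at which two boundary-contributing loops meet while locally disconnecting $F(C)$, contrary to the fact that intersecting $\mathrm{SLE}_{8/3}$ loops touch along nontrivial sets) and shows that $\partial F(C)$ is locally connected.

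\emph{Conformal invariance and restriction.} Conformal invariance of the law of $\Gamma$ is inherited directly from that of $\mathbb P_c$, since a conformal automorphism of $\U$ carries $\Gammabis$ to a loop-soup and outer boundaries of outermost clusters to outer boundaries of outermost clusters; non-triviality is clear since for $c<c_0$ the soup a.s.\ has loops, hence clusters, hence an outermost cluster whose outer boundary is a genuine simple loop in $\U$. For restriction, fix a simply connected $U\subset\U$ and label each loop of $\Gammabis$ ``outer'' if its cluster $C$ has $F(C)\not\subset U$ and ``inner'' otherwise; the outer loops, the outermost clusters among them, and the resulting open set $U^*$ are a measurable function of $\Gammabis$, and by the disjoint-filling property every inner loop lies in a single connected component $O$ of $U^*$. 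Exposing the outer loops first and then applying the loop-soup restriction property to the random domain $U^*$ --- carried out in the standard way one proves domain-Markov properties of loop-soups --- shows that, conditionally on the outer loops, the inner loops lying in a component $O$ of $U^*$ form an independent loop-soup $\mathbb P_O$ of the same intensity $c$ in $O$, with distinct components mutually independent; taking outer boundaries of outermost clusters in each $\mathbb P_O$ then produces independent copies of the outermost-cluster-boundary ensemble in the components of $U^*$, which is exactly the restriction axiom.

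\emph{Main obstacle.} I expect the two hardest points to be: first, promoting ``no percolation to the boundary'' to the full geometric statement --- above all the Jordan property of the individual cluster boundaries, and to a lesser extent the local finiteness of $\Gamma$ --- which requires genuine small-scale control of loop-soup clusters via $\mathrm{SLE}_{8/3}$; and second, the conditioning step in the restriction proof, where $U^*$ is itself a random domain, so that the decomposition of $\Gammabis$ must be phrased through a measurable labeling of the individual loops before any Poissonian independence can be invoked.
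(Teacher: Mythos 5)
Your overall framework---couple loop-soups monotonically in $c$, identify a critical intensity $c_0$, show the three CLE axioms below $c_0$ via a multiscale comparison with Mandelbrot's fractal percolation, and prove the restriction axiom by a dyadic exhaustion of the random domain $U^*$---tracks the paper's quite closely, and the restriction step is essentially Lemma~\ref{l.confrest}. There are, however, two places where your route departs from the paper's, one of which is a genuine gap.

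\textbf{From $c>c_0$ to a single cluster.}
You define $c_0$ through a $0$--$1$ law (ergodicity of the translated loop-soup in $\HH$) for the event $E_c$ that no cluster reaches $\partial\U$, and then need to deduce that $c>c_0$ forces a single cluster. Your proposed bridge---that the probability of a ``fixed loop'' being isolated by an uncrossed annulus is $0$ or $1$, decreasing in $c$, must vanish once $E_c$ fails, and that ``countability of the loops then forces all of them into one cluster''---does not go through. There is no ``fixed loop'' in a Poisson process without a Palm-type conditioning; monotonicity of the isolation event is unclear; and even if no loop were isolated, this would not preclude countably many mutually non-adjacent clusters. The $0$--$1$ law gives you $\mathbb P_c(E_c)\in\{0,1\}$ but does not supply the mechanism that actually merges clusters. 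The paper's Lemma~\ref{l.biggerc-onecluster} is the missing ingredient: assuming positive probability at intensity $c$ of a cluster touching a boundary arc, one iterates the loop-soup restriction property inside the successive complementary domains to show that a.s.\ chains of adjacent clusters come within every rational $\delta$ of every rational point; then superposing an independent soup of intensity $c'>0$ a.s.\ places infinitely many small loops around each contact point, gluing every pair of adjacent clusters into one, while a.s.\ every added loop meets some existing loop. This yields $\overline\Gamm=\{\overline\U\}$ at intensity $c+c'$. Some such iteration-plus-superposition argument is needed; the ergodicity route alone does not get there. (In fact the paper does not even use the $0$--$1$ law: Lemma~\ref{l.biggerc-onecluster} produces the dichotomy directly.)

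\textbf{Simple loops.}
You propose to establish that each cluster boundary is a Jordan curve using SLE$_{8/3}$ (intersecting Brownian-loop outer boundaries touch along nontrivial sets, hence no pinch points). The paper explicitly avoids any SLE input in this section and uses a purely topological argument (Lemma~\ref{l.simpleloops}): local finiteness and the finitely-many-annulus-crossings estimate from the BK inequality (Lemma~\ref{exponentialcrossingdecay}) allow one to build a single continuous parameterized loop $L$ whose range is the closure of a cluster; then the outer boundary of a continuous planar loop is itself a continuous loop, and the absence of cut points (the loops are simple and no two meet in only one point) makes it a simple loop. Your SLE route is both heavier and under-specified: ruling out pinch points does not by itself give local connectedness, and you would in any case still need the annulus-crossing estimate to control accumulation of small clusters. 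A smaller point: your $c_0<\infty$ argument via ``dominating supercritical Bernoulli percolation'' is not yet a proof of a single cluster; the paper instead shows that for $c$ large every point of $D$ is a.s.\ surrounded by a loop, which forbids any cluster boundary and so forces one cluster.
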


Throughout this section, we will use neither SLE-type results nor results derived earlier in the paper.  We will focus on properties of the collection $\overline \Gamm$ of (closures of) the clusters defined by the loop-soup $\Gammabis$. The proposition will follow immediately from a sequence of six lemmas that we now state and prove.

It is easy to see (and we will justify this in a moment) that when $c$ is very large, there almost surely
exists just one cluster, and that this cluster is dense in $\U$, i.e., that
almost surely, $\overline \Gamm = \{ \overline \U \}$.

\begin {lemma} \label{l.confrest}
Suppose that $\mathbb P_c (\overline \Gamm = \{ \overline \U \}) < 1 $.
Let $U \subset \U$ denote some open subset of $\U$, and define $U^*$ to be the set obtained by removing from $U$ all the
(closures of) loop-soup clusters $\overline C$ that do not stay in $U$. Then, conditionally on
$U^*$ (with $U^* \not= \emptyset$), the set of loops of $\Gammabis$ that do stay in $U^*$ is distributed like a Brownian loop-soup in $U^*$.
\end {lemma}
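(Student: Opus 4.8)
The hypothesis $\mathbb P_c(\overline\Gamm = \{\overline\U\}) < 1$ only serves to make $\{U^*\neq\emptyset\}$ a non-null event, so that the conditioning is meaningful; it plays no other role. Write $\Gammabis_{\mathrm{out}}$ for the set of loops of $\Gammabis$ lying in a cluster $C$ with $\overline C\not\subset U$, and $\Gammabis_{\mathrm{in}}=\Gammabis\setminus\Gammabis_{\mathrm{out}}$. Two reductions bring the lemma to a Markov-type statement. First, $U^*$ is a deterministic functional of $\Gammabis_{\mathrm{out}}$ (indeed of $\bigcup_{l\in\Gammabis_{\mathrm{out}}}l$). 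Second, the loops of $\Gammabis$ that stay in $U^*$ are exactly those in $\Gammabis_{\mathrm{in}}$: a loop $l$ meets only loops in its own cluster, so if the cluster of $l$ is not contained in $U$ then $l\subset\overline C$ is removed in forming $U^*$, while if the cluster of $l$ stays in $U$ then $l\subset U$ and $l$ is disjoint from every loop of $\Gammabis_{\mathrm{out}}$, whence $l\subset U^*$. (The one delicate point is that the latter requires $l$ to be at positive distance from $\overline{\bigcup\Gammabis_{\mathrm{out}}}$, not merely disjoint from the individual loops; this, and the matching between ``cluster not contained in $U$'' and ``cluster meeting $\U\setminus U$'', rest on the structural facts about loop clusters --- no cluster accumulates against a loop or a frontier it does not meet --- that are established among the other lemmas of this section.) It therefore suffices to show that, conditionally on $\Gammabis_{\mathrm{out}}$, the collection $\Gammabis_{\mathrm{in}}$ is a Brownian loop-soup in $U^*$; the displayed statement, conditioned only on $U^*$, then follows by the tower property, since $\mathbb P_{U^*}$ is a function of $U^*$.

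To compute this conditional law I would discover $\Gammabis_{\mathrm{out}}$ through an iterative exploration. Set $K_0=\U\setminus U$ (a relatively closed subset of $\U$); having revealed loops $L_1,\dots,L_n$, put $K_n=K_0\cup\bigcup_{l\in L_1\cup\dots\cup L_n}l$ and let $L_{n+1}$ be the loops of $\Gammabis$ not among $L_1,\dots,L_n$ that meet $K_n$. Since clusters are formed by \emph{finite} chains of loops, $\bigcup_{n\ge 1}L_n=\Gammabis_{\mathrm{out}}$ and $\bigcup_n K_n=K_0\cup\bigcup_{l\in\Gammabis_{\mathrm{out}}}l$. The heart of the argument is the inductive claim that, conditionally on $(L_1,\dots,L_n)$, the loops of $\Gammabis$ not yet revealed form a Brownian loop-soup in $\U\setminus K_{n-1}$, independent of $(L_1,\dots,L_n)$ --- for $n=1$ this reads: the loops avoiding $K_0$ form an independent loop-soup in $\U\setminus K_0$, which is a direct instance of the two restriction properties of the loop-soup. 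To go from $n$ to $n+1$, condition further on $(L_1,\dots,L_n)$, so that $K_{n-1}$ and $K_n$ are deterministic and the unrevealed loops are a genuine loop-soup in $\U\setminus K_{n-1}$; split this Poisson point process into the loops meeting $K_n$ and the loops avoiding it. The former is $L_{n+1}$; the two pieces are independent (a Poisson process restricted to disjoint measurable sets of loops), and by the restriction property of the Brownian loop measure $\mu$ the latter is a loop-soup in $(\U\setminus K_{n-1})\setminus K_n=\U\setminus K_n$. Averaging back over $(L_1,\dots,L_n)$ yields the claim at level $n+1$. (One uses here the restriction property of $\mu$ for arbitrary open subdomains, legitimate because the loop-soup in a general domain is defined precisely through it; note that $\U\setminus K_{n-1}$ need not be connected.)

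It remains to let $n\to\infty$. The domains $\U\setminus K_{n-1}$ decrease to $V_\infty:=\U\setminus\bigl(K_0\cup\bigcup_{l\in\Gammabis_{\mathrm{out}}}l\bigr)=U\setminus\bigcup_{l\in\Gammabis_{\mathrm{out}}}l$, the revealed loops accumulate to $\Gammabis_{\mathrm{out}}$, and $\Gammabis\setminus\bigcup_n L_n=\Gammabis_{\mathrm{in}}$ is exactly the set of loops of $\Gammabis$ staying in $V_\infty$; passing the inductive statement to the limit gives that, conditionally on $\Gammabis_{\mathrm{out}}$, $\Gammabis_{\mathrm{in}}$ is a loop-soup in $V_\infty$, independent of $\Gammabis_{\mathrm{out}}$. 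Finally one identifies this with $\mathbb P_{U^*}$: the sets $V_\infty$ and $U^*=U\setminus\overline{\bigcup\Gammabis_{\mathrm{out}}}$ differ only by the accumulation set $\overline{\bigcup\Gammabis_{\mathrm{out}}}\setminus\bigcup\Gammabis_{\mathrm{out}}$, which has empty interior and is met by no loop of $\Gammabis$ (again by the no-accumulation fact), so the loops of $\Gammabis$ staying in $V_\infty$ are the same collection as those staying in $U^*$, with the same law $\mathbb P_{U^*}$. I expect this closing step --- the passage to the limit and the attendant topological bookkeeping --- to be the main obstacle: one must check that the exploration really exhausts $\Gammabis_{\mathrm{out}}$, that a loop disjoint from every loop of $\Gammabis_{\mathrm{out}}$ genuinely lies in the \emph{open} set $U^*$, and, on the measure-theoretic side, that conditional independence can be carried legitimately through a countable exploration (a strong-Markov property of the loop-soup with respect to the ``stopping set'' $\overline{\bigcup\Gammabis_{\mathrm{out}}}$), which if one prefers can instead be obtained by approximating $U$ from inside by nicer open sets and invoking monotonicity. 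Throughout, the only inputs are the two restriction properties of the loop-soup recalled before the lemma, together with the structural facts about clusters.
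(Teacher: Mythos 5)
Your proposal is a genuinely different route from the paper's. The paper does not explore $\Gammabis_{\mathrm{out}}$ generation by generation; instead it discretizes $U^*$ from inside: for each $n$ it lets $U_n^*$ be the largest union of closed dyadic squares of side $2^{-n}$ contained in $U^*$, and the whole argument rests on the single observation that, for each fixed union $V_n$ of such squares, the event $\{U_n^*=V_n\}$ is measurable with respect to the loops \emph{not} contained in $V_n$. (Roughly: on this event no cluster exiting $U$ touches $V_n$, so removing or resampling the loops inside $V_n$ can neither create nor destroy such a cluster and hence cannot change $U^*$.) Poissonian independence then gives that, conditionally on $U^*$, the loops inside $U_n^*$ form a loop-soup in $U_n^*$; and since $U^*$ is open, any loop staying in $U^*$ eventually lies in some $U_n^*$, so the lemma follows by letting $n\to\infty$. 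Compared with your route this is more economical: the independence claim concerns a \emph{fixed} finite union of squares, and the limit in $n$ is a trivial exhaustion of the open set $U^*$. Your exploration reaches a stronger conclusion — conditional independence given the full $\sigma$-field of $\Gammabis_{\mathrm{out}}$, a genuine strong-Markov statement — but, as you acknowledge yourself, it pushes all the weight onto the terminal countable limit of conditionings, which is precisely the difficulty the paper's discretization is designed to avoid. Two further remarks. First, you do not actually need the two-sided identification of ``loops staying in $U^*$'' with $\Gammabis_{\mathrm{in}}$; the easy inclusion (a loop staying in $U^*$ must lie in $\Gammabis_{\mathrm{in}}$, since any loop of $\Gammabis_{\mathrm{out}}$ is contained in a removed cluster closure) together with $U^*\subset V_\infty$ suffices, because the loop-soup in $V_\infty$ restricted to loops staying in $U^*$ is already a loop-soup in $U^*$; this sidesteps your worry about accumulation against $\overline{\bigcup\Gammabis_{\mathrm{out}}}$. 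Second, you cannot appeal to ``structural facts about loop clusters established among the other lemmas of this section'': this is the first lemma of the section, so its proof cannot rely on them, and indeed the paper's proof does not.
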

\begin {figure}[htbp]
\begin {center}
\includegraphics [width=2in]{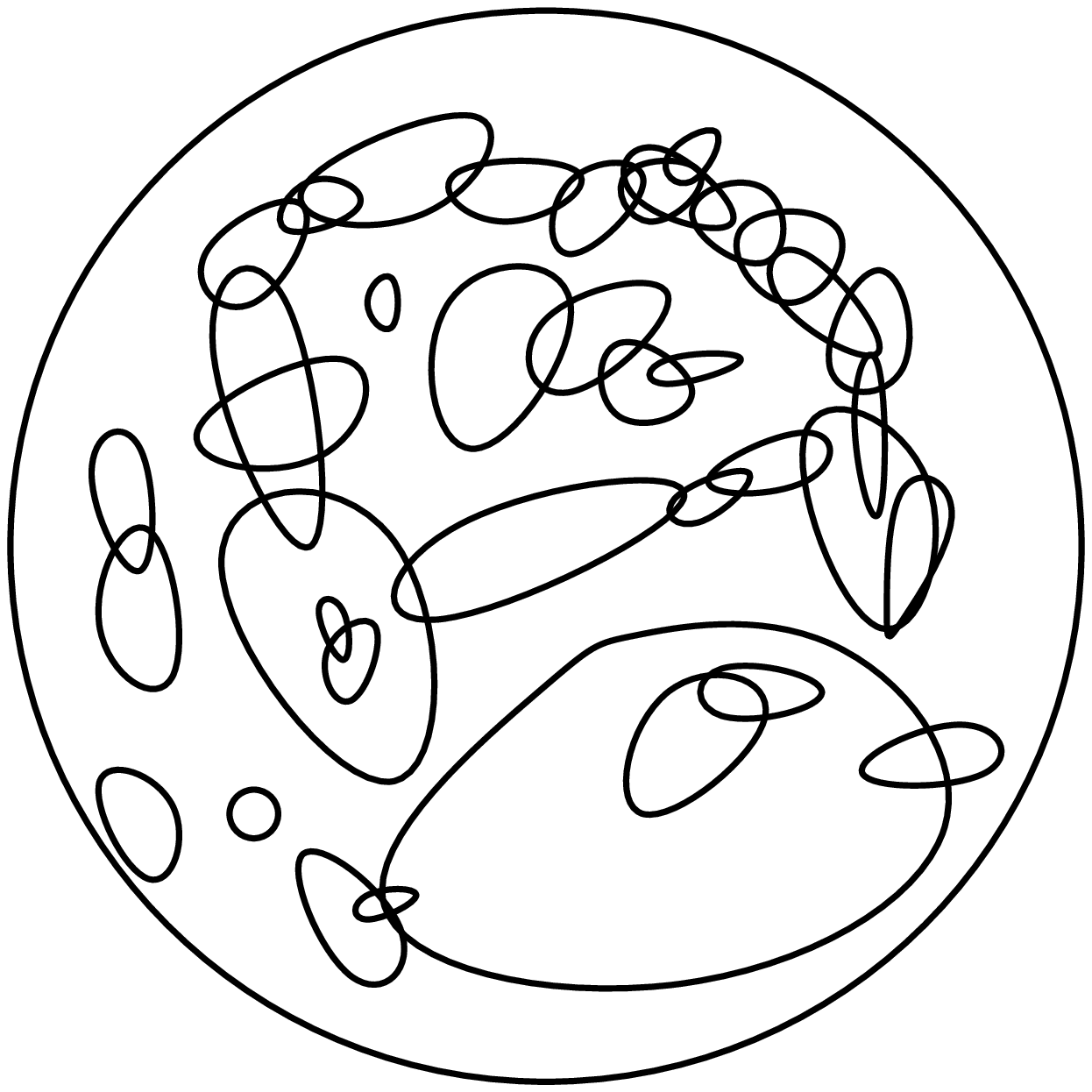}
\includegraphics [width=2in]{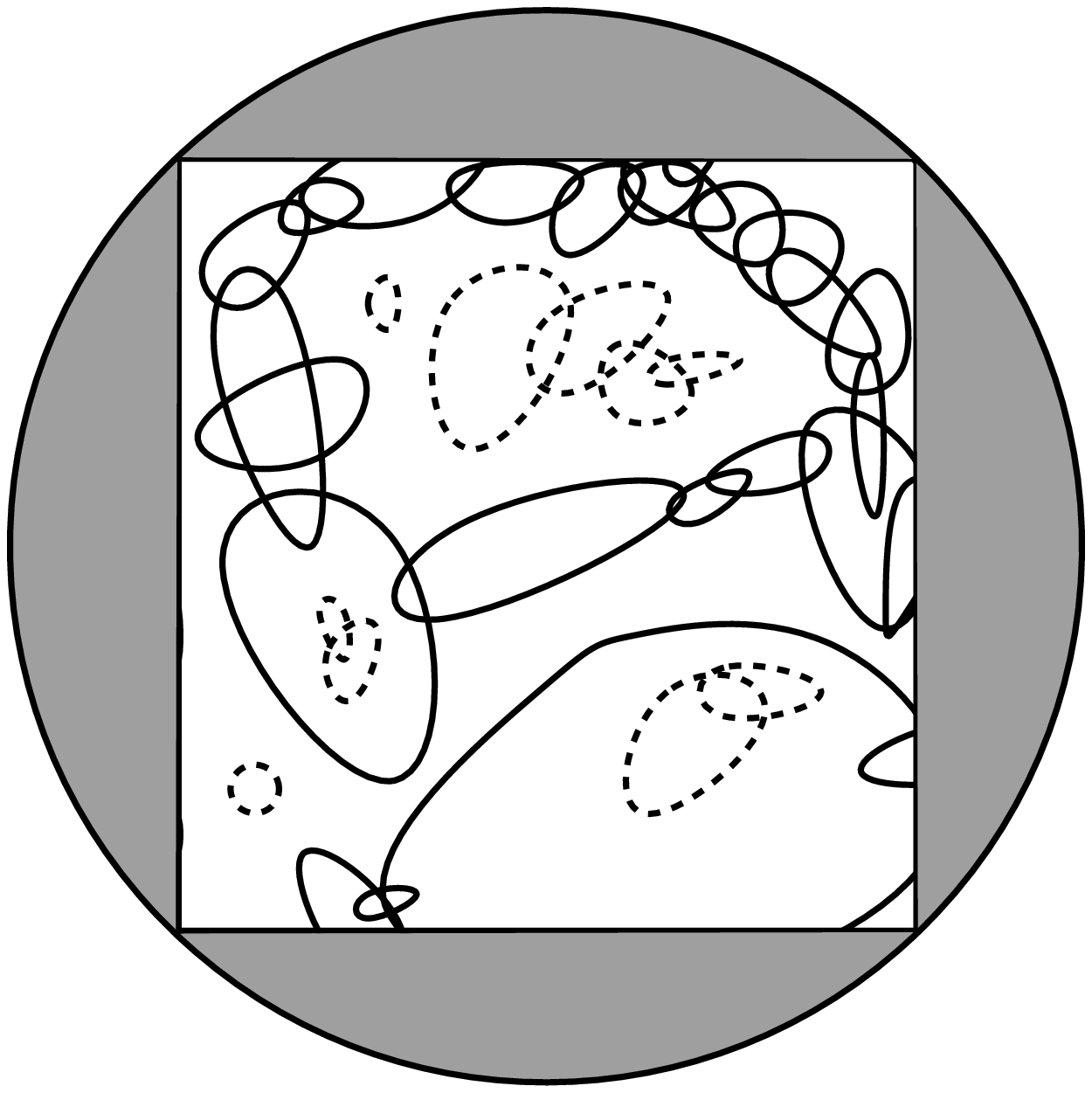}
\caption {Loop-soup clusters that stay in the rectangle $U$ are dashed (sketch)}
\end {center}
\end {figure}
Note that we have not yet proved at this point that in this case, $\Gamma$ is a locally finite collection of disjoint simple loops (this fact will be proved later in this section).


\begin {proof}
Let us define for any $n \ge 1$, the set $U_n^*=U_n^* (U^*)$ to be the
 largest union of dyadic squares of side-lengths $2^{-n}$ that is contained in $U^*$ (note that this is a deterministic function of $U^*$).
For each $n \ge 1$, and for each union  $V_n$ of such dyadic squares the loop-soup restricted to $V_n$ is independent of the event $\{ U_n^* = V_n \}$.
It implies immediately that conditionally on $U^*$, the set of loops that do stay in $U_n^*$ is distributed like a Brownian loop-soup in $U_n^*$.
Since this holds for all $n$, the statement of the lemma follows.
\end {proof}

\begin{lemma} \label{l.biggerc-onecluster}
Suppose that $\mathbb P_c ( \overline \Gamm = \{ \overline \U \} ) < 1$ and that there is a $\mathbb P_c$ positive probability that $\overline \Gamm$ contains an element intersecting
the boundary of $\U$.  Then for all positive $c'$, $\mathbb P_{c+c'} ( \overline \Gamm = \{ \overline \U \} ) = 1$.
\end{lemma}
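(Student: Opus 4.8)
The proof rests on the Poissonian superposition property of loop-soups: a sample of $\mathbb P_{c+c'}$ may be realised as $\Gammabis^{(c)}\cup\Gammabis^{(c')}$ with $\Gammabis^{(c)}$ and $\Gammabis^{(c')}$ independent loop-soups of intensities $c$ and $c'$. Since for any positive intensity the loops of a loop-soup are a.s.\ dense in $\U$ (the loop measure of the set of loops staying in any nonempty open set is infinite), the property $\overline\Gamm=\{\overline\U\}$ is equivalent to ``there is exactly one cluster''. Moreover, if some cluster $C^{*}$ of $\Gammabis^{(c)}\cup\Gammabis^{(c')}$ satisfies $\overline{C^{*}}=\overline\U$, then it is the only one: for any loop $\ell$ (compactly contained in $\U$) both $\mathrm{int}(\ell)$ and $\mathrm{ext}(\ell)\cap\U$ are nonempty open subsets of $\U$, hence each contains a loop of $C^{*}$, and a chain of loops inside $C^{*}$ joining these two, being connected and joining a point inside $\ell$ to a point outside, must contain a loop meeting $\ell$, so $\ell\in C^{*}$. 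Thus it suffices to produce, almost surely, a single cluster of $\Gammabis^{(c)}\cup\Gammabis^{(c')}$ with closure $\overline\U$.

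First I would promote the hypothesis on $\Gammabis^{(c)}$. By rotational invariance and a union bound, ``with positive probability some cluster meets $\partial\U$'' implies that for every boundary arc $I$, with positive probability some cluster of $\Gammabis^{(c)}$ meets $I$; and by a self-improvement argument (restriction plus conformal invariance: rescaling a small half-disc centred at the boundary point where such a cluster accumulates turns it into a cluster that again touches the boundary but is now of size comparable to the rescaled domain) this probability is in fact bounded below for a \emph{macroscopic} such cluster — say one whose extremal distance from the complementary part of the boundary is at least a fixed $\rho_0>0$. Feeding this into a Borel--Cantelli argument over a countable family of pairwise disjoint half-discs of $\U$ accumulating at a fixed boundary point $x_0$ (so that their restricted loop-soups are genuinely independent, and the half-disc geometry is scale invariant so the probabilities stay bounded below), one gets that a.s., at infinitely many scales near $x_0$, $\Gammabis^{(c)}$ has a macroscopic cluster ``crossing'' the corresponding half-annular region.

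Next I would use $\Gammabis^{(c')}$ to glue. In each half-annular region separating two consecutive of these scales, the loops of $\Gammabis^{(c')}$ staying in that region form a fresh loop-soup, and these are independent across the disjoint regions; conditionally on $\Gammabis^{(c)}$, each has a probability bounded below of containing a cluster crossing that region, which therefore meets the two macroscopic $\Gammabis^{(c)}$-clusters on either side. A second Borel--Cantelli step yields such bridges at infinitely many scales, so all these clusters merge into a single cluster of $\Gammabis^{(c+c')}$ that accumulates at $x_0$ and contains a loop of diameter bounded below (from the largest scale used). Running this simultaneously over a countable dense set of boundary points, and using that $\Gammabis^{(c+c')}$ has only finitely many loops of diameter $\ge\tfrac1{10}$, a pigeonhole argument forces one such cluster $C^{*}$ to accumulate at infinitely many boundary points; finally one upgrades this to $\overline{C^{*}}=\overline\U$ — the complement of $\overline{C^{*}}$ in $\U$ cannot contain a loop, since that loop's neighbourhood would be crossed by the same bridging mechanism — proving $\overline\Gamm=\{\overline\U\}$ almost surely. (The hypothesis $\mathbb P_c(\overline\Gamm=\{\overline\U\})<1$ is only needed to place ourselves in the regime where Lemma~\ref{l.confrest} applies.)

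The main obstacle is the geometric bookkeeping behind the two Borel--Cantelli steps: one must choose the half-discs and half-annuli so that (a) the restricted loop-soups are exactly independent, (b) the relevant crossing events have probabilities bounded away from $0$ uniformly in the scale — which is precisely where the self-improvement to macroscopic clusters and the scale invariance of the half-disc geometry are essential — and (c) a crossing $\Gammabis^{(c)}$-cluster at one scale is necessarily hit by a crossing $\Gammabis^{(c')}$-cluster in the adjacent annulus, which again requires the $\Gammabis^{(c)}$-cluster to be macroscopic rather than hugging the boundary. The final identification of the various boundary-accumulating clusters as a single one is the other delicate point; it is forced by the almost sure finiteness of the number of macroscopic loops, which here plays the role of a zero--one law.
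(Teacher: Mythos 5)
Your overall strategy---superpose the two loop-soups, use the hypothesis to produce $\Gammabis^{(c)}$-clusters touching boundary arcs, and bridge them with fresh $c'$-loops---captures the same underlying mechanism as the paper's proof, but you organize it very differently, and your organization has gaps that the paper's is specifically built to avoid.

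The paper runs an \emph{iterative exploration towards the origin} rather than a two-level Borel--Cantelli along dyadic half-discs at a fixed boundary point. Fixing a boundary arc $\partial$, set $A_1$ to be the union of $\Gammabis^{(c)}$-clusters meeting $\partial$. By the restriction argument of Lemma~\ref{l.confrest}, conditionally on $A_1$ the loops avoiding $\overline A_1$ form a fresh loop-soup in each component of $\U\setminus\overline A_1$, and the conformal radius from $0$ of the relevant component is strictly less than $1$ whenever $A_1\neq\emptyset$, an event of positive probability. Iterating with the enlarged frontier $\partial\cup A_1$ (whose harmonic measure from $0$ does not decrease), the conformal radius of the nested components is dominated by a product of i.i.d.\ copies of some $\rho_1<1$ and hence tends to $0$. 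This yields a.s., for every rational $\delta$ and every rational target $z$, a finite chain of pairwise-adjacent $\Gammabis^{(c)}$-clusters reaching from $\partial$ into the $\delta$-ball around $z$. Only at the very end is $\Gammabis^{(c')}$ introduced, to weld adjacent $\Gammabis^{(c)}$-clusters into a single cluster.

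Your version has two genuine gaps. First, the ``self-improvement'' to macroscopic clusters is not established: if a cluster $C$ has closure touching $\partial\U$ at $y$, the loops of $C$ that stay inside a small half-disc around $y$ may all be tiny (with $C$ accumulating at $y$ only through loops that exit the half-disc), so conformally rescaling that half-disc to $\U$ just reproduces the original hypothesis rather than upgrading it to ``macroscopic at extremal-distance scale $\rho_0$.'' Second, and more seriously, the concluding pigeonhole step does not close. For each fixed $x_0$, Borel--Cantelli gives a.s.\ infinitely many successful scales, hence a cluster $C(x_0)$ of $\Gammabis^{(c+c')}$ accumulating at $x_0$ and containing a loop of some positive diameter $d(x_0)$; but $d(x_0)$ is governed by the largest successful scale, which is a random variable depending on $x_0$, and there is no reason for $\inf_{x_0}d(x_0)>0$ over a countable dense set of boundary points. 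Since the finiteness of the number of loops of diameter $\geq\epsilon$ holds only for each fixed $\epsilon>0$, the collection $\{C(x_0)\}$ can a priori be infinite, and the conclusion that a single cluster serves infinitely many $x_0$'s does not follow. The paper avoids both issues by design: the conformal-radius contraction is multiplicative and works no matter how small each $A_n$ is, so no ``macroscopic'' hypothesis is ever needed; and the connectivity statement is proved for each rational $(\delta,z)$ separately by a soft countable union, with no uniformity-in-$x_0$ required.
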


\begin{proof}
Assume the hypotheses of the lemma, and
let $A_1$ be the union of all elements of $\overline \Gamm$ that intersect some prescribed boundary arc $\partial$ of $\U$ of positive length.
By invariance under rotation, $\mathbb P_c ( A_1 \not= \emptyset ) > 0$.
Using the same
 argument as in the previous lemma, we get that if $U$ is a fixed open set such that $\overline U \subset \U$, then conditioned on the event $\overline U \cap \overline A_1 = \emptyset$,
the law of the set of loops in $\Gammabis$ that are contained in $U$ is the same as its original law (since changing the set of loops within $U$ has no effect on $A_1$).
Since this holds for any $U$, conformal invariance of the loop soup implies that conditioned on $A_1$, the law
of the elements of $\Gammabis$ that do not intersect $\overline A_1$ is that of independent copies of $\Gammabis$ conformally mapped to
each component of $\U \setminus \overline A_1$.
Note that this in fact implies that the event that $A_1$ is empty is independent of $\Gammabis$, and hence has probability zero or one (but we will not really need this).

The conformal radius $\rho_1$ of $\U \setminus \overline A_1$ seen from $0$ has a strictly positive probability to be smaller than one.
We now iterate the previous procedure: We let $U_2$ denote the connected component of $\U \setminus \overline A$ that contains the origin. Note that
the harmonic measure of $\partial_2:= \partial \cup A_1$ at $0$ in $U_1$ is clearly not smaller than the harmonic measure of $\partial$ in $\U$ at $0$ (a Brownian motion started at the origin that exits $\U$ in $\partial$ with necessarily exit $U_1$ through $\partial_2$). We now consider the loop-soup in this domain $U_2$, and we let $A_2$ denote the union of all loop-soup clusters that touch $\partial_2$. We then iterate the procedure, and note that the conformal radius of $U_n$ (from $0$) is dominated by a product of i.i.d. copies of $\rho_1$.

This shows  that for any positive $\delta$  one can almost surely find in $\Gamma$ a finite sequence of clusters $\overline C_1, \ldots, \overline C_k$, such that
$d (C_j , C_{j+1}) = 0$ for all $j < k$,  such that $C_1$ touches $\partial$ and $d ( C_k, 0) \le \delta$.
By conformal invariance, it is easy to check that the same is true if we replace the origin by any fixed point $z$.
Hence, the statement holds almost surely, simultaneously for all
points $z$ with rational coordinates, for all rational $\delta$, and all boundary arcs of $\partial \U$ of positive length.

Note that almost surely, each loop of the loop soup surrounds some point with rational coordinates. We can therefore conclude (see Figure \ref {loops1000}) that almost surely,
any two clusters in $\overline \Gamm$ are ``connected'' via a finite sequence of adjacent clusters in $\overline \Gamm$.
\begin {figure}[htbp]
\begin {center}
\includegraphics [width=2.5in]{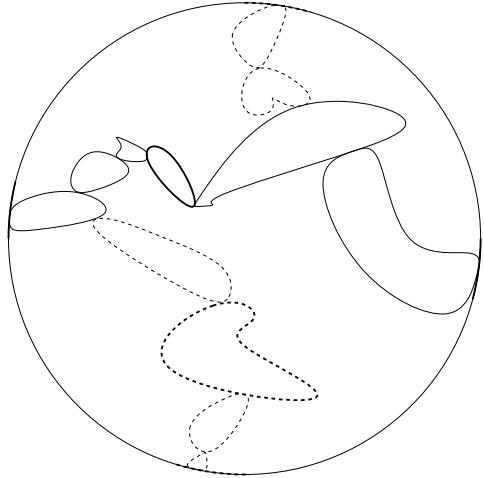}
\caption {The dark solid loop is part of a crossing of (light solid) loops from the left boundary segment to the right boundary segment.  The dark dotted loop is part of a crossing of loops from the lower boundary segment to the upper boundary segment.}
\label{loops1000}
\end {center}
\end {figure}

If we now augment $\Gammabis$ by adding an independent Brownian loop-soup $\Gammabis'$ of
intensity $c'$ for any given positive $c'$, the new loops will almost surely join together any two adjacent clusters of $\overline \Gamm$ into
a single cluster (this is just because any given point $z' \in \U$ -- for instance one chosen contact point between two adjacent clusters of $\overline \Gamm$ -- will almost surely be surrounded by infinitely many small loops of $\Gammabis'$).
Furthermore, for an analogous reason, almost surely, any loop of $\Gammabis'$ intersects some loop of $\Gammabis$.
It follows that for all $c'>0$, $\mathbb P_{c+c'}$ almost surely, there exists just one single cluster, i.e., $\overline \Gamm = \{ \overline \U \}$.
\end{proof}

\begin{lemma} \label{l.firstpcdef}
There is a critical constant $c_0 \in [0, \infty]$ such that
\begin{enumerate}
 \item If  $c > c_0$, then $\mathbb P_c (\overline \Gamm = \{ \overline \U \}) = 1$.
 \item If  $c  \in (0, c_0)$, then $\mathbb P_c$ almost surely \begin{enumerate}
 \item[(a)] $\overline \Gamm$ has infinitely many elements.
 \item[(b)] No element of $\overline \Gamm$ intersects the boundary of $\U$.
 \item[(c)] No two elements of $\overline \Gamm$ intersect each other. \end{enumerate}
\end{enumerate}
\end{lemma}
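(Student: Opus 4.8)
The plan is to introduce $c_0$ as a percolation threshold for the loop-soup and then read off the two regimes from Lemmas~\ref{l.confrest} and~\ref{l.biggerc-onecluster}.

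\emph{Definition of $c_0$ and the regime $c>c_0$.} First I would record the monotonicity coming from the Poissonian structure of the soup: for $c,c'>0$, a loop-soup of intensity $c+c'$ is the superposition $\Gammabis\cup\Gammabis'$ of two independent loop-soups of intensities $c$ and $c'$. If $\overline\Gamm(\Gammabis)=\{\overline\U\}$, then all loops of $\Gammabis$ still lie in a single cluster of $\Gammabis\cup\Gammabis'$, whose closure is already $\overline\U$; moreover every loop of $\Gammabis'$ a.s.\ meets some loop of $\Gammabis$ (the argument is the one used in the last paragraph of the proof of Lemma~\ref{l.biggerc-onecluster}), so no loop of $\Gammabis'$ can form a separate cluster. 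Hence $\overline\Gamm(\Gammabis\cup\Gammabis')=\{\overline\U\}$, so $\mathbb P_c(\overline\Gamm=\{\overline\U\})\le\mathbb P_{c+c'}(\overline\Gamm=\{\overline\U\})$ and the set $S:=\{c>0:\mathbb P_c(\overline\Gamm=\{\overline\U\})=1\}$ is upward closed. I set $c_0:=\inf S\in[0,\infty]$. If $c>c_0$ there is $c''\in S$ with $c''<c$, whence $c\in S$, i.e.\ $\mathbb P_c(\overline\Gamm=\{\overline\U\})=1$; this is conclusion (1).

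\emph{Conclusion (b), and no dense cluster, for $c<c_0$.} For $c\in(0,c_0)$ we have $c\notin S$, i.e.\ $\mathbb P_c(\overline\Gamm=\{\overline\U\})<1$. I would prove (b) by contradiction: if $\mathbb P_c$ assigned positive probability to the existence of an element of $\overline\Gamm$ meeting $\partial\U$, then both hypotheses of Lemma~\ref{l.biggerc-onecluster} hold, giving $\mathbb P_{c+c'}(\overline\Gamm=\{\overline\U\})=1$ for every $c'>0$; choosing $0<c'<c_0-c$ yields $c+c'\in S$ with $c+c'<c_0=\inf S$, a contradiction. Thus, $\mathbb P_c$-a.s.\ no element of $\overline\Gamm$ meets $\partial\U$, which is (b). In particular $\mathbb P_c(\overline\Gamm=\{\overline\U\})=0$, since the cluster $\overline\U$ does meet $\partial\U$.

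\emph{Conclusion (a).} The $\mu$-mass of the set of loops contained in $\U$ is infinite, so $\Gammabis$ is a.s.\ nonempty and $\overline\Gamm\ne\emptyset$. I would rule out $|\overline\Gamm|=k$ for each finite $k\ge1$: on this event the union $V$ of the $k$ cluster-closures is, by (b), a compact subset of $\U$, so with positive probability one additionally has $V\subset\{|z|<\rho\}$ for some rational $\rho\in(0,1)$. Applying Lemma~\ref{l.confrest} to the deterministic annulus $U=\{\rho<|z|<1\}$, on this event $U^*=U$ (no cluster lies outside $U$ that needs to be removed), so the loops of $\Gammabis$ staying in $U$ form a loop-soup in $U$, which is a.s.\ nonempty; any such loop lies in $U\subset\U\setminus V$ and therefore meets no loop of any of the $k$ clusters, so it belongs to a cluster not counted among them --- contradicting $|\overline\Gamm|=k$. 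Hence $\mathbb P_c(|\overline\Gamm|=k)=0$ for all finite $k\ge1$, and together with $\mathbb P_c(\overline\Gamm=\{\overline\U\})=0$ this gives (a).

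\emph{Conclusion (c), and the main obstacle.} For (c) I would again argue by contradiction, aiming at (b). If with positive probability two distinct clusters $C_1\ne C_2$ satisfy $\overline{C_1}\cap\overline{C_2}\ne\emptyset$, then --- since no loop of $C_1$ meets a loop of $C_2$ --- the continuum $\overline{C_2}$ accumulates at $\overline{C_1}$ through some complementary component $D$ of $\U\setminus\overline{C_1}$, at a point $z\in\partial D$ with $z\notin\partial\U$ (by (b)). Conditioning on $\overline{C_1}$ and invoking the restriction property in its conditional form, the loops of $\Gammabis$ that avoid $\overline{C_1}$ and stay in $D$ form a loop-soup in $D$ one of whose cluster-closures contains $z\in\partial D$. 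When $D$ is a bounded (hence simply connected) component, a conformal map $D\to\U$ carries this to a loop-soup in $\U$ possessing, with positive probability, a cluster whose closure meets $\partial\U$, contradicting (b). When $D$ is the doubly connected outer component, I would first remove from $D$ a crosscut joining $\partial\U$ to $\overline{C_1}$ and avoiding a neighborhood of $z$, obtaining a simply connected sub-domain that still contains the loops of $C_2$ near $z$ and has $z$ on its boundary; the loops staying in it form a loop-soup (restriction again) with a cluster touching the boundary at $z$, and one concludes as before. I expect the genuine difficulty --- the main obstacle --- to be exactly this local analysis near $z$: arranging the crosscut so that enough loops of $C_2$ survive and still chain into a single cluster whose closure reaches $z$, and verifying that $z$ is an accessible prime end of the resulting simply connected domain, so that its conformal image really does touch $\partial\U$.
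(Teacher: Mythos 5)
Your overall structure matches the paper's: the lemma is a formal consequence of Lemmas~\ref{l.confrest} and~\ref{l.biggerc-onecluster}, once one sets up a threshold $c_0$ and shows that failure of any of (a), (b), (c) below it would push the one-cluster regime down. Your definition $c_0 := \inf\{c : \mathbb P_c(\overline\Gamm = \{\overline\U\}) = 1\}$ differs superficially from the paper's (which takes the supremum of the subcritical set), but your monotonicity observation plus the argument via Lemma~\ref{l.biggerc-onecluster} shows the two coincide, and conclusion (1) and item (b) are handled correctly. For (a), note that the paper has a shorter route: if $|\overline\Gamm| = k < \infty$, then $\bigcup_{j=1}^k \overline{C_j}$ is closed and, since the loops of $\Gammabis$ are a.s.\ dense in $\U$, equals $\overline\U$; hence some $\overline{C_j}$ meets $\partial\U$, contradicting (b). Your annulus argument is correct but heavier than needed.

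The genuine gap is in your treatment of (c), and you have essentially identified it yourself. You invoke ``the restriction property in its conditional form'' with respect to the random closed set $\overline{C_1}$. But Lemma~\ref{l.confrest} is established only for a deterministic open $U$, conditioning on the resulting $U^*$; extending it to conditioning directly on a random cluster-closure is a different (and stronger) spatial Markov statement that has not been proved and would require its own argument (e.g.\ a dyadic approximation of $\overline{C_1}$ from outside together with a passage to the limit, plus control on the loops accumulating at $\partial D$). The paper sidesteps this entirely: to show that failure of (c) implies failure of (b), it applies Lemma~\ref{l.confrest} to a \emph{fixed, deterministic} $U$ which, on an event of positive probability, contains one of the two touching clusters but not the other. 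On that event, $\U \setminus U^*$ includes the cluster-closure that escapes $U$, so a cluster of the restricted soup in $U^*$ accumulates on $\partial U^*$; mapping $U^*$ conformally to $\U$ then produces, with positive probability, a cluster touching $\partial\U$. This gives $\lnot(\mathrm{c}) \Rightarrow \lnot(\mathrm{b})$ without ever conditioning on a random set, and avoids the crosscut/prime-end analysis you flagged as the main obstacle. I recommend replacing the conditional-restriction step by this deterministic-$U$ argument; as currently written, the step from ``two cluster-closures touch with positive probability'' to ``some cluster touches $\partial\U$ with positive probability'' is not justified by the tools available at this point in the paper.
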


\begin{proof}
Suppose that $\mathbb P_c ( \overline \Gamm = \{ \overline \U \} ) <1$;
if there is a $\mathbb P_c$ positive probability that two elements of $\overline \Gamm$ intersect each other, then (applying Lemma \ref {l.confrest} to some  $U$
that contains one but not the other with positive probability)  we find that there is a positive probability that an element of $\overline \Gamm$ intersects $\partial \U$.

Also, if $c>0$ and $\overline \Gamm$ has only finitely many elements with positive probability, then (with the same probability) at least one of these elements must intersect $\partial \U$ (since the loops of $\Gammabis$ are dense in $\U$ a.s.).

Thus, Lemma \ref{l.biggerc-onecluster} implies that if $c_0$ is the supremum of $c$ for which (a), (b), and (c) hold almost surely, then $\Gammabis$ has only one cluster $\mathbb P_c$ almost surely whenever $c>c_0$.
\end{proof}

We say $c$ is {\em subcritical} if the (a), (b), and (c) of Lemma \ref{l.firstpcdef} hold $\mathbb P_c$ almost surely. We will later show that $c_0$ is subcritical,
but we have not established that yet. We remark that the proof of Lemma \ref {l.firstpcdef} shows that in order to prove that some $c$ is subcritical, it suffices to check (b).

\begin{lemma}
\label{pc}
The $c_0$ of Lemma \ref{l.firstpcdef} lies in $(0,\infty)$.  Moreover, when $c>0$ is small enough, there are $\mathbb P_c$
almost surely continuous paths and loops in $D$ that
intersect no element of $\Gammabis$.
\end{lemma}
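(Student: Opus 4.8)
The plan is to establish $c_0>0$ and $c_0<\infty$ separately, both by a renormalization across the dyadic annular shells near $\partial\U$, using only two features of the Brownian loop soup: loops lying in disjoint subdomains form independent configurations (a Poisson point process restricted to disjoint sets, together with the restriction property of $\mu$), and $\mu$ is scale invariant. Set $S_k=\{2^{-k-1}<1-|z|<2^{-k}\}$. Every $\overline C\in\overline\Gamm$ contains at least one loop, which is a compact subset of $\U$ and hence lies at positive distance $\delta_0>0$ from $\partial\U$; so if $\overline C$ also meets $\partial\U$ then, being connected, $\overline C$ ``crosses'' $S_k$ for every $k$ with $2^{-k}<\delta_0$, meaning it contains a connected subset of $\overline{S_k}$ joining the two boundary circles of $S_k$ --- equivalently a chain of loops of $\Gammabis$ that, inside $\overline{S_k}$, link the inner circle to the outer circle. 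Conversely a cluster crossing $S_k$ for all large $k$ has closure meeting $\partial\U$. Thus $c_0>0$ reduces to showing that for $c$ small, a.s.\ no cluster crosses arbitrarily many of the $S_k$, and $c_0<\infty$ to showing that for $c$ large this occurs with positive probability.

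For $c_0>0$ I would prove a recursive estimate
\[
\mathbb P_c\big(\text{some cluster crosses }S_K\cup S_{K+1}\cup\cdots\cup S_{K+N-1}\big)\le C\,2^{K}\rho(c)^{\,N},
\]
with $\rho(c)\to 0$ as $c\to 0^+$ and $C,\rho$ independent of $K$ (scale invariance). The inductive step rests on the independence of the loops in the disjoint annuli $S_j$: a radial crossing of a block of $2m$ shells restricts to a radial crossing of the half closer to $\partial\U$ and of the half farther from it, glued by loops straddling the middle circle, and --- after separating off the rare event that a single long loop spans the whole block --- this yields the product bound up to a factor controllable for $c$ small. Since a cluster whose closure meets $\partial\U$ crosses $S_K,S_{K+1},\dots$ from some $K$ onward, and the right-hand side above tends to $0$ as $N\to\infty$ whenever $\rho(c)<1$, a union bound over $K$ shows that a.s.\ no element of $\overline\Gamm$ meets $\partial\U$. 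By the remark after Lemma~\ref{l.firstpcdef}, checking property~(b) is enough for subcriticality, so small $c$ is subcritical and $c_0>0$. The same decay shows that in this regime the complement in $D$ of $\cup_{\gamma\in\Gamma}\gamma$ contains open discs --- e.g.\ any disc well inside the filling of an outermost cluster --- which gives the final assertion: such discs contain continuous paths and loops, none of which meets an element of $\Gamma$.

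For $c_0<\infty$ I would run the analogous recursion as a lower bound, or more simply a Galton--Watson comparison. Given a loop of $\Gammabis$ intersecting $S_k$, the number of loops of $\Gammabis$ that meet it, reach into $S_{k+1}$, and are contained in a fixed sub-annulus chosen disjoint from the regions already examined is, by independence and scale invariance, stochastically at least a Poisson variable of mean $\lambda c$ for a fixed $\lambda>0$. For $c>1/\lambda$ this branching process is supercritical, hence survives with positive probability, producing a cluster that crosses $S_k$ for all large $k$ and therefore has closure meeting $\partial\U$. Since that event has probability $0$ for every $c<c_0$ (Lemma~\ref{l.firstpcdef}), we get $c_0\le c<\infty$; alternatively one feeds it into Lemma~\ref{l.biggerc-onecluster}, using also that ``there is a single cluster'' is monotone in $c$ (add an independent soup), to obtain $\mathbb P_{c}(\overline\Gamm=\{\overline\U\})=1$ past the threshold.

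The hard part is the $c_0>0$ half, and specifically the renormalization step: a shell can be crossed by a chain of many tiny loops rather than by a single loop of comparable size, and the crossings in consecutive shells have to be joined, so the combinatorial object one estimates (a coded sequence of loops, one block per shell) and the multiplicativity of the crossing probability across scales must be arranged carefully, with a per-scale bound uniform in $K$. This is where one needs the quantitative properties of $\mu$ --- finiteness of its mass on loops of prescribed minimal diameter staying in a bounded region, hence scale-free ``crossing'' and ``offspring'' constants --- rather than merely its conformal invariance.
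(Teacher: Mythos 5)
Your plan for the two bounds $c_0>0$ and $c_0<\infty$ is plausible in spirit but is essentially a programme rather than a proof: you point to a recursive crossing estimate across dyadic shells as the engine of the argument, and then explicitly defer the hard step (the per-scale multiplicativity, uniform in $K$, with the bookkeeping of chains that may use loops of very disparate sizes). The paper sidesteps precisely this difficulty by mapping each loop $l$ to a deterministic dyadic square $S_l$ of comparable scale: the events ``no loop $l$ with $S_l=S$'' for distinct squares $S$ are genuinely i.i.d.\ Bernoulli, which puts the cluster structure underneath a Mandelbrot fractal percolation with parameter $\tilde p=\exp(-bc)$, and then one imports the Chayes--Chayes--Durrett results wholesale. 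Your approach would have to redo that quantitative multiscale work from scratch, and the factor-$2^K$ union bound you propose is a warning sign: the bound has to beat that factor, which requires a genuinely sharp per-scale estimate. So the main claim $c_0\in(0,\infty)$ is left incomplete.

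More importantly, there is a real error in your argument for the second assertion of the lemma. The statement asks for paths and loops in $D$ meeting no element of $\Gammabis$, i.e.\ avoiding every loop of the \emph{Brownian loop soup} itself, not the cluster boundaries $\Gamma$. Since the loops of $\Gammabis$ are a.s.\ dense in $D$, no nonempty open disc can avoid them; so ``any disc well inside the filling of an outermost cluster'' cannot possibly work, and indeed no open set can. The requested paths and loops are necessarily nowhere dense (Lebesgue-null) sets. This is exactly what the fractal-percolation coupling gives for free: the limit set $M$ is null, it is contained in the set $\tilde M$ which by construction stays at positive distance from every loop of $\Gammabis$, and for $p$ close to $1$ the model of Chayes--Chayes--Durrett supplies continuous crossings and loops inside $M$. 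Your multiscale plan for $c_0>0$, even if completed, would not yield these paths and loops without an additional construction of the same flavor, which is why the paper's proof is organized around the coupling rather than around crossing-probability recursion.
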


\begin{proof}
We first prove the latter statement: for small $c>0$ there exist almost surely simple paths crossing $\U$ that intersect no element of $\Gammabis$.  This
will also imply $c_0 >0$.  To this end we will couple the loop-soup with a well-known fractal percolation model.
The argument is similar in spirit to the one for multi-scale Poisson percolation in \cite{MR1409145}, Chapter 8.

Consider the unit square $D=(0,1)^2$ instead of $\U$. For each $n$, we will divide it into $4^n$ (closed) dyadic squares
of side-length $2^{-n}$. To each such square $C$, associate a Bernoulli random variable $X(C)$ equal to
one with probability $p$. We assume that the $X(C)$ are independent.
Then, define
\begin{equation} \label{e.mdef}
 M= [0,1]^2 \setminus \bigcup_{C \ : \ X(C)=0} C .
\end{equation}
This is the fractal percolation model introduced by Mandelbrot in \cite{MR665254} (see also the book \cite{MR1409145}).
It is very easy to see that the area of $M$ is almost surely zero as soon as $p<1$.
Chayes, Chayes and Durrett \cite{MR931500}
have shown that this model exhibits a phase transition:
There exists a $p_c$, such that for all $p \ge p_c$, $M$ connects the left and right sides of the
unit square with positive probability, whereas for all $p < p_c$, this is a.s.\ not the case
(note that in fact, if $p \le 1/4$, then $M$ is almost surely empty by a standard martingale argument).
Here, we will only use the fact that for $p$ large enough (but less than one), $M$ connects the two opposite sides of the
unit square with positive probability.  We remark that the proof in \cite{MR931500} actually gives (for large $p$) a positive probability that there
exists a continuous path from the left to right side of the unit square in $M$ that can be parameterized as $t \to (x(t),y(t))$ where $t \in [0,1]$
and $x(t)$ is non-decreasing. It also shows (modulo a straightforward FKG-type argument) that $M$ contains loops with positive probability.

Now, let us consider a loop-soup with intensity $c$ in the unit square.
For each loop $l$, let $d(l) \in (0,1)$ denote its $L^1$-diameter (i.e., the maximal variation of the $x$-coordinate or of the $y$-coordinate), and define $n(l) \ge 0$
 in such a way that $d(l) \in [2^{-n-1}, 2^{-n})$.
Note that $l$ can intersect at most 4 different dyadic squares with side-length $2^{-n}$. We
can therefore associate
in a deterministic (scale-invariant and translation-invariant)
manner to each loop $l \subset (0,1)^2$, a dyadic site $s = (j2^{-n(l)}, j'2^{-n(l)}) \in (0,1)^2$  such that
$l$ is contained in the square $S_l$ with side-length $2 \times 2^{-n(l)}$ and bottom-left corner at $s$. Note that $S_l \subset (0,2)^2$.

We are first going to replace all loops $l$ in the loop-soup by the squares $S_l$.
This clearly enlarges the obtained clusters. By the scale-invariance and the Poissonian character of
the loop-soup, for each fixed square
\begin{equation}
\label{e.stype} S=[j2^{-m}, (j+2) 2^{-m}] \times [j' 2^{-m}, (j'+2) 2^{-m}] \subset [0,2]^2,
\end{equation}
the probability that there exists no loop $l$ in the loop-soup such that
$S_l=S$ is (at least) equal to $\exp ( - bc)$ for some positive constant $b$ (that is independent of $m$).
Furthermore, all these events (when one lets $S$ vary) are independent.

Hence, we see that the loop-soup percolation process is dominated by a variant of Mandelbrot's fractal percolation model:
let $\tilde X$ denote an independent percolation on squares of type \eqref{e.stype},
with each $\tilde X(S)$ equal to $1$ with probability $\tilde p = \exp (-bc)$ and define the random compact set
\begin{equation}
\tilde M= [0,2]^2 \setminus \bigcup_{S \ : \ \tilde X(S)=0} S.
\end{equation}
Note that in the coupling between the loop-soup and $\tilde M$ described above, the distance between $\tilde M$ and each fixed loop in the loop-soup
is (strictly) positive almost surely.  In particular, $\tilde M$ is contained in the complement of the union of the all the loops (and their interiors).

We now claim that this variant of the percolation model is dominated by Mandelbrot's original percolation model with a larger (but still less than 1) value of $p = p(\tilde p)$ that we will choose in a moment.
To see this, let $\hat X$ be a $\hat p$-percolation (with $\hat p = p^{1/4}$) on the set of $(C,S)$ pairs with $C$ a dyadic square and $S$ a square comprised of $C$ and three
of its neighbors.  Take
$$X(C) = \min_{S\ : \ C \subset S} \hat X(C,S) \hbox { and } \tilde X(S) = \max_{C \ : \ C \subset S} \hat X(C,S).$$
Clearly $X(C)$ is a Bernoulli percolation with parameter $p = \hat p^4$ and $\tilde X(S)$ is a Bernoulli percolation with parameter $1-(1-\hat p)^4$.
Let us now choose $p$ in such a way that $\tilde p  = 1 - (1- \hat p)^4$. Note that $p$ tends monotonically to $1$ as $\tilde p$ tends to $1$.
Hence, by taking $\tilde p$ sufficiently close to $1$, i.e.\ $c$ sufficiently small, we can ensure that $p$ is as close to $1$ as we want (so that $M$
contains paths and loops with positive probability). But
in our coupling, by construction we have
$$X(C) \leq \min_{C \subset S} \tilde X(S),$$
 and thus $M \subset \tilde M$.

We have now shown that $c_0 > 0$, but we still have to show that $c_0 < \infty$.  We use a similar coupling with the fractal percolation
model. For any dyadic square that does not touch the boundary of $[0,1]^2$, we let $X(C)$ be $0$ if $C$ is surrounded by a loop in $\Gammabis$ that is contained in the set of eight neighboring dyadic squares to $C$ (of the same size).
The $X(C)$ are i.i.d.\ (for all $C$ whose eight neighbors are contained in $D$), and have a small probability (say smaller than $1/4$) of being $1$ when $c$ is taken sufficiently large.
We now use the fact, mentioned above, that if $p \le 1/4$, then $M$ is almost surely empty; from this we conclude easily that almost surely, for each $C$ (whose incident neighbors
are in $D$) every point in $C$ is surrounded by a loop in $\Gammabis$ almost surely.  It follows immediately that almost surely, every point in $D$ is surrounded by a loop in $\Gammabis$.
This implies that almost surely all loops of $\Gammabis$ belong to the same cluster (since otherwise there would be a point on the boundary of a cluster that was not surrounded by a loop).
\end{proof}

\begin{lemma} \label{exponentialcrossingdecay}
If $c$ is sub-critical, the probability that there are $k$
disjoint chains of loops in $\overline \Gamm$ crossing from the inside to the outside of a fixed annulus decays exponentially in $k$.
\end{lemma}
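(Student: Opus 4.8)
The plan is to separate the combinatorial heart of the statement (``$k$ disjoint crossings cost a $k$th power'') from its analytic heart (``a single crossing has probability bounded away from $1$''), and to use no SLE input at all --- only the restriction property of the loop soup and the finiteness of the Brownian loop measure on loops of diameter bounded below.

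First I would reduce the $k$-fold statement to a one-crossing statement. Fix the annulus $\mathcal A=\{z:r<|z-z_0|<R\}$ with $\overline{\mathcal A}\subset\U$ (the general case follows after applying a M\"obius automorphism of $\U$ and, if needed, the restriction property, which can only decrease the probability of a crossing). The event that there is a chain of loops in $\overline\Gamm$ crossing from the inner to the outer boundary circle of $\mathcal A$ is an \emph{increasing} event in the Poisson loop soup $\Gammabis$ (adding loops can only enlarge clusters), and the event that there are $k$ \emph{disjoint} such chains is exactly the $k$-fold disjoint occurrence of this increasing event. By a van den Berg--Kesten-type inequality for Poisson point processes, its probability is at most $p^{k}$, where $p=\mathbb P_{c}[\text{some cluster of }\Gammabis\text{ crosses }\mathcal A]$. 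So it remains only to show $p<1$, and the decay constant is then $\log(1/p)$.

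Second, I would prove $p<1$ using subcriticality. Choose $R'\in(R,1)$ with $\overline{D_{R'}}\subset\U$, where $D_{R'}=\{|z-z_0|<R'\}$. By the restriction property, the loops of $\Gammabis$ contained in $D_{R'}$ form a Brownian loop soup in $D_{R'}$, independent of the remaining loops; since $c$ is subcritical and subcriticality is conformally invariant (Lemma \ref{l.firstpcdef} and the remark after it), almost surely no cluster of this $D_{R'}$-soup touches $\partial D_{R'}$. Moreover, the loops of $\Gammabis$ that meet the closed inner disk $\overline{D_{r}}$ and are not contained in $D_{R'}$ have diameter at least $R'-r>0$, so they form a Poisson process of finite intensity $L(\overline{D_{r}},\U\setminus D_{R'};\U)<\infty$, and with positive probability there are none. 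On that event every loop of $\Gammabis$ meeting $\overline{D_{r}}$ lies in $D_{R'}$, and I would then argue, by iterating the restriction property along a finite nested sequence of radii between $r$ and $R'$ --- at each step peeling off the (finitely many) loops large enough to bridge two consecutive radii, and using that the clusters of the loop soup in each intermediate disk avoid that disk's boundary --- that the cluster containing any loop reaching $\{|z-z_0|=r\}$ cannot also reach $\{|z-z_0|=R\}$. This produces a positive-probability event on which $\mathcal A$ is uncrossed, giving $p<1$.

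The step I expect to be the main obstacle is precisely this last one: the naive blocking event ``no long loop bridging $\overline{D_r}$ and $\U\setminus D_{R'}$'' does not by itself prevent a crossing, because a crossing chain could escape the inner region through a cascade of ever-smaller loops accumulating near an intermediate circle, and the restriction property has to be invoked repeatedly, at each intermediate scale, to rule this out. Making that nested peeling argument precise --- so that it genuinely yields a blocking event of positive probability, with subcriticality being used at each of the intermediate scales --- is the technical core of the lemma; everything else is the soft reduction above.
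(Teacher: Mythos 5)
Your reduction via the BK inequality for Poisson point processes matches the paper exactly, and the idea of using the restriction property and subcriticality is also on the right track; but your plan for showing that a single crossing has probability $p<1$ has a genuine gap that the iterative peeling you sketch does not resolve, and the placement of your blocking circles is off in a way that matters. Your event ``no loop of $\Gammabis$ meets both $\overline{D_r}$ and $\U\setminus D_{R'}$'' constrains only the loops that touch the inner disk; it does nothing to confine the later loops of a crossing chain, which never touch $\overline{D_r}$ at all (in a minimal chain $\gamma_1,\dots,\gamma_n$ only $\gamma_1$ meets $\overline{D_r}$). The correct blocking event --- the one used in the paper --- forbids loops bridging $C_R$ (the \emph{outer} circle of the annulus) and a slightly larger circle $C_{R''}$ with $R<R''<1$: then in a minimal chain $\gamma_1,\dots,\gamma_{n-1}\subset D_R$ automatically, and $\gamma_n$ meets $C_R$ but (on the blocking event) cannot cross $C_{R''}$, so the entire chain is trapped in $D_{R''}$ and hence belongs to the restricted loop-soup in $D_{R''}$. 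Your choice of circles does not yield this confinement.

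The deeper issue is that you never translate subcriticality into an actual bound on a crossing probability. You observe that the $D_{R'}$-soup clusters a.s.\ avoid $\partial D_{R'}$, but this a.s.\ statement about a \emph{single} realization does not by itself show that $P[\text{crossing of a fixed annulus}]<1$: a cluster can come arbitrarily close to $\partial D_{R'}$ on an event of positive probability. The paper's key step --- which your sketch lacks --- is the base case: fix the inner radius $a$ and use the fact that clusters stay away from $\partial\U$ to deduce that $P[\text{crossing of }\{a<|z|<1-1/n\}]\to 0$ as $n\to\infty$ (the events decrease to ``a cluster touching $C_a$ reaches $\partial\U$''), so that for some $r_1<1$ this crossing probability is $<1$; then one handles a general annulus $\{\rho<|z-z_0|<\rho'\}$ by combining the blocking event on $\{\rho'<|z-z_0|<\rho'/r_1\}$ with the restriction property and scale invariance, which maps the restricted soup in $D_{\rho'/r_1}$ to a soup in $\U$ where the relevant annulus has outer radius $r_1$ --- i.e.\ precisely the base case. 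Your finite nested-radii peeling cannot substitute for this limit: ruling out individual loops bridging consecutive circles does not prevent a chain from crossing via overlapping loops that each span only one scale, which is exactly the cascade scenario you flagged as the obstacle. So the honest self-diagnosis at the end of your proposal is correct; the way to close the gap is the paper's base-case-plus-scaling argument, not iterative peeling.
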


\begin{proof}
We know that $c$ is subcritical, so that for all $r<1$, the probability that there exists a crossing of the annulus $\{ z \ : \ r < | z| < 1 -1 /n \}$
by a chain of loops goes to $0$ as $n\to \infty$. Hence, there exists $r_1 \in (r, 1)$ such that the probability that there is a single crossing of $\{ z \ : \ r < | z | < r_1 \}$ is strictly smaller than one.
Hence, it follows easily that if we consider any given annulus, $\{ z \ : \  r < | z - z_0 | < r' \} \subset \U$, there exists a positive probability that no cluster of the loop-soup crosses the annulus (just consider the two independent events of positive probability that the loop-soup restricted to $\{ z \ : \ |z -z_0 | < r'/r_1\}$ contains no crossing of the annulus, and that no loop intersects both the circles of radius $r'$ and $r'/r_1$ around $z_0$).  In other words, the probability that there exists a crossing of the annulus is strictly smaller than one.
 The result then follows from the BK inequality for Poisson point processes (see for instance \cite {vdB} and the references therein).
\end{proof}

As a consequence (letting $k \to \infty$), we see that for each fixed annulus, the probability that it is crossed by infinitely many disjoint chains of loops is zero.
We are now ready to state and prove the final lemma of this section:

\begin{lemma} \label{l.simpleloops}
If $c$ is sub-critical, the set $\overline \Gamm$ is almost surely locally finite. Moreover, when $c$ is sub-critical, then almost surely, all elements of $\Gamma$ are continuous simple loops.
\end{lemma}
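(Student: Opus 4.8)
The plan is to establish both claims by contradiction, using only Lemma~\ref{exponentialcrossingdecay} together with the subcriticality consequences of Lemma~\ref{l.firstpcdef} (for subcritical $c$: no element of $\overline\Gamm$ meets $\partial\U$, and no two elements of $\overline\Gamm$ intersect). For local finiteness, suppose with positive probability there is an $\epsilon>0$ for which $\overline\Gamm$ has infinitely many clusters of diameter at least $\epsilon$. Cover the compact set $\overline\U$ by finitely many balls $B(w_1,\epsilon/100),\dots,B(w_m,\epsilon/100)$. Any $\overline C\in\overline\Gamm$ with $\diam\overline C\ge\epsilon$ contains a point of some $B(w_j,\epsilon/100)$ and, being connected of diameter $\ge\epsilon$, a point outside $B(w_j,\epsilon/2)$, and so crosses the fixed annulus $A_j=\{z:\epsilon/100<|z-w_j|<\epsilon/2\}$. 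By pigeonhole, infinitely many such clusters cross one $A_j$; being pairwise disjoint they produce, for every $k$, at least $k$ pairwise disjoint chains of loops crossing $A_j$ (more precisely $A_j\cap\U$: the proof of Lemma~\ref{exponentialcrossingdecay} adapts verbatim, only loops contained in $\U$ being relevant), contradicting Lemma~\ref{exponentialcrossingdecay}. Intersecting over $\epsilon=1/n$ gives a.s.\ local finiteness of $\overline\Gamm$; and since the filling of a cluster lies in its convex hull, an outermost cluster and its outer boundary have the same diameter, so $\Gamma$ is locally finite as well.

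For simplicity of the loops, fix an outermost cluster $\overline C$, let $\Omega$ be the unbounded component of $\widehat{\C}\setminus\overline C$ (a simply connected subdomain of $\widehat{\C}$), so that the corresponding element of $\Gamma$ is $\eta=\partial\Omega=\partial F(\overline C)$. By Carath\'eodory's theorem $\eta$ is a Jordan curve provided (i) $\eta$ is locally connected and (ii) $\eta$ has no pinch point, i.e.\ the continuous boundary extension of a Riemann map $\D\to\Omega$ is injective on $\partial\D$. I would rule out each failure by the crossing scheme above. If (i) fails at a point $p$, there exist $\epsilon_0>0$, $\rho_n\downarrow0$, and pairwise disjoint connected sets $K_n\subset\eta$ with $K_n\cap B(p,\rho_n)\neq\emptyset$ and $K_n\not\subset B(p,\epsilon_0)$; for each fixed small $\rho$, infinitely many $K_n$ cross the fixed annulus $\{z:\rho<|z-p|<\epsilon_0\}$, and being disjoint closed subsets of $\overline C$ they yield arbitrarily many pairwise disjoint loop-chains crossing a slightly smaller fixed annulus around $p$ --- contradiction. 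If (ii) fails at a point $p$, then $F(\overline C)\setminus\{p\}$, and hence (the bounded complementary components of $\overline C$ being filled in) also $\overline C\setminus\{p\}$, is disconnected; grouping components gives a separation $\overline C\setminus\{p\}=C_A\sqcup C_B$ with $p\in\overline{C_A}\cap\overline{C_B}$, so that $\overline C_A:=C_A\cup\{p\}$ and $\overline C_B:=C_B\cup\{p\}$ are connected, of positive diameter, and meet only at $p$. Taking $\epsilon$ below both diameters, each of them meets $B(p,\rho)$ (it contains $p$) and $\C\setminus B(p,\epsilon)$, hence crosses $\{z:\rho<|z-p|<\epsilon\}$, and inside that annulus $\overline C_A$ and $\overline C_B$ are disjoint. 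Iterating this over a geometric sequence of scales $\rho\to0$, and using the conformal scale invariance of the loop soup (loops spanning many scales being rare, since $\mu$ assigns finite mass to loops of diameter $>\delta$), one should accumulate for every $k$ at least $k$ pairwise disjoint loop-chains crossing one fixed annulus around $p$, the final contradiction. Thus a.s.\ every element of $\Gamma$ is a continuous simple loop.

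The step I expect to be the main obstacle is the passage, in both parts of the second claim, from a topological pathology of $\eta$ to an honest lower bound on the number of \emph{loop-disjoint} chains across a single fixed annulus: a single Brownian loop could a priori bridge two of the strands produced by the pathology, so one must shrink the annulus and discard the (finitely many) large loops to keep the chains disjoint as collections of loops; and in the pinch-point case one must combine information from many well-separated scales around $p$, invoking near-independence of the loop soup in disjoint annular regions, in order to reach the required multiplicity $k$. Granting this, everything reduces cleanly to the exponential decay of Lemma~\ref{exponentialcrossingdecay} and the subcriticality of $c$.
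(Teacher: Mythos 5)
Your local finiteness argument is correct and follows the same route as the paper's (the paper uses a countable family of annuli covering all points and scales; you use a finite cover of $\overline{\U}$ and the pigeonhole principle --- the same idea). For the assertion that the elements of $\Gamma$ are Jordan curves, however, you take a genuinely different route via Carath\'eodory's theorem, and the obstacle you flag at the end is a real gap, not a technicality.

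The problem lies precisely in the passage from ``$k$ pairwise disjoint connected subsets of $\eta$ crossing a fixed annulus'' to ``$k$ pairwise disjoint chains of loops of $\Gammabis$ crossing that annulus'', which is what Lemma~\ref{exponentialcrossingdecay} actually controls. All the strands $K_n$ you produce in the local-connectivity step are pieces of the boundary of the \emph{same} cluster, and there is no reason that they use distinct loops. A single loop $\ell$ of $\Gammabis$ can contribute arcs to two or more disjoint strands of $\eta$: parts of $\ell$ lie on $\eta$, while the rest of $\ell$ runs through the interior of the filled cluster and so does not appear on $\eta$ at all. Shrinking the annulus and discarding the finitely many large loops does not help, since such a bridging loop can be as small as one likes, and this failure mode can recur at every scale. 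So disjointness of boundary arcs does not imply disjointness of loop chains, and Lemma~\ref{exponentialcrossingdecay} does not apply. For the pinch-point step the situation is worse still: a single failure of injectivity produces only \emph{two} branches $\overline C_A,\overline C_B$, hence at best two disjoint chains per annulus, and iterating over scales $\rho\to 0$ does not obviously accumulate $k$ disjoint chains of a single fixed annulus (two chains that are disjoint inside the thin annulus $\{\rho<|z-p|<\eps\}$ may coincide once restricted to a larger inner radius). You would need additional quantitative input beyond Lemma~\ref{exponentialcrossingdecay} to make either reduction work.

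The paper avoids both difficulties by not analyzing $\eta$ directly. It first shows that the cluster itself --- the closure of the union of its loops --- is the range of a single continuous (non-simple) loop $L$. This is done by choosing a spanning tree of the loop adjacency graph, allotting time $2^{-k}$ to each loop $\eta_k$, and nesting the traversals; the finite-diameter and finite-annulus-crossing facts (the latter being exactly the consequence of Lemma~\ref{exponentialcrossingdecay}) are used only to get uniform convergence of the approximating loops $L_i$ to $L$. The paper then cites the topological fact that the outer boundary of the range of a continuous planar loop is itself a continuous loop (see \cite{BurdzyLawler}), and finally upgrades ``continuous loop'' to ``Jordan curve'' by noting that the cluster a.s.\ has no cut point, since a.s.\ no two loops of $\Gammabis$ meet at exactly one point. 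This replaces the delicate local-connectivity analysis of $\eta$ by the robust construction of $L$, and replaces your multi-scale iteration at a pinch point by the one-line no-cut-point observation. To fill the gap in your Carath\'eodory approach you would essentially have to first establish what the paper establishes about $L$, at which point the Carath\'eodory machinery becomes superfluous.
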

\begin{proof}
If the  set $\overline \Gamm$ is not locally finite (i.e. if there exists some $\epsilon$ such that there are infinitely many clusters of diameter at least $\epsilon$), then for some positive $\epsilon$ there exists a point $z \in \overline \U$ and a sequence $\overline C_n$ of
elements of $\overline \Gamm$ of size greater than $\epsilon$ such that $d (z, C_n) \to 0$ as $n \to \infty$.
 Hence any annulus with outer radius less than $\epsilon/2$ that surrounds $z$ will have infinitely many crossings by disjoint chains of loops.
By Lemma \ref{exponentialcrossingdecay},
the probability that such a point exists is zero (this result just follows from the lemma by considering a countable collection of annuli such that each point in $\U$ is surrounded by arbitrarily small annuli from this collection).

Note that by construction, no element of $\overline \Gamm$  can have a cut point (recall that almost surely no two loops of $\Gammabis$ intersect at only a single point --
and that the elements of $\Gammabis$ are all {simple} loops).

It therefore remains only to check that the outer boundary of a cluster is a continuous loop. Several approaches are possible to justify this. Let us first show that
the outer boundary of a cluster can be viewed as the outer boundary of a single continuous (non-simple) loop.
Let $(\eta_k)$ be any given countable collection of simple loops in $\U$ such that the graph whose vertices are the loops (with two loops connected if they intersect) is connected.  Assume that there are at most finitely many loops above any given diameter, and at most finitely many disjoint crossings by chains of loops of any given annulus (note that all this holds almost surely when $(\eta_k)$ is the family of loops in a given sub-critical loop-soup cluster).  We would like to define a single continuous loop that traces exactly through all of the points on the closure of the union of the $\eta_k$.  It is not so surprising that this is possible, but requires some justification:

Let $T$ be a spanning tree of the adjacency graph on the $\eta_k$ and fix a root vertex.  Now, relabeling appropriately if necessary (for instance by choosing $\eta_1$ to be the root loop and then inductively choose $\eta_k$ to be the largest child in $T$ of $\eta_1, \ldots, \eta_{k-1}$), assume that $\eta_1, \eta_2, \ldots$ are enumerated in such a way that for every $k>1$, the loop $\eta_k$ has one of $\eta_1, \eta_2, \ldots, \eta_{k-1}$ as a parent in $T$.  We can inductively define a parameterization of each of the $\eta_k$, starting and ending at a point $x_k$ on $\eta_k$ and taking $2^{-k}$ time, as follows. Pick a point on $\eta_1$ and traverse the loop in time $2^{-1}$ starting and ending at an arbitrary point $x_1$ on the loop.  Given the parameterizations up to $k-1$, let $x_k$ be the location of the first place that the parameterization of the parent loop of $\eta_k$ hits $\eta_k$.  Then choose an arbitrary parameterization of $\eta_k$ starting and ending at $x_k$ and taking $2^{-k}$ time.

Now we can define a single loop $L$ that traverses all of the $\eta_j$ in a total time $1 = \sum_{j=1}^\infty 2^{-j}$:  We define a sequence of loops $L_i$ converging to $L$ uniformly.  First $L_1$ is the loop that traverses $\eta_1$ in order except that each time it hits a point which is an $x_k$ for some $k>1$ it waits at that point for a time equal to the sum of all the lengths of $\eta_k$ and all of its descendants.  Thus $L_1$ traverses $\eta_1$ in unit time.  We define $L_2$ the same way except that at each one of the $x_k$ that the $L_1$ loop paused at we actually traverse the loop $\eta_k$ (pausing at each $x_\ell$ on that loop for an amount of time equal to the sum of the lengths of $\eta_\ell$ and all of its descendants). Inductively we similarly define $L_k$, which traverses all loops $\eta_1, \ldots, \eta_k$, and note that the finite diameter and finite number of annulus crossing conditions imply that $L_1, L_2, \ldots$ converge uniformly to a limiting continuous loop $L$ whose range contains the range of each of $L_1, L_2, \ldots$, hence each $\eta_j$. The range of $L$ is therefore exactly the closure of $\cup_k \eta_k$.

Now, it is easy to see that the outer boundary of a continuous loop in the plane is necessarily a continuous loop (this is for instance explained and used the proof of Theorem 1.5(ii) in \cite {BurdzyLawler}, page 1003).
In the case of our loop-soup clusters, we know that this loop is almost surely simple because the cluster has no cut-point.
\end{proof}

Proposition \ref{p.satisfiesconformal} now follows from the results proved in this section: Lemma \ref{pc} gives the existence of $c_0$, Lemma \ref{l.simpleloops} implies that the loops are simple and locally finite a.s. (which in particular also implies the existence of outermost clusters), and Lemma \ref{l.confrest} yields the restriction property.

\section {Relation between $c$ and $\kappa$}
\label {S.3}

The main result of our Markovian characterization, combined with Proposition \ref {p.satisfiesconformal}
now implies the following:

\begin{corollary} \label{CLEcor}
If $c$ is sub-critical, then the set $\Gamma$ is a CLE$_\kappa$ for some $\kappa \in (8/3, 4]$.
In other words, for such $c$, the $\Gamma$
is equivalent in law to the loop ensemble constructed in \cite{Sh} via branching SLE($\kappa, \kappa-6$).
\end{corollary}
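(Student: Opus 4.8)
The plan is to read the corollary off directly from the Markovian characterization of Part One, the only actual work being to check that $\Gamma$ fits the CLE framework of Section~\ref{s.CLEproperty}. First I would verify that, for subcritical $c$, the random collection $\Gamma$ of outer boundaries of outermost loop-soup clusters is a \emph{simple loop configuration} in the sense of that section: the loops are simple and the family is locally finite by Lemma~\ref{l.simpleloops}; the loops are pairwise disjoint by Lemma~\ref{l.firstpcdef}(2c) (no two elements of $\overline \Gamm$ intersect, hence neither do their outer boundaries); and the family is non-nested, since if the outer boundary of an outermost cluster $\overline C$ lay in the interior of the outer boundary of another outermost cluster $\overline{C'}$, then $\overline C$ would lie in $F(\overline{C'})$, contradicting the outermost property of $\overline C$.

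Next I would check the three CLE axioms. Non-triviality is immediate from Lemma~\ref{l.firstpcdef}(2a): for subcritical $c$, $\overline \Gamm$ has infinitely many elements almost surely, so $\Gamma \neq \emptyset$ with probability one. Conformal invariance is inherited from the Moebius invariance of the loop-soup: the passage $\Gammabis \mapsto \Gamma$ (forming clusters, taking closures and fillings, selecting outermost clusters, taking outer boundaries) commutes with conformal maps, so the conformal invariance of $\mathbb P_c$ on $\U$ transfers to $\Gamma$; this is exactly what is needed to define $\Gamma_D$ consistently in any simply connected $D \neq \C$ via an arbitrary uniformizing map. The restriction axiom is the content of Lemma~\ref{l.confrest}: conditionally on $U^*$, the loops of $\Gammabis$ that stay in $U^*$ form a loop-soup in $U^*$, so — applying the same cluster/outer-boundary construction inside $U^*$, and noting that the intensity $c$ is still subcritical there, so that Lemmas~\ref{l.simpleloops} and~\ref{l.firstpcdef} apply again — the outer boundaries of outermost clusters inside $U^*$ have law $P_{U^*}$, as required (with the analogous statement for a union of disjoint simply connected domains, as in the final observation of Section~\ref{s.CLEproperty}).

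Having thus identified $\Gamma$ as a CLE, the corollary follows by invoking Theorem~\ref{t3}: every CLE is a CLE$_\kappa$ for some $\kappa \in (8/3,4]$, i.e.\ agrees in law with the branching SLE($\kappa,\kappa-6$) loop ensemble of \cite{Sh}; at this stage $\kappa$ is allowed to depend on $c$, and identifying it through $c=(3\kappa-8)(6-\kappa)/2\kappa$ is deferred to Theorem~\ref{kappacorrespondence}. The substantive inputs are Proposition~\ref{p.satisfiesconformal} and Theorem~\ref{t3}, both already established; the present step is a matching of definitions, and the one point that deserves care — the domain-consistency required by the CLE axioms versus the fact that Proposition~\ref{p.satisfiesconformal} was phrased in $\U$ — is handled as above by conformal invariance of the loop-soup.
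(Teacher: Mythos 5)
Your proposal is correct and follows the paper's route exactly: Proposition~\ref{p.satisfiesconformal} supplies that $\Gamma$ satisfies the conformal restriction axioms, and Theorem~\ref{t3} then identifies any such CLE with a CLE$_\kappa$ for some $\kappa\in(8/3,4]$. The detailed re-verification you give (simplicity and local finiteness from Lemma~\ref{l.simpleloops}, disjointness and non-nesting from Lemma~\ref{l.firstpcdef}, non-triviality, conformal invariance, and restriction via Lemma~\ref{l.confrest}) is essentially the content already bundled into the statement and proof of Proposition~\ref{p.satisfiesconformal}, so it is not strictly needed once that proposition is invoked, but it is harmless and usefully makes explicit the matching of definitions that the paper leaves implicit.
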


We will now identify $\kappa$ in terms of $c$.
\begin{proposition} \label{p.ckappa} For all subcritical $c$, the set $\Gamma$ is in fact a CLE$_\kappa$ with $c=(3\kappa-8)(6-\kappa)/2\kappa$.
\end{proposition}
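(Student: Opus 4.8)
By Corollary~\ref{CLEcor} we already know that $\Gamma$ is a CLE$_\kappa$ for some $\kappa=\kappa(c)\in(8/3,4]$, so the task is only to identify the function $\kappa(c)$; equivalently, since $\kappa$ determines and is determined by the conformal covariance exponent $\beta=8/\kappa-1$ of the associated pinned measure $\mu$, it suffices to compute $\beta$ (or $\kappa$) in terms of $c$. The plan is to compute one and the same conformally covariant quantity in two ways: once from the Brownian loop-soup definition, using only the conformal restriction property of the Brownian loop measure $\mu$ of \cite{LW,W} (the sole feature of the loop-soup that we allow ourselves), and once from the CLE$_\kappa$/SLE$_\kappa$ structure, using the identification of the CLE bubble measure with the SLE$_\kappa$ excursion measure (Proposition~\ref{muisSLEexcursion} and Lemma~\ref{l.besexc}) together with the standard SLE restriction identities of \cite{LSWr}. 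In both computations the outcome will be the exponential of a constant times the mass (under the Brownian loop measure) of a certain family of loops, and the constant is $c$ in the loop-soup computation but is the central charge $(3\kappa-8)(6-\kappa)/2\kappa$ in the SLE computation; matching the two constants yields the proposition.

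Concretely, I would work in $\HH$ with marked boundary point $0$, fix a bounded hull $A\subset\overline\HH$ attached to $\partial\HH$, bounded away from $0$ and $\infty$, with $\HH\setminus A$ simply connected, and let $\Phi_A\colon\HH\setminus A\to\HH$ be the conformal map normalized at $0$ and $\infty$. On the loop-soup side, passing from the loop-soup in $\HH$ to the loop-soup in $\HH\setminus A$ just deletes an independent Poisson family of Brownian loops, and the conditional law of the outermost cluster $C(z)$ around an interior point $z$, given that $\overline{C(z)}$ avoids $A$, is reweighted relative to the conformal image of the cluster measure by a factor $\exp\big({-c\,M_{\HH}(C(z);A)}\big)$, where $M_{\HH}(C(z);A)$ is the $\mu$-mass of the loops in $\HH$ that meet both $\overline{C(z)}$ and $A$ (this is an infinite quantity whose divergence is concentrated near $0$, together with the analogous finite contribution from loops surrounding $C(z)$). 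Taking the $\varepsilon\to0$ limit that defines the pinned measure, exactly as in Sections~\ref{s.tail} and \ref{pinnedSLE}, converts the near-$0$ part of $M$ into the conformal covariance power $\Phi_A'(0)^{\beta}$ of $\mu$ --- this is Proposition~\ref{muA} read quantitatively --- and leaves a finite ``bulk'' loop-mass $m_A$ (depending only on $A$ and $\kappa$) multiplied by $c$: the density of $\mu$ restricted to $\{\gamma\cap A=\emptyset\}$ with respect to the conformal image of $\mu$ under $\Phi_A$ is $\Phi_A'(0)^{\beta}\exp(-c\,m_A)$. On the SLE side the same density is computed from the SLE$_\kappa$ excursion measure via the restriction computations of \cite{LSWr} (equivalently, via the fact that an SLE$_\kappa$ curve decorated by an independent loop-soup of intensity $(3\kappa-8)(6-\kappa)/2\kappa$ satisfies exact conformal restriction with boundary exponent $(6-\kappa)/2\kappa$): it again takes the form $\Phi_A'(0)^{\beta}\exp\big({-}\tfrac{(3\kappa-8)(6-\kappa)}{2\kappa}\,m_A\big)$, with the very same finite loop-mass $m_A$ and the same boundary power (the latter being automatically consistent because $\beta=8/\kappa-1$ and the boundary restriction exponent $(6-\kappa)/2\kappa$ both arise from the scaling property of $\mu$). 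Since the two expressions must agree for all admissible $A$ and $m_A$ is non-constant in $A$, we conclude $c=(3\kappa-8)(6-\kappa)/2\kappa$.

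The main obstacle is the loop-soup half of the comparison: one must run the bookkeeping at the level of \emph{clusters} rather than individual loops (deleting loops can merge clusters, and one needs the outermost cluster around $z$ to genuinely survive the restriction, i.e.\ the event that no chain of loops joins it to $A$ and no added loop surrounds it), and one must control the divergence of the loop-measure mass near the marked point $0$ and match its regularized value precisely with the exponent $\beta$ and with the standard normalization of $\mu$ from \cite{LW}. This is the same kind of $\varepsilon\to0$ limiting argument already developed for the pinned measure in Sections~\ref{S.5}--\ref{pinnedSLE}, so the route is clear; the delicate point is pinning down all multiplicative constants so that the coefficient of the Brownian-loop-measure term is exactly $c$, after which matching with the \cite{LSWr} restriction identities gives the stated relation.
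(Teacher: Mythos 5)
Your proposal takes essentially the same route as the paper's proof. Both strategies identify $\kappa$ by computing, in two ways, the Radon--Nikodym derivative of the one-point pinned/bubble measure between two nested domains: on the Brownian-loop-soup side the coefficient of the Brownian loop measure mass in the exponential is $c$ (because adding or deleting an independent Poisson family of loops has a likelihood ratio $e^{-c\,L(\cdot,\cdot;\cdot)}$), while on the SLE side the same exponential coefficient is $(3\kappa-8)(6-\kappa)/2\kappa$ by the restriction martingale of \cite{LSWr}; equating the two then pins down $\kappa(c)$ since $\kappa\mapsto c(\kappa)$ is injective on $(8/3,4]$.

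The main presentational difference is that the paper works with the bubble \emph{probability} measure $P^i$ (the $\eps\to 0$ limit of the law of $\gamma(i)$ conditioned to touch $C_\eps$) and a fixed auxiliary semi-disc $H$, whereas you work with the infinite pinned measure $\mu$ and a general hull $A$. Using $P^i$ sidesteps the need to track the covariance prefactor $\Phi_A'(0)^\beta$ (both sides of the paper's comparison are probability measures supported on loops through $0$ surrounding $i$, so that factor never appears); in your version this prefactor does appear on both sides and must cancel, which you correctly observe but which adds a layer of bookkeeping. The paper also derives the SLE-side identity directly for the approximating chordal SLE from $\eps$ to $0$ and then passes to the limit $\eps\to0$, which cleanly avoids any fusion-type computation at the marked point. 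Finally, on the loop-soup side the paper's proof explicitly controls the second source of discrepancy you flag --- that adding back the loops leaving $H$ could create a \emph{new} cluster surrounding $\gamma'(i)$ --- via the BK inequality argument already used in Lemma~\ref{exponentialcrossingdecay}, showing that its conditional probability vanishes in the $\eps\to0$ limit; your proposal identifies this obstacle as the delicate point but does not spell out how to dispose of it.
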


Note that in our proof, we will not really use the description of CLE$_\kappa$ via branching SLE($\kappa, \kappa -6$). We will only use the description of the conformal loop-ensemble via its pinned loop measure, as described in the earlier sections:
Suppose that $\Gamma$ is a random loop ensemble that satisfies the conformal restriction axioms. Consider its version in the upper half-plane $\H$, and consider the loop
$\gamma (i)$ of $\Gamma$ that surrounds $i$. Let us now consider the law of $\gamma(i)$ conditioned on the event $\{d( 0, \gamma (i)) \le \eps \}$. We have seen that this law converges when $\eps \to 0$ to some limit
$P^i$, and furthermore that for some $\kappa \in (8/3, 4]$, $P^i$ is equal to an SLE-excursion law $P^{i, \kappa}$ that we will describe in the next paragraph. Furthermore, when this is the case, it turns out that the entire family $\Gamma$ is a CLE$_\kappa$ for this value of $\kappa$.

Consider an SLE$_\kappa$ in the upper half-plane, started from the point $\eps >0$ on the real axis to the point $0$ (on the real axis as well). Such an SLE path will typically be very small when $\eps$ is small. However, one can show that the limit when $\eps \to 0$ of (the law of) this SLE, conditioned on disconnecting $i$ from $\infty$ in the upper half-plane exists. This limit (i.e., its law) is what we call $P^{i, \kappa}$.

Conformal invariance of SLE$_\kappa$ enables to define an analogous measure in other simply connected domains. Suppose for instance that
$H = \{ z \in \H \ : \ | z| < 3 \}$. We can again consider the limit of the law of SLE$_\kappa$ in $H$ from $\eps$ to $0$, and conditioned to disconnect
$i$ from $3i$. This limit is a probability measure $P_H^{i, \kappa}$ that can also be viewed as the image of $P^{i , \kappa}$ under the conformal map
$\Phi$ from $\H$ onto $H$ that keeps the points $0$ and $i$ invariant.  Note that the same argument holds for other choices of $H$, but choosing this particular one will be enough $H$ to identify the relationship between $c$ and $\kappa$.

One can use SLE techniques to compare ``directly'' the laws of an SLE $\gamma$ from $\eps$ to $0$ in $\H$ and of an SLE $\gamma'$ also from $\eps$ to $0$ in $H$.
More precisely, the SLE martingale derived in \cite {LSWr} show that the Radon-Nikodym of
the former with respect to the latter is a multiple of
$$ \exp  ( - c L ( \gamma, \H \setminus H ; \H ))$$
on the set of loops $\gamma$ that stay in $H$ (recall that $L(A,A';D)$ denotes the $\mu$-mass of the set of Brownian loops in $D$ that intersect both $A$ and $A'$),
where
$$  c = c (\kappa) =\frac {(3\kappa-8)(6-\kappa)}{2\kappa}.$$
This absolute continuity relation is valid for all $\eps$, and it therefore follows that it still holds after passing to the previous limit $\eps \to 0$, i.e., for the
two probability measures $P^{i, \kappa}$ and $P^{i, \kappa}_H$. This can be viewed as a property of $P^{i, \kappa}$ itself because $P^{i, \kappa}_H$ is the conformal image of $P^{i, \kappa}$ under $\Phi$.

Note that the function $\kappa \mapsto c(\kappa)$ is strictly increasing on the interval $(8/3, 4]$. Only one value of $\kappa$ corresponds to each value of $c \in (0,1]$.
Hence, in order to identify the value $\kappa$ associated to a family $\Gamma$ of loops satisfying the conformal restriction axioms, it suffices to check that the probability measure $P^i$ satisfies the corresponding absolute continuity relation for the corresponding value of $c$.

\begin{proof}
Suppose that $c$ is subcritical, that $\Gammabis$ is a loop-soup with intensity $c$ in $\H$, and that $\Gamma$ is the corresponding family of disjoint loops (i.e., of outer boundaries of outermost clusters of loops of $\Gammabis$). For the semi-circle $H$ defined above, we denote by $\Gammabis'$ the loops of $\Gammabis$ that stay in $H$, and we denote by $\Gamma'$ the corresponding family of disjoint loops (i.e. outermost boundaries of outermost clusters). Note that $\Gammabis'$ is a loopsoup in $H$.
Let $\gamma(i)$ denote the loop in $\Gamma$ that surrounds $i$, and let $\gamma'(i)$ denote the loop in $\Gamma'$ that surrounds $i$.
Note that if $\gamma(i) \not= \gamma'(i)$, then necessarily $\gamma(i) \notin H$. Furthermore, in order for
$\gamma (i)$ and $\gamma'(i)$ to be equal it suffices that:
\begin {itemize}
 \item $\Gammabis \setminus \Gammabis'$ contains no loop that intersects $\gamma'(i)$ (note that the probability of this event-- conditionally on $\gamma'(i)$ --  is
$\exp ( - c L ( \gamma'(i), \H \setminus H ; \H )$).
\item One did not create another disjoint cluster that goes ``around'' $\gamma'(i)$ by adding to $\Gammabis'$ the loops of $\Gammabis$ that do not stay in $H$. When $\gamma'(i)$ already intersects the disk of radius $\eps$, the conditional probability that one creates such an additional cluster goes to $0$ as $\eps \to 0$ (using the BK inequality for Poisson point processes, as in the proof of Lemma \ref{exponentialcrossingdecay}).
\end {itemize}
If we now condition on the event that $\gamma'(i)$ intersects the disk of radius $\eps$ and let $\eps \to 0$, it follows that under the limiting law $P^i$ satisfies the absolute continuity relation that we are after: On the set of curves $\gamma$ that stay in $H$, the Radon-Nikodym derivative of $P^i$ with respect to the measure defined directly  in $H$ instead of $\H$ is $\exp ( - c L ( \gamma, \H \setminus H ; \H )$).
Hence, one necessarily has $P^i = P^{ i, \kappa}$ for $c=c(\kappa)$.
\end {proof}

This identification allows us to give a short proof of the following fact, which will be instrumental in the next section:

\begin {proposition}
 \label {p.cosubcritical}
$c_0$ is subcritical.
\end {proposition}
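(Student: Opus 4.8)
The plan is to verify property~(b) of Lemma~\ref{l.firstpcdef} at $c=c_0$, since by the remark following that lemma this already implies that $c_0$ is subcritical. Working in $\HH$ via conformal invariance of the loop-soup, it suffices to show that $\mathbb P_{c_0}$-almost surely no cluster of $\Gammabis$ accumulates at the boundary point $0$; a countable union over a dense set of boundary points then gives~(b). The identification enters as follows. First, $c_0\le 1$: by Corollary~\ref{CLEcor} together with Proposition~\ref{p.ckappa}, every subcritical intensity $c$ equals $(3\kappa-8)(6-\kappa)/(2\kappa)$ for some $\kappa\in(8/3,4]$, hence $c\le 1$. Now decompose $\Gammabis$ into two independent loop-soups $\Gammabis_1,\Gammabis_2$ of intensity $c_1=c_2=c_0/2$. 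Then $c_1=c_2<c_0\le 1$, so each $\Gammabis_i$ is strictly subcritical and, by Corollary~\ref{CLEcor} and Proposition~\ref{p.ckappa}, the outer boundaries of its outermost clusters form a CLE$_{\kappa_i}$ with $\kappa_i<4$, i.e.\ with exponent $\beta_i=8/\kappa_i-1>1$. Applying Corollary~\ref{boundedness} and scale invariance to each $\Gammabis_i$, the radius $\mathcal R_i$ of the smallest disc around $0$ containing all clusters of $\Gammabis_i$ that reach the unit semicircle satisfies $\mathbb P(\mathcal R_i>x)=x^{-\beta_i+o(1)}$, which is summable over the dyadic scales $x=2^m$ precisely because $\beta_i>1$. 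This strict inequality, available for the two halves whatever the (still unknown) value $\beta_0$ of the exponent at $c_0$ turns out to be, is the whole reason for splitting $c_0$ in two.

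Next I would analyse the clusters of the combined soup near $0$. Since each individual Brownian loop is almost surely at positive distance from $\R$, and since $\Gammabis_1$ and $\Gammabis_2$ are each locally finite by Lemma~\ref{l.simpleloops}, every cluster of $\Gammabis_1$ meets only finitely many clusters of $\Gammabis_2$ and conversely; thus a cluster of $\Gammabis=\Gammabis_1\cup\Gammabis_2$ is a connected component of a locally finite bipartite graph whose vertices are the clusters of $\Gammabis_1$ and $\Gammabis_2$ and whose edges record intersection. If such a component accumulated at $0$, then (by a compactness argument, K\"onig's lemma) it would contain an infinite path, i.e.\ a sequence $D_1,D_2,\dots$ of clusters, alternately from $\Gammabis_1$ and $\Gammabis_2$, with $D_k\cap D_{k+1}\ne\emptyset$, whose union is connected and has points arbitrarily close to $0$ as well as points at distance $\ge 1$ from $0$. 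Such a union crosses every dyadic half-shell $S_m=\{z\in\HH:2^{-m-1}<|z|<2^{-m}\}$. I would then bound the probability of the event $F_m$ that the combined soup contains a crossing of $S_m$ of this ``dangerous type'' --- one built from clusters of $\Gammabis_1$ and of $\Gammabis_2$ each of which is part of a chain reaching both far from $0$ and very close to $0$ --- using the BK inequality for the Poisson point process $\Gammabis_1\cup\Gammabis_2$ (as in the proof of Lemma~\ref{exponentialcrossingdecay}), the exponential decay of the number of disjoint chain-crossings of a fixed annulus for each subcritical soup (Lemma~\ref{exponentialcrossingdecay}), and the summable tail bounds on $\mathcal R_1,\mathcal R_2$. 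The outcome should be $\sum_m\mathbb P(F_m)<\infty$, so that Borel--Cantelli gives that almost surely only finitely many $S_m$ are crossed in this way, contradicting the existence of a cluster accumulating at $0$.

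The main obstacle is the last step: a chain of the combined soup may alternate between the two sub-soups and may cross a half-shell near the boundary using loops of many different scales, so one has to set up the right multi-scale decomposition of such a crossing and show that ``escaping to the boundary'' forces, at infinitely many scales, an event whose probability is summable. The geometric input that makes this work is exactly Corollary~\ref{boundedness} with $\beta_i>1$: without the $c$--$\kappa$ identification of Propositions~\ref{p.ckappa} and~\ref{muisSLEexcursion} one has no control on how far the clusters touching a small circle around a boundary point can extend, and the argument cannot even begin. (An alternative formulation of the same contradiction would be to follow, in the splitting above, the cluster of $\Gammabis$ surrounding the origin and to show that its filling does not grow to fill $\U$; this again reduces to the same summable tail estimate for $\mathcal R_1$ and $\mathcal R_2$.)
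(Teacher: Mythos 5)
Your plan is a genuinely different, and substantially harder, route than the paper's, and it stops short of the key estimate, so there is a real gap. The paper's actual argument is short and soft: in the monotone coupling $c\mapsto\Gammabis(c)$, it defines $d(c)=d(\overline{C_c},[1,2])$ for the cluster $C_c$ surrounding $i$, uses the FKG-type negative correlation between the decreasing variable $d(c)$ and the increasing event $E_\eps$ to get $P(d(c)\ge x)\ge P^{i,\kappa(c)}(d(\gamma,[1,2])\ge x)$, then uses $c_0\le 1$ (hence $\kappa_0\le 4$), continuity of $P^{i,\kappa}$ in $\kappa$, and the fact that $P^{i,\kappa_0}$-almost surely the SLE excursion avoids $[1,2]$. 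No splitting of $c_0$, no multi-scale crossing argument. Your plan of writing $c_0=c_0/2+c_0/2$ and controlling the superposition near a boundary point is much closer in spirit to the paper's \emph{later}, harder proof that $c_0=1$ (Proposition \ref{criticaltheorem}), which requires new machinery you do not set up (the $M_\alpha$-capacity of \eqref{hcappsi}, the quantity $N(A)$, and the contraction $N(A)\le C_1(c)M(A)$ of Lemma \ref{generalNbound}).

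Concretely, the paragraph ending ``The outcome should be $\sum_m\mathbb P(F_m)<\infty$'' asserts the conclusion rather than proving it, and you yourself flag ``the main obstacle'' — controlling alternating chains that cross a half-shell using loops at many scales — without resolving it. Two of the supporting claims are also shaky. First, the use of Corollary \ref{boundedness} is misstated: summability of $\sum_m 2^{-m\beta_i+o(m)}$ holds as soon as $\beta_i>0$; what $\beta_i>1$ actually buys in the paper is a finite $(1+\alpha)$-moment of $\diam(\tilde A)$ with $\alpha=\alpha(c_1)>0$, used through the $M_\alpha$-contraction — not through a dyadic tail sum. So the precise point where the strict inequality $\kappa_i<4$ enters your argument is never pinned down. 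Second, the claim ``every cluster of $\Gammabis_1$ meets only finitely many clusters of $\Gammabis_2$'', invoked to apply K\"onig's lemma, is not justified: local finiteness of $\overline{\Gamm}_2$ only bounds the number of clusters above a fixed diameter, and arbitrarily small clusters of $\Gammabis_2$ could a priori accumulate on a given cluster of $\Gammabis_1$.
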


Note that the standard arguments developed in the context of Mandelbrot percolation
(see \cite{MR1409145,MR931500}) can be easily adapted to the present setting in order to prove that $c_0$ is subcritical, but not in the sense we have defined it. It shows for instance easily that at $c_0$, there exist paths and loops that intersect no loop in the loop-soup, but non-trivial additional work would then be required in order to deduce that no cluster touches the boundary of the domain. Since we have this identification via SLE$_\kappa$ loops at our disposal, it is natural to prove this result in the following way:

\begin {proof}
Let $C_c$ be the outermost cluster surrounding $i$ if the loop-soup (in $\H$) has intensity $c$.
If we take the usual coupled Poisson point of view in which the set $\Gammabis = \Gammabis(c)$ is increasing in $c$ (with loops ``appearing'' at random times up to time $c$) then we have by definition that almost surely $C_{c_0} = \cup_{c < c_0} C_c$ (this is simply because, almost surely, no loop appears exactly at time $c_0$).  Let $d(c)$ denote the
Euclidean distance between $\overline C_c$ and the segment $[1, 2]$.
Clearly $d(c) > 0 $ almost surely for each $c < c_0$ and $d(c_0) = \lim_{c \to c_0-} d(c)$.

By the remark after Lemma \ref{l.firstpcdef}, we know that in order to prove that $c_0$ is subcritical, it suffices to show that $d(c_0) > 0$ almost surely (by Moebius invariance, this will imply that almost surely, no cluster touches the boundary of $\H$).
Note that $d$ is a non-increasing function of the loop-soup configuration (i.e., adding more loops to a configuration can not increase the corresponding distance $d$).
Similarly, the event $E_\eps$ that the outermost cluster surrounding $i$ does intersect the $\eps$-neighborhood of the origin is an increasing event (i.e., adding more loops to a configuration can only help this event to hold).  Hence it follows that for each $\eps$, the random variable $d(c)$ is negatively correlated with the event $E_\eps$.
Letting $\eps \to 0$, we get that (for subcritical $c$),
 the law of $d(c)$ is ``bounded from below'' by the law of the distance between the curve $\gamma$ (defined under the probability measure $P^{i, \kappa}$) and $[1,2]$, in the sense that for any positive $x$,
$$ P ( d(c) \ge  x ) \ \ge \  P^{i, \kappa} ( d ( \gamma, [1,2]) \ge x ).$$
But we also know that $c_0 \le 1$, so that $\kappa_0 := \lim_{c \to c_0-} \kappa (c) \le 4$.
It follows readily that for all $c < c_0$ and for all $x$,
$$ \lim_{c \to c_0-} P ( d(c) \ge  x ) \ \ge\  \lim_{\kappa \to \kappa_0-} P^{i, \kappa} ( d( \gamma, [1,2]) \ge  x )
\ \ge\  P^{i, \kappa_0} ( d(\gamma, [1,2] ) \ge 2x).$$
But we know that for any $\kappa \le 4$, the SLE excursion $\gamma$ stays almost surely away from $[1,2]$.
Putting the pieces together, we get indeed that
\begin {eqnarray*}
P ( d (c_0)  > 0 )  &= & \lim_{x \to 0+} P ( d(c_0) \ge  x )
\ \ge \  \lim_{x \to 0+} \lim_{c \to c_0-} P ( d (c) \ge x )  \\
&\ge&  \lim_{x \to 0+} P^{i, \kappa_0} ( d ( \gamma, [1,2] ) \ge 2x)
\ \ge \  P^{i, \kappa_0} ( d( \gamma, [1,2] ) > 0 )
 \  =   \  1 .
\end {eqnarray*}
\end {proof}

\section{Identifying the critical intensity $c_0$}
\label {S.4}

\subsection {Statement and comments}

The statements of our main results on Brownian loop-soup cluster, Theorems \ref{loopsoupsatisfiesaxioms} and
\ref{kappacorrespondence}, are mostly contained in the results of the previous section:
Corollary \ref{CLEcor}, Proposition \ref{p.ckappa}, Proposition \ref{p.cosubcritical}.  It remains only to prove the following statement:

\begin{proposition} \label{criticaltheorem}
The critical constant of Lemma \ref{l.firstpcdef} is $c_0 = 1$ (which corresponds to $\kappa = 4$, by Proposition \ref{p.ckappa}).
\end{proposition}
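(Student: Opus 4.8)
\textbf{Proof strategy for Proposition \ref{criticaltheorem}.}

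The plan is to identify the critical intensity using the ``additivity'' of Brownian loop-soups together with the quantitative relation $c = c(\kappa) = (3\kappa - 8)(6-\kappa)/2\kappa$ from Proposition \ref{p.ckappa}. The key structural fact is that the union of two independent loop-soups of intensities $c_1$ and $c_2$ is a loop-soup of intensity $c_1 + c_2$, and that when $c_1 + c_2$ is subcritical, the outer-cluster-boundary ensemble of the union is obtained by overlaying the two CLE$_{\kappa_i}$'s (coming from the two sub-soups) and taking outermost cluster boundaries of that combined collection. Thus, if $c_0 > 1$, one could take two subcritical intensities $c_1, c_2$ with $c_1 + c_2$ still subcritical but such that $\kappa(c_1)$ and $\kappa(c_2)$ are both close to $4$; overlaying two nearly-CLE$_4$ ensembles and taking outermost boundaries would then have to yield a CLE$_{\kappa}$ with $\kappa = \kappa(c_1 + c_2) \le \kappa(c_0)$, and in the limit $c_1, c_2 \uparrow c_0/2$ one would be forced to conclude that the overlay of two CLE$_{4}$'s is again (essentially) a CLE$_{4}$. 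The idea is that this leads to a contradiction with a size/dimension monotonicity: the carpet of the overlaid ensemble is strictly smaller (strictly larger Hausdorff or expectation dimension of the set of points surrounded by loops, strictly smaller carpet) than that of a single CLE$_{\kappa_0}$ unless $\kappa_0$ is the maximal possible value $4$, which forces $c_0 = c(4) = 1$.

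More concretely, I would argue as follows. First I would establish the \emph{lower bound} $c_0 \ge 1$: since $c(\kappa)$ ranges over $(0,1]$ as $\kappa$ ranges over $(8/3, 4]$, and Corollary \ref{CLEcor} together with Proposition \ref{p.ckappa} show that every subcritical $c$ yields a CLE$_{\kappa(c)}$ with $\kappa(c) \le 4$, we need only rule out the possibility that $c_0 < 1$. Suppose $c_0 < 1$. Pick $c_1, c_2$ subcritical with $c_1 + c_2 < c_0$ but with $c_1 + c_2$ as close to $c_0$ as we like; then the loop-soup of intensity $c_1 + c_2$ is subcritical, its outer boundary ensemble is a CLE$_{\kappa(c_1+c_2)}$, and this same ensemble arises by overlaying a CLE$_{\kappa(c_1)}$ and an independent CLE$_{\kappa(c_2)}$ and extracting outermost cluster boundaries. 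But overlaying a second independent nontrivial CLE inside the carpet of the first \emph{strictly} reduces the carpet (it removes, with positive probability, points of the carpet of the first by surrounding them with loops of the second); hence the expectation dimension $d$ of the carpet must \emph{strictly decrease}, i.e. $d(c_1 + c_2) < d(c_1)$ whenever $c_2 > 0$ and both are subcritical. On the other hand $c \mapsto d(c)$ computed via SLE tools is continuous and strictly decreasing on the subcritical range, and remains bounded below as $c \uparrow c_0$; by Fatou/monotonicity the carpet at intensity $c_0$ (if $c_0$ were subcritical, which it is by Proposition \ref{p.cosubcritical}) would have a well-defined dimension $d(c_0) = \lim_{c \uparrow c_0} d(c)$. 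Now choose $c_1, c_2 \uparrow c_0/2$; then $c_1 + c_2 \uparrow c_0$, and the strict-decrease inequality $d(c_1 + c_2) < d(c_1)$ passes to the limit as $d(c_0) \le \lim d(c_1) = d(c_0/2)$, while simultaneously $d(c_0/2)$, being a subcritical value strictly below $c_0$, satisfies $d(c_0/2) > d(c_0)$ by strict monotonicity — a contradiction unless the ``strict'' decrease fails precisely at the top, i.e. unless $c_0 = 1$ so that $c_0/2$ and $c_0$ straddle the critical value $\kappa = 4$ correctly. I would make this airtight by noting that $d(c)$ as given by the explicit formula $d(c) = (187 - 7c + \sqrt{25 + c^2 - 26c})/96$ is strictly decreasing for $c \in (0,1]$, and that the additivity forces $d$ to be \emph{subadditive-like} in the sense $d(c_1 + c_2) \le d(c_i)$, so the only consistent possibility is that the subcritical interval $(0, c_0)$ is exactly $(0,1)$.

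For the \emph{upper bound} $c_0 \le 1$, the cleanest route is a direct contradiction with the existence of a critical (space-filling at $\kappa = 4$, then percolating beyond) regime. If $c_0 > 1$, then there is a subcritical $c^* \in (1, c_0)$ yielding a CLE$_{\kappa^*}$ with $c(\kappa^*) = c^*$; but $c(\kappa) \le 1$ for all $\kappa \in (8/3, 4]$ with equality only at $\kappa = 4$, so no $\kappa \le 4$ has $c(\kappa) = c^* > 1$, while $\kappa > 4$ is excluded because in the subcritical regime all loops of $\Gamma$ are simple and disjoint (Lemma \ref{l.simpleloops}) and the CLE Markovian characterization forces $\kappa \in (8/3, 4]$. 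This is an outright contradiction, so $c_0 \le 1$. Combining, $c_0 = 1$, and the correspondence $c_0 = c(4)$ matches Proposition \ref{p.ckappa}.

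\textbf{The main obstacle.} The delicate point is the lower bound argument: one must ensure that overlaying two subcritical soups of intensities $c_1, c_2$ with $c_1 + c_2$ subcritical really does reproduce, \emph{as a CLE}, the CLE$_{\kappa(c_1+c_2)}$, i.e. that the outermost-cluster-boundary operation commutes with union of loop-soups and with the passage $\Gammabis \mapsto \Gamma$. This requires care because the clusters of $\Gammabis^{(1)} \cup \Gammabis^{(2)}$ are \emph{not} simply unions of a cluster of $\Gammabis^{(1)}$ with a cluster of $\Gammabis^{(2)}$ — they can chain together through repeated alternation — so one needs that, conditionally on $\Gamma^{(1)}$ (the disjoint simple loop ensemble from the first soup), the loops of the second soup behave like an independent soup inside each carpet component, which is exactly the CLE restriction property together with the conformal invariance of the loop-soup; then the additivity at the level of soups plus Proposition \ref{p.ckappa} applied at intensity $c_1 + c_2$ closes the loop. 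Making the strict-monotonicity of $d$ under this overlay rigorous (rather than merely the non-strict inequality, which is immediate) is where most of the real work lies, and is why the SLE-based computation of the carpet dimension, valid across the whole subcritical range with the explicit continuous strictly-monotone formula, is essential to the argument.
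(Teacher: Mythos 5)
Your argument for the \emph{upper} bound $c_0 \le 1$ is exactly the paper's: since subcritical soups yield CLE$_\kappa$'s with $\kappa \in (8/3,4]$ and $c(\kappa) \le 1$ on this interval, $c_0 > 1$ is impossible. The problem is the \emph{lower} bound $c_0 \ge 1$, where your proposed dimension-monotonicity argument does not actually produce a contradiction. You derive (assuming $c_0 < 1$) that $d(c_0) \le d(c_0/2)$ by passing the strict inequality $d(c_1 + c_2) < d(c_1)$ to the limit $c_1, c_2 \uparrow c_0/2$, and separately that $d(c_0/2) > d(c_0)$ by strict monotonicity of the formula $d(c)$. But these two statements are entirely compatible — both just say $d$ is decreasing — so there is no contradiction and no constraint on $c_0$. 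The reason no contradiction can emerge this way is structural: your entire chain of inequalities lives inside the subcritical regime $(0, c_0)$, where everything is consistent for \emph{any} value of $c_0$. To rule out $c_0 < 1$ you must say something about what happens when you push past $c_0$, and the overlay/dimension argument never touches that.

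What the paper actually proves, and what your proposal is missing, is a quantitative percolation bound: if $c_0 < 1$ (hence $\kappa(c_0) < 4$, hence $\beta > 1$), one can choose $\alpha \in (0,1)$ so that $\E(\diam(\tilde A)^{1+\alpha}) < \infty$ — this is Corollary \ref{boundedness}, the only place an SLE estimate enters, and it is valid precisely because $\kappa < 4$. With this $\alpha$ one builds a weighted capacity $M_\alpha$ with the contraction property $N(A) = \E(M_\alpha(\Phi_A(\tilde A))) \le C_1(c) M_\alpha(A)$ where $C_1(c) \to 0$ as $c \to 0$ (Lemma \ref{generalNbound}). Choosing $c'$ small enough that $C_1(c_0) C_1(c') < 1$ and running an alternating exploration (grow by $c_0$-loops, then by $c'$-loops, etc.), one shows the accumulated half-plane capacity stays finite a.s., so the soup of intensity $c_0 + c'$ still has a positive probability of no cluster reaching a fixed line $L_R$ — contradicting that $c_0$ is critical. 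This quantitative mechanism (the $M_\alpha$ machinery exploiting $\beta > 1$ and the alternating exploration producing a geometric decay) is the genuinely new idea of the proof, and it is absent from your proposal. The "strict decrease of $d$ under overlay," even if established, would only recover the non-strict limit $d(c_0) \le d(c_0/2)$ and cannot substitute for it.
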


Propositions \ref{p.satisfiesconformal}, \ref {p.ckappa} and our Markovian characterization results already imply that we cannot have $c_0 > 1$,
since in this case the $\Gamma$  corresponding to $c \in (1, c_0)$ would give additional random loop collections satisfying the conformal axioms (beyond the CLE$_\kappa$ with $\kappa \in (8/3,4]$), which was ruled out in the first part of the paper.  It remains only to rule out the possibility that $c_0 <1$.

 The proof of this fact is not straightforward, and it requires some new notation and several additional lemmas. Let us first outline the very rough idea. Suppose that $c_0 < 1$. This means that at $c_0$, the loop-soup cluster boundaries are described with $SLE_\kappa$-type loops for $\kappa = \kappa ( c_0) < 4$. Certain estimates on SLE show that SLE curves for $\kappa < 4$ have a probability to get $\epsilon$ close to a boundary arc of the domain that decays quickly in $\epsilon$, and that this fails to hold for SLE$_4$.  In fact, we will use Corollary \ref {boundedness}
roughly shows that the probability that the diameter of the set of loops intersecting a small ball on the boundary of $\H$ is large,  decays quickly with the size of this ball when $\kappa < 4$.
  Hence, one can intuitively guess that when $\kappa < 4$, two big clusters will be unlikely to be very close, i.e., in some sense, there is ``some space'' between the clusters. Therefore, adding a loop-soup with a very small intensity $c'$
on top of the loop-soup with intensity $c_0$ might not be sufficient to make all clusters percolate, and this would contradict the fact that $c_0$ is critical.

We find it convenient in this section to work with a loop-soup in the upper half plane $\H$ instead of the unit disk.
To show that there are distinct clusters in such a union of two CLEs $\Gamma$ and $\Gamma'$,
we will start with the semi-disk $A_1$ of radius $1$ centered at the origin.
We then add all the loops in $\Gamma$ that hit $A_1$, add the loops in ${\Gamma}'$ that hit
those loops, add the loops in $\Gamma$ that hit those loops, etc. and try to show that in some sense the size of this
growing set remains bounded almost surely. The key to the proof is to find the right way to describe this ``size'', as the usual quantities
such as harmonic measure or capacity turn out not to be well-suited here.

\subsection {An intermediate way to measure the size of sets}

We will now define a generalization of the usual half-plane capacity.
Suppose that $\alpha \in (0,1]$, and that $A$ is a bounded closed subset of the upper half-plane $\H$. We define
\begin{equation} \label{hcappsi} M(A) = M_\alpha (A) := \lim_{s \to \infty} s \E ( (\Im B^{is}_{\tau(A)})^\alpha),\end{equation}
where $B^{is}$ is a Brownian motion started at $is$ stopped
at the first time $\tau(A)$ that it hits $A \cup \R$.  Note that $M_1= \hcap$ is just the
usual half-plane capacity used in the context of chordal Loewner chains, whereas $\lim_{\alpha \to 0+} M_\alpha (A)$ is the harmonic
measure of $A \cap \HH$ ``viewed from infinity''.
Recall that standard properties of planar Brownian motion imply that
the limit in (\ref {hcappsi})
 necessarily exists, that it is finite, and that for some universal constant $C_0$ and for any $r$ such that $A$ is a subset of the disk of radius $r$ centered at the origin, the limit is equal to
$C_0 r^{-1} \times \E ( (\Im B_{\tau(A)})^\alpha )$
where the Brownian motion $B$ starts at a random point $r e^{i \theta}$, where $\theta$ distributed according to the density $\sin (\theta)  d \theta / 2$ on $[0, \pi]$.

A {\em hull} is defined to be a bounded closed subset of $\H$ whose complement in $\H$ is simply connected.
The union of two hulls $A$ and $A'$ is not necessarily a hull, but we denote by $A \underline \cup A'$ the hull whose
complement is the unbounded component of $\H \setminus (A \cup A')$.

When $A$ is a hull, let us denote by $\Phi_A: \H \setminus A \to \H$ the conformal map normalized at infinity by  $\lim_{z \to \infty} \Phi_A(z) - z = 0$.
 Recall that for all $z \in \HH \setminus A$, $\Im ( \Phi_A (z) ) \le \Im ( z)$.
Then, when $A'$ is another hull, the set $\Phi_A ( A' \cap ( \H \setminus A))$ is not necessarily a hull. But we can still define the unbounded connected component of its complement in the upper half-plane, and take its complement. We call it $\Phi_{A} (A')$ (by a slight abuse of notation).

It is well-known and follows immediately from the definition of half-plane capacity that for any bounded closed $A$ and any positive $a$,
$\hcap (aA) = a^2 \hcap(A)$. Similarly, for any two $A$ and $A'$, $\hcap(A \cup A') \leq \hcap(A) +\hcap(A')$.
Furthermore $\hcap$ is increasing with respect to $A$, and behaves additively with repect to composition of conformal maps for hulls.

We will now collect easy generalizations of some of these four facts.
 Observe first that
for any positive $a$ we have
\begin{equation}\label{Mscaling} M(a A) = a^{\alpha+1} M(A). \end{equation}
Similarly, we have that for any two (bounded closed) $A$ and $A'$,
\begin {equation}
 \label{Munionsubadditive}
M(A  \cup A') \leq M(A) + M(A').
\end {equation}
This follows from the definition \eqref{hcappsi} and the fact that for each sample of the Brownian motion we have
$$(\Im B_{\tau(A  \cup A')})^\alpha = (\Im B_{\tau(A) \wedge \tau(A')})^\alpha \leq  (\Im B_{\tau(A)})^\alpha +  (\Im B_{\tau(A')})^\alpha.$$
Applying the optional stopping time theorem to the local supermartingale $(\Im B_t)^\alpha$, we know that
\begin{equation} \label{Msubset}
A \subset A' \hbox{ implies }  M(A) \leq M(A'),
\end{equation}
since $$\E\bigl((\Im B_{\tau(A) })^\alpha | B_{\tau(A')} \bigr) \leq (\Im B_{\tau(A')})^\alpha.$$
We next claim that
for any three given hulls $A'$, $A_1$ and $A_2$, we have
\begin {equation} \label{Mtriple}
 M(\Phi_{A_1 \underline \cup A_2} (A')) \le M (\Phi_{A_1} (A')).
\end {equation}
To verify the claim, note first that for $A_3= \Phi_{A_1} (A_2)$, we have $\Phi_{A_3} \circ \Phi_{A_1} = \Phi_{A_1 \underline \cup A_2}$.
Recall that $\Im \Phi_{A_3} (z) \leq \Im(z)$ for all $z \in \H$, so that in particular, $\Im (\Phi_{A_3} (B_{\sigma})) \leq \Im (B_{\sigma})$ where
$\sigma$ is the first hitting time of $\Phi_{A_1} (A')$ by the Brownian motion.  We let the starting point of the Brownian motion tend to infinity as before, and the claim follows.

\medbreak

It will be useful to compare $M(A)$ with some quantities involving dyadic squares and rectangles that $A$ intersects.  (This is similar in spirit to the estimates for half-plane capacity in terms of hyperbolic geometry given in \cite{LLN}.)  We will consider the usual hyperbolic tiling of $\H$ by squares of the form $[a 2^j, (a+1) 2^j] \times [2^j, 2^{j+1}]$, for integers $a,j$.
Let $\mathcal S$ be the set of all such squares.
For each hull $A$, we define $\mathcal S (A)$ to be the set of squares in $\mathcal S$ that $A$ intersects, and we let $\hat A$ be the union of these squares, i.e.
$$ \hat A = \cup_{S \in \mathcal S (A)} S.$$
\begin {figure}[htbp]
\begin {center}
\includegraphics [width=5in]{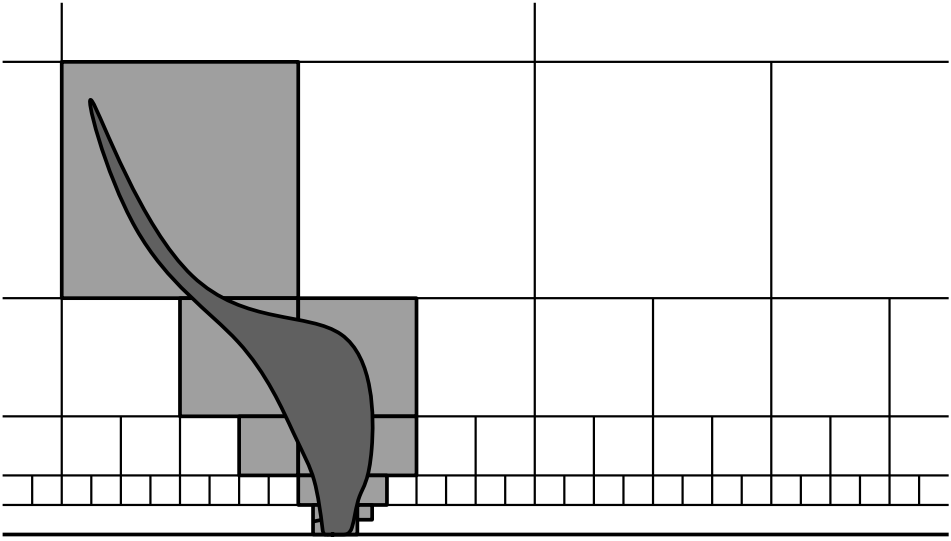}
\caption {A hull $A$ and its corresponding $\hat A$}
\end {center}
\end {figure}

\begin{lemma}\label{MhatMagree}
There exists a universal positive constant $C$ such that for any hull $A$,
$$ C M ( \hat A) \le M(A) \le M( \hat A).$$
\end{lemma}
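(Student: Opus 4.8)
The plan is to dispose of the inequality $M(A)\le M(\hat A)$ at once — it is immediate from $A\subseteq\hat A$ together with the monotonicity property \eqref{Msubset} — and to concentrate on the left-hand inequality $CM(\hat A)\le M(A)$, which I would obtain by comparing the two Brownian hitting problems that define $M_\alpha$. Work directly from the definition \eqref{hcappsi}. Run a Brownian motion $B^{is}$ from $is$ and stop it at $\sigma:=\tau(\hat A)$, the first hitting time of $\hat A\cup\R$; since $A\subseteq\hat A$ we have $\sigma\le\tau(A)$. By the strong Markov property, $\E\bigl((\Im B_{\tau(A)})^\alpha\bigr)=\E\bigl(\Psi(B_\sigma)\bigr)$ with $\Psi(x):=\E_x\bigl((\Im B_{\tau(A)})^\alpha\bigr)$; note $\Psi=0$ on $\R$ and $\Psi(x)=(\Im x)^\alpha$ for $x\in A$. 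Hence the whole statement reduces to the following uniform estimate: there is a universal $c\in(0,1]$ such that for every hull $A$ and every $x\in\hat A\setminus\R$ one has $\E_x\bigl((\Im B_{\tau(A)})^\alpha\bigr)\ge c\,(\Im x)^\alpha$. Granting this, $\E(\Psi(B_\sigma))\ge c\,\E\bigl((\Im B_\sigma)^\alpha\bigr)$ for every $s$, so $s\,\E\bigl((\Im B_{\tau(A)})^\alpha\bigr)\ge c\,s\,\E\bigl((\Im B_\sigma)^\alpha\bigr)$, and letting $s\to\infty$ gives $M_\alpha(A)\ge c\,M_\alpha(\hat A)$, i.e. the claim with $C=c$.

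To prove the uniform estimate, fix $x\in\hat A\setminus\R$, so $x$ lies in some dyadic square $S=[a2^j,(a+1)2^j]\times[2^j,2^{j+1}]\in\mathcal S(A)$; then $\Im x\asymp 2^j$ and $A\cap S\ne\emptyset$. Using the scaling relation \eqref{Mscaling} (at the level of the Brownian expectation, not of $M$) and horizontal translation invariance — both of which preserve the class of hulls — I may assume $j=0$ and $a=0$, i.e. $S=[0,1]\times[1,2]$ and $x\in S$, and it then suffices to show $\E_x\bigl((\Im B_{\tau(A)})^\alpha\bigr)\ge c$ for a universal $c>0$. Here the key structural input is that since $\H\setminus A$ is simply connected, every connected component of the compact set $A$ must intersect $\R$ — otherwise a compact component of $A$ lying in the open upper half-plane could be encircled by a Jordan curve inside $\H\setminus A$. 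Pick $p\in A\cap\overline S$ and let $A_0$ be the component of $A$ through $p$; then $A_0$ is connected, contains $p$ (with $\Im p\ge 1$), and meets $\R$. A standard "component reaches the boundary" argument then shows that the connected component $\tilde K$ of $p$ in the compact set $A_0\cap\{\Im z\ge 1/4\}$ satisfies $\diam(\tilde K)\ge 3/4$: if not, $\tilde K$ would be a proper clopen subset of the connected set $A_0$. Passing if necessary to the component of $p$ in $\tilde K\cap\overline{B(x,10)}$, I may moreover assume $\tilde K\subseteq\overline{B(x,10)}\cap\{\Im z\ge 1/4\}$, still connected, still of diameter $\ge 3/4$, still containing $p$ with $|x-p|\le\sqrt2$.

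Now $\tilde K\subseteq A$ is a connected set of macroscopic diameter sitting at bounded distance from $x$ and well inside the domain $D_0:=\{\Im z>1/8\}\cap B(x,100)$, which contains $x$ comfortably. A standard Brownian hitting estimate — an application of Beurling's projection theorem — yields a universal $\rho>0$ with $\mathbb P_x\bigl(\tau_{\tilde K}<\tau_{\partial D_0}\bigr)\ge\rho$. On this event the Brownian motion hits $A\supseteq\tilde K$ before leaving $D_0$, hence before reaching $\{\Im z\le 1/8\}$ and a fortiori before hitting $\R$, so $\Im B_{\tau(A)}\ge 1/8$. Therefore $\E_x\bigl((\Im B_{\tau(A)})^\alpha\bigr)\ge\rho\,(1/8)^\alpha\ge\rho/8$, and since $(\Im x)^\alpha\le 2$ in these normalized coordinates this is $\ge(\rho/16)(\Im x)^\alpha$ (using $\alpha\le1$). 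Undoing the scaling gives the estimate with $c=\rho/16$.

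The main obstacle is precisely this uniform Brownian hitting estimate: one must hit $A$ itself — not merely $A\cap S$, which could be a single point and thus invisible to Brownian motion — and hit it at a height comparable to $\Im x$, uniformly over all hulls. This is exactly what the structural fact "every component of a hull meets $\R$" is for: it upgrades the bare intersection $A\cap S\ne\emptyset$ into a genuine macroscopic connected piece of $A$ at bounded hyperbolic distance from $x$, which Brownian motion then hits with probability bounded below by Beurling's theorem. The remaining ingredients — the strong-Markov reduction, the scaling normalization, and the trivial inequality — are routine.
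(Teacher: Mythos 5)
Your proof is correct in substance and reaches the same conclusion by a somewhat different route than the paper. The paper's argument is shorter: after the Brownian motion from $is$ first hits a square $S\in\mathcal S(A)$, with universal probability it traces a loop around $S$ staying in the squares adjacent to $S$; since a hull $A$ intersecting $S$ must have its component through $S$ extend down to $\R$, any such loop is forced to cross $A$ at a height comparable to that of $S$. Your argument instead reduces, via the strong Markov property and scaling, to a uniform pointwise estimate $\E_x[(\Im B_{\tau(A)})^\alpha]\ge c(\Im x)^\alpha$, and then produces a macroscopic connected subcontinuum of $A$ near $x$ and invokes a Beurling-type hitting bound. Both routes hinge on the same structural property of hulls (that each component of $A$ touches $\R$); the paper leaves this implicit in the phrase ``which would in particular imply that it hits $A$,'' whereas you state it explicitly, which is a modest improvement in transparency. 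What the paper's loop argument buys is brevity and the avoidance of any continuum-theory lemma; what your argument buys is that all the estimates are localized near $x$ and reduced to named tools.

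One justification in your write-up is stated too quickly, though the claim itself is correct: you argue that a component $A_0$ of $A$ not meeting $\R$ ``could be encircled by a Jordan curve inside $\H\setminus A$,'' but this is not immediate, since $A\setminus A_0$ may accumulate on $A_0$, in which case no Jordan curve around $A_0$ alone can stay in $\H\setminus A$. The correct route is through compactness: in a compact Hausdorff space components coincide with quasi-components, so $A_0=\bigcap_\alpha V_\alpha$ with each $V_\alpha$ clopen in $A$; since $A_0\cap(A\cap\R)=\emptyset$, compactness yields a finite intersection $V$ which is a nonempty clopen subset of $A$ disjoint from $\R$, and one can then encircle $V$ (hence $A_0$) by a Jordan curve in $\H\setminus A$, contradicting simple connectivity. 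The subsequent ``boundary bumping'' step to extract $\tilde K$ of diameter $\ge 3/4$ is likewise glossed (it is not literally that $\tilde K$ is clopen, but rather that a slightly larger clopen set would be), though the intended continuum-theory argument is standard and correct. With these two points tightened, the proof is complete.
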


\begin{proof}
Clearly, $M (\hat A) \ge M (A)$ by \eqref{Msubset} since $A \subset \hat A$. On the other hand,
if we stop a Brownian motion at the first time it hits $\hat A$ i.e.\ a square $S$ of
$\mathcal S(A)$, then it has a bounded probability of later hitting $A$ at a point of about the same height, up to constant factor: This can
be seen, for example, by bounding below the probability that (after this hitting time of $\hat A$) the Brownian motion makes a loop around $S$ before it hits any square
of $\mathcal S$ that
is not adjacent to $S$, which would in particular imply that it hits $A$ during that time. This probability is universal, and the lemma follows.
\end{proof}

\begin{lemma}\label{mhatsum}
There exists a universal positive constant $C'$ such that for any hull $A$,
$$
C' \sum_{S \in {\cal S}(A)} M (S)
\le
M (A)
\le
\sum_{S \in {\cal S}(A)} M (S) .
$$
\end{lemma}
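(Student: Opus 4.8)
The upper inequality $M(A)\le\sum_{S\in\mathcal S(A)}M(S)$ is immediate. Since $A\subseteq\hat A=\bigcup_{S\in\mathcal S(A)}S$, monotonicity \eqref{Msubset} gives $M(A)\le M(\hat A)$; and iterating the subadditivity bound \eqref{Munionsubadditive} over the squares of $\mathcal S(A)$ (passing to the limit over finite sub\-unions, with \eqref{Msubset}, if $\mathcal S(A)$ is infinite) gives $M(\hat A)\le\sum_{S}M(S)$.

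For the lower inequality it suffices, by Lemma \ref{MhatMagree}, to find a universal $C''>0$ with $C''\sum_{S\in\mathcal S(A)}M(S)\le M(\hat A)$, so we may assume outright that $A=\hat A$ is a union of dyadic squares; we record the one structural fact about hulls we shall use, namely that every $S\in\mathcal S(A)$ is joined to $\R$ by a chain of pairwise\-adjacent squares of $\mathcal S(A)$. The plan is to argue through the probabilistic description of $M$ recalled after \eqref{hcappsi}. Let $B$ be Brownian motion from $is$ with $s\to\infty$, let $\tau=\tau(\hat A)$ be its hitting time of $\hat A\cup\R$, and observe that whenever $B$ hits a square $S$ of the form $[a2^{j},(a+1)2^{j}]\times[2^{j},2^{j+1}]$ before $\R$ one has $2^{j}\le\Im B_{\tau(S)}\le2^{j+1}$; hence $M(S)$ is comparable, up to the universal factor $2^{\alpha}\in[1,2]$, to $2^{j\alpha}q_{S}$, where $q_{S}:=\lim_{s}s\,\P(B\text{ hits }S\text{ before }\R)$ and $j=j(S)$ denotes the scale of $S$. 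It is therefore enough to prove
\[
\lim_{s}s\,\E\Big[\sum_{S\in\mathcal S(A)}2^{j(S)\alpha}\,1_{\{B\text{ hits }S\text{ before }\R\}}\Big]\ \le\ C\,\lim_{s}s\,\E\big[(\Im B_{\tau})^{\alpha}\big]\ =\ C\,M(\hat A).
\]
Applying the strong Markov property at $\tau$, and using that the square $S^{*}$ where $B$ first meets $\hat A$ has $2^{j(S^{*})\alpha}\asymp(\Im B_{\tau})^{\alpha}$, this reduces to bounding the ``future charge'' $g(z):=\E_{z}\big[\sum_{S\in\mathcal S(A)}2^{j(S)\alpha}1_{\{B\text{ hits }S\text{ before }\R\}}\big]$, for $z\in\partial\hat A$ at height $h=\Im z$, in such a way that $\int g\,d\omega_{\hat A}\le C\,M(\hat A)$, where $\omega_{\hat A}$ is the $s\to\infty$ limit of $s$ times the law of $B_{\tau}$ on $\partial\hat A$ (so that $M(\hat A)=\int(\Im w)^{\alpha}\,d\omega_{\hat A}(w)$ up to a universal constant).

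The bound on $g$ would be obtained by splitting $\mathcal S(A)$ according to dyadic scale relative to $h$. Squares of $\mathcal S(A)$ below height $h$ contribute $\lesssim h^{\alpha}$: at each scale $2^{k}<h$ there are boundedly many squares of $\mathcal S(A)$ that $B$ can reach from $z$, each contributing $\asymp 2^{k\alpha}$ times a harmonic\-measure factor of order $O(1)$, and $\sum_{k}2^{k\alpha}$ converges because $\alpha>0$. For squares above height $h$ one uses that, $z$ being a first\-hit point of $\hat A$, any $S$ with $j(S)>j(S^{*})$ must lie off to the side of $z$ --- the path having descended past level $2^{j(S)}$ without meeting $\hat A$ --- so $B$ reaches $S$ only by climbing back to height $\asymp 2^{j(S)}$, which costs a factor $\lesssim h/2^{j(S)}$; then the attached\-to\-$\R$ property, which forces $\omega_{\hat A}$ to decay geometrically as one passes down through the successive scales of $\hat A$, together with the supermartingale property of $(\Im B_{t})^{\alpha}$ (the same property that already gives the one\-square bound $M(S)\le M(\hat A)$), lets these cross\-scale terms telescope, after integration against $\omega_{\hat A}$, into $O(M(\hat A))$. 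The main obstacle is precisely this cross\-scale bookkeeping: $g(z)$ is genuinely not $O(h^{\alpha})$ pointwise, since a Brownian path may graze a small low feature of $\hat A$, making $h$ tiny, and then, in the obstacle\-free continuation used to evaluate the $M(S)$'s, climb back up and run along a large high part of $\hat A$; the inequality survives only because first meeting $\hat A$ at such a low feature has probability comparable to $M$\-type quantities of that feature (again because a hull drags a whole descending chain of squares of $\mathcal S(A)$ down with it), so the apparent excess is reabsorbed into $M(\hat A)$ by the integration. I would organise the final write\-up as an induction on dyadic scales, comparing the total charge carried by scale $2^{k}$ with that carried by scale $2^{k-1}$ via the Markov property applied at the first descent of $B$ below level $2^{k}$.
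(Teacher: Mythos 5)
Your plan starts on the right track (reduce to $A=\hat A$, pass to the probabilistic description of $M$, apply the strong Markov property at $\tau(\hat A)$, split by dyadic scale, and use the gambler's-ruin factor for high squares), but it goes wrong at the decisive point: you declare that the pointwise bound $g(z)\lesssim(\Im z)^\alpha$ ``genuinely'' fails, and on that basis you propose an elaborate replacement (integrating $g$ against $\omega_{\hat A}$, ``cross-scale bookkeeping,'' an unsketched induction on scales). In fact the pointwise bound \emph{does} hold --- even with ${\cal S}(A)$ replaced by all of ${\cal S}$ --- and proving it is exactly what the paper does. The ingredient missing from your outline, which resolves the worry that a path might ``climb back up and run along a large high part of $\hat A$,'' is the following: for any starting point $z$ and any fixed dyadic scale $j'$, the \emph{expected number} of squares of ${\cal S}_{j'}$ visited by a Brownian motion before it hits $\R$ is bounded by a universal constant $K$, independently of $z$, $j'$ and $A$. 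One sees this by sorting the scale-$j'$ squares into even and odd by horizontal parity; from any point of an even square of ${\cal S}_{j'}$, the probability of hitting $\R$ before any \emph{other} even square of that scale is bounded below by a universal constant, so the number of even squares visited is stochastically dominated by a geometric random variable of bounded mean, and likewise for odd. Note this is a bound on the expected number visited, not --- as you write --- on the number of squares ``that $B$ can reach,'' which is typically infinite.

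With this in hand the pointwise bound is short. Start from $z$ with $2^{j}\le\Im z\le 2^{j+1}$. For scales $j'\le j+1$, the expected charge is at most $K\cdot 2^{(j'+1)\alpha}$, and $\sum_{j'\le j+1}2^{(j'+1)\alpha}\asymp 2^{j\alpha}$ because $\alpha>0$. For scales $j'\ge j+2$, one first pays the probability $\le 2^{j+1}/2^{j'}$ that $\Im B$ reaches height $2^{j'}$ before $0$ (gambler's ruin for the one-dimensional Brownian motion $\Im B$), and $\sum_{j'\ge j+2}2^{j+1-j'}\cdot 2^{(j'+1)\alpha}\asymp 2^{j\alpha}$ because $\alpha<1$. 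Hence $g(z)\le C(\Im z)^\alpha$ with a universal $C$, with no use of the hull structure of $A$ at this stage. Applying the strong Markov property at $\tau(\hat A)$ (every $S\in{\cal S}(A)$ lies in $\hat A$, so is hit only at or after $\tau(\hat A)$), multiplying by $s$, and letting $s\to\infty$ gives directly $C'\sum_{S\in{\cal S}(A)}M(S)\le M(\hat A)$, which with Lemma \ref{MhatMagree} is the lower inequality. No integration against $\omega_{\hat A}$, and no cross-scale induction, is needed.
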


\begin{proof} The right-hand inequality is obvious by (\ref{Munionsubadditive}) and Lemma \ref {MhatMagree}.
By Lemma \ref {MhatMagree}, it is sufficient to prove the result in the case where $A = \hat A$ is the union of squares in ${\cal S}$.

For each $j$ in $\Z$, we will call $\mathcal S_j$ the set of squares that are at height between $2^{j}$ and $2^{j+1}$.
We will say that a square $S = [a 2^j, (a+1) 2^j] \times [2^j, 2^{j+1}]$ in $\mathcal S_j$ is even (respectively odd) if $a$ is even (resp. odd).
We know that
\begin {equation}
\label {eqsum}
\sum_{S \in {\cal S}(A) } M (S) \le  \lim_{s \to \infty} s \E \left( \sum_{S \in {\cal S} (A) } (\Im B^{is}_{\tau(S)})^\alpha \right).
\end {equation}
To bound this expectation, we note that for each $j \in \mathbb Z$, for each even square $S \in \mathcal S_j$, and for each $z \in S$, the probability that a Brownian motion started from $z$
hits the real line before hitting any other even square in $\mathcal S_j$ is bounded from below independently from $z$, $j$ and $S$.
Hence, the strong Markov property implies that the total number of even squares in $\mathcal S_j$ hit by a Brownian motion before hitting the real line is stochastically dominated by a geometric random variable with finite universal mean (independently of the starting point of the Brownian motion) that we call $K/2$. The same is true for odd squares.

Note also that if the starting point $z$ of $B^z$ is in $S \in {\mathcal S}_j$ and if $k \ge 1$, then the probability that the imaginary part of $B$ reaches $2^{j+1+k}$ before $B$ hits the real line is not larger than $2^{-k}$. It follows from the strong Markov property that the expected number of squares in ${\mathcal S}_{j+k+1}$ that $B$ visits before exiting
$\H$ is bounded by $K\times 2^{-k}$.
It follows that for a Brownian motion started from $z$ with $2^j \le \Im (z) \le 2^{j+1}$
$$
\E \left( \sum_{S \in {\cal S} } (\Im B^z_{\tau(S)})^\alpha \right)
 \le
 \sum_{k \le 1} K (2^{j+k+1})^\alpha  +  \sum_{k \ge 2} K 2^{-k} (2^{j+k+1})^\alpha
\le  C 2^{j \alpha} \le C (\Im z)^\alpha
$$
for some universal constant $C$ (bear in mind that $\alpha < 1$ and that for $S \in {\cal S}_{j+k}$, $(\Im B^x_{\tau (S)}) \le 2^{(j+k+1)}$).
If we now apply this statement to the Brownian motion $B^{is}$ after its first hitting time $\tau (A)$ of $A \cup \R = \hat A \cup \R$, we get that for all large $s$ and for some universal positive constant $C'$,
$$
\E \left( ( \Im B^{is}_{\tau (A)})^\alpha \right)
\ge
C'
\E \left( \sum_{S \in {\cal S} (A) } ( \Im B^{is}_{\tau(S)})^\alpha \right) .
$$
Combining this with (\ref {eqsum}) concludes the proof.
\end{proof}

For each square $S = [a 2^j, (a+1) 2^j] \times [2^j, 2^{j+1}]$ of $\mathcal S$, we can define the union $R(S)$ of $S$ with all the squares of ${\mathcal S}$ that lie strictly {\em under} $S$, i.e.
$R(S) = [a 2^j, (a+1) 2^j] \times [0, 2^{j+1}]$.
Note that that scaling shows immediately that for some universal constant $C''$ and for all $S \in \mathcal S$,
 \begin{equation} \label{MR} M(R(S)) = C'' M(S). \end{equation}

\subsection {Estimates for loop-soup clusters}
Let us now use these quantities to study our random loop-ensembles.
Suppose that $\Gamma$ is the conformal loop ensemble corresponding to any given $c \in (0,c_0]$.
Given a hull $A$ we denote by $\tilde A = \tilde A ( A, \Gamma)$ the random hull whose complement is the unbounded component of the set obtained by removing from $\H \setminus A$ all the loops of $\Gamma$ that intersect $A$.  Local finiteness implies that $\tilde A$ is itself
a hull almost surely.

\begin {figure}[htbp]
\begin {center}
\includegraphics [width=6in]{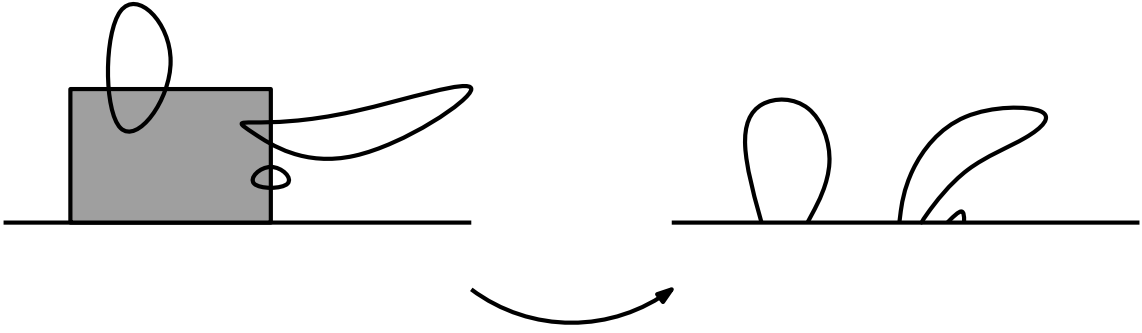}
\caption {Construction of $\Phi_A ( \tilde A)$ (sketch)}
\end {center}
\end {figure}
Now define
$$
N(A) = N_\kappa(A) := \E (M(\Phi_A(\tilde A))).
$$
Recall that if $c < 1$ and $c \le c_0$ then $\mathbb P_c$ defines a CLE$_\kappa$ with $\kappa < 4$. We can therefore reformulate Corollay \ref {boundedness} in terms of $c$  as follows (this corresponds intuitively to the statement that SLE is unlikely to be very close to a boundary arc when $\kappa < 4$):
\begin{proposition} \label{p.swbound}
If $c <1$ and $c \le c_0$, then there is an $\alpha(c)  \in (0,1)$ such that, if we denote by $\diam(A)$ the diameter of
$A$, then we have $\E ( \diam( \tilde A)^{1+\alpha}) < \infty$ for all hulls $A$.
\end{proposition}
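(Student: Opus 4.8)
The plan is to derive the proposition from Corollary~\ref{boundedness}, to which it is essentially equivalent; the only work is a reduction to a single canonical hull and a geometric comparison.

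First I would record the monotonicity of the map $A \mapsto \tilde A$: if $A \subset A'$, then every loop of $\Gamma$ meeting $A$ also meets $A'$ and $\HH \setminus A' \subset \HH \setminus A$, so the set obtained by removing from $\HH\setminus A$ the loops meeting the hull and their interiors can only shrink when one passes from $A$ to $A'$, and hence so can its unbounded component; consequently $\tilde A \subset \tilde A'$, and in particular $\diam(\tilde A) \le \diam(\tilde A')$. Given an arbitrary bounded hull $A$, I would then choose $x_0 \in \R$ and $r>0$ with $A \subset A' := \overline{D(x_0,r)} \cap \overline{\HH}$, the closed semi-disc of radius $r$ about $x_0$. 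By Corollary~\ref{CLEcor} the ensemble $\Gamma$ is, in law, a CLE$_\kappa$, hence invariant under the affine automorphism $z \mapsto (z-x_0)/r$ of $\HH$; this map sends $A'$ to the unit semi-disc $A_1 := \overline{D(0,1)} \cap \overline{\HH}$, so $\diam(\tilde A')$ has the same distribution as $r\,\diam(\tilde A_1)$. Thus it suffices to exhibit $\alpha = \alpha(c) \in (0,1)$ with $\E\bigl(\diam(\tilde A_1)^{1+\alpha}\bigr) < \infty$.

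Next I would compare $\diam(\tilde A_1)$ with the quantity $\mathcal R$ of Corollary~\ref{boundedness}, namely the radius of the smallest disc centred at the origin containing all loops of $\Gamma$ that meet the semicircle $\{z\in\HH : |z|=1\}$. Since CLE loops stay at positive distance from $\R$, any loop $\gamma$ of $\Gamma$ meeting $A_1$ meets $D(0,1)\cap\HH$; either $\gamma \subset D(0,1)$, or $\gamma$, being connected, crosses $\{|z|=1\}$, in which case $\gamma$ together with its interior lies in $\overline{D(0,\mathcal R)}$. In either case $\gamma$ and its interior lie in $\overline{D(0,\max(\mathcal R,1))}$, so $A_1$ together with all loops of $\Gamma$ meeting it and their interiors is contained in $\overline{D(0,\max(\mathcal R,1))}\cap\overline{\HH}$, whose complement in $\HH$ is connected and unbounded. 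It follows that $\tilde A_1 \subset \overline{D(0,\max(\mathcal R,1))}\cap\overline{\HH}$, and therefore $\diam(\tilde A_1) \le 2(\mathcal R+1)$.

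Finally I would invoke Corollary~\ref{boundedness}. The hypotheses $c<1$ and $c\le c_0$, together with the fact that $\kappa \mapsto c(\kappa)=(3\kappa-8)(6-\kappa)/2\kappa$ is strictly increasing on $(8/3,4]$ with value $1$ at $\kappa=4$ (Proposition~\ref{p.ckappa}), force $\kappa<4$, so the exponent of $\Gamma$ satisfies $\beta = 8/\kappa-1 \in (1,2)$. Setting $\alpha(c) := (\beta-1)/2 \in (0,1/2) \subset (0,1)$ we have $1+\alpha(c) = (1+\beta)/2$, and Corollary~\ref{boundedness} gives $\E\bigl(\mathcal R^{(1+\beta)/2}\bigr) < \infty$; combining this with the bound of the previous paragraph yields
\[
\E\bigl(\diam(\tilde A_1)^{1+\alpha(c)}\bigr) \;\le\; 2^{(1+\beta)/2}\,\E\bigl((\mathcal R+1)^{(1+\beta)/2}\bigr) \;<\; \infty ,
\]
which is the required estimate. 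I do not expect a substantial obstacle here: the only points needing care are making the reduction rigorous (monotonicity of $A\mapsto\tilde A$ and the affine invariance of CLE$_\kappa$ in $\HH$) and verifying that the diameter of $\tilde A_1$ is genuinely controlled by $\mathcal R$; all the quantitative content is already contained in Corollary~\ref{boundedness}.
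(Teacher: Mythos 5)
Your argument is correct and is the same route the paper takes: the paper presents Proposition~\ref{p.swbound} as a direct reformulation of Corollary~\ref{boundedness} (using $c<1\Rightarrow\kappa<4\Rightarrow\beta>1$), and you have simply supplied the standard reduction steps (monotonicity of $A\mapsto\tilde A$, scale/translation invariance to pass to the unit semi-disc, and the geometric bound $\diam(\tilde A_1)\le 2(\mathcal R+1)$) that the paper leaves implicit.
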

Throughout the remainder of this subsection, we will suppose that $c_1 < 1$ and $c_1 \le c_0$, and that this $c_1$ is fixed. We then choose $\alpha = \alpha (c_1)$, and we define $M$ and $N$ using this value of $\alpha$.
We will then let $c$ vary in $[0, c_1]$. It follows from the previous proposition that for all $c \le c_1$,
$$N(A) = \E ( M ( \Phi_A (\tilde A))) \le \E ( M ( \tilde A)) \le \E ( \diam(\tilde A)^{1+\alpha}) < \infty .$$
Here is a more elaborate consequence of the previous proposition:
\begin{corollary} \label{rectangleNbound}
Consider $c \leq c_1$ and $\alpha = \alpha(c_1)$ fixed as above.  For any hull $A$ and any $S \in \mathcal S$, if $A_S = A \cap S$, then
$$\E( M ( \Phi_A (\tilde A_S))) \le C(c) M(S),$$ for some constant $C(c)$ depending only on $c$ and
tending to zero as $c \to 0$.
\end{corollary}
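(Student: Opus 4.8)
The plan is to exploit the contraction produced by $\Phi_A$, the scale invariance of $M$ and of the CLE, and the tail estimates of Proposition~\ref{p.swbound} and Corollary~\ref{boundedness}. First, since $\Phi_A$ satisfies $\Im\Phi_A(z)\le\Im z$, the case $A_1=\emptyset$ of $(\ref{Mtriple})$ gives $M(\Phi_A(B))\le M(B)$ for every hull $B$, so the entire content of the corollary lies in the gain hidden in $\Phi_A$: because $A_S=A\cap S\subseteq A$ we have $\Phi_A(A_S)=\emptyset$, and hence $\Phi_A(\widetilde{A_S})$ is the image under $\Phi_A$ of nothing but the loops of $\Gamma$ that meet $A_S$ and protrude from $A$ (together with the pockets they trap against $A$). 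Using $(\ref{Mscaling})$, the invariance of $M$ under real translations, and the invariance in law of the CLE under the affine automorphism $z\mapsto 2^{-j}(z-a2^j)$ of $\H$ — which carries $S=[a2^j,(a+1)2^j]\times[2^j,2^{j+1}]$ onto the unit square $S_0=[0,1]\times[1,2]$ and conjugates $\Phi_A$ into $\Phi_{A'}$ with $A'$ the image of $A$ — the claim reduces to the scale-free statement that
\[ \sup_{A'\ \mathrm{hull}}\ \E\bigl(M(\Phi_{A'}(\widetilde{A'\cap S_0}))\bigr)\ \le\ C(c),\]
where $M(S_0)$ is a fixed universal constant and $C(c)\to 0$ as $c\to 0$.

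Next I would fix a threshold $\rho\in(0,1)$ and split the loops of $\Gamma$ meeting $A'\cap S_0$ into macroscopic ($\diam>\rho$) and microscopic ($\diam\le\rho$) ones. Local finiteness leaves only finitely many macroscopic loops; by Corollary~\ref{boundedness}, transported to $S_0$, all loops meeting $S_0$ lie in a disc of radius comparable to a random variable $\mathcal R$ with $\E(\mathcal R^{(1+\beta)/2})<\infty$, and since for $c\le c_1$ one has $(1+\beta(c))/2\ge 1+\alpha(c_1)=1+\alpha$ and $M(B)\le\diam(B)^{1+\alpha}$ for hulls, the macroscopic part of $M(\Phi_{A'}(\widetilde{A'\cap S_0}))$ is at most $(\text{number of macroscopic loops})\cdot(\mathrm{const}\cdot\mathcal R)^{1+\alpha}$, of finite expectation. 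Moreover each macroscopic loop is the outer boundary of a loop-soup cluster of diameter $>\rho$, which in the subcritical regime — via the exponential decay of the number of disjoint crossing chains of an annulus (Lemma~\ref{exponentialcrossingdecay}) — forces the presence in $\Gammabis$ of a Brownian loop of diameter bounded below in terms of $\rho$ and of a cluster-size variable with all moments finite; since the expected number of Brownian loops of $\Gammabis$ of diameter $>\delta$ meeting a bounded set is $c$ times a finite $\mu$-mass, the expected number of macroscopic loops meeting $S_0$ tends to $0$ as $c\to 0$ for each fixed $\rho$. A H\"older splitting against the finite moments of $\mathcal R$ then bounds the macroscopic contribution by a quantity $\varepsilon_1(c,\rho)$ with $\varepsilon_1(c,\rho)\to 0$ as $c\to 0$.

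For the microscopic loops, the protrusion beyond $A'$ is at most $\rho$, so their total image under $\Phi_{A'}$ is a hull hugging $\R$ in a bounded neighbourhood of $\Phi_{A'}(A'\cap S_0)$; to bound its expected $M$-mass I would decompose $A'$ into the dyadic squares it meets and use the comparisons of Lemmas~\ref{MhatMagree} and~\ref{mhatsum} together with $(\ref{MR})$. Only the boundedly many squares $S'\in\mathcal S(A')$ within distance $\rho$ of $S_0$ are relevant; for each such $S'$ the microscopic loops attached to $\partial A'$ near $S'$, pushed through $\Phi_{A'}$, form a hull whose $M$-mass one controls by a first-moment computation against the Brownian loop measure (producing a factor linear in $c$ and tending to $0$ with $\rho$) times the $M$-distortion of a thin neighbourhood of $\partial A'$ near $S'$ under $\Phi_{A'}$; the monotonicity $(\ref{Msubset})$, the subadditivity $(\ref{Munionsubadditive})$, and the composition inequality $(\ref{Mtriple})$ then let these finitely many contributions be recombined into a bound of the form $\varepsilon_2(c,\rho)\,M(S_0)$. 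Choosing $\rho$ small first and then $c$ small yields the required $C(c)$.

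The hard part will be the last step: controlling, \emph{uniformly over all hulls $A'$}, the $M$-mass of the $\Phi_{A'}$-image of the thin layer of tiny loops attached to $\partial A'$. Summing $M$ over the individual loop fillings is hopeless — there are infinitely many tiny loops, and $M$ of a small set near the boundary is only of the order of its diameter, so the loop-by-loop sum diverges. One must instead use the structure of $M_\alpha$ itself, which is the reason it (rather than half-plane capacity or harmonic measure from infinity) was introduced: a Brownian motion coming in from $\infty$ reaches such a near-boundary layer only with small probability, and the weight $(\Im\,\cdot\,)^\alpha$, combined with the comparison to dyadic rectangles $(\ref{MR})$ and the subadditive composition law $(\ref{Mtriple})$, converts this into a convergent estimate that is uniform in $A'$ (the worst local magnification of $\Phi_{A'}$, near a slit tip, is only of square-root type, which the $M$-calculus absorbs) and carries the correct dependence on $c$. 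Establishing this uniform estimate with the right constant is the technical core of the corollary.
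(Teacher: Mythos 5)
Your plan is genuinely incomplete, and the part you leave unproved is precisely the part the paper handles by a simpler mechanism that your outline never quite states. You split the loops meeting $A_S$ into macroscopic ($\diam>\rho$) and microscopic ($\diam\le\rho$) and propose to handle the macroscopic ones via Corollary~\ref{boundedness} plus H\"older, and the microscopic ones via a dyadic decomposition of $\partial A'$ that you then acknowledge you cannot complete (``Establishing this uniform estimate with the right constant is the technical core of the corollary''). The paper does not decompose the set of loops at all; it decomposes the probability space. It introduces the single event $E(c)$ that some loop-soup cluster meeting $R=[0,1]\times[0,2]$ has radius $>\eps^2$. On $E(c)^{\,c}$ \emph{all} the relevant loops are tiny simultaneously, so $\tilde A_S$ sits inside an $O(\eps^2)$-neighbourhood of $A$, and a one-shot distortion estimate (square-root behaviour near a slit tip -- exactly the phenomenon you mention in your last sentence but never exploit quantitatively) gives a height bound $\Im\bigl(\Phi_A(\tilde A_S)\bigr)\le\eps$. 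Since $\Phi_A(\tilde A_S)$ also lives in a bounded region of $\H$, the definition of $M_\alpha$ then yields $M(\Phi_A(\tilde A_S))\le C\eps^\alpha$ directly: there is no need to control infinitely many small loops individually, no need for Lemmas~\ref{MhatMagree}--\ref{mhatsum}, and no ``thin-layer'' estimate to establish uniformly in $A$. On $E(c)$ the paper uses the crude bound $M(\Phi_A(\tilde A_S))\le M(\tilde R)\le\diam(\tilde R^{c_1})^{1+\alpha}$ (via the coupling with the larger $c_1$-soup) together with $\P(E(c))\to 0$ and $\E\bigl(\diam(\tilde R^{c_1})^{1+\alpha}\bigr)<\infty$.

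Concretely, the gap in your argument is the microscopic step: you correctly observe that summing $M$ loop-by-loop is hopeless, and that the $\alpha$-weight should save the day, but you do not state (let alone prove) the key fact that makes this work, namely that a $\rho$-thin layer attached to $A$ maps under $\Phi_A$ to a set of height $O(\sqrt{\rho})$, after which $M_\alpha\le C\rho^{\alpha/2}$ is immediate from \eqref{hcappsi}. Once one has this observation, the macroscopic/microscopic split you propose is also unnecessary -- the event $E(c)^{\,c}$ already kills all the macroscopic loops -- and so is the H\"older argument. Your overall strategy (bound the bad event using finite moments, and use the $(\Im\cdot)^\alpha$ weight on the good event) is sound in outline, but as written the proposal stops short of proving the corollary.
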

\begin{proof}

By scaling, it suffices to consider the case where $S=[0,1] \times [1,2]$ and hence $R=R(S)$ is the rectangle $[0,1] \times [0,2]$.
Proposition \ref{p.swbound} then implies that
$$
\E (M(\Phi_A (\tilde A_S))) \le \E ( M (\tilde A_S )) \le \E ( M ( \tilde R)) \le \E ( \diam( \tilde R)^{1+\alpha} ) < \infty.
$$
We want to prove that
$\E (M(\Phi_A (\tilde A_S)))$ tends to zero uniformly with respect to $A$ as $c \to 0$.
Let $E(c)$ denote the event that some loop-soup cluster (in the loop-soup of intensity $c$) intersecting the rectangle $R$ has radius more than $\epsilon^2$.
When $E(c)$ does not hold, then standard distortion estimates yield an $\epsilon$ bound on the height of (i.e., the largest imaginary part of an element of) $\Phi_A (\tilde A_S)$. But we then also know that $\tilde A_S$ is a subset of $[-1, 3] \times [0,3]$, so that a Brownian motion started from $is$ will hit $\tilde A_S$ before hitting $A \cup \R$ with a probability bounded by $s^{-1}$ times some universal constant $C$. Hence, unless $E(c)$ holds, we have $M ( \Phi_A ( \tilde A_S )) \le C \eps^\alpha$.

Summing up, we get that
$$ \E ( M ( \Phi_A ( \tilde A_S))) \le C \eps^\alpha + \E ( 1_{E(c)} M ( \tilde A_S))
\le C \eps^\alpha + \E ( 1_{E(c)} \diam( \tilde R^{c_1})^{1+\alpha} ), $$
 where $\tilde R^{c_1}$ denotes the $\tilde R$ corresponding to a larger loop-soup of intensity $c_1$ that we couple to the loop-soup of intensity $c$.
But $\P(E(c)) \to 0$ as $c \to 0$ and $\E ( \diam( \tilde R^{c_1})^{1+\alpha} ) < \infty$, so that if we take $c$ sufficiently small,
$$ \E ( M ( \Phi_A ( \tilde A_S))) \le  2 C \eps^\alpha $$
for all hulls $A$. This completes our proof.
\end{proof}

We are now ready to prove our final Lemma:
\begin{lemma} \label{generalNbound}
For $c \le c_1$, there exists a finite constant $C_1 = C_1 (c)$ such that for
all hulls $A$, $N(A) \leq C_1(c) M(A)$. Furthermore, we can take $C_1(c)$ in such a way that
$C_1(c)$ tends to zero as $c \to 0$.
\end{lemma}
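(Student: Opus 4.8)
The plan is to control $N(A)=\E(M(\Phi_A(\tilde A)))$ by cutting $A$ into its dyadic pieces and applying Corollary~\ref{rectangleNbound} one square at a time. Write $A_S=A\cap S$ for $S\in\mathcal{S}(A)$, so $A=\bigcup_{S\in\mathcal{S}(A)}A_S$. The heart of the matter is the sample-by-sample inclusion
$$\Phi_A(\tilde A)\ \subset\ \underline{\bigcup}_{S\in\mathcal{S}(A)}\Phi_A(\tilde A_S),$$
where $\underline{\bigcup}$ denotes the hull of the union. Once this is in hand, monotonicity \eqref{Msubset}, the invariance of $M$ under passing from a bounded closed set to its hull (immediate from \eqref{hcappsi}, since a Brownian motion coming from $is$ meets a set and its hull at the same instant, the filled-in pockets lying behind the set), and the (countable) subadditivity \eqref{Munionsubadditive} yield, for every realization,
$$M(\Phi_A(\tilde A))\ \le\ \sum_{S\in\mathcal{S}(A)}M(\Phi_A(\tilde A_S)).$$
Taking expectations, interchanging sum and expectation (all terms nonnegative), and then invoking Corollary~\ref{rectangleNbound} and the left-hand inequality of Lemma~\ref{mhatsum},
$$N(A)\ \le\ \sum_{S\in\mathcal{S}(A)}\E\bigl(M(\Phi_A(\tilde A_S))\bigr)\ \le\ C(c)\sum_{S\in\mathcal{S}(A)}M(S)\ \le\ \frac{C(c)}{C'}\,M(A),$$
so that $C_1(c):=C(c)/C'$ does the job, and $C_1(c)\to0$ as $c\to0$ because $C(c)\to0$.

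It then remains to establish the displayed inclusion, which is the step requiring care. By definition $\tilde A$ is the hull of $A\cup\bigcup_j\gamma_j$, the union running over the loops $\gamma_j$ of $\Gamma$ that intersect $A$; every such $\gamma_j$ intersects some $A_S$ and is therefore contained in $\tilde A_S$, and $A_S\subset\tilde A_S$ as well, so $A\cup\bigcup_j\gamma_j\subset\bigcup_{S}\tilde A_S$. Since the hull operation is monotone this gives $\tilde A\subset\underline{\bigcup}_S\tilde A_S$. Now write $W=\H\setminus A$ for the domain of $\Phi_A$ and let $\Omega$ be the unbounded component of $\H\setminus\bigcup_S\tilde A_S$; as each $\tilde A_S\supset A$ we have $\Omega\subset W$, and $\Omega$ is the unbounded component of $W\setminus\bigcup_S(\tilde A_S\setminus A)$. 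Because $\Phi_A$ is a conformal bijection of $W$ onto $\H$ with $\Phi_A(z)-z\to0$, it maps components to components and neighbourhoods of $\infty$ to neighbourhoods of $\infty$; hence $\Phi_A(\Omega)$ is precisely the unbounded component of $\H\setminus\bigcup_S\Phi_A(\tilde A_S\setminus A)$, so that $\H\setminus\Phi_A(\Omega)=\underline{\bigcup}_S\Phi_A(\tilde A_S)$ (using that the hull operation is idempotent). Finally $\tilde A\setminus A\subset(\underline{\bigcup}_S\tilde A_S)\setminus A=W\setminus\Omega$, whence $\Phi_A(\tilde A\setminus A)\subset\Phi_A(W\setminus\Omega)=\H\setminus\Phi_A(\Omega)=\underline{\bigcup}_S\Phi_A(\tilde A_S)$; taking the hull of the left-hand side gives the claimed inclusion. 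Throughout one passes to closures wherever needed, exactly the ``slight abuse of notation'' already present in the definition of $\Phi_A(A')$; the family $\{\Phi_A(\tilde A_S)\}_{S\in\mathcal{S}(A)}$ is moreover a.s.\ locally finite, inherited from the local finiteness of $\Gamma$, which legitimises the countable subadditivity used above.

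The main obstacle is precisely this geometric reduction: one must track simultaneously the three operations at play --- deleting the loops that touch a set, filling to a hull, and pushing forward by $\Phi_A$ normalized at infinity --- and check that they compose in the correct monotone order, the crucial point being that $\Phi_A$ carries the unbounded complementary component exactly onto the unbounded complementary component of the image. All the remaining ingredients (monotonicity, subadditivity and fill-invariance of $M$, the Tonelli interchange, convergence of the series, which is finite since $M(A)<\infty$) are routine. One should of course fix at the outset a $c_1<1$ with $c_1\le c_0$, so that $\alpha=\alpha(c_1)\in(0,1)$ is fixed and $M$, $N$ are formed with this $\alpha$; the statement then concerns $c\in[0,c_1]$, for which Proposition~\ref{p.swbound} and Corollary~\ref{rectangleNbound} supply the required estimates uniformly.
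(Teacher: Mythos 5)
Your proof follows the paper's route: decompose $A$ into the dyadic squares of $\mathcal{S}(A)$, apply Corollary~\ref{rectangleNbound} square by square, and sum up using (countable) subadditivity of $M$ and the left inequality of Lemma~\ref{mhatsum}; the extra work you do on the geometric inclusion $\Phi_A(\tilde A)\subset\underline{\cup}_S\Phi_A(\tilde A_S)$ is a careful unpacking of the two equalities $\tilde A=\underline\cup_S\tilde A_S$ and $\Phi_A(\underline\cup_S\tilde A_S)=\underline\cup_S\Phi_A(\tilde A_S)$ that the paper states without detailed justification. One small slip: you assert that ``each $\tilde A_S\supset A$'', which is false (one only has $\tilde A_S\supset A_S=A\cap S$); the inference $\Omega\subset W$ you draw from it is nonetheless correct, because $A=\cup_S A_S\subset\cup_S\tilde A_S$, so the union --- not each individual piece --- contains $A$.
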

\begin{proof}
Putting together the estimates in
Lemma \ref{mhatsum} and Corollary \ref {rectangleNbound}
we have
\begin {eqnarray*}
 N(A)  & = &  \E ( M ( \Phi_A ( \tilde A))) \\
 & =  & \E ( M ( \Phi_A (\underline \cup_{S \in {\cal S}(A)} \tilde A_S )))
\\
& = & \E ( M ( \underline \cup_{S \in {\cal S}(A)} \Phi_A (\tilde A_S)))  \\
& \le &  \E ( \sum_{S \in {\cal S} (A)} M ( \Phi_A (\tilde A_S)))\\
&  \le&   \sum_{S \in {\cal S}(A)}  C(c) M(S) \\
& \le &  C(c) (C')^{-1} M(A)
\end {eqnarray*}
and $C(c) \to 0$ when $c \to 0+$, whereas $C'$ does not depend on $c$.
\end{proof}

As we will now see, this property implies that for $c = c_0$, $N$ is necessarily infinite for all positive $\alpha$ (i.e.\ it shows that the size of clusters at the critical point can not decay too fast), and this will enable us to conclude the proof of Proposition \ref{criticaltheorem} in the manner outlined after its statement:

\begin {proof}
Suppose that $c_0 < 1$. We choose $c_1=c_0$ (and $\alpha=\alpha (c_1)$). We take $c'$ to be positive but small enough
so that the product of the corresponding constants $C_1(c_0)$ and $C_1 (c')$ in Lemma \ref{generalNbound} is less than $1$.
We will view the loop-soup $\Gammabis$ with intensity $c_0 + c'$ as the superposition of a loop-soup $\Gammabis_0$ with intensity $c_0$ and an independent loop-soup
$\Gammabis'$ with intensity $c'$ i.e.\ we will construct $\Gamma$
 via the loop-soup cluster boundaries in $\Gamma_0$ and $\Gamma'$.

Now let us begin with a given hull $A$ (say the semi-disk of radius $1$ around the origin).
Suppose that $\Gammabis$ contains a chain of loops that join $A$ to the line $L_R= \{ z \in \H \ : \  \Im (z) =R  \}$. This implies that one can find a finite chain $\overline \gamma_1, \ldots, \overline \gamma_n$ (chain means that two consecutive loops intersect) of loops
in $\Gamma_0 \cup \Gamma'$ with $\overline \gamma_1 \cap A \not= \emptyset$ and $\overline \gamma_n \cap L_R \not= \emptyset$.
Since the loops in $\Gamma_0$ (resp. $\Gamma'$) are disjoint, it follows that the loops $\overline \gamma_1, \ldots , \overline \gamma_n$
alternatively belong to $\Gamma_0$ and $\Gamma'$.

Consider the loops of $\Gamma_0$ that intersect $A$. Let us consider $A_1$ the hull generated by the union of  $A$ with these loops (this is the $\tilde A$ associated to the loop-soup $\Gammabis_0$). Recall that the expected value of $M( A_1)$ is finite because $\alpha= \alpha( c_1)$.
Then add to $A_1$, the loops of $\Gamma'$ that intersect $A_1$. This generates a hull $B_2$ (which is the $\tilde A_1$ associated to the loop-soup $\Gammabis'$).
Then, add to $B_2$ the loops of $\Gamma_0$ that intersect $B_2$. Note that in fact, one basically adds only the loops of $\Gamma_0$ that intersect
$A_1 \setminus A$ (the other ones were already in $A_1$) in order to define a new hull $B_3$, and continue iteratively.
Let $F$ be the limiting set obtained. We can also describe this sort of exploration
by writing for all $n \ge 1$, $A_{n+1} = \Phi_{A_n}(\tilde A_n)$, where $\tilde A_n$ is alternately constructed from $A_n$ using a loop-soup with intensity $c_0$  or
$c'$ as $n$ is even or odd.
The expected value of $M(A_n)$ decays exponentially, which implies (Borel-Cantelli) that $M(A_n)$ almost surely decays eventually faster than some exponential
sequence.

We note that if $A$ is a hull such that for all $z \in A$, $\Im (z) \le 1$, we clearly have $\hcap (A) \le M(A)$. On the other hand, we know that if $A$ is a hull such that
there exists $z \in A$ with $\Im (z) \ge 1$, then $M(A) \ge c$ for some absolute constant $c$. Hence, we see that almost surely, for all large enough $n$, $\hcap (A_n) \le M(A_n)$, which implies that almost surely
 $\sum_n \hcap(A_n) < \infty$. But the half-plane capacity behaves additively under conformal iterations, so that in fact
$ \hcap (F) = \sum_{n \ge 0} \hcap (A_n)$.
Hence, for large enough $R$, the probability that $F$ does not intersect $L_R$ is positive, and
 there is a positive probability that no chain of loops in $\Gammabis$ joins $A$ to $L_R$.
It follows that $\mathbb P_{c_0 +c'} ( \overline {\mathcal C} =  \{ \overline \HH \} ) < 1$ and
 Proposition~\ref{p.satisfiesconformal} would then imply that $c_0  +c' \le c_0$ which is impossible.
This therefore implies that $c_0 \ge 1$. As explained after the proposition statement, we also know that $c_0$ can not be strictly larger than $1$, so that we can finally
conclude that $c_0= 1$.
\end {proof}

As explained at the beginning of this section, this completes the proof of  Theorems \ref{loopsoupsatisfiesaxioms} and
\ref{kappacorrespondence}.

\section{An open problem}

When $\kappa \in (4,8)$, the CLE$_\kappa$ described in \cite{Sh} are random collections of non-simple loops.  However, the outer boundaries of the sets of points traversed by the outermost loops should be simple loops.  We expect (though this was not established in \cite{Sh}, in part because of the CLE$_\kappa$ construction was not shown there to be starting-point independent) that these random loop collections will satisfy all of the axioms we used to characterize CLE except that they will a.s.\ contain loops intersecting one another and the boundary of the domain.  Are these the only random loop collections for which this is the case?

This is an apparently difficult type of question.  We expect that the constructions of one-point and two-point pinned measures in the present paper will have analogs in this non-disjoint setting; however the pinned loop will intersect the boundary at other points as well, and our argument for establishing the connection with SLE does not appear to extend to this setting in a straightforward way.  Moreover, we have no direct analog of the loop-soup construction of CLEs in the case where $\kappa \in (4,8)$.

\begin {thebibliography}{99}

\bibitem {Be}
{V. Beffara (2008),
The Hausdorff dimension of SLE curves,
Ann. Probab. {\bf 36}, 1421-1452.}

\bibitem{BPZ}
{A.A. Belavin, A.M. Polyakov, A.B. Zamolodchikov (1984),
Infinite conformal symmetry in two-dimensional quantum field theory.
Nuclear Phys. B {\bf 241}, 333--380.}

\bibitem {vdB}
J. van den Berg (1996),
\newblock A note on disjoint-occurences inequalities for marked Poisson point processes,
\newblock {J. Appl. Prob.} {\bf 33}, 420--426.

\bibitem {BurdzyLawler}
K. Burdzy, G.F. Lawler (1990),
Nonintersection Exponents for Brownian Paths. II. Estimates and Applications to a Random Fractal,
\newblock {Ann. Probab.}  {\bf 18}, 981-1009.
\newblock

\bibitem {CN}
{F. Camia, C. Newman (2006),
Two-dimensional critical percolation: The full scaling limit,
Comm. Math. Phys. {\bf 268}, 1-38.}

\bibitem{MR931500}
J.~T. Chayes, L.~Chayes, and R.~Durrett.
\newblock Connectivity properties of {M}andelbrot's percolation process.
\newblock {\em Probab. Theory Related Fields}, 77(3):307--324, 1988.

\bibitem {ChSm}
{D. Chelkak, S. Smirnov (2010), Conformal invariance of the Ising model at criticality, preprint.}

\bibitem {Do1}
{B. Doyon (2009),
Conformal loop ensembles and the stress-energy tensor. I. Fundamental notions of CLE, preprint.}

\bibitem {Do2}
{B. Doyon (2009),
Conformal loop ensembles and the stress-energy tensor. II. Construction of the stress-energy tensor, preprint.}

\bibitem {Dub}
{J. Dub\'edat (2009),
SLE and the Free Field: Partition functions and couplings,
J. Amer. Math. Soc. {\bf 22}, 995-1054.}

\bibitem {DSh}
{B. Duplantier, S. Sheffield (2008),
Liouville Quantum Gravity and KPZ, preprint.}

\bibitem{K} {A. Kemppainen (2009), On Random Planar Curves and Their Scaling Limits., Ph.D. thesis, University of Helsinki.}

\bibitem {KNln}
{W. Kager, B. Nienhuis (2004),
A guide to stochastic L\"owner evolution and its applications,
J. Stat. Phys. {\bf 115}, 1149-1229}

\bibitem{LLN} S.~Lalley, G.~F. Lawler, and H.~Narayanan.
\newblock Geometric Interpretation of Half-Plane Capacity.
\newblock {Electronic Communications in Probability}, {\bf 14}, 566--571, 2009.

\bibitem {Lbook}
{G.F. Lawler,
{\sl Conformally invariant processes in the plane},
Math. Surveys and Monographs, AMS, 2005.}

\bibitem {Lln}
{G.F. Lawler (2009),
Schramm-Loewner evolution, in Statistical Mechanics, S. Sheffield and T. Spencer, ed., IAS/Park
City Mathematical Series, AMS , 231-295.}

\bibitem {LSW4/3}
G.~F. Lawler, O. Schramm, and W. Werner.
\newblock The dimension of the planar Brownian frontier is $4/3$.
{Math. Res. Lett.}, {\bf 8}, 401-411, 2001.

\bibitem {LSW1}
{G.F. Lawler, O. Schramm, W. Werner (2001),
Values of Brownian intersection exponents I: Half-plane exponents,
Acta Mathematica {\bf 187}, 237-273.}

\bibitem {LSW2}
{G.F. Lawler, O. Schramm, W. Werner (2001),
Values of Brownian intersection exponents II: Plane exponents,
Acta Mathematica {\bf 187}, 275-308.}

\bibitem {LSWlesl}
{G.F. Lawler, O. Schramm, W. Werner (2003),
Conformal invariance of planar loop-erased random walks and uniform spanning trees,
Ann. Probab. {\bf 92}, 939-995.}

\bibitem {LSWr}
{G.F. Lawler, O. Schramm, W. Werner (2003),
Conformal restriction properties. The chordal case,
J. Amer. Math. Soc. {\bf 16}, 917-955.}

\bibitem {LTF}
{G.F. Lawler, J.A. Trujillo-Ferreras
Random walk loop-soup, {Trans. A.M.S.} {\bf 359}, 767-787, 2007.}

\bibitem {LW}
{G.F. Lawler, W. Werner (2004),
The Brownian loop-soup,
Probab. Th. Rel. Fields {\bf 128}, 565-588.}

\bibitem {LGM}
{J.-F. Le Gall, G. Miermont (2009),
Scaling limits of random planar maps with large faces. Ann. Probab., to appear}

\bibitem{MR665254}
B.~B. Mandelbrot.
\newblock {\em The fractal geometry of nature}.
\newblock W. H. Freeman and Co., San Francisco, Calif., 1982.

\bibitem{MR1409145}
R.~Meester and R.~Roy.
\newblock {\em Continuum percolation}, volume 119 of {\em Cambridge Tracts in
  Mathematics}.
\newblock Cambridge University Press, Cambridge, 1996.

\bibitem {M1}
{J. Miller (2010),
Fluctuations for the Ginzburg-Landau $\nabla \phi$ Interface Model on a Bounded Domain,
preprint.}

\bibitem {M}
{J. Miller (2010),
Universality for SLE(4), preprint.}

\bibitem {NW}
{S. Nacu and W. Werner.
Random soups, carpets and dimensions,
{J. London Math. Soc.}, to appear.}

\bibitem {N}
{B. Nienhuis (1982),
Exact critical exponents for the $O(n)$ models in two dimensions,
Phys. Rev. Lett. {\bf 49}, 1062-1065.}

\bibitem {Po}
{Ch. Pommerenke,
Boundary behavior of conformal maps,
Springer, Berlin, 1992.}

\bibitem {PY}
{J. Pitman and M. Yor (1982),
A decomposition of Bessel bridges,
Z. Wahrscheinlichkeitstheorie verw. Gebiete
59, 425-457}

\bibitem {RY}
{D. Revuz, M. Yor,
Continuous martingales and Brownian motion, Springer, 1991.}

\bibitem {RS}
{S. Rohde, O. Schramm (2005),
Basic properties of SLE, Ann. Math. {\bf 161}, 879-920.}

\bibitem {Sch}{
O. Schramm (2000), Scaling limits of loop-erased random walks and
uniform spanning trees, Israel J. Math. {\bf 118}, 221-288.}

\bibitem {SchPerc}
{O. Schramm (2001),
A Percolation formula, Electr. Comm. Probab. {\bf 6}, paper no. 12.}

\bibitem {SchShexpl}
{O. Schramm, S. Sheffield (2005),
Harmonic explorer and its convergence to SLE4,
 Ann. Probab. {\bf  33}, 2127-2148. }

\bibitem {SchSh}
{O. Schramm, S. Sheffield (2009),
Contour lines of the two-dimensional discrete Gaussian Free Field,
Acta Math.
{\bf 202}, 21-137.}

\bibitem {SchSh2}
{O. Schramm, S. Sheffield,
A contour line of the continuum Gaussian Free Field,
preprint, 2010.}

\bibitem {SSW}
{O. Schramm, S. Sheffield, D.B. Wilson (2009),
 Conformal radii in conformal loop ensembles,
Comm. Math. Phys.  {\bf 288}, 43-53.}

\bibitem {SchSm}
{O. Schramm, S. Smirnov (2010), On the scaling limits of planar percolation, preprint.}

\bibitem {SchW}
{O. Schramm, D.B. Wilson (2005),
SLE coordinate changes, New York J. Math. {\bf 11}, 659-669.}

\bibitem {Sh}{
S. Sheffield (2009),
Exploration trees and conformal loop ensembles,
Duke Math. J. {\bf 147}, 79-129.}

\bibitem{ShS}
{S. Sheffield and N. Sun (2010), Strong path convergence from Loewner driving convergence, preprint.}


  \bibitem{Sm1}
  {S. Smirnov (2001), Critical percolation in the plane:
  conformal invariance, Cardy's formula, scaling limits, C. R. Acad. Sci.
  Paris S\'er. I Math. {\bf 333}, 239-244.}

  \bibitem{Sm2}
  {S. Smirnov (2007), Towards conformal invariance of 2D
  lattice models, Proc. ICM 2006, vol. 2, 1421-1451.}

\bibitem {Sm3}
{S. Smirnov (2010),
Conformal invariance in random cluster models. I. Holomorphic fermions in the Ising model,
Ann. Math. {\bf 172}, to appear.}


\bibitem {Su}
{N. Sun (2010),
Conformally invariant scaling limits in planar critical percolation, preprint.}

\bibitem {Wln}
{W. Werner (2004),
Random planar curves and Schramm-Loewner Evolutions,
in 2002 St-Flour summer school,  L.N. Math. {\bf 1840}, 107-195.}

\bibitem {Wls}
{W. Werner (2003),
SLEs as boundaries of clusters of Brownian loops,
C.R. Acad. Sci. Paris, Ser. I Math. {\bf 337}, 481-486.}

\bibitem {Wcrr}
{W. Werner (2005),
Conformal restriction and related questions,
Probability Surveys {\bf 2}, 145-190.}

\bibitem {Wln3}
{W. Werner (2006),
Some recent aspects of random conformally invariant systems,
in Les Houches Summer School, Session LXXXIII, Mathematical statistical mechanics,
57-99, Elsevier.
}

\bibitem {W}
{W. Werner (2008),
 The conformally invariant measure on self-avoiding loops,
J. Amer. Math. Soc. {\bf 21}, 137-169.
}

\bibitem {Wccp}
{W.~Werner.
Conformal restriction properties,
Proc. ICM 2006, vol. 3, 641-660, 2007.}

\bibitem {Zh}
{D. Zhan (2008),
Reversibility of chordal SLE,
Ann. Probab. {\bf 36}, 1472-1494.}

\end {thebibliography}

\medbreak

\noindent\begin{tabular}{lll}
{\it Department of Mathematics MIT, 2-180} & & {\it Laboratoire de Math\'ematiques}\\
{\it 77 Massachusetts Avenue} & & {\it Universit\'e Paris-Sud 11, b\^{a}timent 425}\\
{\it Cambridge, MA 02139-4307} & & {\it 91405 Orsay Cedex, France} \\
sheffield@math.mit.edu & & wendelin.werner@math.u-psud.fr
         \end{tabular}

\end {document}